\theoremstyle{plain}
\newtheorem{theorem}{Theorem}[section]
\newtheorem{proposition}[theorem]{Proposition}
\newtheorem{corollary}[theorem]{Corollary}
\newtheorem{conjecture}[theorem]{Conjecture} 
\theoremstyle{definition}
\newtheorem{definition}[theorem]{Definition}
\theoremstyle{remark}
\newtheorem{remark}[theorem]{Remark}
\newtheorem*{remark*}{Remark}
\definecolor{codegreen}{rgb}{0,0.6,0}
\definecolor{codegray}{rgb}{0.5,0.5,0.5}
\definecolor{codepurple}{rgb}{0.58,0,0.82}
\definecolor{backcolour}{rgb}{0.95,0.95,0.92}
\lstdefinestyle{mystyle}{
    backgroundcolor=\color{backcolour},   
    commentstyle=\color{codegreen},
    keywordstyle=\color{magenta},
    numberstyle=\tiny\color{codegray},
    stringstyle=\color{codepurple},
    basicstyle=\ttfamily\footnotesize,
    breakatwhitespace=false,         
    breaklines=true,                 
    captionpos=b,                    
    keepspaces=true,                 
    showspaces=false,                
    showstringspaces=false,
    showtabs=false,                  
    tabsize=2
}
\crefname{lstlisting}{Listing}{Listings}
\Crefname{lstlisting}{Listing}{Listings}
\def\tightparagraph#1{\noindent\textbf{#1}~}
\def\norm#1{\lVert#1\rVert}
\def\inner#1{\left\langle#1\right\rangle}
\def\bignorm#1{\left\lVert#1\right\rVert}
\def\bigabs#1{\left|#1\right|}
\def\bigopen#1{\left(#1\right)}
\def\bigset#1{\left\{#1\right\}}
\def\bigclosed#1{\left[#1\right]}
\newcommand{\red}[1]{\textcolor{Firebrick3}{#1}}%
\newcommand{\green}[1]{\textcolor{Chartreuse4}{#1}}%
\newcommand{\blue}[1]{\textcolor{DodgerBlue3}{#1}}%
\newcommand\note[1]{#1}
\newcommand{\Psisim}{\Psi^{\text{Sim}}}
\newcommand{\Psialt}{\Psi^{\text{Alt}}}
\newcommand{\simgda}{\textbf{\textcolor{Chartreuse4}{Sim-GDA}}}
\newcommand{\altgda}{\textbf{\textcolor{Firebrick3}{Alt-GDA}}}
\newcommand{\alexgda}{\blue{\textbf{Alex-GDA}}}
\def\1{\bm{1}}
\def\eps{{\epsilon}}
\def\vzero{{\bm{0}}}
\def\va{{\bm{a}}}
\def\vb{{\bm{b}}}
\def\vc{{\bm{c}}}
\def\vg{{\bm{g}}}
\def\vu{{\bm{u}}}
\def\vv{{\bm{v}}}
\def\vw{{\bm{w}}}
\def\vx{{\bm{x}}}
\def\vy{{\bm{y}}}
\def\vz{{\bm{z}}}
\def\mA{{\bm{A}}}
\def\mB{{\bm{B}}}
\def\mC{{\bm{C}}}
\def\mI{{\bm{I}}}
\def\mM{{\bm{M}}}
\def\mP{{\bm{P}}}
\def\mS{{\bm{S}}}
\def\mU{{\bm{U}}}
\def\mV{{\bm{V}}}
\def\mW{{\bm{W}}}
\def\mX{{\bm{X}}}
\def\mY{{\bm{Y}}}
\DeclareMathAlphabet{\mathsfit}{\encodingdefault}{\sfdefault}{m}{sl}
\SetMathAlphabet{\mathsfit}{bold}{\encodingdefault}{\sfdefault}{bx}{n}
\def\gF{{\mathcal{F}}}
\def\gN{{\mathcal{N}}}
\def\gO{{\mathcal{O}}}
\def\sS{{\mathbb{S}}}
\newcommand{\R}{\mathbb{R}}
\DeclareMathOperator{\nullspace}{null}
\DeclareMathOperator{\row}{row}
\DeclareMathOperator{\rank}{rank}
\DeclareMathOperator{\diag}{diag}
\DeclareMathOperator{\Uniform}{Uniform}
\icmltitlerunning{Fundamental Benefit of Alternating Updates in Minimax Optimization}
\begin{document}


\twocolumn[
\icmltitle{Fundamental Benefit of Alternating Updates in Minimax Optimization}



\icmlsetsymbol{equal}{*}

\begin{icmlauthorlist}
\icmlauthor{Jaewook Lee}{equal,kaist}
\icmlauthor{Hanseul Cho}{equal,kaist}
\icmlauthor{Chulhee Yun}{kaist}
\end{icmlauthorlist}

\icmlaffiliation{kaist}{KAIST AI, South Korea}

\icmlcorrespondingauthor{Chulhee Yun}{chulhee.yun@kaist.ac.kr}

\icmlkeywords{Minimax Optimization, Gradient Descent-Ascent, Alex-GDA}

\vskip 0.3in
]



\printAffiliationsAndNotice{\icmlEqualContribution} 

\begin{abstract}
The Gradient Descent-Ascent (GDA) algorithm, designed to solve minimax optimization problems, takes the descent and ascent steps either simultaneously (Sim-GDA) or alternately (Alt-GDA).
While Alt-GDA is commonly observed to converge faster, the performance gap between the two is not yet well understood theoretically, especially in terms of global convergence rates.
To address this theory-practice gap, we present fine-grained convergence analyses of both algorithms for strongly-convex-strongly-concave and Lipschitz-gradient objectives. 
Our new iteration complexity upper bound of Alt-GDA is strictly smaller than the lower bound of Sim-GDA; \emph{i.e.}, Alt-GDA is provably faster.
Moreover, we propose Alternating-Extrapolation GDA (Alex-GDA), a general algorithmic framework that subsumes Sim-GDA and Alt-GDA, for which the main idea is to alternately take gradients from extrapolations of the iterates.
We show that Alex-GDA satisfies a smaller iteration complexity bound, identical to that of the Extra-gradient method, while requiring less gradient computations.
We also prove that Alex-GDA enjoys linear convergence for bilinear problems, for which both Sim-GDA and Alt-GDA fail to converge at all.
\end{abstract}

\vspace*{-10pt}
\section{Introduction}
\vspace*{-5pt}
\label{sec:1}

\begin{figure}[ht]
    \centering
    \includegraphics[width=0.85\linewidth]{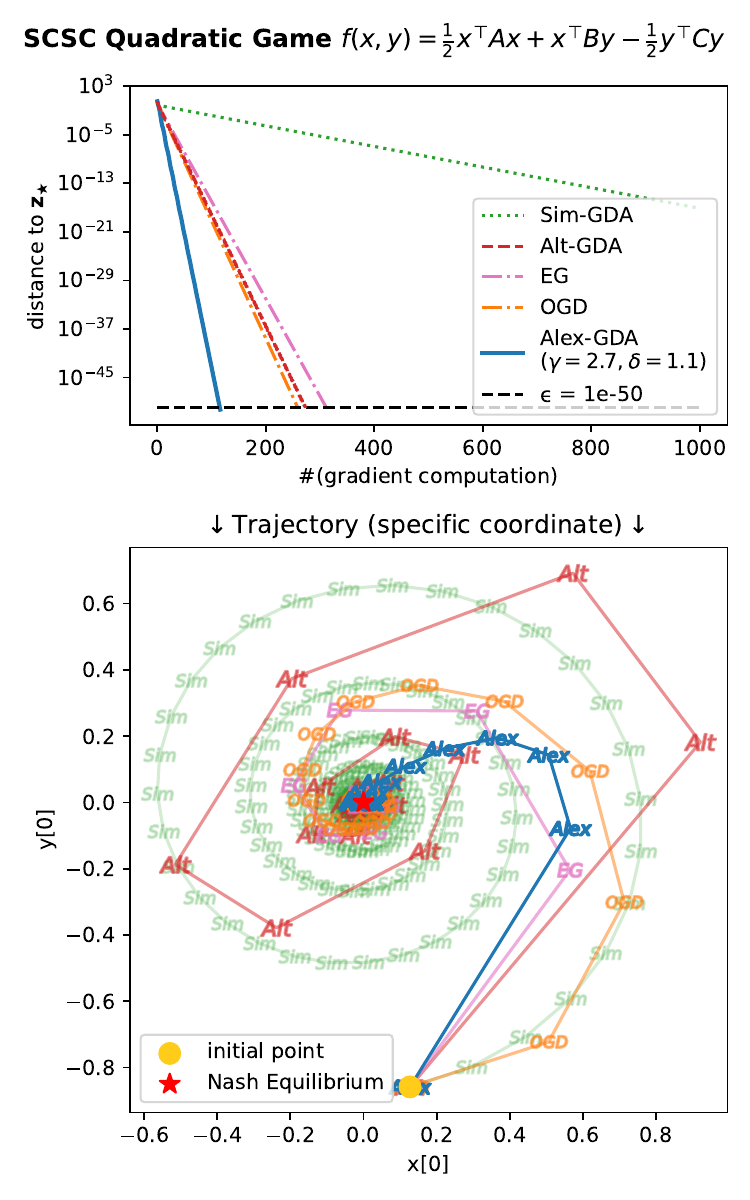}
    \vspace*{-16pt}
    \caption{(\textbf{Top}) Comparing the convergence speeds of algorithms: \simgda{}, \altgda{}, {\color{magenta}\bfseries EG}, {\color{orange}\bfseries OGD} and \alexgda{}. 
    (\textbf{Bottom}) Trajectory of the algorithms. (Partial visualization. Originally, the trajectory is $6$-dimensional since $d_x=d_y=3$).}
    \label{fig:scsc}
    \vspace*{-15pt}
\end{figure}

The \textit{minimax problem} aims to solve:
\begin{align}
    \min_{\vx \in \R^{d_x}} \max_{\vy \in \R^{d_y}} f(\vx, \vy). \label{eq:minimax}
\end{align}
This has been popularized since the work by \citet{neumann1928zur} and is widely studied in mathematics, economics, computer science, and machine learning.
Particularly, in modern machine learning, there are many important settings which fall within problem~\eqref{eq:minimax}, including but not limited to generative adversarial networks (GANs)  \cite{arjovsky2017wasserstein, goodfellow2020generative, heusel2017gans}, adversarial training and robust optimization \cite{latorre2023finding, madry2018towards, sinha2018certifiable, yu2022fast}, reinforcement learning \cite{li2019robust}, and area-under-curve (AUC) maximization \cite{liu2020stochastic, ying2016stochastic, yuan2021large}.

The simplest baseline algorithm for solving minimax problems is \textit{gradient descent-ascent} (GDA) \cite{demyanov1972numerical}, which naturally generalizes the idea of gradient descent for minimization problems. 
The GDA algorithm updates $\vx$ in the direction of decreasing the objective function $f$ while updating $\vy$ in the direction of increasing $f$, either simultaneously (\simgda) or alternately (\altgda). 
Unfortunately, it is not easy for both algorithms to converge to an optimal point even in a convex-concave minimax problem: 
in an unconstrained bilinear problem $\min_x \max_y xy$, for example, \simgda{} diverges all the way out while \altgda{} generates bounded but non-convergent iterates \cite{bailey2020finite,gidel2018a,gidel2019negative,zhang22near}.

To tackle the issues of vanilla GDA(s), numerous algorithms have been introduced and analyzed for smooth minimax problems, including Extra-gradient (\textbf{EG}) \cite{korpelevich1976extragradient}, Optimistic Gradient Descent (\textbf{OGD}) \cite{popov1980modification}, negative momentum \cite{gidel2019negative}, and many more \cite{lee2021fast,park2022exact,yoon2021accelerated,yoon2022accelerated}. 
Although these algorithms enjoy accelerated convergence rates compared to vanilla GDA, the majority of these works focus on simultaneous updates of $\vx$ and $\vy$, mainly because of the simplicity of analysis. 
However,
in minimax problems applied in practical machine learning, it is more natural for the training procedure to work in an alternating sense.
In training GANs, for instance, the discriminator should update its weight based on the outcome of the generator, and vice versa.
Moreover, there exist substantial amounts of empirical evidence of \altgda{} exhibiting faster convergence \cite{goodfellow2020generative,mescheder2017numerics}, as we demonstrate in \cref{fig:scsc}. In contrast, we still lack a theoretical understanding of why and how much \altgda{} is faster, especially compared to \simgda{}.
To fill this gap between theory and practice, it is a timely and important subject to study which one is a winner between simultaneous and alternating updates. 

An existing work by \citet{zhang22near} proposes a theoretical explanation involving \emph{local} convergence guarantees for $\mu$-strongly-convex-strongly-concave (SCSC), $L$-Lipschitz gradient functions.
Their results constructively explain that \altgda{} (of iteration complexity $\tilde\gO(\kappa)$) has a faster convergence rate than \simgda{} ($\tilde\gO(\kappa^2)$), where $\kappa={L}/{\mu}$ is the condition number of the problem. However,
their results are confined to guaranteeing \emph{local} convergence rates,
\note{which is only valid after enough iterations.}

Overall, this raises the following question:
\begin{align}
\begin{aligned}
&\,\,\text{%
\textit{For minimax problems~\eqref{eq:minimax}, are \textbf{alternating} updates}
}\\
&\,\,\text{%
\textit{strictly better than \textbf{simultaneous} updates,}%
}\\
&\,\,\text{%
\textit{even in terms of \textbf{global convergence}}?%
}
\end{aligned}
\label{eq:quote}
\end{align}

\vspace{-10pt}
\subsection{Summary of Contributions}
\vspace{-5pt}

Our contributions are largely twofold. First, we eliminate the limitations of prior work by providing \emph{global} convergence guarantees that elucidate the fundamental strength of \altgda{} over \simgda{}. Second, we propose a novel algorithm called \textbf{Al}ternating-\textbf{Ex}trapolation \textbf{GDA} (\alexgda{})
that achieves an identical rate to the Extra-gradient (\textbf{EG}) method with the same number of gradient computations per iteration as \simgda{} and \altgda{}.

For the following results, we assume $(\mu_x, \mu_y)$-strongly-convex-strongly-concave (SCSC), $(L_x, L_y, L_{xy})$-Lipschitz gradient objectives with condition numbers $\kappa_x = {L_x}/{\mu_x}$, $\kappa_y = {L_y}/{\mu_y}$, and $\kappa_{xy} = {L_{xy}}/{\sqrt{\mu_x \mu_y}}$.%
\footnote{For the definitions of SCSC and Lipschitz-gradient functions, please refer to \cref{def:scsc,def:lipg}. For the definition of condition numbers $\kappa_x$, $\kappa_y$, and $\kappa_{xy}$, please refer to \cref{def:kappa}.}
In particular, we study the upper and lower bounds on the rates of the iteration complexity $K$ required to achieve $\| \vz_K - \vz_{\star} \|^2 \le \epsilon$.

\begin{itemize}
    \item In \cref{sec:3}, we prove that \simgda{} satisfies an iteration complexity rate of
    \begin{align*}
        \Theta \bigopen{(\kappa_x + \kappa_y + \kappa_{xy}^2) \cdot \log(1/\epsilon)}
    \end{align*}
    by showing tightly matching upper and lower bounds.
    Our fine-grained convergence rate highlights the fact that the term $\kappa_{xy}^2$ is the main cause of slow convergence, which previously known results do not capture.
    
    \item In \cref{sec:4}, we prove that \altgda{} satisfies an iteration complexity rate upper bound of
    \begin{align*}
        {\gO} \bigopen{\bigopen{\kappa_x + \kappa_y + \kappa_{xy} (\sqrt{\kappa_x} + \sqrt{\kappa_y})} \cdot \log(1/\epsilon)},
    \end{align*}
    which, compared to the results in \cref{sec:3}, concludes that \emph{\altgda{} is provably faster than \simgda{}.}

    \item In \cref{sec:5}, we propose a new algorithm, \textbf{Al}ternating-\textbf{Ex}trapolation \textbf{GDA} (\alexgda{}), and prove a smaller iteration complexity rate of
    \begin{align*}
        \Theta \bigopen{\bigopen{\kappa_x + \kappa_y + \kappa_{xy}} \cdot \log(1/\epsilon)}
    \end{align*}
    by showing tightly matching upper and lower bounds.
    We also show that \textbf{EG}---which requires twice the number of gradient computations per iteration---yields the same rate by showing an identical lower bound.
\end{itemize}

Next, we turn to bilinear objectives $f(\vx, \vy) = \vx^\top \mB \vy$, for which both \simgda{} and \altgda{} fail to converge.
\begin{itemize}
    \item In \cref{sec:6}, we show that \alexgda{} enjoys linear convergence with an iteration complexity rate upper bound of
    \begin{align*}
        {\gO} \bigopen{\bigopen{L_{xy} / \mu_{xy}}^2 \cdot \log(1/\epsilon)},
    \end{align*}
    where $\mu_{xy}$, $L_{xy}$ are the smallest, largest nonzero singular values of the coupling matrix $\mB$, respectively.
\end{itemize}

Long story short, our results altogether answer the ground-setting question~\eqref{eq:quote} in the positive. For the optimization community|we believe that our fundamental comparison between simultaneous and alternating updates could provide fruitful insights for future investigations to unveil new rate-optimal algorithms by exploiting alternating updates.
\section{Preliminaries}
\label{sec:2}

\tightparagraph{Notation.} 
We study unconstrained minimax problems with objective function $f : \R^{d_x} \times \R^{d_y} \rightarrow \R$, where $\vx \in \R^{d_x}$ and $\vy \in \R^{d_y}$ are the variables.
In some cases we use $\vz = (\vx, \vy) \in \R^{d_x} \times \R^{d_y}$ for notational simplicity. 
We denote by $\|\cdot\|$ the Euclidean $\ell_2$-norm for vectors and the spectral norm ({\it i.e.},  maximum singular value) for matrices.
We denote by $\langle\cdot,\cdot\rangle$ the usual inner product between vectors in Euclidean space of the same dimension.
The spectral radius ({\it i.e.}, maximum absolute eigenvalue) of a matrix $\mM$ is denoted by $\rho(\mM)$.
The letters $\gO$, $\Omega$, $\omega$, and $\Theta$ are for the conventional asymptotic notations, while the tilde notation (\textit{e.g.}, $\tilde{\gO}$ and $\tilde{\Omega}$) hides polylogarithmic factors.

\subsection{Function Class}

We first introduce the definitions we need in order to characterize the function class we will mainly focus on.
\begin{definition}[Strong convexity/concavity]
    For a given constant $\mu > 0$, we say that a differentiable function $f : \R^{d} \rightarrow \R$ is \emph{$\mu$-strongly convex} if
    \begin{align*}
        f(\vz') &\ge f(\vz) + \inner{\nabla f(\vz), \vz' - \vz} + \frac{\mu}{2} \| \vz' - \vz \|^2
    \end{align*}
    for all $\vz, \vz' \in \R^{d}$, and \emph{$\mu$-strongly concave} if $-f(\vz)$ is $\mu$-strongly convex. If the above inequality holds for $f$ (or $-f$) and $\mu = 0$, then we say that $f$ is \emph{convex} (or \textit{concave}).
\end{definition}
\begin{definition}[Strong-convex-strong-concavity]
    \label{def:scsc}
    For given constants $\mu_x, \mu_y > 0$, we say that a differentiable function $f : \R^{d_x} \times \R^{d_y} \rightarrow \R$ is \textit{$(\mu_x, \mu_y)$-strong-convex-strong-concave} (or \textit{$(\mu_x, \mu_y)$-SCSC}) if
    \begin{itemize}
        \item $f(\cdot, \vy)$ is $\mu_x$-strongly convex for all $\vy \in \R^{d_y}$,
        \item $f(\vx, \cdot)$ is $\mu_y$-strongly concave for all $\vx \in \R^{d_x}$.
    \end{itemize}
    If $\mu_x=\mu_y=0$, we say that $f$ is \emph{convex-concave}.
\end{definition}

\begin{definition}[Lipschitz gradients]
    \label{def:lipg}
    For given constants $L_x, L_y \ge 0$ and $L_{xy} \ge 0$, we say that a differentiable function $f : \R^{d_x} \times \R^{d_y} \rightarrow \R$ has \emph{$(L_x, L_y, L_{xy})$-Lipschitz gradients} if 
    \begin{align*}
        \| \nabla_{\vx} f(\vx', \vy) - \nabla_{\vx} f(\vx, \vy) \| &\le L_x \| \vx' - \vx \|,
    \end{align*}
    for all $\vx, \vx' \in \R^{d_x}$ and $\vy \in \R^{d_y}$,
    \begin{align*}
        \| \nabla_{\vy} f(\vx, \vy') - \nabla_{\vy} f(\vx, \vy) \| &\le L_y \| \vy' - \vy \|,
    \end{align*}
    for all $\vy, \vy' \in \R^{d_y}$ and $\vx \in \R^{d_x}$, and
    \begin{align*}
        \| \nabla_{\vx} f(\vx, \vy') - \nabla_{\vx} f(\vx, \vy) \| &\le L_{xy} \| \vy' - \vy \|, \\
        \| \nabla_{\vy} f(\vx', \vy) - \nabla_{\vy} f(\vx, \vy) \| &\le L_{xy} \| \vx' - \vx \|
    \end{align*}
    for all $\vx, \vx' \in \R^{d_x}$ and $\vy, \vy' \in \R^{d_y}$.
\end{definition}
For SCSC and Lipschitz-gradient objective functions, the convergence rates of algorithms usually depend on the ratio between the parameters $\mu_x, \mu_y$ and $L_x, L_y, L_{xy}$, which we often refer to as the \textit{condition number}.
\begin{definition}[Condition numbers]
    \label{def:kappa}
    For given constants $0 < \mu_{x} \le L_{x}$, $0 < \mu_{y} \le L_{y}$, and $L_{xy} \ge 0$, we define the \textit{condition numbers} as $\kappa_x := L_x / \mu_x$, $\kappa_y := L_y / \mu_y$, and $\kappa_{xy} := L_{xy} / \sqrt{\mu_x \mu_y}$.
\end{definition}

The definitions of $\kappa_x$ and $\kappa_y$ are completely analogous to the definition widely used in convex optimization literature, and we have $\kappa_x, \kappa_y \ge 1$ since $\mu_{x} \le L_{x}$, $\mu_{y} \le L_{y}$.
The number $\kappa_{xy} \ge 0$ additionally takes into account how the coupling between the two variables can affect the convergence speed. 

\begin{definition}[Function class]
    For $0 < \mu_{x} \le L_{x}$, $0 < \mu_{y} \le L_{y}$, and $L_{xy} \ge 0$, we define $\gF (\mu_x, \mu_y, L_x, L_y, L_{xy})$ as the function class containing all $f : \R^{d_x} \times \R^{d_y} \rightarrow \R$ that are (i) \textit{twice-differentiable}, (ii) \textit{$(\mu_x, \mu_y)$-SCSC}, and (iii) \textit{has $(L_x, L_y, L_{xy})$-Lipschitz gradients}.
\end{definition}

Considering the minimax problem as in (\ref{eq:minimax}), the optimal solution is often characterized as in \cref{def:nash}.

\begin{definition}
    \label{def:nash}
    A \textit{Nash equilibrium} of a function $f : \R^{d_x} \times \R^{d_y} \rightarrow \R$ is defined as a point $(\vx_{\star}, \vy_{\star}) \in \R^{d_x} \times \R^{d_y}$ which satisfies for all $\vx \in \R^{d_x}$ and $\vy \in \R^{d_y}$:
    \begin{align*}
        f(\vx_{\star}, \vy) \le f(\vx_{\star}, \vy_{\star}) \le f(\vx, \vy_{\star}).
    \end{align*}
\end{definition}

It is well known that if $f \in \gF(\mu_x, \mu_y, L_x, L_y, L_{xy})$, then the Nash equilibrium $(\vx_{\star}, \vy_{\star})$ of $f$ uniquely exists (see \citet{zhang22near}).

\subsection{Algorithms}

We focus on GDA algorithms with constant step sizes $\alpha, \beta > 0$.
In \cref{sec:3,sec:4}, we provide convergence analyses for \simgda{} and \altgda{}, shown in \cref{alg:simaltgda}.
In \cref{sec:5,sec:6}, we construct a new algorithm called \textbf{Al}ternating-\textbf{Ex}trapolation \textbf{GDA} (\alexgda{}), shown in \cref{alg:alexgda}, which we formally define later.

\begin{algorithm}[ht]
    \caption{\simgda{} and \altgda{}}
    \label{alg:simaltgda}
    \begin{algorithmic}
        \STATE {\bfseries Input:} Number of epochs $K$, step sizes $\alpha, \beta > 0$
        \STATE {\bfseries Initialize:} $(\vx_0, \vy_0) \in \R^{d_x} \times \R^{d_y}$
        \FOR{$k = 0, \dots, K-1$}
            \STATE $\vx_{k+1} = \vx_k - \alpha \nabla_{\vx} f(\vx_k, \vy_k)$
            \IF{\simgda{}}
                \STATE $\vy_{k+1} = \vy_k + \beta \nabla_{\vy} f(\green{\vx_k}, \vy_k)$
            \ELSIF{\altgda{}}
                \STATE $\vy_{k+1} = \vy_k + \beta \nabla_{\vy} f(\red{\vx_{k+1}}, \vy_k)$
            \ENDIF
        \ENDFOR
        \STATE {\bfseries Output:} $(\vx_K, \vy_K) \in \R^{d_x} \times \R^{d_y}$
    \end{algorithmic}
\end{algorithm}

\subsection{Lyapunov Function}

Originally designed for stability analysis of dynamical systems \citep{kalman1960control}, the \textit{Lyapunov function} defined as in \cref{def:lyap} \note{(sometimes referred to as the \textit{potential function})} is widely used as a strategy to obtain convergence guarantees
in optimization studies \citep{bansalgupta19,taylor18lyapunov}.

\begin{definition}[Lyapunov function]
    \label{def:lyap}
    Suppose that we have a function $f$ with optimal point $\vz_{\star}$, an initialization point $\vz_0$, and an algorithm that outputs $\vz_k$ at the $k$-th iterate.
    A \textit{Lyapunov function} is defined as a continuous function $\Psi: \R^{d} \rightarrow \R$ such that:
    \begin{itemize}
        \item $\Psi(\vz) \ge 0$ and $\Psi(\vz) = 0$ if and only if $\vz = \vz_{\star}$,
        \item $\Psi(\vz) \rightarrow \infty$ as $\norm{\vz} \rightarrow \infty$,
        \item $\Psi(\vz_{k+1}) \le \Psi(\vz_{k})$ for all $k \ge 0$.
    \end{itemize}
\end{definition}
For an algorithm that outputs $\{ \vz_k \}_{k \ge 0}$ and a Lyapunov function $\Psi$, we define the sequence $\{ \Psi_k \}_{k \ge 0}$ as $\Psi_k :=  \Psi(\vz_k)$,
which we will refer to as, with a bit of an abuse of notation, just the \textit{Lyapunov function} throughout the paper.
\begin{definition}
    \label{def:lyapvalid}
    We say that a Lyapunov function $\{\Psi_k\}_{k \ge 0}$ is \textit{valid} if it satisfies $ \Psi_k \ge A \| \vz_k - \vz_{\star} \|^2$ for all $k$ and
    for some constant $A > 0$.
\end{definition}
If we find a \textit{valid} Lyapunov function with contraction factor $r \in (0,1)$--- that is, for all $k \ge 0$, we have
$\Psi_{k+1} \le r \Psi_{k}$, then we can deduce that
\begin{align}
    K &= \gO \bigopen{ \frac{1}{1 - r} \cdot \log \frac{\Psi_0}{A \epsilon} } \label{eq:validilav}
\end{align}
iterations are sufficient to ensure $\| \vz_K - \vz_{\star} \|^2 \le \epsilon$.
We refer to $K$ as the \textit{iteration complexity}, and the rate in the right-hand side of \eqref{eq:validilav} as the \textit{iteration complexity upper bound}.
\section{Convergence Analysis of Sim-GDA}
\label{sec:3}

Given an objective function $f \in \gF (\mu_x, \mu_y, L_x, L_y, L_{xy})$, for which the Nash equilibrium is unique, we define the scaled distance to the Nash equilibrium $V (\vx, \vy)$ as
\begin{align*}
    V (\vx, \vy) = \frac{1}{\alpha} \norm{\vx - \vx_{\star}}^2 + \frac{1}{\beta} \norm{\vy - \vy_{\star}}^2.
\end{align*}
For \simgda, we focus on the convergence rate in terms of the Lyapunov function $\Psisim_k = V (\vx_k, \vy_k)$.
Note that $\Psisim_k$ is always nonnegative, and is valid since we have
$A^{\text{Sim}} \| \vz_k - \vz_{\star} \|^2 \le \Psisim_k$
for $A^{\text{Sim}} = \min \bigset{\frac{1}{\alpha}, \frac{1}{\beta}}$.
\note{This potential function is a popular choice in minimax optimization or variance reduction problems with step sizes of different scales \citep{palaniappan16}.}

\subsection{Convergence Upper Bound}

\cref{thm:simgda} yields a contraction result for \simgda{}.

\begin{restatable}{theorem}{theoremsimgda}
    \label{thm:simgda}
    Suppose that $f \in \gF (\mu_x, \mu_y, L_x, L_y, L_{xy})$.
    Then, there exists a pair of step sizes $\alpha, \beta$ with
    \begin{align*}
        \alpha \mu_x = \beta \mu_y &= \Theta \bigopen{\frac{1}{\kappa_x + \kappa_y + \kappa_{xy}^2}},
    \end{align*}
    such that \simgda{} satisfies $\Psi^{\text{\emph{Sim}}}_{k+1} \le r \Psi^{\text{\emph{Sim}}}_{k}$ with
    \begin{align}
        r &= 
        \bigopen{\frac{\bigopen{\kappa_{xy} + \sqrt{ \max \bigset{\kappa_x, \kappa_y} + \kappa_{xy}^2} \ }^2 - 1}{\bigopen{\kappa_{xy} + \sqrt{ \max \bigset{\kappa_x, \kappa_y} + \kappa_{xy}^2} \ }^2 + 1}}^2.
        \label{eq:simcontrate}
    \end{align}
\end{restatable}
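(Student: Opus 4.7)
The plan is to reduce the per-iteration contraction to a matrix spectral-norm bound via linearization, and then carefully optimize the step sizes to obtain the Chebyshev-like rate.

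First, I would linearize around the equilibrium. Define the saddle operator $F(\vz) := (\nabla_{\vx} f(\vz), -\nabla_{\vy} f(\vz))$, so that \simgda{} reads $\vz_{k+1} = \vz_k - D F(\vz_k)$ with $D := \diag(\alpha I, \beta I)$. Since $f$ is twice differentiable and $F(\vz_{\star}) = \vzero$, the integral form of the mean-value theorem yields $F(\vz_k) = H_k (\vz_k - \vz_{\star})$, where
\[
H_k := \int_0^1 \nabla F\bigl(\vz_{\star} + t(\vz_k - \vz_{\star})\bigr)\,dt = \begin{pmatrix} P_k & Q_k \\ -Q_k^\top & -C_k \end{pmatrix}.
\]
The SCSC and Lipschitz-gradient assumptions translate to $\mu_x I \preceq P_k \preceq L_x I$, $\mu_y I \preceq -C_k \preceq L_y I$, and $\|Q_k\| \le L_{xy}$, so that $\vz_{k+1} - \vz_{\star} = (I - D H_k)(\vz_k - \vz_{\star})$.

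Next, I pass to rescaled coordinates. Define $\vu_k := D^{-1/2}(\vz_k - \vz_{\star})$; then $\Psisim_k = \|\vu_k\|^2$, and the update becomes $\vu_{k+1} = (I - M_k)\vu_k$ with $M_k := D^{1/2} H_k D^{1/2}$. Hence $\Psisim_{k+1} \le \|I - M_k\|^2 \, \Psisim_k$, and it suffices to bound $\|I - M_k\|^2$ uniformly over the admissible range of $H_k$.

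Now decompose $M_k = M_s + M_a$ into its symmetric and antisymmetric parts. Taking $\eta := \alpha \mu_x = \beta \mu_y$, the block-diagonal $M_s = \diag(\alpha P_k, -\beta C_k)$ has eigenvalues in $[\eta,\, \eta \max\{\kappa_x, \kappa_y\}]$, while $\|M_a\| \le \sqrt{\alpha \beta}\, L_{xy} = \eta \kappa_{xy}$. Expand
\[
\|(I - M_k) v\|^2 = \|v\|^2 - 2 v^\top M_s v + \|M_s v\|^2 + v^\top [M_s, M_a]\, v + \|M_a v\|^2,
\]
using that the quadratic form of the antisymmetric $M_a$ vanishes and that the commutator $[M_s, M_a]$ is symmetric. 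Combining the eigenvalue bounds on $M_s$, the norm bound on $M_a$, and the key observation $[M_s, M_a] = [M_s - \eta I,\, M_a]$ (so the commutator norm is governed by the spread of $M_s$, not by $\|M_s\|$), I would extract an inequality of the form
\[
\|I - M_k\|^2 \;\le\; (1 - \eta)^2 + \eta^2 C
\]
with $C = C(\kappa_x, \kappa_y, \kappa_{xy})$ an explicit expression. Optimizing over $\eta$ sets $\eta^{\ast} = 1/(1 + C)$ and $r = C/(1 + C)$, which coincides with $\bigl((s-1)/(s+1)\bigr)^2$ under the algebraic identification $C = (s-1)^2/(4s)$ for $s = \bigl(\kappa_{xy} + \sqrt{\max\{\kappa_x, \kappa_y\} + \kappa_{xy}^2}\bigr)^2$; the corresponding $\eta^{\ast} = \Theta(1/s) = \Theta\bigl(1/(\kappa_x + \kappa_y + \kappa_{xy}^2)\bigr)$ is of the claimed order.

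The main obstacle is extracting the precise Chebyshev-like constant $\bigl((s-1)/(s+1)\bigr)^2$ rather than a looser bound of the form $1 - \Omega(1/s)$. A direct triangle inequality $\|M_k\| \le \|M_s\| + \|M_a\|$ yields the correct iteration-complexity order but with a worse constant, because it overestimates when $M_s$ is close to a scalar multiple of the identity. Tightening to the Chebyshev rate requires either (i) exploiting that $[M_s, M_a]$ is controlled by $\|M_s - \eta I\|$ (which vanishes in the best case $\kappa_x = \kappa_y = 1$), or (ii) identifying a worst-case quadratic instance that saturates the bound and reducing the general case to it via an eigen-direction argument.
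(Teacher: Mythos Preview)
Your linearization via the integral mean-value theorem and the rescaling to $\vu_k = D^{-1/2}(\vz_k - \vz_\star)$ are correct and match the paper exactly; this reduces the question to bounding $\|I - M_k\|$ uniformly over all admissible $M_k$. The gap is everything after that.

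The bound you postulate, $\|I - M_k\|^2 \le (1-\eta)^2 + \eta^2 C$ with a \emph{constant} $C$, cannot hold in the form you need. Take $\kappa_{xy} = 0$: then $M_a = 0$ and $\|I-M_k\|^2 = \max\{(1-\eta)^2,(1-\eta\kappa)^2\}$ with $\kappa = \max\{\kappa_x,\kappa_y\}$. For this to be $\le (1-\eta)^2 + \eta^2 C$ at every $\eta$ you would need $C$ large; but the $C$ you reverse-engineer from the target rate is $C = (s-1)^2/(4s) = (\kappa-1)^2/(4\kappa)$, and at $\eta = 1$ the bound reads $(1-\kappa)^2 \le (\kappa-1)^2/(4\kappa)$, which is false for $\kappa>1$. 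In short, the $(1-\eta)^2$ term never captures the $(1-\eta\kappa)^2$ behaviour of the symmetric part, and the commutator estimate $\|[M_s-\eta I, M_a]\| \le 2\eta(\kappa-1)\cdot\eta\kappa_{xy}$ only produces $C = 2(\kappa-1)\kappa_{xy}+\kappa_{xy}^2$, which is both different from your claimed $C$ and wrong in the $\kappa_{xy}\to 0$ limit. Your ``algebraic identification'' is back-computed from the answer rather than derived, and you concede as much in the final paragraph.

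What the paper actually does, after the same rescaling, is avoid the symmetric/antisymmetric split entirely. The key step is a block-matrix lemma: for $\mM = \begin{bmatrix} \mX & -\mW \\ \mW^\top & \mY \end{bmatrix}$ with $t_x I \preceq \mX \preceq s_x I$, $t_y I \preceq \mY \preceq s_y I$, $\|\mW\|\le\ell$, one has
\[
\|\mM\| \le \max\left\{\left\|\begin{bmatrix} s_x & -\ell \\ \ell & t_y\end{bmatrix}\right\|,\ \left\|\begin{bmatrix} t_x & -\ell \\ \ell & s_y\end{bmatrix}\right\|\right\},
\]
proved by sandwiching the Rayleigh quotient of the symmetrized matrix between Rayleigh quotients of $2\times 2$ matrices (this is essentially your option (ii), carried out). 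Applied to $I - M_k$ with $\alpha\mu_x=\beta\mu_y=\zeta$, this gives $r = \max\{f_{\kappa_x}(\zeta)^2, f_{\kappa_y}(\zeta)^2\}$ where
\[
f_\kappa(\zeta) = \frac{\kappa-1}{2}\zeta + \sqrt{\left(1 - \frac{\kappa+1}{2}\zeta\right)^2 + \zeta^2\kappa_{xy}^2},
\]
and the exact Chebyshev constant then falls out of a one-variable optimization of this explicit $2\times 2$ norm (via a trigonometric substitution). The reduction to $2\times 2$ is the step your proposal is missing; the commutator route does not recover it.
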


While we defer the proof of \cref{thm:simgda} to \cref{sec:appsimgda}, by (\ref{eq:validilav}) we can restate the convergence rate upper bound in terms of the iteration complexity as follows.

\begin{restatable}{corollary}{corsimgda}
    \label{cor:simgda}
    For the step sizes given as in \cref{thm:simgda}, \simgda{} linearly converges with iteration complexity
    \begin{align*}
        \gO \bigopen{ \bigopen{\kappa_x + \kappa_y + \kappa_{xy}^2} \cdot \log \frac{\Psi^{\text{\emph{Sim}}}_0}{A^{\emph{Sim}} \epsilon} }, 
    \end{align*}
    where $A^{\emph{Sim}} = \min \bigset{\frac{1}{\alpha}, \frac{1}{\beta}}$.
\end{restatable}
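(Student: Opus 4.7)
The plan is to combine \cref{thm:simgda} with the general implication \eqref{eq:validilav}. The excerpt already checks that $\Psi^{\text{Sim}}_k$ is a \emph{valid} Lyapunov function with constant $A^{\text{Sim}} = \min\bigset{1/\alpha, 1/\beta}$, and \cref{thm:simgda} supplies the per-step contraction factor $r$ in \eqref{eq:simcontrate}. So the only nontrivial task is to show $1/(1-r) = \Theta(\kappa_x + \kappa_y + \kappa_{xy}^2)$; then \eqref{eq:validilav} delivers exactly the claimed bound.

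To carry out this estimate, abbreviate $S := \bigopen{\kappa_{xy} + \sqrt{\max\bigset{\kappa_x, \kappa_y} + \kappa_{xy}^2}}^2$, so $r = \bigopen{(S-1)/(S+1)}^2$. A direct algebraic manipulation gives
\begin{align*}
1 - r \;=\; \frac{(S+1)^2 - (S-1)^2}{(S+1)^2} \;=\; \frac{4S}{(S+1)^2},
\end{align*}
and since $S \ge \max\{\kappa_x, \kappa_y\} \ge 1$, we have $1-r = \Theta(1/S)$. It therefore suffices to sandwich $S$ up to universal constants by $\kappa_x + \kappa_y + \kappa_{xy}^2$. Expanding the square yields $S = \max\{\kappa_x,\kappa_y\} + 2\kappa_{xy}^2 + 2\kappa_{xy}\sqrt{\max\{\kappa_x,\kappa_y\} + \kappa_{xy}^2}$, so dropping the cross term gives $S \ge \max\{\kappa_x,\kappa_y\} + 2\kappa_{xy}^2$. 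For the matching upper bound, apply $\sqrt{a+b}\le \sqrt{a}+\sqrt{b}$ to get $S \le (2\kappa_{xy} + \sqrt{\max\{\kappa_x,\kappa_y\}})^2 \le 8\kappa_{xy}^2 + 2\max\{\kappa_x,\kappa_y\}$. Using $\max\{\kappa_x,\kappa_y\} \ge \tfrac{1}{2}(\kappa_x+\kappa_y)$, these together give $S = \Theta(\kappa_x + \kappa_y + \kappa_{xy}^2)$, hence $1/(1-r) = \Theta(\kappa_x + \kappa_y + \kappa_{xy}^2)$, and plugging into \eqref{eq:validilav} finishes the proof. There is essentially no obstacle here—the argument is purely elementary once \cref{thm:simgda} is in hand; the only mild care needed is to keep track of both the linear ($\kappa_{xy}$ inside the square root) and quadratic ($\kappa_{xy}^2$ from squaring the sum) contributions of $\kappa_{xy}$ to $S$, but both are absorbed into the same $\Theta(\kappa_{xy}^2)$ term after expansion.
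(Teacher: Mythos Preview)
Your proposal is correct and follows essentially the same approach as the paper's own proof: both reduce the task to showing $1/(1-r) = \Theta(\kappa_x + \kappa_y + \kappa_{xy}^2)$ and then invoke \eqref{eq:validilav}. The only cosmetic difference is that the paper works with $\xi := \kappa_{xy} + \sqrt{\max\{\kappa_x,\kappa_y\}+\kappa_{xy}^2}$ (so your $S = \xi^2$) and writes $1/(1-r) = \tfrac{1}{4}(\xi + 1/\xi)^2$, which is algebraically identical to your $(S+1)^2/(4S)$; you also spell out the $S = \Theta(\kappa_x+\kappa_y+\kappa_{xy}^2)$ estimate more explicitly than the paper, which simply asserts it.
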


We defer the proof of \cref{cor:simgda} to \cref{sec:corsimgda}.

\tightparagraph{Comparison with Previous Work.} 
The previously known iteration complexity upper bound of \simgda{} was $\tilde{\gO} (\kappa^2)$ \citep{mescheder2017numerics,azizian20,zhang22near}, where the condition number is defined as $\kappa = \frac{\max \{L_x, L_y, L_{xy} \}}{\min \{\mu_x, \mu_y\}}$.
However, using a single condition number might oversimplify the problem and lead to loose results; for instance, if the condition numbers follow $\kappa_{x}, \kappa_{y} = \Theta (t^2)$ and $\kappa_{xy} = \Theta(t)$ for some $t$, then previous results can only guarantee up to $\tilde{\gO} (t^4)$, while \cref{cor:simgda} suggests a better rate of $\tilde{\gO} (t^2)$.
This shows that separating the condition numbers helps capture how $\kappa_{xy}$, or the \textit{interaction between $\vx$ and $\vy$}, affects convergence speed.

Meanwhile, a recent work by \citet{zamani22convergence} proposes an iteration complexity upper bound for \simgda{} of $\tilde{\gO} (\overline{\kappa} + \kappa_{xy}^2)$ for $\overline{\kappa} = \frac{\max\{L_x, L_y\}}{\min\{\mu_x, \mu_y\}}$, but the proof heavily relies on a computer-assisted method known as the Performance Estimation Problem (PEP) \citep{drori14performance}.
Our fine-grained analysis subsumes all of these previous results, and|to the best of our knowledge|is the first to clarify the exact convergence rate of \simgda{} in terms of individual condition numbers $\kappa_x$, $\kappa_y$, and $\kappa_{xy}$. 


\subsection{Convergence Lower Bound}

\cref{thm:simgdalb} provides a convergence lower bound of the iteration complexity of \simgda{} which holds for all possible step sizes $\alpha, \beta > 0$.

\begin{restatable}{theorem}{theoremsimgdalb}
    \label{thm:simgdalb}
    There exists a $6$-dimensional function $f \in \gF (\mu_x, \mu_y, L_x, L_y, L_{xy})$ with $d_x = d_y = 3$ such that for any constant step sizes $\alpha, \beta > 0$, the convergence of \simgda{} requires an iteration complexity of rate at least
    \begin{align*}
        \Omega \bigopen{ \bigopen{\kappa_x + \kappa_y + \kappa_{xy}^2} \cdot \log {\frac{1}{\epsilon}} }
    \end{align*}
    in order to have $\| \vz_K - \vz_{\star} \|^2 \le \epsilon$.
\end{restatable}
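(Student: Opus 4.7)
The plan is to construct a concrete quadratic whose Hessian is block diagonal in such a way that the Sim-GDA dynamics decompose into three independent $2\times 2$ systems, each responsible for one of the three terms $\kappa_x$, $\kappa_y$, $\kappa_{xy}^2$. Specifically, I would take
\begin{align*}
    f(\vx,\vy) = \tfrac{1}{2}\vx^{\top} A \vx - \tfrac{1}{2}\vy^{\top} C \vy + \vx^{\top} B \vy,
\end{align*}
with diagonal matrices $A = \diag(L_x, \mu_x, \mu_x)$, $C = \diag(\mu_y, L_y, \mu_y)$, and $B = \diag(0, 0, L_{xy})$. A direct Hessian check (eigenvalues of $A$ lie in $[\mu_x,L_x]$, of $C$ in $[\mu_y,L_y]$, and $\|B\| = L_{xy}$) places $f \in \gF(\mu_x,\mu_y,L_x,L_y,L_{xy})$, with unique Nash equilibrium $\vz_{\star} = \vzero$. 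The Sim-GDA update is linear, $\vz_{k+1} = M\vz_k$, and because $A,B,C$ are simultaneously diagonal in the canonical basis, $M$ is block diagonal with three $2\times 2$ blocks acting on the pairs $(x_i,y_i)$ for $i=1,2,3$.

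The first two blocks are themselves diagonal, with entries $\diag(1 - \alpha L_x,\, 1 - \beta \mu_y)$ and $\diag(1 - \alpha \mu_x,\, 1 - \beta L_y)$, while the coupling block is
\begin{align*}
    M_3 = \begin{pmatrix} 1 - \alpha \mu_x & -\alpha L_{xy} \\ \beta L_{xy} & 1 - \beta \mu_y \end{pmatrix}.
\end{align*}
Mere stability of $M_1$ on the $x_1$-coordinate and of $M_2$ on the $y_2$-coordinate already forces $\alpha < 2/L_x$ and $\beta < 2/L_y$; outside this range the iterates diverge and the claimed lower bound is trivial. Within this range, $\alpha\mu_x < 2/\kappa_x$ and $\beta\mu_y < 2/\kappa_y$, so the eigenvalue $1-\alpha\mu_x$ in $M_2$ and $1-\beta\mu_y$ in $M_1$ satisfy $|1-\alpha\mu_x| \ge 1 - 2/\kappa_x$ and $|1-\beta\mu_y| \ge 1 - 2/\kappa_y$. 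This yields the $\kappa_x$ and $\kappa_y$ contributions to the contraction lower bound for $\rho(M)$.

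For the $\kappa_{xy}^2$ contribution, the crucial computation is on the determinant $\det(M_3) = (1-\alpha\mu_x)(1-\beta\mu_y) + \alpha\beta L_{xy}^2 =: 1 - g(\alpha,\beta)$, where $g(\alpha,\beta) = \alpha\mu_x + \beta\mu_y - \alpha\beta(\mu_x\mu_y + L_{xy}^2)$. A one-line calculus maximization gives $\max_{\alpha,\beta>0} g = 1/(1+\kappa_{xy}^2)$, attained at $\alpha^{\ast} = \mu_y/(\mu_x\mu_y + L_{xy}^2)$ and $\beta^{\ast} = \mu_x/(\mu_x\mu_y + L_{xy}^2)$. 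Since $\rho(M_3)^2 \ge |\det(M_3)| = |\lambda_1\lambda_2|$ for any $2\times 2$ matrix, this gives the step-size-free bound $\rho(M_3) \ge \kappa_{xy}/\sqrt{1+\kappa_{xy}^2} \ge 1 - \Theta(1/\kappa_{xy}^2)$.

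Combining, $\rho(M) = \max_i \rho(M_i) \ge 1 - c/(\kappa_x + \kappa_y + \kappa_{xy}^2)$ for an absolute constant $c>0$. Picking an initialization $\vz_0$ with nonzero projection onto a dominant eigenvector of $M$ (which we can simply fix, e.g., $\vz_0 = (1,\dots,1)$, since the eigenvectors of $M_1,M_2$ are coordinate axes and those of $M_3$ generically have both coordinates nonzero), we get $\|\vz_K - \vz_{\star}\| \ge c' \rho(M)^K$, so $\rho(M)^{2K} \le \epsilon$ forces $K \ge \Omega((\kappa_x + \kappa_y + \kappa_{xy}^2)\log(1/\epsilon))$. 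The main hurdle I foresee is not the block-1,2 analysis but ensuring robustness of the $\sqrt{|\det(M_3)|}$ bound across both the complex-conjugate and real-eigenvalue regimes of $M_3$, and making the ``generic initialization'' step completely independent of $(\alpha,\beta)$; the former follows from the universal inequality $\rho^2 \ge |\det|$, and the latter should be resolvable by verifying that the fixed $\vz_0 = (1,\dots,1)$ has nonzero projection onto the top eigenvector of $M_3$ for every admissible $(\alpha,\beta)$.
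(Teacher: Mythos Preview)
Your construction is essentially the paper's (the same block-diagonal quadratic, with the parameters distributed slightly differently), and the $\kappa_x,\kappa_y$ parts are fine. The gap is in the $\kappa_{xy}^2$ step: the ``one-line calculus maximization'' of
\[
g(\alpha,\beta)=\alpha\mu_x+\beta\mu_y-\alpha\beta(\mu_x\mu_y+L_{xy}^2)
\]
is wrong. The Hessian of $g$ is $\bigl(\begin{smallmatrix}0 & -(\mu_x\mu_y+L_{xy}^2)\\ -(\mu_x\mu_y+L_{xy}^2) & 0\end{smallmatrix}\bigr)$, which is indefinite, so the critical point you found is a \emph{saddle}, not a maximum; $g$ is unbounded above on $\{\alpha,\beta>0\}$. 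Consequently the ``step-size-free'' bound $\rho(M_3)\ge \kappa_{xy}/\sqrt{1+\kappa_{xy}^2}$ is false. Concretely, take $\kappa_x=\kappa_y=1$, $\kappa_{xy}=10$, and $\alpha\mu_x=1$, $\beta\mu_y=0.005$ (both admissible). Then $g=0.5$, the eigenvalues of $M_3$ are complex, and $\rho(M_3)=\sqrt{\det M_3}=\sqrt{0.5}\approx 0.707$, far below the claimed $\approx 0.995$. So $M_3$ alone cannot deliver the $\kappa_{xy}^2$ term.

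The paper's proof avoids this by \emph{not} trying to lower-bound $\rho(M_3)$ uniformly. Instead it first observes (from the decoupled $\mu_x,\mu_y$ coordinates) that the iteration count is always at least $\Omega\bigl((\tfrac{1}{\alpha\mu_x}+\tfrac{1}{\beta\mu_y})\log\tfrac{1}{\epsilon}\bigr)$, and then shows that any $(\alpha,\beta)$ for which the coupled block converges must satisfy $\tfrac{1}{\alpha\mu_x}+\tfrac{1}{\beta\mu_y}=\Omega(\kappa_{xy}^2)$. This is done by a case split on the discriminant of $M_3$: if the eigenvalues are complex, the condition $\rho(M_3)^2=1-g<1$ itself forces the bound; if they are real, the discriminant condition $(\alpha\mu_x-\beta\mu_y)^2>4\alpha\beta\mu_x\mu_y\,\kappa_{xy}^2$ directly implies $\max\{\tfrac{\alpha\mu_x}{\beta\mu_y},\tfrac{\beta\mu_y}{\alpha\mu_x}\}>4\kappa_{xy}^2$, which combined with the $\kappa_x,\kappa_y$ constraints gives the result. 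In your counterexample above, the $\kappa_{xy}^2$ slowdown is indeed carried by $|1-\beta\mu_y|=0.995$ in the \emph{other} blocks, not by $M_3$; the paper's argument captures exactly this interaction, which your determinant bound on $M_3$ in isolation cannot.
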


The iteration complexity lower bound in \cref{thm:simgdalb} exactly matches the upper bound in \cref{cor:simgda}, ensuring that our analysis on \simgda{} is tight (ignoring log factors). 
We defer the proof of \cref{thm:simgdalb} to \cref{sec:thmsimgdalb}.

\begin{remark*}
    \note{Unlike typical lower bounds for which the initialization is specifically chosen along with the function, our results in \cref{thm:simgdalb} works for any initialization, while the dependency on initialization is hidden in the numerator in the $\log (1/\epsilon)$ part similarly as in the upper bound results.
    All we need is an initialization point with $\gO(1)$ distance from the optimum, and the same applies to the lower bound results we present in \cref{thm:alexgdalb}.}
\end{remark*}
\section{Convergence Analysis of Alt-GDA}
\label{sec:4}


For \altgda{}, the half-step iterates alternating between $\vx$ and $\vy$ updates make theoretical analysis much harder than when dealing with simultaneous updates. 
We address this by focusing on the convergence rate in terms of the following Lyapunov function (instead of just $V (\vx_k, \vy_k)$):
\begin{align*}
        \Psi^{\text{Alt}}_k &= V^{\text{Alt}} (\vx_{k}, \vy_{k}) + V^{\text{Alt}} (\vx_{k+1}, \vy_{k}) \\
        &\phantom{=} - \alpha (1 - \alpha L_x) \norm{\nabla_{\vx} f(\vx_k, \vy_k)}^2,
\end{align*}
where $V^{\text{Alt}} (\vx, \vy)$ is defined as
\begin{align*}
    \bigopen{\frac{1}{\alpha} - \mu_x} \norm{\vx - \vx_{\star}}^2 + \bigopen{\frac{1}{\beta} - \mu_y} \norm{\vy - \vy_{\star}}^2.
\end{align*}
Note that we capture the two-step-alternating nature of the algorithm by considering two adjacent iterates at a time, which turns out to be the key idea in the proofs.

\subsection{Convergence Upper Bound}

\cref{thm:altgda} yields a contraction result for \altgda{}.

\begin{restatable}{theorem}{theoremaltgda}
    \label{thm:altgda}
    Suppose that $f \in \gF (\mu_x, \mu_y, L_x, L_y, L_{xy})$ and we run \altgda{} with step sizes $\alpha, \beta > 0$ that satisfy
    \begin{align*}
        \alpha &\le \frac{1}{2} \cdot \min \bigset{\frac{1}{L_x}, \ \frac{\sqrt{\mu_y}}{L_{xy} \sqrt{L_x}}}, \\
        \beta &\le \frac{1}{2} \cdot \min \bigset{\frac{1}{L_y}, \ \frac{\sqrt{\mu_x}}{L_{xy} \sqrt{L_y}}}.
    \end{align*}
    Then $\Psi^{\text{\emph{Alt}}}_k$ is valid, and satisfies $\Psi^{\text{\emph{Alt}}}_{k+1} \le r \Psi^{\text{\emph{Alt}}}_{k}$ with
    \begin{align*}
        r &= \max \bigset{\frac{\frac{1}{\alpha} - \mu_x}{\frac{1}{\alpha} - 2 \beta^2 L_y L_{xy}^2}, \ \frac{\frac{1}{\beta} - \mu_y}{\frac{1}{\beta} - \alpha^2 L_x L_{xy}^2}, \ \frac{\frac{1}{\alpha} - \mu_x}{\frac{1}{\alpha}}},
    \end{align*}
    where we have $0 < r < 1$.
\end{restatable}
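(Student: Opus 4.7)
The plan is to split the proof into two tasks: establishing that $\Psialt_k$ is a valid Lyapunov function and then showing a one-step contraction $\Psialt_{k+1} \le r \Psialt_k$.

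For validity, I would expand the $(\tfrac{1}{\alpha} - \mu_x)\norm{\vx_{k+1} - \vx_\star}^2$ part of $V^{\text{Alt}}(\vx_{k+1}, \vy_k)$ using the $\vx$-update $\vx_{k+1} = \vx_k - \alpha \nabla_\vx f(\vx_k, \vy_k)$. The coefficient of $\norm{\nabla_\vx f(\vx_k, \vy_k)}^2$ it contributes is $\alpha - \alpha^2 \mu_x$, which combined with the Lyapunov penalty $-\alpha(1-\alpha L_x)\norm{\nabla_\vx f(\vx_k, \vy_k)}^2$ collapses to the nonnegative $\alpha^2 (L_x - \mu_x)$. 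The cross term $-2(1-\alpha\mu_x)\inner{\vx_k - \vx_\star, \nabla_\vx f(\vx_k, \vy_k)}$ is handled by $\mu_x$-strong convexity of $f(\cdot, \vy_k)$ together with $\norm{\nabla_\vx f(\vx_\star, \vy_k)} \le L_{xy}\norm{\vy_k - \vy_\star}$ (obtained from Lipschitzness and $\nabla f(\vz_\star) = \vzero$). Under $\alpha \le \tfrac{1}{2L_x}$ and $\beta \le \tfrac{1}{2L_y}$, the resulting quadratic form in $(\norm{\vx_k - \vx_\star}, \norm{\vy_k - \vy_\star})$ is positive definite, yielding $\Psialt_k \ge A \norm{\vz_k - \vz_\star}^2$ with an explicit $A > 0$.

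For the contraction, note that $(\tfrac{1}{\alpha}-\mu_x)\norm{\vx_{k+1}-\vx_\star}^2$ appears identically in both $\Psialt_k$ and $\Psialt_{k+1}$; the core task is to bound the ``new'' portion $(\tfrac{1}{\alpha}-\mu_x)\norm{\vx_{k+2}-\vx_\star}^2 + 2(\tfrac{1}{\beta}-\mu_y)\norm{\vy_{k+1}-\vy_\star}^2 - \alpha(1-\alpha L_x)\norm{\nabla_\vx f(\vx_{k+1}, \vy_{k+1})}^2$ by $r$ times its ``$k$-counterpart'' in $\Psialt_k$. For the $\vy$-step $\vy_{k+1} = \vy_k + \beta \nabla_\vy f(\vx_{k+1}, \vy_k)$, I would expand $\norm{\vy_{k+1} - \vy_\star}^2$, apply $\mu_y$-strong concavity of $f(\vx_{k+1}, \cdot)$, and use $\norm{\nabla_\vy f(\vx_{k+1}, \vy_\star)} \le L_{xy}\norm{\vx_{k+1} - \vx_\star}$; the coupling contribution of order $\beta^2 L_y L_{xy}^2 \norm{\vx_{k+1} - \vx_\star}^2$ is absorbed into the $(\tfrac{1}{\alpha} - \mu_x)\norm{\vx_{k+1} - \vx_\star}^2$ slot of $\Psialt_k$, producing the first ratio $\frac{1/\alpha - \mu_x}{1/\alpha - 2\beta^2 L_y L_{xy}^2}$. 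A symmetric analysis of the next $\vx$-step $\vx_{k+1} \to \vx_{k+2}$ yields the second ratio, and absorbing the gradient-penalty contribution against the $\vx$-step expansion gives the third.

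The main obstacle is the bookkeeping of cross terms introduced by the $L_{xy}$ coupling under alternation. Because the $\vy$-step uses $\nabla_\vy f(\vx_{k+1}, \vy_k)$ with the \emph{already-updated} $\vx_{k+1}$, the coupling residual it generates is keyed to $\norm{\vx_{k+1}-\vx_\star}^2$ rather than $\norm{\vx_k-\vx_\star}^2$; this is the structural rationale for carrying the ``half-step'' iterate $V^{\text{Alt}}(\vx_{k+1}, \vy_k)$ inside $\Psialt_k$, which provides precisely the slot against which the coupling pays. The penalty $-\alpha(1-\alpha L_x)\norm{\nabla_\vx f(\vx_k, \vy_k)}^2$ is calibrated so that, after applying $L_x$-Lipschitzness to the $\vx$-step expansion, the residual gradient-magnitude terms cancel cleanly. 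Finally, the step-size caps $\beta \le \tfrac{1}{2}\tfrac{\sqrt{\mu_x}}{L_{xy}\sqrt{L_y}}$ and $\alpha \le \tfrac{1}{2}\tfrac{\sqrt{\mu_y}}{L_{xy}\sqrt{L_x}}$ are exactly what is needed to keep the coupling-induced denominators $\tfrac{1}{\alpha}-2\beta^2 L_y L_{xy}^2$ and $\tfrac{1}{\beta}-\alpha^2 L_x L_{xy}^2$ strictly positive and strictly larger than their respective numerators, ensuring $0 < r < 1$.
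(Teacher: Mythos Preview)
Your high-level architecture matches the paper's: prove validity of $\Psialt_k$, then a one-step contraction, exploiting that $\Psialt_k$ carries both $V^{\text{Alt}}(\vx_k,\vy_k)$ and $V^{\text{Alt}}(\vx_{k+1},\vy_k)$ so the coupling residual from each half-step finds a slot. For validity the paper is actually simpler than your plan: it leaves $(\tfrac{1}{\alpha}-\mu_x)\norm{\vx_{k+1}-\vx_\star}^2$ intact and just lower-bounds $-\alpha(1-\alpha L_x)\norm{\nabla_\vx f(\vx_k,\vy_k)}^2$ via $\norm{\nabla_\vx f(\vx_k,\vy_k)}^2 \le 2L_x^2\norm{\vx_k-\vx_\star}^2 + 2L_{xy}^2\norm{\vy_k-\vy_\star}^2$.

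The genuine gap is in the contraction step. The technique you describe for the $\vy$-update---expand $\norm{\vy_{k+1}-\vy_\star}^2$, apply $\mu_y$-strong concavity, then bound $\norm{\nabla_\vy f(\vx_{k+1},\vy_\star)} \le L_{xy}\norm{\vx_{k+1}-\vx_\star}$---is the \simgda{}-style argument. It produces a coupling term of order $\beta^2 L_{xy}^2\norm{\vx_{k+1}-\vx_\star}^2$ (from the $\beta^2\norm{\nabla_\vy f}^2$ piece), not the $\beta^2 L_y L_{xy}^2$ appearing in the theorem, and it simultaneously degrades the $\norm{\vy_k-\vy_\star}^2$ coefficient by an unwanted $2\beta^2 L_y^2$. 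The paper obtains the stated constants by a different mechanism: it expands $\tfrac{1}{\alpha}\norm{\vx_1}^2$, $\tfrac{2}{\beta}\norm{\vy_1}^2$, $\tfrac{1}{\alpha}\norm{\vx_2}^2$ \emph{together} and applies, in addition to strong convexity/concavity, the smoothness descent lemma along the half-step directions $\vx_0\!\to\!\tilde\vx_1$ and $\vy_0\!\to\!\vy_1$. This generates function values $f(\vx_\star,\vy_0)$, $f(\vx_1,\vy_0)$, $f(\vx_1,\vy_\star)$, $f(\vx_1,\vy_1)$, $f(\vx_\star,\vy_1)$ that \emph{telescope across the three consecutive half-steps}, leaving residuals like $-2(f(\vx_1,\vy_\star)-f(\vx_\star,\vy_\star))$ and $-2(f(\vx_\star,\vy_\star)-f(\vx_\star,\vy_0))$. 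These are then bounded by cocoercivity as $-\tfrac{1}{L_x}\norm{\nabla_\vx f(\vx_1,\vy_\star)}^2$ and $-\tfrac{1}{L_y}\norm{\nabla_\vy f(\vx_\star,\vy_0)}^2$. Only at that point does $L_{xy}$ enter, via $\norm{\nabla_\vy f(\vx_1,\vy_0)}^2 - 2\norm{\nabla_\vy f(\vx_\star,\vy_0)}^2 \le 2L_{xy}^2\norm{\vx_1}^2$, and the coefficient it inherits is $\beta^2 L_y$ (from the descent-lemma step), yielding exactly $2\beta^2 L_y L_{xy}^2\norm{\vx_1}^2$. This function-value telescoping across three half-steps is the key technical device and is absent from your sketch.

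A secondary structural issue: your framing ``$(\tfrac{1}{\alpha}-\mu_x)\norm{\vx_{k+1}-\vx_\star}^2$ appears identically in both, so bound the new portion by $r$ times its $k$-counterpart'' cannot work as written---a shared nonnegative term cannot simply be cancelled while still achieving a strict contraction $\Psialt_{k+1}\le r\Psialt_k$ with $r<1$. The paper instead proves an intermediate inequality whose left side carries all four pieces of $\Psialt_1$ but with \emph{enlarged} coefficients $\tfrac{1}{\alpha}-2\beta^2 L_y L_{xy}^2$, $2(\tfrac{1}{\beta}-\alpha^2 L_x L_{xy}^2)$, $\tfrac{1}{\alpha}$, so that $\Psialt_1 \le r\cdot(\text{LHS}) \le r\,\Psialt_0$ holds term by term; the three ratios in the statement are precisely the ``$\Psialt_1$-coefficient divided by enlarged coefficient'' for each slot.
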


While we defer the proof of \cref{thm:altgda} to \cref{sec:appaltgda}, by (\ref{eq:validilav}) we can restate the convergence rate upper bound in terms of the iteration complexity as follows.

\begin{restatable}{corollary}{coraltgda}
    \label{cor:altgda}
    For step sizes given by the maximum possible values in \cref{thm:altgda}, \altgda{} linearly converges with iteration complexity
    \begin{align*}
        \gO \bigopen{ \bigopen{\kappa_x + \kappa_y + \kappa_{xy} (\sqrt{\kappa_x} + \sqrt{\kappa_y})} \cdot \log \frac{\Psi^{\emph{Alt}}_{0}}{A^{\emph{Alt}} \epsilon} },
    \end{align*}
    where $A^{\emph{Alt}} = \min \bigset{\frac{1}{2 \alpha} - \mu_x, 2 \bigopen{\frac{3}{4 \beta} - \mu_y}} > 0$.
\end{restatable}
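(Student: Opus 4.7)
The plan is to instantiate the contraction rate $r$ given by \cref{thm:altgda} at the largest admissible step sizes, lower-bound $1/(1-r)$ in terms of the condition numbers $\kappa_x, \kappa_y, \kappa_{xy}$, and convert that into the claimed iteration complexity through \eqref{eq:validilav}. Alongside, I must also extract the explicit validity constant $A^{\text{Alt}}$ stated in the corollary, since \cref{thm:altgda} only asserts validity in the abstract.

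First, I would simplify the step sizes. Setting $\alpha = \tfrac{1}{2}\min\{\tfrac{1}{L_x}, \tfrac{\sqrt{\mu_y}}{L_{xy}\sqrt{L_x}}\}$ and $\beta$ symmetrically, the identity $\tfrac{L_{xy}\sqrt{L_x}}{\sqrt{\mu_y}} = \mu_x \kappa_{xy}\sqrt{\kappa_x}$ yields $\tfrac{1}{\alpha\mu_x} = 2\max\{\kappa_x, \kappa_{xy}\sqrt{\kappa_x}\}$, with a symmetric expression for $\tfrac{1}{\beta\mu_y}$. Next, I would lower-bound $1-r$ by handling each of the three ratios in the max defining $r$. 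The third ratio contributes $\alpha\mu_x$ directly. For the first, the constraint $\beta \le \tfrac{\sqrt{\mu_x}}{2L_{xy}\sqrt{L_y}}$ forces $2\beta^2 L_y L_{xy}^2 \le \mu_x/2$, so its numerator is at least $\mu_x/2$ and the complement of the ratio is at least $\alpha\mu_x/2$; the second ratio is symmetric. Combining gives $1-r \ge \tfrac{1}{2}\min\{\alpha\mu_x,\beta\mu_y\}$, and inverting produces
\[ \tfrac{1}{1-r} \le 4\max\{\kappa_x,\kappa_y,\kappa_{xy}\sqrt{\kappa_x},\kappa_{xy}\sqrt{\kappa_y}\} = \mathcal{O}\bigl(\kappa_x + \kappa_y + \kappa_{xy}(\sqrt{\kappa_x}+\sqrt{\kappa_y})\bigr). \]

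Finally, I would pin down the validity constant. Using $\nabla f(\vz_\star)=\vzero$ together with the Lipschitz bounds yields $\|\nabla_\vx f(\vx_k,\vy_k)\|^2 \le 2L_x^2\|\vx_k-\vx_\star\|^2 + 2L_{xy}^2\|\vy_k-\vy_\star\|^2$; combined with $2\alpha L_x \le 1$, the subtracted term in $\Psi^{\text{Alt}}_k$ is bounded by $L_x\|\vx_k-\vx_\star\|^2 + 2\alpha L_{xy}^2\|\vy_k-\vy_\star\|^2$. Subtracting this from $V^{\text{Alt}}(\vx_k,\vy_k)+V^{\text{Alt}}(\vx_{k+1},\vy_k)$ and dropping the nonnegative $\|\vx_{k+1}-\vx_\star\|^2$ contribution leaves a coefficient of at least $\tfrac{1}{2\alpha}-\mu_x$ on $\|\vx_k-\vx_\star\|^2$ and at least $2(\tfrac{3}{4\beta}-\mu_y)$ on $\|\vy_k-\vy_\star\|^2$, the latter because the combined step-size bounds give $4\alpha\beta L_{xy}^2 \le \sqrt{\mu_x\mu_y/(L_x L_y)} \le 1$ and hence $2\alpha L_{xy}^2 \le 1/(2\beta)$. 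Both coefficients are positive since $\alpha \le 1/(2L_x)$ and $\beta \le 1/(2L_y)$. Plugging these estimates into \eqref{eq:validilav} then delivers the claim. The principal obstacle I anticipate is precisely this validity bookkeeping: the cross-term $\|\nabla_\vx f\|^2$ must be split between the $\vx$ and $\vy$ quadratic summands without sacrificing positivity of either coefficient, so the target constants $\tfrac{1}{2\alpha}-\mu_x$ and $2(\tfrac{3}{4\beta}-\mu_y)$ are quite sensitive to how much slack each step-size constraint leaves behind.
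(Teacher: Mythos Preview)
Your proposal is correct and follows essentially the same route as the paper: both use the step-size constraint $\beta \le \tfrac{\sqrt{\mu_x}}{2L_{xy}\sqrt{L_y}}$ to bound $2\beta^2 L_y L_{xy}^2 \le \mu_x/2$ (and symmetrically for the second ratio), conclude $\tfrac{1}{1-r}=\gO\bigl(\max\{\tfrac{1}{\alpha\mu_x},\tfrac{1}{\beta\mu_y}\}\bigr)$, and rewrite the maxima as $\kappa_x+\kappa_y+\kappa_{xy}(\sqrt{\kappa_x}+\sqrt{\kappa_y})$. Your derivation of the validity constant $A^{\text{Alt}}$ via the Lipschitz bound on $\|\nabla_{\vx} f(\vx_k,\vy_k)\|^2$ and the inequality $4\alpha\beta L_{xy}^2 \le \sqrt{\mu_x\mu_y/(L_xL_y)}\le 1$ is likewise the same argument the paper uses (as Proposition~C.1 inside the proof of \cref{thm:altgda}); the only difference is organizational, not mathematical.
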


We defer the proof of \cref{cor:altgda} to \cref{sec:coraltgda}.

Recall that for \simgda{} we have an upper bound of $\tilde{\gO} \bigopen{\kappa_x + \kappa_y + \kappa_{xy}^2}$, and a lower bound which shows that this rate cannot be improved.
Comparing this with \cref{cor:altgda}, we can conclude that the convergence rate of \altgda{} is faster than \simgda{}.

\paragraph{Comparison with Sim-GDA.}
\note{
Our fine-grained analysis clarifies how the dependence of the convergence speed of \simgda{} and \altgda{} on $\kappa_{xy}$, corresponding to the \textit{interaction between $\boldsymbol{x}$ and $\boldsymbol{y}$}, are different from each other.
If $\kappa_{xy} = O(\sqrt{\kappa_x} + \sqrt{\kappa_y})$, then the diagonal blocks of the Hessian dominate, for which both \simgda{} and \altgda{} exhibit similar convergence dynamics to plain GD.
If $\kappa_{xy} = \omega (\sqrt{\kappa_x} + \sqrt{\kappa_y})$, i.e., the off-diagonal block dominates, then the relatively large interaction between $\boldsymbol{x}$ and $\boldsymbol{y}$ slows down convergence.
Our results show that \altgda{} is capable of faster convergence essentially because its dependency on $\kappa_{xy}$ is of smaller order.}

\paragraph{Comparison with Local Analysis.} \citet{zhang22near} show that the \textit{local} convergence rates of \simgda{} and \altgda{} are $\tilde{\gO} (\kappa^2)$ and $\tilde{\gO} (\kappa)$, respectively, where $\kappa = \frac{\max \{L_x, L_y, L_{xy} \}}{\min \{\mu_x, \mu_y\}}$.
Such kinds of \textit{local} convergence rates of operators, including GDA iterates, rely on (the spectral radius of) the Jacobian matrix of the operator at the optimum \citep{bertsekas99nonlinear} and require that the iterates are in a small neighborhood around the optimum, or|for gradient methods|that the objective function is \textit{quadratic}, so that the Jacobian is constant and the same spectral arguments hold everywhere in the domain.
In contrast, \cref{cor:simgda,cor:altgda} both show \textit{global} convergence rates for all initialization and SCSC objectives without such assumptions.


While we can see that \cref{cor:simgda} naturally subsumes the local convergence rate $\tilde{\gO} (\kappa^2)$, it turns out that \cref{cor:altgda} is analogous to $\tilde{\gO} (\kappa^{3/2})$, which is has a gap of $\kappa^{1/2}$ with the local convergence rate of $\tilde{\gO} (\kappa)$ by \citet{zhang22near}. 
Viewing the local convergence result as a global convergence bound for the smaller class of \textit{quadratic} SCSC functions, 
we believe that there may exist a \textit{non-quadratic} function for which \altgda{} requires an iteration complexity of $\tilde \omega(\kappa)$, \note{which we discuss in detail in \cref{conj:alt}.}

\section{Alternating-Extrapolation GDA}
\label{sec:5}

A natural way of unifying the baseline algorithms \simgda{} and \altgda{} is to think of taking a \textit{linear combination} between the two.
That is, we can write:
\begin{align}
    \begin{aligned}
    \vx_{k+1} &= \vx_{k} - \alpha \nabla_{\vx} f(\vx_{k}, \vy_{k}), \\
    \blue{\tilde{\vx}_{k+1}} &= (1 - \gamma) \green{\vx_k} + \gamma \red{\vx_{k+1}}, \\
    \vy_{k+1} &= \vy_k + \beta \nabla_{\vy} f(\blue{\tilde{\vx}_{k+1}}, \vy_k).
    \end{aligned}
    \label{eq:greenred}
\end{align}
Note that this formulation provides an \emph{interpolation} between \simgda{} ($\gamma = 0$) and \altgda{} ($\gamma = 1$). In the previous sections, we demonstrated a provable gap in the iteration complexity between the two endpoints $\gamma = 0$ and $1$; this motivates us to consider an \emph{extrapolation} to $\gamma > 1$ and see if we can achieve a further speed-up. 

However, if we extrapolate the $\vx$ side alone, the update equations for $\vx$ and $\vy$ will no longer be of the same form.
By symmetrizing the $\vx$ and $\vy$ sides, we now obtain the following general framework:
\begin{align*}
    \vx_{k+1} &= \vx_{k} - \alpha \nabla_{\vx} f(\vx_{k}, \tilde{\vy}_{k}), \\
    \tilde{\vx}_{k+1} &= (1 - \gamma) \vx_{k} + \gamma \vx_{k+1}, \\
    \vy_{k+1} &= \vy_{k} + \beta \nabla_{\vy} f(\tilde{\vx}_{k+1}, \vy_{k}), \\
    \tilde{\vy}_{k+1} &= (1 - \delta) \vy_{k} + \delta \vy_{k+1},
\end{align*}
where $\tilde{\vx}_{k+1}$ and $\tilde{\vy}_{k+1}$ are the points where we compute the gradients, and $\gamma, \delta \geq 0$ are hyperparameters. 
Notice that choosing $(\gamma,\delta) = (0,1)$ recovers \simgda{} and $(\gamma,\delta) = (1,1)$ corresponds to \altgda{}.

We can rewrite our updates in terms of gradient updates (\cref{alg:alexgda}). We name our algorithm framework \textbf{Al}ternating-\textbf{Ex}trapolation \textbf{GDA} (\alexgda{}), after the fact that our analysis mainly focuses on the case $\gamma, \delta > 1$ in which we compute gradients using \emph{extrapolated} iterates, and we make alternating updates between $\vx$ and $\vy$.

\begin{algorithm}[ht]
    \caption{\textbf{Al}ternating-\textbf{Ex}trapolation \textbf{GDA} (\alexgda{})}
    \label{alg:alexgda}
    \begin{algorithmic}
        \STATE {\bfseries Input:} Number of epochs $K$, step sizes $\alpha, \beta > 0$, 
        \STATE hyperparameters $\gamma, \delta \ge 0$
        \STATE {\bfseries Initialize:} $(\vx_0, \vy_0) \in \R^{d_x} \times \R^{d_y}$ and $\tilde{\vy}_0 = \vy_0 \in \R^{d_y}$
        \FOR{$k = 0, \dots, K-1$}
            \STATE $\vx_{k+1} = \vx_k - \alpha \nabla_{\vx} f(\vx_k, \blue{\tilde{\vy_k}})$
            \STATE $\tilde{\vx}_{k+1} = \vx_k - \gamma \alpha \nabla_{\vx} f(\vx_k, \blue{\tilde{\vy}_k})$
            \STATE $\vy_{k+1} = \vy_k + \beta \nabla_{\vy} f(\blue{\tilde{\vx}_{k+1}}, \vy_k)$
            \STATE $\tilde{\vy}_{k+1} = \vy_k + \delta \beta \nabla_{\vy} f(\blue{\tilde{\vx}_{k+1}}, \vy_k)$
        \ENDFOR
        \STATE {\bfseries Output:} $(\vx_K, \vy_K) \in \R^{d_x} \times \R^{d_y}$
    \end{algorithmic}
\end{algorithm}

\paragraph{Initialization.}
Some careful readers might notice that the first step of \alexgda{} is a bit different from the rest of the iterations; for $k = 0$ we set $\tilde{\vy}_0 = \vy_0$, whereas we use $\tilde{\vy}_k = \vy_k + (\delta - 1) \beta \nabla_{\vy} f(\tilde{\vx}_{k}, \vy_{k-1})$ for all subsequent steps ($k \ge 1$).
This requires a bit more careful analysis, just as in how we define the Lyapunov function for \alexgda{}:
\begin{align*}
    &\Psi_k^{\text{Alex}} = V (\vx_k, \vy_k) + V (\vx_{k+1}, \vy_k) \\
    &- \alpha \norm{\nabla_{\vx} f(\vx_k, \tilde{\vy}_k)}^2 + (\delta - 1) \beta \norm{\nabla_{\vx} f(\tilde{\vx}_k, \vy_{k-1})}^2 \\
    &+ \frac{(\gamma - 1) (\delta - 1) \alpha \beta}{1 - \alpha \mu_x} \cdot L_{xy} \sqrt{\frac{\mu_y}{\mu_x}} \cdot \norm{\nabla_{\vx} f(\vx_{k-1}, \tilde{\vy}_{k-1})}^2
\end{align*}
for $k \ge 1$, and
\begin{align*}
    &\Psi_0^{\text{Alex}} = V (\vx_0, \vy_0) + V (\vx_{1}, \vy_0) - \alpha \norm{\nabla_{\vx} f(\vx_0, \tilde{\vy}_0)}^2 \\
    &+ \frac{(\gamma - 1) (\delta - 1) \alpha \beta}{(1 - \alpha \mu_x)(1 - \beta \mu_y)} \cdot L_{xy} \sqrt{\frac{\mu_y}{\mu_x}} \cdot \norm{\nabla_{\vx} f(\vx_{0}, \tilde{\vy}_{0})}^2
\end{align*}
for $k = 0$.

\subsection{Convergence Upper Bound}

\cref{thm:alexgda} yields a contraction result for \alexgda{}.

\begin{restatable}{theorem}{theoremalexgda}
    \label{thm:alexgda}
    Suppose that $f \in \gF (\mu_x, \mu_y, L_x, L_y, L_{xy})$ and we run \alexgda{} with $\gamma, \delta > 1$ and step sizes $\alpha, \beta > 0$ that satisfy
    \begin{align*}
        \alpha &\le C \cdot \min \bigset{\frac{1}{L_x}, \ \frac{\sqrt{\mu_y}}{L_{xy} \sqrt{\mu_x}}}, \\
        \beta &\le C \cdot \min \bigset{\frac{1}{L_y}, \ \frac{\sqrt{\mu_x}}{L_{xy} \sqrt{\mu_y}}}.
    \end{align*}
    for some constant $C > 0$ (which only depends on $\gamma$ and $\delta$). Then $\Psi^{\text{\emph{Alex}}}_k$ is valid, and satisfies $\Psi^{\text{\emph{Alex}}}_{k+1} \le r \Psi^{\text{\emph{Alex}}}_{k}$ with
    \begin{align*}
        r &= \max \bigset{1 - \alpha \mu_x, 1 - \beta \mu_y}.
    \end{align*}
\end{restatable}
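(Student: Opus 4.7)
The plan is to prove both validity and contraction by direct algebraic expansion of $\Psi^{\text{Alex}}_{k+1}$ in terms of the Alex-GDA updates, then regrouping to match $r \Psi^{\text{Alex}}_k$. The key preliminary observation is that since $\nabla_\vx f(\vx_\star, \vy_\star) = \nabla_\vy f(\vx_\star, \vy_\star) = 0$, every gradient of $f$ splits into a ``diagonal'' part controlled by $\mu_x, L_x$ (or $\mu_y, L_y$) and a ``cross'' part bounded by $L_{xy}$ times the distance of the other variable to the optimum. First, I would expand $\|\vx_{k+1} - \vx_\star\|^2$ from $\vx_{k+1} = \vx_k - \alpha \nabla_\vx f(\vx_k, \tilde{\vy}_k)$: strong convexity in $\vx$ yields a $(1-\alpha\mu_x)$ factor on $\|\vx_k - \vx_\star\|^2$, together with an $\alpha^2 \|\nabla_\vx f(\vx_k, \tilde{\vy}_k)\|^2$ term and a cross inner product involving $\nabla_\vx f(\vx_\star, \tilde{\vy}_k)$. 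Symmetric expansions give $\|\vy_{k+1} - \vy_\star\|^2$ from the $\vy$-update at $\tilde{\vx}_{k+1}$ and $\|\vx_{k+2} - \vx_\star\|^2$ from the $\vx$-update at $\tilde{\vy}_{k+1}$, which I combine into $V(\vx_{k+1}, \vy_{k+1}) + V(\vx_{k+2}, \vy_{k+1})$.

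Second, I would rewrite the extrapolated iterates as $\tilde{\vx}_{k+1} = \vx_{k+1} - (\gamma - 1)\alpha \nabla_\vx f(\vx_k, \tilde{\vy}_k)$ and $\tilde{\vy}_{k+1} = \vy_{k+1} + (\delta - 1)\beta \nabla_\vy f(\tilde{\vx}_{k+1}, \vy_k)$, substituting these into each cross inner product to expose every residual gradient-norm-squared contribution produced by the extrapolation. Each auxiliary piece in $\Psi^{\text{Alex}}_k$ is engineered to absorb exactly one such residual: the $-\alpha \|\nabla_\vx f(\vx_k, \tilde{\vy}_k)\|^2$ term cancels the quadratic gradient contribution from the $\vx_{k+1}$ update; the memory term $(\delta-1)\beta\|\nabla_\vx f(\tilde{\vx}_k, \vy_{k-1})\|^2$ compensates for the extrapolation in $\tilde{\vy}_k$ that enters the $\vy$-side cross term one step later; and the coupling term proportional to $L_{xy}\sqrt{\mu_y/\mu_x}/(1-\alpha\mu_x)$ bridges $\vx$- and $\vy$-gradient memories between successive iterations with the correct $(1-\alpha\mu_x)$ normalization.

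The main obstacle will be choosing Young-inequality splittings for every cross term of the form $L_{xy}\|\vx - \vx_\star\|\|\vy - \vy_\star\|$ so that, after collecting residuals, the resulting quadratic form on $(\|\vx_k - \vx_\star\|^2, \|\vy_k - \vy_\star\|^2, \|\vx_{k+1} - \vx_\star\|^2)$ together with the three gradient memory terms is pointwise dominated by $r$ times its counterpart in $\Psi^{\text{Alex}}_k$, where $r = \max\{1-\alpha\mu_x, 1-\beta\mu_y\}$. This balancing is what dictates the constant $C$ in the step-size restriction, and crucially relies on $\gamma, \delta > 1$ to keep all memory coefficients positive. The $k = 0$ case, in which $\tilde{\vy}_0 = \vy_0$ so one memory term is absent, is handled as a separate base case using the modified definition of $\Psi_0^{\text{Alex}}$; a direct substitution confirms $\Psi_1^{\text{Alex}} \le r \Psi_0^{\text{Alex}}$ under the same step-size bound.

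For validity, I would use $L_x$-Lipschitzness to bound $\|\nabla_\vx f(\vx_k, \tilde{\vy}_k)\|^2 \le 2L_x^2\|\vx_k - \vx_\star\|^2 + 2L_{xy}^2\|\tilde{\vy}_k - \vy_\star\|^2$, then control $\|\tilde{\vy}_k - \vy_\star\|^2$ by $\|\vy_k - \vy_\star\|^2$ plus a positive multiple of the second memory term. Under the step-size conditions $\alpha \le C/L_x$ and $\beta \le C/L_y$ with small enough $C$, the coefficient on $\|\vx_k - \vx_\star\|^2$ in $\Psi^{\text{Alex}}_k$ will remain at least a positive constant times $1/\alpha$, producing a valid constant $A > 0$ as required.
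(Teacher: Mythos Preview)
Your validity argument is essentially the paper's: bound $\|\nabla_\vx f(\vx_k,\tilde\vy_k)\|^2$ by Lipschitz constants and absorb $\|\tilde\vy_k-\vy_\star\|^2$ using the positive memory term $(\delta-1)\beta\|\nabla_\vy f(\tilde\vx_k,\vy_{k-1})\|^2$. That part is fine.

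The gap is in the contraction step. Your plan treats the cross terms as objects of the form $L_{xy}\|\vx-\vx_\star\|\,\|\vy-\vy_\star\|$ to be split by Young's inequality, after using only the operator form of strong convexity (leaving a residual $\langle\nabla_\vx f(\vx_\star,\tilde\vy_k),\vx_k-\vx_\star\rangle$). That is not how the paper proceeds, and it is precisely the step that distinguishes $\kappa_{xy}$ from $\kappa_{xy}^2$. If you Young-split $\alpha L_{xy}\|\tilde\vy_k-\vy_\star\|\,\|\vx_k-\vx_\star\|$ so that the $\vx$-piece fits under $\mu_x\|\vx_k-\vx_\star\|^2$ and the $\vy$-piece fits under the $\mu_y$ slack of the $\vy$-potential, you are forced into $\alpha^2 L_{xy}^2\lesssim \mu_x\mu_y$, i.e.\ $\alpha\lesssim\sqrt{\mu_x\mu_y}/L_{xy}$, which is not the stated bound $\alpha\lesssim\sqrt{\mu_y/\mu_x}/L_{xy}$ and does not give $r=\max\{1-\alpha\mu_x,1-\beta\mu_y\}$ in the theorem's step-size regime.

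What the paper actually does (Proposition~D.2) is use the \emph{function-value} form of strong convexity, e.g.\ $-2\langle\nabla_\vx f(\vx_0,\tilde\vy_0),\vx_0-\vx_\star\rangle\le -\mu_x\|\vx_0-\vx_\star\|^2-2(f(\vx_0,\tilde\vy_0)-f(\vx_\star,\tilde\vy_0))$, together with smoothness applied \emph{between the iterate and the extrapolated point}, e.g.\ $2\langle\nabla_\vx f(\vx_0,\tilde\vy_0),\vx_0-\tilde\vx_1\rangle\le L_x\|\vx_0-\tilde\vx_1\|^2+2(f(\vx_0,\tilde\vy_0)-f(\tilde\vx_1,\tilde\vy_0))$. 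Summing these across the three half-steps and invoking convex--concavity at $(\vx_\star,\vy_\star)$ makes all function values telescope away. What survives are \emph{gradient--gradient} inner products such as $-2(\gamma-1)\alpha\langle\nabla_\vx f(\vx_1,\tilde\vy_1),\nabla_\vx f(\vx_0,\tilde\vy_0)\rangle$. Each is split into a diagonal part (bounded using $L_x$-cocoercivity, yielding $-2(\gamma-1)\alpha(1-\alpha L_x)\|\nabla_\vx f(\vx_0,\tilde\vy_0)\|^2$) and a cross part $\|\nabla_\vx f(\vx_1,\tilde\vy_1)-\nabla_\vx f(\vx_1,\tilde\vy_0)\|\le L_{xy}\|\tilde\vy_1-\tilde\vy_0\|$, where $\|\tilde\vy_1-\tilde\vy_0\|$ is itself $O(\beta)$ times a gradient norm. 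The resulting cross terms are therefore of order $\alpha\beta L_{xy}\|\nabla_\vx\|\,\|\nabla_\vy\|$, one order smaller in step size than distance-based cross terms; AM--GM with weight $\sqrt{\mu_y/\mu_x}$ then produces exactly the memory terms in $\Psi^{\text{Alex}}$ and the step-size condition $\alpha\le C\sqrt{\mu_y}/(L_{xy}\sqrt{\mu_x})$. Your outline is missing this function-value/telescoping machinery, and without it the bookkeeping you describe cannot close at the claimed rate.
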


While we defer the proof of \cref{thm:alexgda} to \cref{sec:appalexgda}, by (\ref{eq:validilav}) we can restate the convergence rate upper bound in terms of the iteration complexity as follows.
\begin{restatable}{corollary}{coralexgda}
    \label{cor:alexgda}
    For step sizes given by the maximum possible values in \cref{thm:alexgda}, \alexgda{} linearly converges with iteration complexity
    \begin{align*}
        \gO \bigopen{ \bigopen{\kappa_x + \kappa_y + \kappa_{xy}} \cdot \log \frac{\Psi^{\emph{Alex}}_{0}}{A^{\emph{Alex}} \epsilon} },
    \end{align*}
    where $A^{\emph{Alex}} = \min \bigset{\frac{1}{2 \alpha}, \frac{1}{\beta}} > 0$.
\end{restatable}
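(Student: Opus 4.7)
The plan is to derive this corollary mechanically from Theorem~\ref{thm:alexgda} via the generic iteration-complexity bound~\eqref{eq:validilav}. The only real work is substituting the maximum step sizes into the contraction factor $r$ and identifying the validity constant $A^{\text{Alex}}$.

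First, I would evaluate $\alpha \mu_x$ and $\beta \mu_y$ at the maxima prescribed by Theorem~\ref{thm:alexgda}. A short algebraic manipulation gives
\begin{align*}
\alpha \mu_x &= C \cdot \min \bigset{\tfrac{\mu_x}{L_x}, \ \tfrac{\sqrt{\mu_x \mu_y}}{L_{xy}}} = C \cdot \min \bigset{1/\kappa_x, \ 1/\kappa_{xy}}, \\
\beta \mu_y &= C \cdot \min \bigset{1/\kappa_y, \ 1/\kappa_{xy}}.
\end{align*}
Since $r = \max\{1 - \alpha \mu_x, 1 - \beta \mu_y\}$, it follows that
\begin{align*}
1 - r = \min \{\alpha \mu_x, \beta \mu_y\} = \frac{C}{\max\{\kappa_x, \kappa_y, \kappa_{xy}\}} = \Theta \bigopen{\frac{1}{\kappa_x + \kappa_y + \kappa_{xy}}},
\end{align*}
using the elementary equivalence $\max\{a,b,c\} \asymp a+b+c$ for nonnegative reals.

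Second, because Theorem~\ref{thm:alexgda} already asserts that $\Psi^{\text{Alex}}_k$ is valid, I only need to exhibit the lower-bound constant matching the claim. The quadratic $V(\vx_k, \vy_k) = \tfrac{1}{\alpha} \norm{\vx_k - \vx_{\star}}^2 + \tfrac{1}{\beta} \norm{\vy_k - \vy_{\star}}^2$ already gives a clean multiple of $\norm{\vz_k - \vz_{\star}}^2$, and the remaining summands of $\Psi^{\text{Alex}}_k$ --- namely $V(\vx_{k+1}, \vy_k)$, the $(\delta-1)\beta\|\cdot\|^2$ term, and the $(\gamma-1)(\delta-1)$ extrapolation-correction term --- are all nonnegative when $\gamma,\delta>1$, so they can only help. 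The one sign-indefinite piece is $-\alpha \norm{\nabla_{\vx} f(\vx_k, \tilde{\vy}_k)}^2$. Using optimality $\nabla_{\vx} f(\vx_{\star},\vy_{\star}) = \vzero$ together with Lipschitz-gradient bounds yields $\norm{\nabla_{\vx} f(\vx_k, \tilde{\vy}_k)}^2 \le 2 L_x^2 \norm{\vx_k - \vx_{\star}}^2 + 2 L_{xy}^2 \norm{\tilde{\vy}_k - \vy_{\star}}^2$. Under the step-size caps with $C$ chosen small enough, this negative contribution is absorbed by at most half of $V(\vx_k, \vy_k)$, leaving $\Psi^{\text{Alex}}_k \ge \tfrac{1}{2\alpha} \norm{\vx_k - \vx_{\star}}^2 + \tfrac{1}{\beta} \norm{\vy_k - \vy_{\star}}^2$ and hence $A^{\text{Alex}} = \min \bigset{1/(2\alpha), 1/\beta} > 0$.

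Combining these two ingredients with~\eqref{eq:validilav} immediately yields the stated iteration complexity $\gO \bigopen{(\kappa_x + \kappa_y + \kappa_{xy}) \cdot \log (\Psi_0^{\text{Alex}} / (A^{\text{Alex}} \epsilon))}$. I expect the main obstacle to be the careful bookkeeping for the validity constant --- in particular, verifying that the negative $-\alpha\norm{\nabla_{\vx} f(\vx_k,\tilde\vy_k)}^2$ can be controlled by the $V$-terms rather than by the auxiliary extrapolation-correction terms (whose scale depends on $\tilde\vy_k$ through a Lipschitz step) --- while the $1-r$ computation itself is a routine simplification of the step-size formulas.
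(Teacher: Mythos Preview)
Your proposal is correct and follows essentially the same approach as the paper: the computation of $1-r = \min\{\alpha\mu_x,\beta\mu_y\} = \Theta\bigl((\kappa_x+\kappa_y+\kappa_{xy})^{-1}\bigr)$ is identical, and your sketch for the validity constant $A^{\text{Alex}}=\min\{1/(2\alpha),1/\beta\}$ mirrors the paper's Proposition~D.1 (where the $\tilde{\vy}_k$ dependence in the negative gradient term is indeed handled by splitting $\|\tilde{\vy}_k-\vy_\star\|^2\le 2\|\vy_k-\vy_\star\|^2+2(\delta-1)^2\beta^2\|\nabla_{\vy} f(\tilde{\vx}_k,\vy_{k-1})\|^2$ and absorbing the latter into the nonnegative $(\delta-1)\beta$ term). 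The only difference is that the paper's proof of the corollary itself does not re-derive the validity constant at all---it simply quotes $A^{\text{Alex}}$ as already established inside the proof of \cref{thm:alexgda}---so your second step, while correct in outline, is redundant for the corollary proper.
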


While we defer the proof of \cref{cor:alexgda} to \cref{sec:coralexgda},
we can observe that \cref{cor:alexgda} provides a stronger iteration complexity upper bound than \cref{cor:altgda}.

\subsection{Convergence Lower Bound}

\cref{thm:alexgdalb} provides a convergence lower bound of the iteration complexity of \alexgda{} which holds for all possible step sizes $\alpha, \beta > 0$.

\begin{restatable}{theorem}{thmalexgdalb}
    \label{thm:alexgdalb}
    There exists a $6$-dimensional function $f \in \gF (\mu_x, \mu_y, L_x, L_y, L_{xy})$ with $d_x = d_y = 3$ such that for any constant step sizes $\alpha, \beta > 0$, the convergence of \alexgda{} with $\gamma, \delta > 1$ requires an iteration complexity of
    \begin{align*}
        \Omega \bigopen{ \bigopen{\kappa_x + \kappa_y + \kappa_{xy}} \cdot \log \frac{1}{\epsilon} }
    \end{align*}
    in order to have $\| \vz_K - \vz_{\star} \|^2 \le \epsilon$.
\end{restatable}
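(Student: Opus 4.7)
The plan is to construct a block-diagonal quadratic whose \alexgda{} iteration decouples into three independent subsystems, each producing one term of the $\kappa_x + \kappa_y + \kappa_{xy}$ lower bound. Take $f(\vx, \vy) = \tfrac{1}{2}\vx^\top \mA \vx - \tfrac{1}{2}\vy^\top \mC \vy + \vx^\top \mB \vy$ with $\mA = \diag(L_x, \mu_x, \mu_x)$, $\mC = \diag(L_y, \mu_y, \mu_y)$, and $\mB = \diag(0, 0, L_{xy})$; a routine check places $f$ in $\gF(\mu_x, \mu_y, L_x, L_y, L_{xy})$ with unique Nash equilibrium $\vz_\star = \mathbf{0}$. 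Because all three matrices are diagonal, the \alexgda{} update decouples across indices $i \in \{1,2,3\}$ into three linear subsystems with transition operators $M^{(i)}$; denote $\rho_i := \rho(M^{(i)})$ and $r := \max_i \rho_i$, the spectral radius of the full operator.

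For the $\kappa_x + \kappa_y$ contribution, subsystems $1$ and $2$ are uncoupled ($b=0$), so \alexgda{} reduces to vanilla GD/GA coordinate-wise: $\rho_1 = \max\{|1-\alpha L_x|, |1-\beta L_y|\}$ and $\rho_2 = \max\{|1-\alpha\mu_x|, |1-\beta\mu_y|\}$. Imposing $r < 1$ forces $\alpha \in [(1-r)/\mu_x,\, (1+r)/L_x]$ and $\beta \in [(1-r)/\mu_y,\, (1+r)/L_y]$; non-emptiness of these intervals yields $r \ge (\kappa_x-1)/(\kappa_x+1)$ and $r \ge (\kappa_y-1)/(\kappa_y+1)$, i.e., $1 - r = O(1/\max\{\kappa_x, \kappa_y\})$.

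For the $\kappa_{xy}$ contribution, subsystem $3$ is three-dimensional, since the extrapolated iterate $\tilde y_3$ must be tracked as an additional state variable. I would compute $\det M^{(3)}$ by a single row operation: subtracting $\delta$ times the second row from the third eliminates both the first and the third entries of the new third row and leaves $1 - \delta$ in position $(3,2)$; Laplace-expanding then yields the clean identity $\det M^{(3)} = (\gamma - 1)(\delta - 1)\, \alpha \beta L_{xy}^2$. Combining $\rho_3 \ge |\det M^{(3)}|^{1/3}$ with the step-size lower bounds $\alpha \ge (1-r)/\mu_x$ and $\beta \ge (1-r)/\mu_y$ established above gives $r^3 \ge (\gamma - 1)(\delta - 1)(1 - r)^2 \kappa_{xy}^2$. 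In the interesting regime where the first two subsystems already force $r \ge 1/2$, this solves to $1 - r = O(1/\kappa_{xy})$ with constant depending only on $\gamma, \delta$.

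Combining the three estimates gives $r \ge 1 - O\bigl(1/(\kappa_x + \kappa_y + \kappa_{xy})\bigr)$. Picking a universal initialization with every coordinate nonzero (say the all-ones vector) excites the slowest eigenmode inside each subsystem, so the standard linear-recursion lower bound $\|\vz_K - \vz_\star\|^2 \ge c\, r^{2K} \|\vz_0 - \vz_\star\|^2$ converts this into $K = \Omega\bigl((\kappa_x + \kappa_y + \kappa_{xy}) \log(1/\epsilon)\bigr)$, with the initialization dependence folded into the logarithm as the remark after \cref{thm:simgdalb} allows. The main obstacle I anticipate is the $3 \times 3$ determinant step: the matrix involves seven parameters, and the decisive factor $(\gamma - 1)(\delta - 1)$---which vanishes for \simgda{} ($\gamma = 0$) and \altgda{} ($\gamma = \delta = 1$) and is precisely what makes \alexgda{} strictly better than both---only emerges after the correct row reduction isolates it.
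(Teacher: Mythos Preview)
Your construction and the determinant argument are correct and genuinely different from the paper's route. The paper computes the full cubic characteristic polynomial of the $(x,y,\tilde y)$ block, invokes a Routh--Hurwitz criterion (\cref{prop:routhhurwitz} and \cref{cor:routhhurwitzbd}) to force all coefficients to be $O(1)$, and reads off $\alpha\beta L_{xy}^2 = O(1)$ from there. Your single row operation gives $|\det M^{(3)}| = (\gamma-1)(\delta-1)\alpha\beta L_{xy}^2$ (the actual sign is negative, but you only use the modulus), and $\rho_3^3 \ge |\det M^{(3)}|$ then yields the same conclusion $\alpha\beta L_{xy}^2 = O_{\gamma,\delta}(1)$ in one line. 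This is more elementary and exposes precisely why the hypothesis $\gamma,\delta>1$ is needed.

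There is, however, a genuine gap in your final conversion step. The inequality $\|\vz_K - \vz_\star\|^2 \ge c\, r^{2K}\|\vz_0-\vz_\star\|^2$ is \emph{not} a standard fact for the 3-dimensional subsystem: having every coordinate nonzero does not guarantee a nonzero component along the leading eigenvector of $M^{(3)}$, and that eigenvector varies with $\alpha,\beta$, so you cannot fix one initialization and claim a uniform $c>0$ across all step sizes. The paper sidesteps this entirely: it never argues about decay in the coupled block. Instead it uses the constraint $\alpha\beta L_{xy}^2 = O(1)$ only to bound $\tfrac{1}{\alpha\mu_x}+\tfrac{1}{\beta\mu_y} \ge \tfrac{2}{\sqrt{\alpha\beta\mu_x\mu_y}} = \Omega(\kappa_{xy})$, and then reads the complexity off the \emph{scalar} coordinates in subsystem~2, where $s_{k}=(1-\alpha\mu_x)^k s_0$ and excitation is trivial. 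Your argument is easily repaired the same way: from $|\det M^{(3)}|<1$ you get $\min\{\alpha\mu_x,\beta\mu_y\} \le \sqrt{\alpha\beta\mu_x\mu_y} < \tfrac{1}{\sqrt{(\gamma-1)(\delta-1)}\,\kappa_{xy}}$, hence $\rho_2 \ge 1 - O(1/\kappa_{xy})$, and the $\kappa_{xy}$ term of the lower bound already follows from the one-dimensional subsystem~2 without touching eigenvectors of $M^{(3)}$.
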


The iteration complexity rate in \cref{thm:alexgdalb} exactly matches the upper bound in \cref{cor:alexgda}, which ensures that our analysis on \alexgda{} is tight (ignoring log factors).
We defer the proof of \cref{thm:alexgdalb} to \cref{sec:thmalexgdalb}.

\subsection{Comparison with EG}
\label{sec:comparison_eg}

Here we compare \alexgda{} to the Extra-gradient (\textbf{EG}) method \citep{korpelevich1976extragradient}, an algorithm based on \textit{simultaneous} updates of the form:
\begin{align*}
    &\left.\begin{aligned}
        \vx_{k+\frac{1}{2}} &= \vx_{k} - \alpha \nabla_{\vx} f(\vx_{k}, \vy_{k}), \\
    \vy_{k+\frac{1}{2}} &= \vy_{k} + \beta \nabla_{\vy} f(\vx_{k}, \vy_{k}),
    \end{aligned}\right\}\text{\footnotesize exploration steps} \\
    &\left.\begin{aligned}
        \vx_{k+1} &= \vx_{k} - \alpha \nabla_{\vx} f(\vx_{k+\frac{1}{2}}, \vy_{k+\frac{1}{2}}), \\
    \vy_{k+1} &= \vy_{k} + \beta \nabla_{\vy} f(\vx_{k+\frac{1}{2}}, \vy_{k+\frac{1}{2}}).
    \end{aligned}\right\}\text{\footnotesize update steps}
\end{align*}
It is known by \citet{mokhtari2019a} that \textbf{EG} converges with iteration complexity $\tilde{\gO} (\kappa)$, where $\kappa = \frac{\max \{L_x, L_y, L_{xy}\}}{\min \{\mu_x, \mu_y\}}$.
While \textbf{EG} is famous for its simplicity and fast convergence, we can show that \textbf{EG} must satisfy the same lower bound with \alexgda{} via the following proposition.

\begin{restatable}{proposition}{propeglb}
    \label{prop:eglb}
    There exists a $6$-dimensional function $f \in \gF (\mu_x, \mu_y, L_x, L_y, L_{xy})$ with $d_x = d_y = 3$ such that for any constant step sizes $\alpha, \beta > 0$, the convergence of \textbf{EG} requires an iteration complexity of rate at least
    \begin{align*}
        \Omega \bigopen{ \bigopen{\kappa_x + \kappa_y + \kappa_{xy}} \cdot \log \frac{1}{\epsilon} }
    \end{align*}
    in order to have $\| \vz_K - \vz_{\star} \|^2 \le \epsilon$.
\end{restatable}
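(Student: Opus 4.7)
The plan is to mirror the construction behind \cref{thm:alexgdalb} and reduce the lower bound to a spectral-radius computation on quadratic objectives. Since \textbf{EG} applied to any quadratic $f$ is affine---one iteration can be written $\vz_{k+1} = \mM_{\text{EG}}(\alpha, \beta) \vz_k + \vb$ for an explicit matrix $\mM_{\text{EG}}$ that depends only on the Hessian blocks and the step sizes---we have $\vz_k - \vz_\star = \mM_{\text{EG}}^k (\vz_0 - \vz_\star)$, and choosing $\vz_0 - \vz_\star$ along the dominant eigendirection gives $\norm{\vz_k - \vz_\star} \ge \rho(\mM_{\text{EG}})^k \norm{\vz_0 - \vz_\star}$. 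Consequently, whenever $\rho(\mM_{\text{EG}}) \ge 1 - c/N$ for some problem-specific $N$, at least $\Omega(N \log(1/\epsilon))$ iterations are required to drive $\norm{\vz_K - \vz_\star}^2 \le \epsilon$, uniformly in $\alpha, \beta$. The proposition follows if we can exhibit, for any step sizes, at least one dominant eigenvalue whose magnitude satisfies this bound with $N = \kappa_x + \kappa_y + \kappa_{xy}$.

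I will build a single $6$-dimensional $f \in \gF(\mu_x, \mu_y, L_x, L_y, L_{xy})$ as the orthogonal direct sum of three $2$-dimensional quadratic pieces, each activating one of the three target terms. The first piece is a decoupled $\vx$-hard quadratic roughly of the form $\tfrac{L_x}{2} x_1^2 - \tfrac{\mu_y}{2} y_1^2$ (tuned so that the $\vx$-direction also hides a $\mu_x$-curvature mode), on which \textbf{EG} is separable and the $\vx$-component becomes standard \textbf{EG} on an $L_x$-smooth $\mu_x$-strongly-convex quadratic; a routine stability argument forces $\alpha = O(1/L_x)$ and yields $\rho \ge 1 - O(1/\kappa_x)$. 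A symmetric $\vy$-hard piece gives $\rho \ge 1 - O(1/\kappa_y)$ via $\beta = O(1/L_y)$. The third piece is a bilinearly coupled block $\tfrac{\mu_x}{2} x_3^2 + L_{xy} x_3 y_3 - \tfrac{\mu_y}{2} y_3^2$, whose $2 \times 2$ \textbf{EG} operator on $(x_3, y_3)$ has eigenvalues pinned near $1$ by the coupling, yielding $\rho \ge 1 - O(1/\kappa_{xy})$. Because the three blocks are orthogonal, $\rho$ of the full \textbf{EG} operator is the maximum of the three block spectral radii, which proves the claim.

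The main obstacle lies in the coupled block, since there $\alpha$ and $\beta$ must be optimized \emph{jointly} and the $2 \times 2$ \textbf{EG} matrix has generally complex eigenvalues. The cleanest route is to write $\mM_{\text{EG}}$ on this block in closed form, apply the $2 \times 2$ identity $\rho(\mM_{\text{EG}}) \ge \sqrt{|\det \mM_{\text{EG}}|}$, and expand the determinant as a polynomial $(1-\alpha \mu_x)(1-\beta \mu_y) + \alpha \beta L_{xy}^2 \cdot (\textit{lower-order corrections})$ in $\alpha, \beta$. Minimizing the resulting expression over the step sizes that are simultaneously admissible for all three blocks---so in particular $\alpha \le O(1/L_x)$ and $\beta \le O(1/L_y)$, as enforced by the other two pieces---should produce the bound $\rho \ge 1 - O(\sqrt{\mu_x \mu_y}/L_{xy}) = 1 - O(1/\kappa_{xy})$. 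The delicate point is showing this bound is tight with dependence $\kappa_{xy}$ rather than $\kappa_{xy}^2$ uniformly in $(\alpha,\beta)$; this is exactly the place where \textbf{EG}'s extrapolation earns it the improvement over the \simgda{} lower bound of \cref{thm:simgdalb}, but cannot improve past \alexgda{}.
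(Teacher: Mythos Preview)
Your construction (direct sum of three $2$-dimensional blocks) is essentially the paper's, and the handling of the $\kappa_x$ and $\kappa_y$ terms is fine: the $L_x$-curvature direction in Block~1 forces $\alpha < 1/L_x$, the $\mu_x$-curvature direction in Block~2 then yields a contraction no better than $1 - O(\alpha\mu_x) \ge 1 - O(1/\kappa_x)$, and symmetrically for $\kappa_y$. This is exactly how the paper uses its coordinates $s,t,u,v$.

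The genuine gap is in the coupled block. First, the expression you wrote down, $(1-\alpha\mu_x)(1-\beta\mu_y) + \alpha\beta L_{xy}^2 \cdot (\text{corrections})$, is the determinant of the \simgda{} operator $\mP$, not of $\mM_{\text{EG}} = \mI - \mP + \mP^2$; the latter equals $d^2 + t^2 + 1 - dt - t - d$ with $d = \det\mP$ and $t = \mathrm{tr}\,\mP$, which has a completely different shape. Second, and more importantly, the mechanism you propose---minimize $\sqrt{|\det \mM_{\text{EG}}|}$ subject only to $\alpha \le O(1/L_x)$, $\beta \le O(1/L_y)$---does not produce the $\kappa_{xy}$ bound. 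Those step-size constraints come from \emph{other} blocks and carry no information about $L_{xy}$; under them the determinant can be driven well below $1 - O(1/\kappa_{xy})$ whenever $\kappa_{xy}^2 \lesssim \kappa_x\kappa_y$.

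The paper's argument is different and simpler in structure. It shows that \emph{convergence of the coupled block itself} forces $\alpha\beta L_{xy}^2 = O(1)$---via an explicit computation of $|\lambda_{\text{EG}}|^2$ in the complex-eigenvalue case of $\mP$, and a direct step-size-ratio argument in the real case. This constraint on $\alpha\beta$ is then fed back into the \emph{uncoupled} $\mu_x$- and $\mu_y$-curvature directions (your $x_2,y_1$), whose rates are $1 - O(\alpha\mu_x)$ and $1 - O(\beta\mu_y)$: by AM--GM, $\tfrac{1}{\alpha\mu_x} + \tfrac{1}{\beta\mu_y} \ge \tfrac{2}{\sqrt{\alpha\beta\mu_x\mu_y}} = \Omega(\kappa_{xy})$. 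So the $\kappa_{xy}$ term does not come from the coupled block's spectral radius directly, but from the coupled block constraining $\alpha\beta$ and the slow uncoupled coordinates supplying the rate. Your proposal misses this interplay; you would need either to adopt it or to carry out a full spectral-radius lower bound on the $2\times 2$ \textbf{EG} operator uniformly in $(\alpha,\beta)$, which is essentially the same computation the paper performs in its Case~2.
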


We defer the proof of \cref{prop:eglb} to \cref{sec:propeglb}.

By comparing \cref{prop:eglb} with the upper (and lower) bound for \alexgda{}, it is clear that \textbf{EG} \textit{cannot} be strictly faster than \alexgda{} in terms of iteration complexity rates.
Moreover, \alexgda{} requires only two gradient values (one for $\vx$, $\vy$ each) per a single iteration, while \textbf{EG} needs to perform exactly twice the amount of computations (two for $\vx$, $\vy$ each).
Nevertheless, \alexgda{} is provably as fast as \textbf{EG}, and in fact, it showcases faster \textit{empirical} convergence compared to \textbf{EG} as shown in \cref{fig:scsc}.

In \cref{sec:a}, we also compare \alexgda{} with another well-known baseline algorithm, Optimistic Gradient Descent (\textbf{OGD}) \citep{popov1980modification}.
\section{Alex-GDA Converges on Bilinear Problems}
\label{sec:6}


One drawback shared by \simgda{} and \altgda{} is that both algorithms fail to converge for simple unconstrained bilinear problems of the form $\min_\vx \max_\vy f(\vx, \vy) = \vx^\top \mB \vy$ \citep{gidel2019negative}, an important special case of a convex-concave but non-SCSC problem with Lipschitz gradients.

Surprisingly, we show that \alexgda{},
on the other hand,
\textit{does} converge on bilinear problems. In order to present the result, we define $\mu_{xy}$ as the smallest \textit{nonzero} singular value of $\mB$. Note that it is natural to assume the existence of nonzero singular values---if not, then $\mB = \vzero$, and the objective is constantly zero. Analogously to previous definitions, we choose $L_{xy}$ as the largest singular value of $\mB$. 

We first characterize the exact condition for convergent step sizes of \alexgda{} on bilinear problems. 
Interestingly, it allows a larger range of parameters $\gamma$ and $\delta$: we no longer require $\gamma > 1$ and $\delta > 1$ here.


\begin{restatable}{theorem}{thmalexgdabilinearconvergentstepsize}
    \label{thm:alexgdabilinearconvergentstepsize}
    With a proper choice of step sizes $\alpha$ and $\beta$, \alexgda{} linearly converges to a Nash equilibrium of a bilinear problem if and only if $\gamma+\delta>2$.
    In this case, the exact conditions for convergent step sizes $\alpha$ and $\beta$ are:
    \begin{align*}
        \begin{cases}
            \alpha\beta \!<\! \frac{4}{(2\gamma-1)(2\delta-1)L_{xy}^2}, \!\! & \! \text{if } 4\gamma\delta\!-\!3(\gamma\!+\!\delta) \!+\! 2 \ge 0,\\
            \alpha\beta \!<\! \frac{\gamma+\delta-2}{-(\gamma-1)(\delta-1)(\gamma+\delta-1)L_{xy}^2}, \!\! & \! \text{if } 4\gamma\delta\!-\!3(\gamma\!+\!\delta) \!+\! 2 < 0.
        \end{cases}
    \end{align*}
\end{restatable}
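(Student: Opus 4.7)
The plan is to reduce the bilinear dynamics to a family of $3\times 3$ linear recursions via the singular value decomposition of $\mB$, then apply the Schur--Cohn/Jury stability criterion to the resulting characteristic polynomial. First, write $\mB = \mU \mSigma \mV^\top$ and project $\vx_k,\vy_k,\tilde\vy_k$ onto corresponding pairs of singular vectors. The components in $\mathrm{null}(\mB^\top)\times \mathrm{null}(\mB)$ (exactly the set of Nash equilibria) are fixed under the updates, so linear convergence to \emph{some} Nash equilibrium reduces to convergence of the components lying along each nonzero singular value $\sigma$. For each such $\sigma$, the scalar projections $(u_k,v_k,\tilde v_k)$ satisfy an independent $3$-dimensional linear recursion $\vw_{k+1} = \mM \vw_k$ with
\begin{equation*}
\mM \;=\; \begin{pmatrix} 1 & 0 & -\alpha\sigma \\ \beta\sigma & 1 & -\gamma\lambda \\ \delta\beta\sigma & 1 & -\gamma\delta\lambda \end{pmatrix}, \qquad \lambda := \alpha\beta\sigma^2,
\end{equation*}
and a similarity by $\mathrm{diag}(\sqrt{\alpha/\beta},1,1)$ shows that the eigenvalues of $\mM$ depend only on the triple $(\lambda,\gamma,\delta)$. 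Thus linear convergence is equivalent to $\rho(\mM(\lambda))<1$ for every $\lambda$ of the form $\alpha\beta\sigma^2$ with $\sigma \in [\mu_{xy},L_{xy}]$. A direct expansion yields the monic characteristic polynomial
\begin{equation*}
P(\nu) \;=\; \nu^3 + (\gamma\delta\lambda - 2)\nu^2 + \bigl((\gamma+\delta)\lambda - 2\gamma\delta\lambda + 1\bigr)\nu + (\gamma-1)(\delta-1)\lambda,
\end{equation*}
for which $P(1)=\lambda$, and specializing to \simgda{} $(\gamma,\delta)=(0,1)$ and \altgda{} $(\gamma,\delta)=(1,1)$ recovers the known non-convergent spectra as sanity checks.

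For the necessity of $\gamma+\delta>2$, observe that at $\lambda=0$ we have $P = \nu(\nu-1)^2$, so two eigenvalues collide at $\nu=1$. Writing $\nu = 1+\epsilon$ and factoring out the branch of the cubic with $\epsilon$ near $-1$, Vieta's formulas on the remaining quadratic in $\epsilon$ yield, to leading order in $\lambda$, $\epsilon_2\epsilon_3 = \lambda$ and $\epsilon_2+\epsilon_3 = -(\gamma+\delta-1)\lambda$. For small $\lambda>0$ these roots are complex conjugates, so $|1+\epsilon|^2 = 1 - (\gamma+\delta-2)\lambda + O(\lambda^2)$. Since we must have $\rho(\mM(\lambda))<1$ for arbitrarily small $\lambda>0$ (taking $\sigma = \mu_{xy}$ and shrinking $\alpha\beta$), this already forces $\gamma+\delta>2$; the borderline case $\gamma+\delta=2$ is ruled out by the direct check on condition~(iv) described below.

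For sufficiency and the exact step-size range, assume $\gamma+\delta>2$ and apply the Schur--Cohn/Jury criterion to $P(\nu) = \nu^3 + a_2\nu^2 + a_1\nu + a_0$. The four conditions simplify to: (i) $P(1)=\lambda>0$ (automatic); (ii) $P(-1)<0$, i.e.\ $(2\gamma-1)(2\delta-1)\lambda < 4$; (iii) $|a_0| = |(\gamma-1)(\delta-1)|\lambda < 1$; and (iv) $1-a_0^2 > |a_0a_2 - a_1|$, which splits into two sub-inequalities. After cancellation of the $a_0^2$ terms, one of these sub-inequalities collapses cleanly to
\begin{equation*}
(\gamma-1)(\delta-1)(\gamma+\delta-1)\lambda + (\gamma+\delta-2) > 0,
\end{equation*}
giving the upper bound $\lambda < \frac{\gamma+\delta-2}{-(\gamma-1)(\delta-1)(\gamma+\delta-1)}$ exactly when the leading coefficient is negative. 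A short algebraic comparison (parameterized by $A := (\gamma-1)(\delta-1)$ and $B := \gamma+\delta-2$, using $(2\gamma-1)(2\delta-1) = 4A+2B+1$) shows that the bound from~(ii) is tighter than the one from~(iv) precisely when $(2B+1)(4A+B)\ge 0$, which under $\gamma+\delta>2$ reduces to $4\gamma\delta - 3(\gamma+\delta)+2 \ge 0$; the other sub-inequality of~(iv) and condition~(iii) are dominated by the binding one in both regimes. Setting $\lambda = \alpha\beta L_{xy}^2$ (the largest realized $\lambda$) then yields the stated step-size conditions.

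The main obstacle is the case analysis inside condition~(iv) and verifying that the secondary Schur--Cohn conditions are never binding, so that the stated two-case bound is sharp; the expression $4\gamma\delta - 3(\gamma+\delta)+2$ appears exactly as the sign controlling which condition dominates, and the rest is essentially bookkeeping on the perturbative and Schur--Cohn algebra.
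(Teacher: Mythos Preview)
Your approach is essentially the paper's: SVD reduction to per-singular-value $3\times 3$ linear recursions, the same cubic characteristic polynomial, a Schur-stability criterion, and a case analysis on which inequality is binding. The only cosmetic difference is that you use the Jury form of the criterion while the paper uses the equivalent Grove--Ladas form (their Proposition~D.3), which leads to a five-case rather than two-regime analysis but arrives at the same thresholds and the same discriminating quantity $4\gamma\delta-3(\gamma+\delta)+2$.

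There is, however, a genuine gap in your necessity argument. Your perturbative expansion near $\lambda=0$ correctly shows that when $\gamma+\delta<2$ the spectral radius exceeds $1$ for all sufficiently small $\lambda$. But the theorem asserts that \emph{no} choice of $\alpha,\beta$ works when $\gamma+\delta\le 2$, which requires ruling out \emph{every} $\lambda>0$, not just small ones. Your sentence ``we must have $\rho(\mM(\lambda))<1$ for arbitrarily small $\lambda>0$ (taking $\sigma=\mu_{xy}$ and shrinking $\alpha\beta$)'' does not justify this: for a fixed problem and a fixed choice of step sizes, the realized values of $\lambda$ are bounded away from zero, and the theorem allows $\alpha,\beta$ to be chosen after the problem is revealed. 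In principle the stability region $S=\{\lambda>0:\rho(\mM(\lambda))<1\}$ could be an interval bounded away from $0$, and a rank-one problem with $\alpha\beta\sigma^2$ placed inside it would converge. The paper closes this gap not by perturbation but by showing directly from the stability inequalities (its Cases~2 and~3) that when $\gamma+\delta\le 2$ the conditions are simultaneously unsatisfiable for \emph{all} $\lambda>0$; in particular, when $(\gamma-1)(\delta-1)\ge 0$ and $3/2<\gamma+\delta\le 2$ one needs both a lower bound on $\lambda$ from your condition~(iv) and an incompatible upper bound from~(ii). You have all the ingredients to do this with your Jury conditions---you just need to carry out that check rather than rely on the small-$\lambda$ expansion.

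A related point: your final step ``setting $\lambda=\alpha\beta L_{xy}^2$ then yields the stated step-size conditions'' presumes that $S$ is an interval of the form $(0,\lambda_{\max})$, so that the largest $\lambda$ is the binding one. This is exactly what the domination claims in your last paragraph amount to, and it is indeed ``the main obstacle'' you flag; just be aware that verifying it also completes the necessity direction, making the perturbative argument redundant.
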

We defer the proof of \cref{thm:alexgdabilinearconvergentstepsize} to \cref{sec:alexgdabilinearconvergentstepsize}.

Furthermore, if we properly choose the step size, we can obtain the iteration complexity of \alexgda{} on bilinear problems.

\begin{restatable}{theorem}{thmalexgdabilinearcomplexity}
    \label{thm:alexgdabilinearcomplexity}
    For general $\gamma\ge1$ and $\delta\ge1$ such that $\gamma+\delta>2$, If we choose the step sizes $\alpha$ and $\beta$ so that $\alpha\beta = \frac{1}{C_{\gamma,\delta} L_{xy}^2}$ where $C_{\gamma,\delta}>0$ is a constant that only depends on $\gamma$ and $\delta$,
    an iteration complexity upper bound of \alexgda{} is
    \begin{align*}
        \gO\left(\frac{C_{\gamma,\delta}}{\gamma+\delta-2} \cdot \left(\frac{L_{xy}}{\mu_{xy}}\right)^{\!2}\cdot \log\bigopen{\frac{\norm{\vw_0}^2}{\epsilon}}\right),
    \end{align*}
    where $\norm{\vw_0}^2 \!=\! \norm{\vx_0\!-\!\vx_\star}^2 \!+\! 2\norm{\vy_0\!-\!\vy_\star}^2$ and $\vz_\star \!=\!(\vx_\star,\vy_\star)$ is a uniquely determined Nash equilibrium if $\vz_0$ is given.

    If $\delta=1$, the optimal rate exponent of \alexgda{} is 
    \begin{align*}
        \lim_{k\rightarrow\infty}\frac{\norm{\vz_k-\vz_\star}}{\norm{\vz_{k-1}-\vz_\star}} = \sqrt{\frac{L_{xy}^2 - \mu_{xy}^2}{L_{xy}^2 + \mu_{xy}^2}},
    \end{align*}
    where the optimal choice of parameters are
    \begin{align*}
        \alpha\beta = \frac{2\mu_{xy}^2/L_{xy}^2}{L_{xy}^2+ \mu_{xy}^2}, \quad \gamma = 1+\frac{L_{xy}^2}{\mu_{xy}^2}.
    \end{align*}
\end{restatable}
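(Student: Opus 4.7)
The plan is to exploit the fact that on a bilinear objective the gradients $\nabla_\vx f = \mB\vy$ and $\nabla_\vy f = \mB^\top \vx$ are linear, so \alexgda{} becomes a stationary linear dynamical system whose convergence is controlled by the spectral radius of a fixed iteration matrix. I would first fix a singular value decomposition $\mB = \mU\mSigma\mV^\top$, rotate $\vx$ into the left singular basis and $\vy$ into the right singular basis, and use the fact that the dynamics decouple across singular modes. Zero singular modes lie in $\nullspace(\mB^\top)$ and $\nullspace(\mB)$ respectively and remain stationary, so we pick the (unique) Nash equilibrium $\vz_\star$ whose null-space coordinates match those of $\vz_0$; this way $\vx_k - \vx_\star$ and $\vy_k - \vy_\star$ live entirely in $\col(\mB)$ and $\col(\mB^\top)$, and the relevant singular values lie in $[\mu_{xy}, L_{xy}]$.

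For a single mode with singular value $\sigma$, writing $u = \alpha\sigma$, $v = \beta\sigma$, $\tau = \alpha\beta\sigma^2$, and tracking the state $(x_k, y_k, \tilde y_k)$ (which is self-contained since $\tilde y_k = y_{k-1} + \delta v \tilde x_k$ for $k\ge 1$ and $\tilde y_0 = y_0$), the error update reads $\vs_{k+1} = M(\tau)\vs_k$ with
\[
M(\tau) = \begin{pmatrix} 1 & 0 & -u \\ v & 1 & -\gamma\tau \\ \delta v & 1 & -\delta\gamma\tau \end{pmatrix}.
\]
A direct expansion of $\det(\lambda I - M(\tau))$ yields the characteristic polynomial
\[
p_\tau(\lambda) = \lambda^3 + (\gamma\delta\tau - 2)\lambda^2 + \bigopen{1 + \tau(\gamma+\delta-2\gamma\delta)}\lambda + \tau(\gamma-1)(\delta-1).
\]
At $\tau = 0$ the roots are $\{0,1,1\}$, so I would expand the double root near $\lambda = 1$ by setting $\lambda = 1 + \epsilon$ and matching orders of $\sqrt{\tau}$; this gives $\epsilon = \pm i\sqrt{\tau} + \tfrac{1-\gamma-\delta}{2}\tau + O(\tau^{3/2})$, hence
\[
\rho(M(\tau))^2 = 1 - (\gamma+\delta-2)\tau + O(\tau^2).
\]
The worst mode is $\sigma = \mu_{xy}$ (smaller $\tau$ yields weaker contraction). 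Choosing $\alpha\beta = 1/(C_{\gamma,\delta}L_{xy}^2)$ with $C_{\gamma,\delta}$ inside the convergent range guaranteed by \cref{thm:alexgdabilinearconvergentstepsize} and converting the per-step contraction into an iteration count via \eqref{eq:validilav} gives the advertised bound. The factor $2$ multiplying $\norm{\vy_0-\vy_\star}^2$ in $\norm{\vw_0}^2$ reflects the initialization $\tilde\vy_0 = \vy_0$, contributing once to the $\vy$-slot and once to the $\tilde\vy$-slot of the state.

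For the special case $\delta = 1$, the third row of $M(\tau)$ coincides with the second, so $\lambda = 0$ is a root and the other two come from
\[
q_\tau(\lambda) = \lambda^2 - (2-\gamma\tau)\lambda + \bigopen{1-(\gamma-1)\tau}.
\]
Its discriminant $\tau(\gamma^2\tau - 4)$ determines whether the pair is complex conjugate (common modulus $\sqrt{1-(\gamma-1)\tau}$) or real; in the complex regime the modulus is maximized over $\sigma$ at $\sigma = \mu_{xy}$, whereas in the real regime (at large $\sigma$) it is maximized at $\sigma = L_{xy}$. Minimizing $\max_\sigma\rho$ over $(\gamma,\alpha\beta)$ by balancing these two worst cases pushes $\sigma = L_{xy}$ into the real regime with the trace condition $\gamma\tau = 2$, yielding a zero-sum real pair of modulus $\sqrt{(\gamma-2)/\gamma}$; matching this to $\sqrt{1-(\gamma-1)\alpha\beta\mu_{xy}^2}$ and solving gives $\gamma = 1 + L_{xy}^2/\mu_{xy}^2$, $\alpha\beta = \tfrac{2\mu_{xy}^2/L_{xy}^2}{L_{xy}^2+\mu_{xy}^2}$, and common modulus $\sqrt{(L_{xy}^2-\mu_{xy}^2)/(L_{xy}^2+\mu_{xy}^2)}$, as claimed.

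The main obstacle I anticipate is the quantitative step in the general bound: turning the perturbative expansion $\rho(M(\tau))^2 = 1 - (\gamma+\delta-2)\tau + O(\tau^2)$ into a clean, non-asymptotic contraction that is uniform in $(\gamma,\delta,\tau)$. I would pursue this either via the Schur--Cohn stability criterion applied to $p_\tau$ (which already underlies \cref{thm:alexgdabilinearconvergentstepsize} and naturally threads the case split $4\gamma\delta-3(\gamma+\delta)+2 \gtrless 0$ into the constant $C_{\gamma,\delta}$), or by constructing an explicit positive-definite quadratic form $\vw^\top \mP \vw$ on $(x,y,\tilde y)$ whose per-step decrease is a positive multiple of $(\gamma+\delta-2)\tau \cdot \vw^\top \mP \vw$. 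The Lyapunov route is cleaner for recovering the prefactor $\norm{\vw_0}^2$ exactly; the difficulty is that the worst directions for $M(\tau)$ differ across singular values, so $\mP$ must be tailored to the mode $\sigma = \mu_{xy}$ while still witnessing contraction of the larger modes.
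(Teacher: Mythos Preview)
Your reduction to a mode-by-mode cubic $p_\tau(\lambda)=\lambda(\lambda-1)^2+\tau(\gamma(\lambda-1)+1)(\delta(\lambda-1)+1)$, the handling of the null-space components, and the identification of $\vz_\star$ are all correct and coincide with the paper's setup. Your $\delta=1$ analysis also lands on the right answer, though by a shortcut: the paper proceeds through a three-case split on whether $\gamma^2\alpha\beta\mu_{xy}^2$ and $\gamma^2\alpha\beta L_{xy}^2$ straddle $4$, and in the interesting case equates the complex-regime modulus at $\mu_{xy}$ with the real-regime modulus at $L_{xy}$, then proves via an explicit monotonicity lemma (its Proposition~E.4) that $h(\gamma)$ is maximized at $\gamma_*=1+L_{xy}^2/\mu_{xy}^2$. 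Your ``trace condition $\gamma\tau_{L_{xy}}=2$'' ansatz recovers this, but the claim that balancing \emph{forces} the two real roots at $L_{xy}$ to be a zero-sum pair is not automatic; the real-regime modulus is $\bigl|1-\tfrac{\gamma\tau}{2}\bigr|+\tfrac12\sqrt{\tau(\gamma^2\tau-4)}$, and you would still need to rule out optima where only the larger real root matches the complex modulus.

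The substantive gap is the one you flag for the general $(\gamma,\delta)$ case, and neither of your proposed fixes (Schur--Cohn inequalities for $p_\tau$, or a tailored Lyapunov quadratic) is what the paper does. The paper's route is a direct root-structure argument. Writing $\Gamma=1-\tfrac1\gamma$, $\Delta=1-\tfrac1\delta$, $\psi=\gamma\delta\tau$, it first shows $p_\tau$ has a unique non-positive real root $-r\in(-\tfrac12,0]$ (using $P_i(0)\ge 0$, $P_i(-\tfrac12)<0$, plus a separate lemma that there is no positive real root once $\psi|\Gamma-\Delta|\le\min\{(1-\Gamma)^2,(1-\Delta)^2\}$); the remaining two roots are then a complex-conjugate pair $c,\bar c$. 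Vieta gives $-r+2\,\mathrm{Re}\,c=2-\psi$ and $-2r\,\mathrm{Re}\,c+|c|^2=1-\psi(\Gamma+\Delta)$, and combining these with the elementary bound $r\le\psi\Gamma\Delta/\bigl(1-\psi(\Gamma+\Delta)\bigr)$ (read off from $P_i(-r)=0$) yields, after one more algebraic lemma, the clean inequality $|c|^2\le 1-\tfrac14\psi(\Gamma+\Delta-2\Gamma\Delta)=1-\tfrac14(\gamma+\delta-2)\tau$, valid uniformly for $\psi$ below an explicit threshold depending only on $(\gamma,\delta)$. That is precisely the non-asymptotic version of your expansion $\rho^2=1-(\gamma+\delta-2)\tau+O(\tau^{2})$, obtained not by perturbing around $\tau=0$ but by pinning down the real root and reading the complex modulus from the coefficients; this also furnishes the explicit $C_{\gamma,\delta}$ in the statement.
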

While we defer the proof of \cref{thm:alexgdabilinearcomplexity} to \cref{sec:alexgdabilinearcomplexity}, we remark that the convergence speed depends on a new type of condition number, namely ${L_{xy}}/{\mu_{xy}} \ge 1$, which is distinct from our definition of $\kappa_{xy}$.


\subsection{Comparison with EG}
\label{sec:comparisonwitheg_bilinear}

A work by \citet{zhang2020convergence} analyzes optimal convergence rates of \textbf{EG} and several other minimax optimization algorithms on bilinear problems. They prove that the optimal rate exponent of \textbf{EG} is $\frac{L_{xy}^2 - \mu_{xy}^2}{L_{xy}^2 + \mu_{xy}^2}$, which boils down to the iteration complexity $\tilde{\gO}\bigopen{\bigopen{{L_{xy}}/{\mu_{xy}}}^2}$; it matches the iteration complexity of \alexgda{} up to constant factor.

It seems that the optimal rate exponent of \textbf{EG} is quadratically better than that of \alexgda{} with $\delta=1$. However, since \textbf{EG} takes \emph{twice} more gradient computation per iteration than \alexgda{}, the optimal \emph{gradient computation complexity} of \textbf{EG} and \alexgda{} with $\delta=1$ are exactly identical. Still, there is room for further improvement in the convergence rate of \alexgda{} by choosing $\delta$ other than 1, but we leave it as a future work.

We also compare \alexgda{} with \textbf{OGD} in \cref{sec:a}.
\section{Experiments}
\label{sec:7}

The details of the experiments are illustrated in \cref{sec:g}.

\subsection{SCSC Quadratic Games} 

\note{
\begingroup
An SCSC quadratic game is a minimax problem:
\begin{align*}
    \min_{\vx} \max_{\vy} \tfrac{1}{2} \vx^\top\mA\vx + \vx^\top\mB\vy - \tfrac{1}{2}\vy^\top\mC\vy, 
\end{align*}
where $\mA$ and $\mC$ are positive definite matrices.
\endgroup
}

\tightparagraph{\note{(1) Small-scale.}}
We conducted experiments on a $(3+3)$-dimensional SCSC quadratic game to visually compare the convergence speed of the algorithms in \cref{fig:scsc}. 
We choose appropriate step sizes for each algorithm by applying grid search, regarding the number of gradient computations to arrive at an $\eps$-distant point from the Nash equilibrium, among convergent step sizes.
As shown in the figure and already observed in \citet{zhang22near}, \altgda{} beats \simgda{} in terms of the convergence rate. 
We additionally observe that the gradient complexity of \altgda{} seems comparable to that of \textbf{EG} and \textbf{OGD}.%
\footnote{In all experiments, we allow \textbf{EG} with different step sizes used at the exploration and update steps, which is of a more general formulation than the description in \cref{sec:comparison_eg}. 
We also use a more general formulation of \textbf{OGD} than that explained in \cref{sec:a}.
See \cref{sec:experiment_scsc}.}
Furthermore, with moderately tuned parameters $\gamma$ and $\delta$, our \alexgda{} achieves a convergence rate that is even faster than \textbf{EG} and \textbf{OGD}. 

\note{
\tightparagraph{(2) Higher Dimension, Extensive Comparisons.}
We run further experiments on $(100+100)$-dimensional SCSC quadratic games to extensively compare GDA, \textbf{EG}, \textbf{OGD}, and \alexgda{}.
We test both simultaneous/alternating versions and (either positive or negative) momentum variants.
In particular, we investigate five different configurations of problem parameters ($\mu_x\!=\!\mu_y, \mu_{xy}, L_x\!=\!L_y, L_{xy}$), where $\mu_{xy}$ is the smallest singular value of the matrix $\mB$.
The results are shown in \cref{tab:quadratic_extensive}.
We observe \altgda{} is much faster than \simgda{} and even faster than \simgda{} with momentum.
Among algorithms \textit{without} momentum, \alexgda{} exhibits the best gradient complexity.
If we include algorithms with momentum, a variant of \alexgda{} (\cref{alg:alexgda_momentum} in \cref{sec:experiment_scsc_large}) achieves the best performance among all compared algorithms, while the alternating \& momentum variant of \textbf{OGD} showcases the second-best performance for most of problem parameters. 
Lastly, we verify our theoretical findings by observing an increasing trend of gradient complexity in terms of condition numbers $L/\mu$($=\kappa_x=\kappa_y$) and $L_{xy}/\sqrt{\mu_x\mu_y}$($=\kappa_{xy}$), but not in terms of $L_{xy}/\mu_{xy}$ (introduced for analysis of bilinear problems).
}

\begin{table*}[ht]
\scriptsize
\addtolength{\tabcolsep}{-2pt}
\centering
\caption{\textbf{(100+100)-dimensional SCSC quadratic games.} 
We report the number of gradient computations, averaged over 30 runs. 
Every algorithm was run until the squared distance from the optimum reached $\le\!\epsilon$. 
We set $\mu_x = \mu_y = \mu$, $L_x = L_y = L$. 
Note that \textbf{Sim} means \simgda{} and \textbf{Alt} means \altgda{}.
Also, \textbf{+M} means momentum (positive or negative), while \textbf{+A} means alternating updates. 
For each row, we mark the first, second, and third places as $\ast$, $\dagger$, and $\ddagger$, respectively.}
\label{tab:quadratic_extensive}
\begin{tabular}{c|cccc|cccc|cccc|>{\columncolor[HTML]{EFEFEF}}c>{\columncolor[HTML]{EFEFEF}}c}
    \hline
    \multirow{2}{*}{$(\mu, \mu_{xy}, L, L_{xy}, \epsilon)$} & \multirow{2}{*}{\textbf{Sim}} & \textbf{Sim} & \multirow{2}{*}{\textbf{Alt}} & \textbf{Alt} & \multirow{2}{*}{\textbf{EG}} & \textbf{EG} & \textbf{EG} & \textbf{EG} & \multirow{2}{*}{\textbf{OGD}} & \textbf{OGD} & \textbf{OGD} & \textbf{OGD} & \cellcolor[HTML]{EFEFEF} & \textbf{Alex} \\
     &  & \textbf{+M} &  & \textbf{+M} &  & \textbf{+M} & \textbf{+A} & \textbf{+AM} &  & \textbf{+M} & \textbf{+A} & \textbf{+AM} & \multirow{-2}{*}{\textbf{Alex}} & \textbf{+M} \\ \hline
    $(0.1, 0.1, 1, 1, 10^{-8})$ & 1974.2 & 421.0 & 105.9 & 78.9 & 133.8 & 115.6 & 139.9 & 102.0 & 132.8 & 105.3 & 90.0 & 67.6$^\ddagger$ & 62.7$^\dagger$ & 44.7$^\ast$ \\
    $(0.1, 0.05, 1, 2, 10^{-8})$ & 7865.0 & 839.8 & 149.1 & 105.6 & 253.2 & 210.6 & 278.9 & 186.9 & 215.1 & 177.3 & 116.1 & 93.6$^\dagger$ & 100.6$^\ddagger$ & 69.1$^\ast$ \\
    $(0.01, 0.001, 1, 0.5, 10^{-4})$ & 42762.1 & 3824.3 & 394.9 & 182.0 & 291.1 & 225.9 & 380.7 & 228.4 & 281.1 & 176.1 & 182.3 & 127.7$^\dagger$ & 133.1$^\ddagger$ & 58.3$^\ast$ \\
    $(0.01, 0.01, 1, 1, 10^{-4})$ & 104220.5 & 8539.4 & 567.6 & 157.2 & 308.8 & 223.9 & 299.9 & 175.7 & 280.5 & 184.9 & 200.5 & 117.3$^\dagger$ & 138.8$^\ddagger$ & 73.2$^\ast$ \\
    $(0.01, 0.05, 1, 2, 10^{-4})$ & 416822.5 & 16719.4 & 777.4 & 149.0 & 347.5 & 253.9 & 363.6 & 231.0 & 337.6 & 213.3 & 162.0 & 108.6$^\dagger$ & 135.4$^\ddagger$ & 83.9$^\ast$ \\ \hline
\end{tabular}
\addtolength{\tabcolsep}{3pt}
\end{table*}

\subsection{\note{Generative Adversarial Networks}} 

\begin{table}[ht]
    \footnotesize
    \centering
    \caption{\textbf{WGAN-GP.} 
    We report the mean (and standard deviation) of FID scores (the lower the better).}
    \label{tab:wgan}
    \begin{tabular}{r|ccc}
    \hline
    \phantom{$y_y^k$} & \textbf{MNIST} & \textbf{CIFAR-10} & \textbf{LSUN Bedroom} \\
    \hline
    Sim-Adam  & 3.97 (1.3) & 45.0 (1.2) & 131.2 (8.4) \\
    Alt-Adam  & 1.85 (0.3) & 24.2 (1.7) & 9.0 (1.2) \\
    \rowcolor[HTML]{EFEFEF}
    Alex-Adam & \textbf{1.53 (0.3)} & \textbf{23.8 (1.5)} & \textbf{6.3 (0.6)} \\
    \hline
    \end{tabular}
\end{table}

\note{
To examine the efficacy of \alexgda{},
we train WGAN-GP \citep{arjovsky2017wasserstein,gulrajani2017improved} for the image generation task, mostly following the implementation details in \citet{heusel2017gans}.
We examine the natural combinations of Adam \citep{kingma2015adam} and (stochastic variants of) \green{\bf Sim-}/\red{\bf Alt-}/\alexgda{}, which we call Sim-/Alt-/Alex-Adam, respectively.
We highlight that \alexgda{} can be easily implemented on top of any existing base optimizers including Adam because all we need to implement additionally is a couple of extrapolation steps; we provide a brief PyTorch \citep{paszke2019pytorch} implementation of Alex-Adam for GANs in \cref{lst:alex_adam} of \cref{sec:experiment_wgan}.
We moderately tune the step sizes and the values of $\gamma$ and $\delta$.
As a result, we use $(\gamma, \delta)=(1,4)$ for MNIST \citep{deng2012mnist}, $(\gamma, \delta)=(1, 1.2)$ for CIFAR-10 \citep{krizhevsky2009learning}, and $(\gamma, \delta)=(1, 2)$ for LSUN Bedroom $64\!\times\!64$ dataset \citep{yu15lsun}.
The result is shown in \cref{tab:wgan}, where we report Fr{\'e}chet inception distance (FID) scores \citep{heusel2017gans}.
To the best of our knowledge, we achieve state-of-the-art image generation performance in terms of FID scores for MNIST and LSUN Bedroom $64\!\times\!64$ datasets with Alex-Adam.}

\note{
The experiments in \cref{tab:quadratic_extensive,tab:wgan} can be reproduced with our code available at GitHub.\footnote{\href{https://github.com/HanseulJo/Alex-GDA}{\texttt{github.com/HanseulJo/Alex-GDA}}}
}
\section{Conclusion}
\label{sec:8}

We present 
global convergence rates of \simgda{} and \altgda{} on SCSC, Lipschitz-gradient objectives in terms of the condition numbers $\kappa_x, \kappa_y$, and $\kappa_{xy}$.
For \simgda{} we prove an iteration complexity of $\tilde{\Theta} (\kappa_x + \kappa_y + \kappa_{xy}^2)$, while for \altgda{} we obtain a smaller iteration complexity of $\tilde{\gO} (\kappa_x + \kappa_y + \kappa_{xy} (\sqrt{\kappa_x} + \sqrt{\kappa_y}))$.
Comparing the results, we show that \altgda{} is provably faster than \simgda{} in terms of global convergence.

Moreover, we propose a novel algorithm called \alexgda{}, inspired by an extension of \simgda{} and \altgda{} via linear extrapolation.
\alexgda{} shows a faster iteration complexity of $\tilde{\Theta} (\kappa_x + \kappa_y + \kappa_{xy})$, matching the convergence rate of \textbf{EG} with less gradient computations per iteration. 
We also show that \alexgda{} converges linearly for bilinear problems, for which \simgda{} and \altgda{} diverge. 

We believe that our results, altogether, are valuable demonstrations of \textbf{\textit{\underline{the benefit of alternating updates}}} in GDA algorithms for minimax optimization.

\paragraph{Future Work.}


\note{As an effort to check if it is possible to obtain $\gO(\kappa)$ convergence of \altgda{}, we have tried using a computer-assisted method called the performance estimation problem (PEP) \citep{drori14performance}, a powerful tool originally designed to infer \textit{tight} worst-case complexities of \textit{convex} optimization algorithms.
Based on the work by \citet{gupta24}, we devised a PEP-based tool that automatically finds the worst-case convergence rate of an algorithm by optimizing the function, step size, and performance measure altogether.
While it is known by \citet{ryu20} that the extension of such methods to \textit{minimax} optimization can only yield a possibly loose \textit{upper bound}, the estimate we obtained for \altgda{} was approximately $\gO (\kappa^{1.4})$.
Moreover, the estimated rate for \simgda{} was $\gO (\kappa^{1.99})$, which is very close to our theoretical results.
You may refer to \cref{sec:h} for more details.}



Based on these observations and the discussions about \cref{thm:altgda} at the end of \cref{sec:4}, we leave the following conjecture on the convergence lower bound of \altgda{}.

\begin{conjecture}
    \label{conj:alt}
    There exists a \textbf{non-quadratic} function $f \in \gF (\mu_x, \mu_y, L_x, L_y, L_{xy})$ such that for any constant step sizes $\alpha, \beta > 0$, the convergence of \altgda{} requires an iteration complexity of
    \begin{align*}
        \Theta \bigopen{ \bigopen{\kappa_x + \kappa_y + \kappa_{xy}(\kappa_x + \kappa_y)^p} \cdot \log \frac{1}{\epsilon} }
    \end{align*}
    for $p \in (0, \frac{1}{2})$.
\end{conjecture}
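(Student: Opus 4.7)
Since the conjecture is an existence statement about a hard instance, my plan is to construct an explicit non-quadratic function $f\in\gF(\mu_x,\mu_y,L_x,L_y,L_{xy})$ together with an initialization for which \altgda{} provably incurs the claimed $\Theta((\kappa_x+\kappa_y+\kappa_{xy}(\kappa_x+\kappa_y)^p)\log(1/\eps))$ rate. The upper-bound half for this specific function should follow from a sharpened variant of \cref{thm:altgda} tailored to its structure, so the real difficulty is the matching \emph{lower bound}. My starting point is the quadratic hard instance used in \cref{thm:simgdalb}, restricted to the block where the rate is governed by $\kappa_{xy}$, and then perturbed by a carefully engineered higher-order term that creates a slow mode \altgda{} cannot kill faster than $1-\Theta(1/(\kappa_{xy}(\kappa_x+\kappa_y)^p))$ per step, while the local rate near the optimum still degenerates to the $\tilde{\gO}(\kappa)$ behavior identified by \citet{zhang22near}.

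\paragraph{Key Steps.}
First, I would revisit the upper-bound proof in \cref{sec:appaltgda} and pinpoint which term forces $\sqrt{\kappa_x}+\sqrt{\kappa_y}$ rather than $1$; by the discussion following \cref{cor:altgda}, this comes from the Lipschitz bound on $\nabla_\vx f(\vx_{k+1},\vy_k)-\nabla_\vx f(\vx_k,\vy_k)$, which telescopes exactly for quadratics but leaves a residual for non-quadratics. Second, I would search for a perturbation $g(\vx,\vy)$ with Hessian perturbation small enough to preserve the prescribed Lipschitz constants up to constants, yet large enough to keep this residual $\Omega((\kappa_x+\kappa_y)^p)$ along the trajectory; natural candidates are smoothly bumped cubics or quartics $c\cdot\phi(\vu^\top\vx)\,\psi(\vv^\top\vy)$ aligned with the slowest-mixing eigendirection of the underlying quadratic part, with scale $c=c(\mu,L)$ calibrated to saturate smoothness. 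Third, I would analyze the resulting weakly nonlinear discrete dynamics by linearization plus a discrete Gr\"onwall argument, showing that the slow mode survives for $\Omega(\kappa_{xy}(\kappa_x+\kappa_y)^p)$ iterations; the exponent $p$ should fall out of the dependence of $c$ on $\kappa_x,\kappa_y$. Finally, I would match the upper bound on this specific instance via a direct Lyapunov argument (likely a perturbation of $\Psialt_k$) to conclude $\Theta$.

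\paragraph{Main Obstacle.}
The deepest obstacle is proving a genuine lower bound against \emph{all} constant step sizes $\alpha,\beta>0$, not merely exhibiting slow convergence for one tuning. For the quadratic lower bounds of \cref{thm:simgdalb,thm:alexgdalb} one diagonalizes the linear update operator and reads off spectral radii; no such clean spectral object exists in the non-quadratic regime. I anticipate needing a two-scale argument: first rule out unstable step sizes via standard divergence arguments in the $\alpha L_x,\beta L_y\gtrsim 1$ regime, then inside the stable regime track the slow mode through a potential function that \emph{lower bounds} the distance to $\vz_\star$ and contracts by at most $1-O(1/(\kappa_{xy}(\kappa_x+\kappa_y)^p))$ per step uniformly in $(\alpha,\beta)$. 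A secondary difficulty is pinning down the correct value of $p$: the PEP estimates in \cref{sec:h} suggest $p\approx 0.4$, but rigorously identifying where the construction becomes tight requires a delicate balance between the magnitude of the perturbation and the way alternation propagates it, and I expect the final value of $p$ to emerge from a one-dimensional optimization of the form ``maximize slowdown subject to Lipschitz-gradient smoothness,'' analogous to how the $\kappa_{xy}^2$ dependence in \cref{thm:simgdalb} arises from balancing coupling against stability.
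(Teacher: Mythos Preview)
The statement you are attempting to prove is \cref{conj:alt}, which is explicitly a \emph{conjecture} in the paper, not a theorem. The paper does not prove it and does not claim to; it is listed under ``Future Work'' and is supported only by the heuristic PEP evidence in \cref{sec:h}, which itself is hedged (the authors note that the PEP relaxation simulates a strictly larger function class than SCSC Lipschitz-gradient functions, so the numerically observed exponent $\approx 1.385$ is only an upper bound on the true worst-case exponent). There is therefore no ``paper's own proof'' to compare your proposal against.

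Given that, a few remarks on your proposal as a research plan. The overall architecture---take the quadratic hard instance, add a controlled non-quadratic perturbation, and argue that the residual term identified in the proof of \cref{thm:altgda} is now genuinely $\Omega((\kappa_x+\kappa_y)^p)$---is a natural first attack, and your identification of the ``main obstacle'' (ruling out \emph{all} step sizes in the absence of a linear update operator) is accurate. However, two points deserve caution. First, the conjecture asks for a $\Theta$ rate, i.e., matching upper and lower bounds \emph{on the same instance}; your upper-bound sketch (``a perturbation of $\Psialt_k$'') is vague, and it is not obvious that the same perturbation that slows convergence to $\Omega(\kappa_{xy}(\kappa_x+\kappa_y)^p)$ also admits an $O(\kappa_{xy}(\kappa_x+\kappa_y)^p)$ upper bound rather than the generic $O(\kappa_{xy}(\kappa_x+\kappa_y)^{1/2})$ from \cref{cor:altgda}. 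Second, the conjecture allows $p$ to be any value in $(0,\tfrac12)$, so you need not pin down a specific exponent; but conversely, you must ensure your construction does not accidentally yield $p=0$ (recovering the quadratic rate) or $p=\tfrac12$ (no improvement over the existing upper bound). The ``one-dimensional optimization'' you allude to at the end is the right instinct, but until it is carried out there is no guarantee the optimum lands strictly inside $(0,\tfrac12)$.
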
 

    
Also, on top of our findings on bilinear functions in \cref{sec:6}, we also leave the following conjecture on \alexgda{} on general \textit{convex-concave} objectives for future work.
    
\begin{conjecture}
    Suppose that the objective function $f$ is convex-concave and has $(L_x, L_y, L_{xy})$-Lipschitz gradients.
    Then, we conjecture that \alexgda{} exhibits last-iterate convergence to a Nash equilibrium of $f$.
\end{conjecture}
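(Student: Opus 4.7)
The plan is to adapt the Lyapunov function $\Psi^{\text{Alex}}_k$ from \cref{thm:alexgda} by examining its degeneration as $\mu_x, \mu_y \to 0$ and supplementing it with auxiliary gradient-norm terms so that a non-trivial one-step decrease survives. Concretely, I would first recast \alexgda{} as a two-step fixed-point iteration on the augmented state $(\vz_k, \vz_{k-1})$, and use the monotonicity of the saddle operator $F(\vz) = (\nabla_{\vx} f(\vz), -\nabla_{\vy} f(\vz))$, which holds for any convex--concave $f$, to derive a Fej\'er-type inequality of the form
\begin{align*}
    \Psi_{k+1} \le \Psi_k - c_1 \norm{\vx_{k+1} - \vx_k}^2 - c_2 \norm{\vy_{k+1} - \vy_k}^2
\end{align*}
for some $c_1, c_2 > 0$, where $\Psi_k$ reduces to $\Psi^{\text{Alex}}_k$ when strong convexity is reintroduced and to the spectral-radius potential of \cref{sec:6} when specialized to bilinear $f$. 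The bilinear analysis already suggests that the restriction $\gamma + \delta > 2$ and cross terms proportional to $(\gamma - 1)(\delta - 1) \norm{\nabla f}^2$ will be needed, so the candidate Lyapunov function will share the structure of $\Psi^{\text{Alex}}_k$ but with the factor $1 - \alpha \mu_x$ replaced by quantities that remain nondegenerate at $\mu_x = 0$.

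Once such a Fej\'er-type inequality is in hand, the usual chain of consequences should follow: boundedness of $\{\vz_k\}$, summability of the successive differences $\norm{\vz_{k+1} - \vz_k}^2$, and hence $\vz_{k+1} - \vz_k \to 0$. Combined with the Lipschitz continuity of $\nabla f$, this would let me replace the extrapolated arguments $\tilde{\vx}_{k+1}, \tilde{\vy}_k$ by the true iterates up to vanishing error, yielding $F(\vz_k) \to 0$ and therefore that every cluster point is a Nash equilibrium. An Opial-style argument, using that $\norm{\vz_k - \vz_\star}^2$ is asymptotically non-increasing for every Nash equilibrium $\vz_\star$, would then upgrade subsequential convergence to convergence of the full sequence to a single limit --- delivering the last-iterate statement.

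The main obstacle, in my view, is the loss of the contraction factor at $\mu_x = \mu_y = 0$: the proof of \cref{thm:alexgda} uses the multiplier $1 - \alpha \mu_x$ crucially to absorb the indefinite cross terms coming from the coupling constant $L_{xy}$, and when strong convexity is removed this bookkeeping collapses. Repairing it seems to require inserting a ``gradient-mapping'' proxy --- either $\vz_{k+1} - \vz_k$ or $\norm{\nabla_\vx f(\vx_k, \tilde\vy_k)}^2 + \norm{\nabla_\vy f(\tilde\vx_{k+1}, \vy_k)}^2$ --- into the Lyapunov function and carefully tuning the coefficients so that the resulting quadratic form is negative semidefinite. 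A secondary difficulty is the asymmetry between the $\vx$ and $\vy$ updates ($\gamma$ versus $\delta$), which prevents the symmetric arguments used for \textbf{EG} and \textbf{OGD}; the analysis will likely have to be performed in a rescaled norm with weights determined by the ratio $\alpha/\beta$, and the admissible range of step sizes $(\alpha, \beta)$ will probably shrink to the bilinear-regime conditions of \cref{thm:alexgdabilinearconvergentstepsize} when no strong convexity is present.
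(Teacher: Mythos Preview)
The statement you are attempting is a \emph{conjecture} in the paper, not a theorem: it appears in the Future Work paragraph of \cref{sec:8} and the paper provides no proof for it. There is therefore nothing in the paper to compare your proposal against. Your plan is a reasonable outline of how one might attack the open problem --- the Fej\'er-monotonicity plus Opial-lemma route is indeed the standard template for last-iterate convergence of extragradient-type methods in the merely convex--concave setting --- but you should be aware that you are proposing a proof of a statement the authors explicitly leave open, not reconstructing an existing argument.

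One substantive remark on your plan: the step ``$\vz_{k+1}-\vz_k\to 0$ combined with Lipschitz gradients yields $F(\vz_k)\to 0$'' is not automatic here. Unlike \textbf{EG} or \textbf{OGD}, the \alexgda{} update computes gradients at \emph{extrapolated} points $\tilde\vx_{k+1},\tilde\vy_k$, and the difference $\tilde\vy_k-\vy_k=(\delta-1)\beta\nabla_\vy f(\tilde\vx_k,\vy_{k-1})$ need not vanish merely because successive iterates are close; you need to show the gradient norms themselves go to zero, which is exactly what the summable-decrease term in your Fej\'er inequality must supply. So the candidate Lyapunov decrease must control $\norm{\nabla_\vx f(\vx_k,\tilde\vy_k)}^2+\norm{\nabla_\vy f(\tilde\vx_{k+1},\vy_k)}^2$ directly, not just $\norm{\vz_{k+1}-\vz_k}^2$. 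Your proposal gestures at this but does not commit to it; getting the signs and coefficients of those gradient-norm terms to cooperate without the $\mu_x,\mu_y$ cushion is, as you correctly identify, the crux of the problem.
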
 


\section*{Acknowledgements}
This work was supported by Institute of Information \& communications Technology Planning \& evaluation (IITP) grant (No.\ RS-2019-II190075, Artificial Intelligence Graduate School Program (KAIST)) funded by the Korea government (MSIT). The work was also supported by the National Research Foundation of Korea (NRF) grant (No.\ RS-2023-00211352) funded by the Korea government (MSIT). CY acknowledges support from a grant funded by Samsung Electronics Co., Ltd.

\section*{Impact Statement}
This paper presents work whose goal is to advance the field of Machine Learning.
There are many potential societal consequences of our work, none of which we feel must be specifically highlighted here. 

\bibliography{refs}
\bibliographystyle{icml2024}


\newpage
\onecolumn
\appendix

\begin{center}
\huge \bf Supplementary Material
\end{center}

\section{Comparison with OGD} 
\label{sec:a}

Here we compare \alexgda{} to the Optimistic Gradient Descent (\textbf{OGD}) method \citep{popov1980modification}, an algorithm based on \textit{simultaneous} updates of the form:
\begin{align}
    \label{eq:ogd}
    \begin{aligned}
        \vx_{k+1} &= \vx_{k} - 2 \alpha \nabla_{\vx} f(\vx_{k}, \vy_{k}) + \alpha \nabla_{\vx} f(\vx_{k-1}, \vy_{k-1}), \\
        \vy_{k+1} &= \vy_{k} + 2 \beta \nabla_{\vy} f(\vx_{k}, \vy_{k}) - \beta \nabla_{\vy} f(\vx_{k-1}, \vy_{k-1}).
    \end{aligned}
\end{align}

We remark that \textbf{OGD} takes the same amount of gradient computation as \simgda{}, \altgda{}, and \alexgda{}.
One may observe that \alexgda{} stores the previous \textit{iterates} $\vx_k$ and $\vy_k$ to compute $\tilde{\vx}_{k+1}$ and $\tilde{\vy}_{k+1}$, whereas the implementation of \textbf{OGD} requires storing the previous \emph{gradients} $\nabla_{\vx} f(\vx_{k-1}, \vy_{k-1})$ and $\nabla_{\vy} f(\vx_{k-1}, \vy_{k-1})$ instead.
As a result, while these two algorithms exploit different types of information, the memory consumption of \alexgda{} and \textbf{OGD} are identical.

As \textbf{EG}, it is also known that \textbf{OGD} converges with iteration complexity $\tilde{\gO} (\kappa)$, where $\kappa = \frac{\max \{L_x, L_y, L_{xy}\}}{\min \{\mu_x, \mu_y\}}$ \citep{mokhtari2019a}.
We show that the iteration complexity cannot be strictly better than \alexgda{} through the following proposition.

\begin{restatable}{proposition}{propogdlb}
    \label{prop:ogdlb}
    There exists a $6$-dimensional function $f \in \gF (\mu_x, \mu_y, L_x, L_y, L_{xy})$ with $d_x = d_y = 3$ such that for any constant step sizes $\alpha, \beta > 0$, the convergence of \textbf{OGD} requires an iteration complexity of rate at least
    \begin{align*}
        \Omega \bigopen{ \bigopen{\kappa_x + \kappa_y + \kappa_{xy}} \cdot \log \frac{1}{\epsilon} }
    \end{align*}
    in order to have $\| \vz_K - \vz_{\star} \|^2 \le \epsilon$.
\end{restatable}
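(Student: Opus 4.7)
The plan is to construct a $6$-dimensional hard instance by direct sum, in the spirit of the proofs of Theorem~\ref{thm:simgdalb} and Theorem~\ref{thm:alexgdalb}. Split $\vx = (x_1, x_2, x_3) \in \R^3$ and $\vy = (y_1, y_2, y_3) \in \R^3$, and set
\begin{align*}
f(\vx, \vy) = \tfrac{L_x}{2} x_1^2 + \tfrac{\mu_x}{2} x_2^2 - \tfrac{L_y}{2} y_1^2 - \tfrac{\mu_y}{2} y_2^2 + \tfrac{\mu_x}{2} x_3^2 + L_{xy} x_3 y_3 - \tfrac{\mu_y}{2} y_3^2,
\end{align*}
with unique Nash equilibrium at the origin. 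A direct check shows $f \in \gF(\mu_x, \mu_y, L_x, L_y, L_{xy})$: the Hessians $\nabla_\vx^2 f = \diag(L_x, \mu_x, \mu_x)$ and $-\nabla_\vy^2 f = \diag(L_y, \mu_y, \mu_y)$ give the SCSC/smoothness parameters, and the only cross-Lipschitz contribution comes from the $(x_3, y_3)$ coupling with constant $L_{xy}$. Since the gradient decouples across the three orthogonal blocks $\{x_1, x_2\}$, $\{y_1, y_2\}$, $\{x_3, y_3\}$, OGD~(\ref{eq:ogd}) splits into three independent linear recurrences, and the overall iteration complexity is at least the largest of the three per-block complexities for any fixed $(\alpha, \beta)$.

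For the $\{x_1, x_2\}$ block, OGD on a scalar $z$ with gradient $L_j z$ (with $L_j \in \{L_x, \mu_x\}$) satisfies $z_{k+1} = (1 - 2\alpha L_j)z_k + \alpha L_j z_{k-1}$, whose roots $\lambda_\pm = \tfrac{1}{2}\bigl((1 - 2\alpha L_j) \pm \sqrt{1 + 4\alpha^2 L_j^2}\bigr)$ have product $-\alpha L_j$. Stability on $x_1$ forces $\alpha L_x < 2/3$, and under this constraint the larger root on $x_2$ expands as $\lambda_+ = 1 - \alpha \mu_x + O((\alpha \mu_x)^2)$, so the iteration complexity on this block is $\Omega(1/(\alpha\mu_x)) \geq \Omega(\kappa_x)$ uniformly in $\alpha$. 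The $\{y_1, y_2\}$ block is symmetric and gives $\Omega(\kappa_y)$.

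The main technical work is the bilinear block. Writing $\vv_k = (x_{3,k}, y_{3,k})$, OGD evolves $(\vv_k, \vv_{k-1}) \in \R^4$ via $\vv_{k+1} = (I - 2DJ)\vv_k + DJ \vv_{k-1}$, with $D = \diag(\alpha, \beta)$ and $J = \left(\begin{smallmatrix} \mu_x & L_{xy} \\ -L_{xy} & \mu_y \end{smallmatrix}\right)$. A short substitution shows that each eigenvalue $\mu$ of $DJ$ yields two OGD eigenvalues $\lambda$ satisfying $\lambda^2 - (1 - 2\mu)\lambda - \mu = 0$, with product $-\mu$; stability thus forces $|\mu|^2 = \alpha\beta(\mu_x\mu_y + L_{xy}^2) < 1$, while for small $|\mu|$ the larger root expands as $|\lambda_+| = 1 - \mathrm{Re}(\mu) - \tfrac{1}{2}\mathrm{Im}(\mu)^2 + O(|\mu|^3)$. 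The hardest step is a case analysis over $(\alpha, \beta)$: combining the three per-block complexities $1/(\alpha\mu_x)$, $1/(\beta\mu_y)$, $1/(\mathrm{Re}(\mu) + \mathrm{Im}(\mu)^2)$ with the stability constraints $\alpha L_x, \beta L_y = O(1)$ and $\alpha\beta L_{xy}^2 = O(1)$, an AM--GM argument (shrinking $\alpha\beta$ to ease bilinear-block stability correspondingly slows the $x_2$ and $y_2$ coordinates) shows that the worst per-block complexity is $\Omega(\kappa_x + \kappa_y + \kappa_{xy})$ independently of $(\alpha,\beta)$. The case-split mirrors the ones used in Theorem~\ref{thm:simgdalb} and Theorem~\ref{thm:alexgdalb}, so I would reuse the same template with the OGD characteristic polynomial above, ultimately yielding the claimed $\Omega((\kappa_x + \kappa_y + \kappa_{xy}) \log(1/\epsilon))$ lower bound.
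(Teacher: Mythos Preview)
Your proposal is correct and follows essentially the same approach as the paper: the same block-diagonal quadratic hard instance, the same scalar recurrence $w_{k+1}=(1-2c)w_k+cw_{k-1}$ forcing $\alpha L_x,\beta L_y<2/3$ and yielding complexity $\Omega(1/(\alpha\mu_x)+1/(\beta\mu_y))$ from the $x_2,y_2$ coordinates, and the same AM--GM step once $\alpha\beta L_{xy}^2=O(1)$ is extracted from the coupled block. The one difference is in how that last constraint is obtained: the paper writes the full quartic characteristic polynomial of the $4\times4$ companion matrix and invokes a Schur--Cohn-type criterion (Theorem~1.5 of Grove--Ladas) to bound its constant term $\ell=\alpha\beta(\mu_x\mu_y+L_{xy}^2)<1$, whereas your factorization through the eigenvalues $\mu$ of $DJ$ and Vieta on $\lambda^2-(1-2\mu)\lambda-\mu=0$ (so that the product of all four OGD eigenvalues is $\det(DJ)$) is a cleaner shortcut to the same inequality; note also that the third ``per-block complexity'' you list is not actually needed---the $\Omega(\kappa_{xy})$ part already follows from the $x_2,y_2$ rates combined with the stability constraint $\alpha\beta L_{xy}^2=O(1)$.
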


We prove \cref{prop:ogdlb} in \cref{sec:f}.

We also compare \alexgda{} with \textbf{OGD} in terms of bilinear problem, based upon the analysis of \citet{zhang2020convergence}.
From their results, the iteration complexity of \textbf{OGD} is translated to $\tilde{\gO}\bigopen{\bigopen{{L_{xy}}/{\mu_{xy}}}^2}$; it matches to the iteration complexity of \alexgda{} up to constant factor.
In more detail, the authors proved that optimal convergence rate exponent of \textbf{OGD} of the form in \cref{eq:ogd} is approximately $1-\frac{\mu_{xy}^2}{6L_{xy}^2}$, which is 6 times slower than our optimal rate exponent $\sqrt{\frac{L_{xy}^2 - \mu_{xy}^2}{L_{xy}^2 + \mu_{xy}^2}} \approx 1-\frac{\mu_{xy}^2}{L_{xy}^2}$ proved in \cref{thm:alexgdabilinearcomplexity}.
On the other hand, \citet{zhang2020convergence} also proved that the alternating variant of \textbf{OGD}, \textit{i.e.}, Gauss-Siedel OGD (\textbf{GS-OGD}), has an optimal convergence rate exponent $\sqrt{\frac{L_{xy}^2 - \mu_{xy}^2}{L_{xy}^2 + \mu_{xy}^2}}$. It exactly matches our optimal rate of \alexgda{} with $\delta=1$. 
These facts again buttress our claim that alternating updates are beneficial in minimax optimization.
\newpage
\section{\texorpdfstring{Proofs used in \cref{sec:3}}{Proofs used in Section 3}} 
\label{sec:b}

Here we prove all theorems related to \simgda{} presented in \cref{sec:3}.
\begin{itemize}
    \item  In \cref{sec:appsimgda} we prove \cref{thm:simgda} which yields a contraction inequality for \simgda{}. 
    \item In \cref{sec:corsimgda} we prove \cref{cor:simgda} which derives the corresponding iteration complexity upper bound.
    \item In \cref{sec:thmsimgdalb} we prove \cref{thm:simgdalb} which yields a matching lower bound for \simgda{}.
    \item In \cref{sec:simtech} we prove technical propositions and lemmas used throughout the proofs in \cref{sec:b}.
\end{itemize}

\subsection{\texorpdfstring{Proof of \cref{thm:simgda}}{Proof of Theorem 3.1}} \label{sec:appsimgda}

Here we prove \cref{thm:simgda} of \cref{sec:3}, restated below for the sake of readability.

\theoremsimgda*

\begin{proof}

Recall that we define the Lyapunov function as
\begin{align*}
    \Psisim_k &= \frac{1}{\alpha} \| \vx_k - \vx_{\star} \|^2 + \frac{1}{\beta} \| \vy_k - \vy_{\star} \|^2.
\end{align*}
Now we will show that $\Psisim_1 \le \gamma \Psisim_0$ for any choice of initialization points $\vx_0$ and $\vy_0$ ({\it i.e.}, set $k=0$ W.L.O.G.), which directly implies $\Psisim_{k+1} \le \gamma \Psisim_k$ for all $k$.
\cref{prop:simcont} yields a one-step contraction inequality that applies to \simgda{} with $\alpha < \frac{1}{L_x}$ and $\beta < \frac{1}{L_y}$, {\it i.e.}, when the step sizes are small enough.
\begin{restatable}{proposition}{propsimcont}
    \label{prop:simcont}
    For $f \in \gF (\mu_x, \mu_y, L_x, L_y, L_{xy})$, \simgda{} with step sizes $\alpha < \frac{1}{L_x}$ and $\beta < \frac{1}{L_y}$ satisfies
    \begin{align*}
        \frac{1}{\alpha} \norm{\vx_1 - \vx_{\star}}^2 + \frac{1}{\beta} \norm{\vy_1 - \vy_{\star}}^2 &\le r \bigopen{\frac{1}{\alpha} \norm{\vx_0 - \vx_{\star}}^2 + \frac{1}{\beta} \norm{\vy_0 - \vy_{\star}}^2},
    \end{align*}
    where the contraction factor is given by
    \begin{align*}
        r &= \max \left\{ \left\| \begin{bmatrix}
            1 - \alpha L_x & - \sqrt{\alpha \beta} L_{xy} \\
            \sqrt{\alpha \beta} L_{xy} & 1 - \beta \mu_y
        \end{bmatrix} \right\|^2, \ \ \left\| \begin{bmatrix}
            1 - \alpha \mu_x & - \sqrt{\alpha \beta} L_{xy} \\
            \sqrt{\alpha \beta} L_{xy} & 1 - \beta L_y
        \end{bmatrix} \right\|^2 \right\}.
    \end{align*}
\end{restatable}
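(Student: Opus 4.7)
My plan is to express the one-step update of Sim-GDA as a linear operator $\mM$ acting on the rescaled error $(\vx_0 - \vx_\star)/\sqrt{\alpha}, (\vy_0 - \vy_\star)/\sqrt{\beta}$, and then compare $\|\mM\|^2$ to the norms of the two scalar $2\times 2$ matrices in the statement via a technical lemma on block-matrix operator norms.

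I would start by using that $(\vx_\star, \vy_\star)$ is an unconstrained SCSC Nash equilibrium, so $\nabla f(\vx_\star, \vy_\star) = \vzero$. Applying the fundamental theorem of calculus along linear paths from $(\vx_\star, \vy_\star)$ to $(\vx_0, \vy_0)$ yields
\begin{align*}
\nabla_{\vx} f(\vx_0, \vy_0) &= \mH_{xx}(\vx_0 - \vx_\star) + \mH_{xy}(\vy_0 - \vy_\star),\\
\nabla_{\vy} f(\vx_0, \vy_0) &= \mH_{yx}(\vx_0 - \vx_\star) + \mH_{yy}(\vy_0 - \vy_\star),
\end{align*}
where the averaged Hessian blocks $\mH_{xx}$ and $-\mH_{yy}$ are symmetric with spectra in $[\mu_x, L_x]$ and $[\mu_y, L_y]$ respectively (by SCSC and gradient-Lipschitzness), and $\|\mH_{xy}\|, \|\mH_{yx}\| \le L_{xy}$. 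Setting $\vu := (\vx_0 - \vx_\star)/\sqrt{\alpha}$, $\vv := (\vy_0 - \vy_\star)/\sqrt{\beta}$, and $\vp, \vq$ analogously, a direct substitution shows $(\vp, \vq)^\top = \mM (\vu, \vv)^\top$ with
\begin{align*}
\mM := \begin{bmatrix} \mI - \alpha \mH_{xx} & -\sqrt{\alpha\beta}\,\mH_{xy} \\ \sqrt{\alpha\beta}\,\mH_{yx} & \mI + \beta \mH_{yy}\end{bmatrix},
\end{align*}
and hence $\Psi^{\text{Sim}}_1 = \|\vp\|^2 + \|\vq\|^2 \le \|\mM\|^2 (\|\vu\|^2 + \|\vv\|^2) = \|\mM\|^2\, \Psi^{\text{Sim}}_0$, reducing the claim to $\|\mM\|^2 \le r$.

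The remaining ingredient, which I would state as a separate lemma to be proved in \cref{sec:simtech}, is the following block-matrix bound: for symmetric positive-definite $\mA, \mC$ with spectra in $[a_-, a_+]$ and $[c_-, c_+]$ (all strictly positive) and any $\mB$ with $\|\mB\| \le b$,
\begin{align*}
\bignorm{\begin{bmatrix} \mA & -\mB \\ \mB^\top & \mC \end{bmatrix}}^2 \le \max\bigset{\bignorm{\begin{bmatrix} a_- & -b \\ b & c_+ \end{bmatrix}}^2,\ \bignorm{\begin{bmatrix} a_+ & -b \\ b & c_- \end{bmatrix}}^2}.
\end{align*}
Specializing to $\mA = \mI - \alpha \mH_{xx}$ (which is positive definite when $\alpha < 1/L_x$, with $a_- = 1 - \alpha L_x$ and $a_+ = 1 - \alpha \mu_x$), $\mC = \mI + \beta \mH_{yy}$ (positive definite when $\beta < 1/L_y$, with $c_- = 1 - \beta L_y$ and $c_+ = 1 - \beta \mu_y$), and $b = \sqrt{\alpha\beta}\, L_{xy}$ recovers exactly the claimed $r$.

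The main difficulty is proving this block-matrix lemma sharply. A naive Cauchy--Schwarz reduction via the scalars $(\|\mA\vu\|, \|\mC\vv\|)$ discards the off-diagonal sign pattern and only yields the strictly looser symmetric bound $\bignorm{\begin{bmatrix} \|\mA\| & b\\ b & \|\mC\|\end{bmatrix}}^2$---for instance at $a_\pm = c_\pm = b = 1$ this produces $4$ versus the desired $\|M_1\|^2 = 2$---because it drops the partial cancellation in the cross term $2\vu^\top(\mB\mC - \mA\mB)\vv$ arising from the expansion $\|\mA\vu - \mB\vv\|^2 + \|\mB^\top\vu + \mC\vv\|^2$. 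To preserve this cancellation, I would first diagonalize $\mB$ via the SVD of $\mH_{xy}$ (which leaves the spectral ranges of $\mA, \mC$ and the block-operator norm intact), and then exploit the convexity of $\mM \mapsto \|\mM\|^2$ together with the closed form $\bignorm{\begin{bmatrix} a & -b\\ b & c\end{bmatrix}}^2 = \tfrac12\bigopen{a^2 + c^2 + 2b^2 + |a-c|\sqrt{(a+c)^2 + 4b^2}}$ to reduce the problem to explicit monotonicity analysis on the rectangle $[a_-, a_+] \times [c_-, c_+]$, whose maximizers turn out to be exactly the opposing-eigenvalue corners $(a_-, c_+)$ and $(a_+, c_-)$ corresponding to $M_1$ and $M_2$.
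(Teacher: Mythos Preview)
Your reduction is exactly the paper's: linearize the Sim-GDA step via the averaged Hessian blocks along the segment $[\vz_\star,\vz_0]$, rescale by $(1/\sqrt{\alpha},1/\sqrt{\beta})$, and reduce $\Psi^{\text{Sim}}_1\le r\,\Psi^{\text{Sim}}_0$ to an operator-norm bound on the resulting $2\times2$ block matrix. The block-matrix inequality you isolate is precisely the paper's \cref{lem:normineq}. One small omission: your $\mM$ has off-diagonal blocks $-\sqrt{\alpha\beta}\,\mH_{xy}$ and $\sqrt{\alpha\beta}\,\mH_{yx}$, whereas the lemma is stated for $-\mB$ and $\mB^\top$; you need $\mH_{yx}=\mH_{xy}^{\top}$, which does hold because the function class assumes twice differentiability and the averaged Hessians inherit this symmetry.

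Where your proposal diverges from the paper is the sketched proof of the lemma, and there the plan has a gap. Diagonalizing $\mB$ by the SVD of $\mH_{xy}$ replaces $\mA,\mC$ by $U^\top\mA U,\ V^\top\mC V$, which are still \emph{general} symmetric matrices with the same spectral ranges; the problem does not decouple into coordinate-wise $2\times2$ blocks. Convexity of $\mM\mapsto\|\mM\|^2$ then only pushes you to extreme points of the Loewner box $\{a_-\mI\preceq\mA\preceq a_+\mI\}$, which are all matrices of the form $a_-P+a_+(I-P)$ for arbitrary orthogonal projectors $P$---a far larger set than the two scalar matrices $a_\pm\mI$. So the step ``reduce to monotonicity on the rectangle $[a_-,a_+]\times[c_-,c_+]$'' is not justified; you never arrive at a scalar $(a,c)$ problem to which your closed-form $2\times2$ norm applies.

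The paper avoids this by a different route: it passes to the symmetric matrix $\mM'=\bigl[\begin{smallmatrix}\mX&\mW\\\mW^\top&-\mY\end{smallmatrix}\bigr]$ (same norm), writes $\lambda_{\max}^{\mM'}$ and $-\lambda_{\min}^{\mM'}$ via Rayleigh quotients, and then \emph{enlarges} the supremum by also optimizing over the singular directions of $\mW$; this decouples the cross term to $2pq\sigma_1$ and reduces cleanly to the two $2\times2$ matrices. Even simpler, the paper records (in a remark) the one-line sandwich
\[
\begin{bmatrix}\|\vx\|\\\|\vy\|\end{bmatrix}^{\!\top}\!\begin{bmatrix}t_x&-\ell\\-\ell&-s_y\end{bmatrix}\!\begin{bmatrix}\|\vx\|\\\|\vy\|\end{bmatrix}
\;\le\;
\begin{bmatrix}\vx\\\vy\end{bmatrix}^{\!\top}\!\mM'\!\begin{bmatrix}\vx\\\vy\end{bmatrix}
\;\le\;
\begin{bmatrix}\|\vx\|\\\|\vy\|\end{bmatrix}^{\!\top}\!\begin{bmatrix}s_x&\ell\\\ell&-t_y\end{bmatrix}\!\begin{bmatrix}\|\vx\|\\\|\vy\|\end{bmatrix},
\]
which gives the lemma immediately. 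Either of these replaces your SVD/convexity plan.
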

We prove \cref{prop:simcont} in \cref{sec:propsimcont}.

To find the right step sizes, we search among $\alpha, \beta$ which satisfies ${\alpha}/{\beta} = {\mu_y}/{\mu_x}$.
This allows us to reduce the problem to optimizing the choice of $\zeta$, which can be defined as
\begin{align*}
    \zeta = \alpha \mu_x = \beta \mu_y.
\end{align*}
Then the contraction factor can be rewritten as
\begin{align*}
    r &= \max \left\{ \left\| \begin{bmatrix}
        1 - \zeta \kappa_x & - \zeta \kappa_{xy} \\
        \zeta \kappa_{xy} & 1 - \zeta
    \end{bmatrix} \right\|^2, \ \ \left\| \begin{bmatrix}
        1 - \zeta & - \zeta \kappa_{xy} \\
        \zeta \kappa_{xy} & 1 - \zeta \kappa_y
    \end{bmatrix} \right\|^2 \right\}.
\end{align*}
For $\kappa \ge 1$, let us define the function $f_{\kappa} : (0, \infty) \rightarrow (0, \infty)$ as:
\begin{align}
\begin{aligned}
    f_{\kappa} (\zeta) &= \left\| \begin{bmatrix}
        1 - \zeta \kappa & - \zeta \kappa_{xy} \\
        \zeta \kappa_{xy} & 1 - \zeta
    \end{bmatrix} \right\| = \frac{\kappa - 1}{2} \cdot \zeta + \sqrt{\bigopen{1 - \frac{\kappa + 1}{2} \cdot \zeta}^2 + \zeta^2 \kappa_{xy}^2}.
\end{aligned} \label{eq:fzeta}
\end{align}
Then we can simplify as follows:
\begin{align*}
    r &= \max \bigset{\bigopen{f_{\kappa_x} (\zeta)}^2, \bigopen{f_{\kappa_y} (\zeta)}^2}.
\end{align*}

\cref{prop:normopt} characterizes the optimal choice of $\zeta$ and the optimal function value of $f_{\kappa} (\zeta)$ defined as in (\ref{eq:fzeta}).
\begin{restatable}{proposition}{propnormopt}
    \label{prop:normopt}
    For $f_{\kappa} : (0, \infty) \rightarrow (0, \infty)$ defined as in (\ref{eq:fzeta}), the minimizer $\zeta^{\star}$ is equal to
    \begin{align*}
        \zeta^{\star} &= \frac{1}{\sqrt{\kappa + \kappa_{xy}^2}} \cdot \frac{2 \bigopen{\kappa_{xy} + \sqrt{\kappa + \kappa_{xy}^2} \ }}{1 + \bigopen{\kappa_{xy} + \sqrt{\kappa + \kappa_{xy}^2} \ }^2}
    \end{align*}
    and the minimum value of $f_{\kappa}$ attained at $\zeta^{\star}$ is equal to
    \begin{align*}
        f_{\kappa} (\zeta^{\star}) &= \frac{\bigopen{\kappa_{xy} + \sqrt{\kappa + \kappa_{xy}^2} \ }^2 - 1}{\bigopen{\kappa_{xy} + \sqrt{\kappa + \kappa_{xy}^2} \ }^2 + 1}.
    \end{align*}
    Moreover, we have $f_{\kappa_x} (\zeta) \ge f_{\kappa_y} (\zeta)$ for all $\zeta \in (0, \infty)$ if and only if $\kappa_x \ge \kappa_y$.
\end{restatable}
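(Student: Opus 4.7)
The plan is to treat $f_\kappa$ as a smooth one-variable function on $(0,\infty)$ and find its unique critical point by standard calculus, then handle the monotonicity in $\kappa$ via a direct partial-derivative computation. Writing $a = \frac{\kappa-1}{2}$ and $b = \frac{\kappa+1}{2}$ so that $b - a = 1$ and $b^2 - a^2 = \kappa$, the function becomes $f_\kappa(\zeta) = a\zeta + g(\zeta)$ where $g(\zeta) = \sqrt{(1-b\zeta)^2 + \zeta^2\kappa_{xy}^2}$. Since $f_\kappa(\zeta)\to 1$ as $\zeta\to 0^+$ and $f_\kappa(\zeta)\to\infty$ as $\zeta\to\infty$, a global minimizer exists in the interior, so it suffices to find all critical points and pick the one that yields value below $1$.

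Setting $f_\kappa'(\zeta) = 0$ and isolating the square root gives $a\,g(\zeta) = b - (b^2 + \kappa_{xy}^2)\zeta$. Squaring both sides and using the identities $b^2 - a^2 = \kappa$ and $b^2 - \kappa = a^2$, the resulting equation collapses to the quadratic
\begin{align*}
(b^2+\kappa_{xy}^2)(\kappa+\kappa_{xy}^2)\zeta^2 \;-\; 2b(\kappa+\kappa_{xy}^2)\zeta \;+\; \kappa \;=\; 0.
\end{align*}
The discriminant simplifies, via $b^2(\kappa+\kappa_{xy}^2) - (b^2+\kappa_{xy}^2)\kappa = a^2\kappa_{xy}^2$, to give the two roots $\zeta = \frac{b\sqrt{\kappa+\kappa_{xy}^2}\pm a\kappa_{xy}}{(b^2+\kappa_{xy}^2)\sqrt{\kappa+\kappa_{xy}^2}}$. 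I would then rewrite the numerator and denominator using $s := \kappa_{xy} + \sqrt{\kappa+\kappa_{xy}^2}$ (which satisfies $s^2 = \kappa + 2s\kappa_{xy}$ and hence $s^2 + 1 = (b+\kappa_{xy})^2 + (b-1) = b^2+\kappa_{xy}^2 + 2s\kappa_{xy} - (\kappa - 2b + 1)/\ldots$, etc.), verify after simplification that the smaller root matches the claimed $\zeta^\star$, and confirm that this root is the one consistent with the unsquared equation. Substituting $\zeta^\star$ back into $f_\kappa$, the identity $s^2 - 2s\kappa_{xy} = \kappa$ collapses the expression to $(s^2-1)/(s^2+1)$, which is $< 1$ (since $s\geq 1$), confirming that this critical point is indeed the global minimum.

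For the final monotonicity claim, I would differentiate $f_\kappa$ with respect to $\kappa$ at fixed $\zeta$, which yields
\begin{align*}
\partial_\kappa f_\kappa(\zeta) \;=\; \frac{\zeta}{2}\left[1 - \frac{1-b\zeta}{\sqrt{(1-b\zeta)^2 + \zeta^2\kappa_{xy}^2}}\right] \;\geq\; 0,
\end{align*}
since $\sqrt{(1-b\zeta)^2 + \zeta^2\kappa_{xy}^2} \geq |1-b\zeta| \geq 1-b\zeta$. This gives the ``if'' direction immediately. For the ``only if'' direction, I would compare the leading-order behavior as $\zeta\to\infty$: $f_\kappa(\zeta) = \bigl(a + \sqrt{b^2+\kappa_{xy}^2}\bigr)\zeta + O(1)$, whose leading coefficient is strictly increasing in $\kappa$, so $\kappa_x < \kappa_y$ would force $f_{\kappa_x}(\zeta) < f_{\kappa_y}(\zeta)$ for all large $\zeta$, contradicting the hypothesis. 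The main obstacle is the second paragraph's algebra: the raw quadratic formula does not visibly resemble the clean expression $\frac{1}{\sqrt{\kappa+\kappa_{xy}^2}}\cdot\frac{2s}{1+s^2}$, so the computation hinges on recognizing and exploiting the identity $s^2 = \kappa + 2s\kappa_{xy}$ to convert everything into symmetric functions of $s$.
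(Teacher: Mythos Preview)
Your approach is correct and takes a genuinely different route from the paper. The paper factors out the optimization into a separate lemma (Lemma~B.5) that attacks $f(x) = Ax + \sqrt{(1-Bx)^2 + C^2x^2}$ via a trigonometric/hyperbolic substitution: writing $\sin\phi = A/R$, $\sin\psi = B/R$ with $R=\sqrt{B^2+C^2}$, changing to $y=\tan\psi - Rx\sec\psi$, then to $y=\sinh\theta$, after which the minimum value emerges as $\cos(\psi-\phi)$ and half-angle identities deliver the closed form. For the monotonicity claim, the paper invokes a second lemma (Lemma~B.6) that bounds the difference $f_2-f_1$ via a direct algebraic inequality. You instead differentiate, square, and reduce to a quadratic for the minimizer, and handle monotonicity by the one-line computation $\partial_\kappa f_\kappa(\zeta) = \tfrac{\zeta}{2}\bigl(1 - (1-b\zeta)/g(\zeta)\bigr) \ge 0$. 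Your route is more elementary and avoids the change-of-variables machinery; the price is the algebraic cleanup matching the quadratic root to the claimed $\zeta^\star$, which you correctly flag as the crux. The key simplification you want is $1+s^2 = 2(b + s\kappa_{xy})$ (from $s^2 = \kappa + 2s\kappa_{xy}$ and $\kappa+1=2b$), which makes the cross-multiplication between $\frac{bD - a\kappa_{xy}}{(b^2+\kappa_{xy}^2)D}$ and $\frac{s}{D(b+s\kappa_{xy})}$ collapse cleanly; your parenthetical identity sketch is muddled, but this single relation does all the work. Your asymptotic argument for the ``only if'' direction is also cleaner than the paper's, since it sidesteps the edge case $\kappa_{xy}=0$ where $\partial_\kappa f_\kappa$ can vanish on an interval.
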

We prove \cref{prop:normopt} in \cref{sec:propnormopt}.

If $\kappa_x \ge \kappa_y$, we choose $\alpha, \beta$ such that
\begin{align*}
    \alpha \mu_x &= \beta \mu_y = \zeta_{x}^{\star} := \frac{1}{\sqrt{\kappa_x + \kappa_{xy}^2}} \cdot \frac{2 \bigopen{\kappa_{xy} + \sqrt{\kappa_x + \kappa_{xy}^2} \ }}{1 + \bigopen{\kappa_{xy} + \sqrt{\kappa_x + \kappa_{xy}^2} \ }^2}.
\end{align*}
Note that $\zeta_{x}^{\star} = \Theta \bigopen{\frac{1}{\kappa_x + \kappa_{xy}^2}}$.
Then, since $f_{\kappa_x} (\zeta) \ge f_{\kappa_y} (\zeta)$, we have
\begin{align*}
    r &= \max \bigset{\bigopen{f_{\kappa_x} (\zeta_{x}^{\star})}^2, \bigopen{f_{\kappa_y} (\zeta_{x}^{\star})}^2} = \bigopen{f_{\kappa_x} (\zeta_{x}^{\star})}^2 = \bigopen{\frac{\bigopen{\kappa_{xy} + \sqrt{\kappa_x + \kappa_{xy}^2} \ }^2 - 1}{\bigopen{\kappa_{xy} + \sqrt{\kappa_x + \kappa_{xy}^2} \ }^2 + 1}}^2
\end{align*}
which is identical to (\ref{eq:simcontrate}) when $\kappa_x \ge \kappa_y$.

Similarly, if $\kappa_x < \kappa_y$, we choose $\alpha, \beta$ such that
\begin{align*}
    \alpha \mu_x &= \beta \mu_y = \zeta_{y}^{\star} := \frac{1}{\sqrt{\kappa_y + \kappa_{xy}^2}} \cdot \frac{2 \bigopen{\kappa_{xy} + \sqrt{\kappa_y + \kappa_{xy}^2} \ }}{1 + \bigopen{\kappa_{xy} + \sqrt{\kappa_y + \kappa_{xy}^2} \ }^2}.
\end{align*}
Note that $\zeta_{y}^{\star} = \Theta \bigopen{\frac{1}{\kappa_y + \kappa_{xy}^2}}$. 
Then, since $f_{\kappa_y} (\zeta) \le f_{\kappa_y} (\zeta)$, we have
\begin{align*}
    r &= \max \bigset{\bigopen{f_{\kappa_x} (\zeta_{y}^{\star})}^2, \bigopen{f_{\kappa_y} (\zeta_{y}^{\star})}^2} = \bigopen{f_{\kappa_y} (\zeta_{y}^{\star})}^2 = \bigopen{\frac{\bigopen{\kappa_{xy} + \sqrt{\kappa_y + \kappa_{xy}^2} \ }^2 - 1}{\bigopen{\kappa_{xy} + \sqrt{\kappa_y + \kappa_{xy}^2} \ }^2 + 1}}^2
\end{align*}
which is identical to (\ref{eq:simcontrate}) when $\kappa_x < \kappa_y$.
Note that for either case, we have that
\begin{align*}
    \alpha \mu_x &= \beta \mu_y = \Theta \bigopen{\frac{1}{\max \{\kappa_x, \kappa_y \} + \kappa_{xy}}} = \Theta \bigopen{\frac{1}{\kappa_x + \kappa_y + \kappa_{xy}}},
\end{align*}
which concludes the proof.\footnote{Note that for $a, b \ge 0$, we have $\max \{ a, b \} = \Theta (a + b)$ since $\frac{a+b}{2} \le \max \{ a, b \} \le a + b$.}
\end{proof}
\subsection{\texorpdfstring{Proof of \cref{cor:simgda}}{Proof of Corollary 3.2}} \label{sec:corsimgda}

Here we prove \cref{cor:simgda} of \cref{sec:3}, restated below for the sake of readability.

\corsimgda*

\begin{proof}
    Let us define $\xi := \kappa_{xy} + \sqrt{ \max \bigset{\kappa_x, \kappa_y} + \kappa_{xy}^2}$ so that $r = \bigopen{\frac{\xi^2 - 1}{\xi^2 + 1}}^2$ by \cref{thm:simgda}. By definition we have $\xi^2 = \Theta \bigopen{\kappa_x + \kappa_y + \kappa_{xy}^2}$ and $\xi \ge 1$, which gives us
    \begin{align*}
        \frac{1}{1 - r} &= \frac{1}{1 - \bigopen{\frac{\xi^2 - 1}{\xi^2 + 1}}^2} = \frac{\bigopen{\xi^2 + 1}^2}{\bigopen{\xi^2 + 1}^2 - \bigopen{\xi^2 - 1}^2} = \frac{1}{4}\bigopen{\xi + \frac{1}{\xi}}^2 = \Theta \bigopen{\kappa_x + \kappa_y + \kappa_{xy}^2}.
    \end{align*}
    Therefore it is sufficient to run
    \begin{align*}
        K  &= \gO \bigopen{ \bigopen{\kappa_x + \kappa_y + \kappa_{xy}^2} \cdot \log \frac{\Psisim_0}{A^{\text{Sim}} \epsilon} }
    \end{align*}
    iterations to ensure that $\| \vz_K - \vz_{\star} \|^2 \le \epsilon$, where $A^{\text{Sim}} = \min \bigset{\frac{1}{\alpha}, \frac{1}{\beta}}$.
\end{proof}

\begin{remark*}
    \note{Here we present a simpler proof of \cref{cor:simgda} we discovered afterwards.
    The proof can achieve the same iteration complexity upper bound with a similar yet slightly different choice of step sizes $\alpha, \beta$.
    Compared to the one using \cref{thm:simgda}, this proof does not require complicated matrix analyses and better extends to algorithms with alternating updates, such as \altgda{} (as in \cref{prop:altstepone}) or \alexgda{} (as in \cref{prop:alexgda}).}
\end{remark*}

\subsubsection*{Step 1. Contraction Inequality}
We first prove the following proposition.
\begin{proposition}
    \label{prop:simsimple}
    For $f \in \gF (\mu_x, \mu_y, L_x, L_y, L_{xy})$ and \simgda{} with step sizes $\alpha \le \frac{1}{2L_x}$ and $\beta \le \frac{1}{2L_y}$, we have
    \begin{align}
        \frac{1}{\alpha} \| \vx_1 - \vx_{\star} \|^2 + \frac{1}{\beta} \| \vy_1 - \vy_{\star} \|^2 &\le \bigopen{\frac{1}{\alpha} - \mu_x + 2\beta L_{xy}^2} \norm{\vx_0 - \vx_{\star}}^2 + \bigopen{\frac{1}{\beta} - \mu_y + 2\alpha L_{xy}^2} \norm{\vy_0 - \vy_{\star}}^2 \label{eq:simineq}
    \end{align}
    for all $\vx_0 \in \R^{d_x}, \vy_0 \in \R^{d_y}$.
\end{proposition}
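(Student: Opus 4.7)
The plan is to substitute the Sim-GDA updates into $\tfrac{1}{\alpha}\|\vx_1-\vx_\star\|^2+\tfrac{1}{\beta}\|\vy_1-\vy_\star\|^2$ and bound each resulting term using the SCSC and Lipschitz-gradient structure of $f$. Expanding both squared norms yields $\tfrac{1}{\alpha}\|\vx_0-\vx_\star\|^2+\tfrac{1}{\beta}\|\vy_0-\vy_\star\|^2+T_1+T_2$, where
\begin{align*}
    T_1 &:= -2\langle\nabla_{\vx} f(\vx_0,\vy_0),\vx_0-\vx_\star\rangle + 2\langle\nabla_{\vy} f(\vx_0,\vy_0),\vy_0-\vy_\star\rangle,\\
    T_2 &:= \alpha\|\nabla_{\vx} f(\vx_0,\vy_0)\|^2+\beta\|\nabla_{\vy} f(\vx_0,\vy_0)\|^2.
\end{align*}
The approach is to bound $T_1$ via strong monotonicity and $T_2$ via Lipschitz triangle inequalities, then balance the step sizes so only the target coefficients survive.

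For $T_1$ I would invoke the $(\mu_x,\mu_y)$-strong monotonicity of the saddle-gradient $F(\vx,\vy):=(\nabla_{\vx} f,-\nabla_{\vy} f)$. Since $f$ is twice differentiable, $F(\vz_0)-F(\vz_\star)$ can be written as an integral of its Jacobian along the segment from $\vz_\star$ to $\vz_0$; the off-diagonal Hessian blocks cancel inside $\langle F(\vz_0)-F(\vz_\star),\vz_0-\vz_\star\rangle$ by the Schwarz symmetry $\nabla_{\vy}\nabla_{\vx} f=(\nabla_{\vx}\nabla_{\vy} f)^\top$, leaving the SCSC lower bound $\mu_x\|\vx_0-\vx_\star\|^2+\mu_y\|\vy_0-\vy_\star\|^2$. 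Combined with $F(\vz_\star)=\vzero$, this yields $T_1\le -2\mu_x\|\vx_0-\vx_\star\|^2-2\mu_y\|\vy_0-\vy_\star\|^2$.

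For $T_2$ I would decompose $\nabla_{\vx} f(\vx_0,\vy_0)=[\nabla_{\vx} f(\vx_0,\vy_0)-\nabla_{\vx} f(\vx_\star,\vy_0)]+[\nabla_{\vx} f(\vx_\star,\vy_0)-\nabla_{\vx} f(\vx_\star,\vy_\star)]$ using $\nabla_{\vx} f(\vx_\star,\vy_\star)=\vzero$, apply the two Lipschitz bounds of \cref{def:lipg} to each summand, and invoke $\|\va+\vb\|^2\le 2\|\va\|^2+2\|\vb\|^2$ to obtain $\|\nabla_{\vx} f(\vx_0,\vy_0)\|^2\le 2L_x^2\|\vx_0-\vx_\star\|^2+2L_{xy}^2\|\vy_0-\vy_\star\|^2$, with a symmetric bound for $\|\nabla_{\vy} f(\vx_0,\vy_0)\|^2$. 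Hence $T_2\le (2\alpha L_x^2+2\beta L_{xy}^2)\|\vx_0-\vx_\star\|^2+(2\alpha L_{xy}^2+2\beta L_y^2)\|\vy_0-\vy_\star\|^2$.

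Summing the three pieces reduces the claim to absorbing the ``self-Lipschitz'' losses $2\alpha L_x^2$ and $2\beta L_y^2$ into the strong-convexity credits $-2\mu_x$ and $-2\mu_y$ from $T_1$, leaving behind only the cross-coupling $2\beta L_{xy}^2$ and $2\alpha L_{xy}^2$. This is the main delicacy: the stated step sizes $\alpha\le\tfrac{1}{2L_x}$ and $\beta\le\tfrac{1}{2L_y}$ only give $2\alpha L_x^2\le L_x$ from the naive Lipschitz route, which is not tight enough when $\mu_x<L_x$. To close the gap under the stated conditions, I would instead refine the bound on $P:=\nabla_{\vx} f(\vx_0,\vy_0)-\nabla_{\vx} f(\vx_\star,\vy_0)$ via co-coercivity $\|P\|^2\le L_x\langle P,\vx_0-\vx_\star\rangle$ (valid because $f(\cdot,\vy_0)$ is convex and $L_x$-smooth), yielding a prefactor $(1-\alpha L_x)\ge\tfrac{1}{2}$ under $\alpha\le\tfrac{1}{2L_x}$ that multiplies the strong-convexity lower bound $\langle P,\vx_0-\vx_\star\rangle\ge\mu_x\|\vx_0-\vx_\star\|^2$, producing exactly the required $-\mu_x$ coefficient; the $\vy$-side is handled symmetrically, giving the desired inequality.
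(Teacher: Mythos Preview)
Your refinement in the last paragraph has a genuine gap. Once you abandon the operator-monotonicity bound on $T_1$ and instead pair $-2\langle P,\vx_0-\vx_\star\rangle$ with $2\alpha\|P\|^2$ via co-coercivity, you are implicitly decomposing $-2\langle\nabla_{\vx} f(\vx_0,\vy_0),\vx_0-\vx_\star\rangle = -2\langle P,\vx_0-\vx_\star\rangle - 2\langle Q,\vx_0-\vx_\star\rangle$ with $Q=\nabla_{\vx} f(\vx_\star,\vy_0)$. Your argument correctly controls the $P$-part and the $2\alpha\|Q\|^2$ part, but the cross term $-2\langle Q,\vx_0-\vx_\star\rangle$ is left completely unaccounted for (and likewise $+2\langle\nabla_{\vy} f(\vx_0,\vy_\star),\vy_0-\vy_\star\rangle$ on the $\vy$-side). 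These two leftover terms do \emph{not} cancel for non-quadratic $f$: writing each via an integral of $\nabla_{\vx\vy}^2 f$, one is integrated along the segment $\{\vx_\star\}\times[\vy_\star,\vy_0]$ and the other along $[\vx_\star,\vx_0]\times\{\vy_\star\}$, so their sum is $2\int_0^1(\vx_0-\vx_\star)^\top\bigl[\nabla_{\vx\vy}^2 f(\vx_\star+s(\vx_0-\vx_\star),\vy_\star)-\nabla_{\vx\vy}^2 f(\vx_\star,\vy_\star+s(\vy_0-\vy_\star))\bigr](\vy_0-\vy_\star)\,ds$, which can have either sign. Conversely, if you keep the monotonicity bound on $T_1$ and only refine $T_2$, the co-coercivity inequality leaves you with the \emph{positive} term $2\alpha L_x\langle P,\vx_0-\vx_\star\rangle$, for which the strong-convexity bound $\langle P,\vx_0-\vx_\star\rangle\ge\mu_x\|\vx_0-\vx_\star\|^2$ is a \emph{lower} bound and hence useless as an upper bound.

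The paper avoids this obstruction by routing through \emph{function values} rather than operator monotonicity. From $\mu_x$-strong convexity of $f(\cdot,\vy_0)$ it keeps the full inequality $-2\langle\nabla_{\vx} f(\vx_0,\vy_0),\vx_0-\vx_\star\rangle\le-\mu_x\|\vx_0-\vx_\star\|^2-2(f(\vx_0,\vy_0)-f(\vx_\star,\vy_0))$, and similarly for the $\vy$-side; the function-value residues telescope to $-2(f(\vx_0,\vy_\star)-f(\vx_\star,\vy_\star))-2(f(\vx_\star,\vy_\star)-f(\vx_\star,\vy_0))$, which by smoothness and saddle optimality is at most $-\tfrac{1}{L_x}\|\nabla_{\vx} f(\vx_0,\vy_\star)\|^2-\tfrac{1}{L_y}\|\nabla_{\vy} f(\vx_\star,\vy_0)\|^2$. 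These negative gradient norms at the \emph{anchor points} $(\vx_0,\vy_\star)$ and $(\vx_\star,\vy_0)$ then absorb $\alpha\|\nabla_{\vx} f(\vx_0,\vy_0)\|^2$ and $\beta\|\nabla_{\vy} f(\vx_0,\vy_0)\|^2$ via $\|a\|^2-2\|b\|^2\le 2\|a-b\|^2$ and the $L_{xy}$-Lipschitz condition, and the step-size assumptions $\alpha\le\tfrac{1}{2L_x}$, $\beta\le\tfrac{1}{2L_y}$ are exactly what makes $\tfrac{1}{L_x}\ge 2\alpha$ and $\tfrac{1}{L_y}\ge 2\beta$. This is the idea your proposal is missing.
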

\begin{proof}
    Recall that \simgda{} takes updates of the form:
\begin{align*}
    \vx_{1} &= \vx_{0} - \alpha \nabla_{\vx} f(\vx_{0}, \vy_{0}), \\
    \vy_{1} &= \vy_{0} + \beta \nabla_{\vy} f(\vx_{0}, \vy_{0}).
\end{align*}
From this, we can deduce that
\begin{align*}
    \frac{1}{\alpha} \| \vx_1 - \vx_{\star} \|^2 &= \frac{1}{\alpha} \norm{\vx_0 - \vx_{\star}}^2 + \frac{2}{\alpha} \inner{\vx_1 - \vx_0, \vx_0 - \vx_{\star}} + \frac{1}{\alpha} \norm{\vx_1 - \vx_0}^2 \\
    &= \frac{1}{\alpha} \norm{\vx_0 - \vx_{\star}}^2 - 2 \inner{\nabla_{\vx} f(\vx_{0}, \vy_{0}), \vx_0 - \vx_{\star}} + \alpha \norm{\nabla_{\vx} f(\vx_{0}, \vy_{0})}^2, \\
    \frac{1}{\beta} \| \vy_1 - \vy_{\star} \|^2 &= \frac{1}{\beta} \norm{\vy_0 - \vy_{\star}}^2 + \frac{2}{\beta} \inner{\vy_1 - \vy_0, \vy_0 - \vy_{\star}} + \frac{1}{\beta} \norm{\vy_1 - \vy_0}^2 \\
    &= \frac{1}{\beta} \norm{\vy_0 - \vy_{\star}}^2 + 2 \inner{\nabla_{\vy} f(\vx_{0}, \vy_{0}), \vy_0 - \vy_{\star}} + \beta \norm{\nabla_{\vy} f(\vx_{0}, \vy_{0})}^2.
\end{align*}
Note that $\mu_x$-strong convexity of $f(\cdot, \vy_0)$ yields
\begin{align}
    - 2 \inner{\nabla_{\vx} f({\vx_0}, \vy_0), {\vx_0 - \vx_{\star}}} &\le - \mu_x \norm{{\vx_0 - \vx_{\star}}}^2 - 2 (f({\vx_0}, \vy_0) - f({\vx_{\star}}, \vy_0)), \label{eq:simproof1}
\end{align}
and $\mu_y$-strong concavity of $f(\vx_0, \cdot)$ yields
\begin{align}
    2 \inner{\nabla_{\vy} f(\vx_0, {\vy_0}), {\vy_0 - \vy_{\star}}} &\le - \mu_y \norm{{\vy_0 - \vy_{\star}}}^2 - 2 (f(\vx_0, {\vy_{\star}}) - f(\vx_0, {\vy_0})). \label{eq:simproof3}
\end{align}
Moreover, since $f$ is convex-concave and has Lipschitz gradients\footnote{Note that the Lipschitz gradient conditions for $L_x$ and $L_y$ are equivalent to the widely used notion of \textit{smoothness} in convex optimization literature.}, we have
\begin{align}
    - 2 (f(\vx_0, \vy_{\star}) - f(\vx_{\star}, \vy_{\star})) &\le - \frac{1}{L_x} \norm{\nabla_{\vx} f (\vx_0, \vy_{\star})}^2, \label{eq:simproof2} \\
    - 2 (f(\vx_{\star}, \vy_{\star}) - f(\vx_{\star}, \vy_0)) &\le - \frac{1}{L_y} \norm{\nabla_{\vy} f (\vx_{\star}, \vy_0)}^2. \label{eq:simproof4}
\end{align}
Applying (\ref{eq:simproof1})--(\ref{eq:simproof4}), we have
\begin{align*}
    &\frac{1}{\alpha} \| \vx_1 - \vx_{\star} \|^2 + \frac{1}{\beta} \| \vy_1 - \vy_{\star} \|^2 \\
    &\le \bigopen{\frac{1}{\alpha} - \mu_x} \norm{\vx_0 - \vx_{\star}}^2 + \bigopen{\frac{1}{\beta} - \mu_y} \norm{\vy_0 - \vy_{\star}}^2 \\
    & + \alpha \norm{\nabla_{\vx} f(\vx_{0}, \vy_{0})}^2 + \beta \norm{\nabla_{\vy} f(\vx_{0}, \vy_{0})}^2 - \frac{1}{L_x} \norm{\nabla_{\vx} f (\vx_0, \vy_{\star})}^2 - \frac{1}{L_y} \norm{\nabla_{\vy} f (\vx_{\star}, \vy_0)}^2.
\end{align*}
If $\alpha \le \frac{1}{2L_x}$ and $\beta \le \frac{1}{2L_y}$, we can use the triangle inequality and the Lipschitz gradient condition for $L_{xy}$ to obtain
\begin{align*}
    \alpha \norm{\nabla_{\vx} f(\vx_{0}, \vy_{0})}^2 - \frac{1}{L_x} \norm{\nabla_{\vx} f (\vx_0, \vy_{\star})}^2 &\le \alpha \norm{\nabla_{\vx} f(\vx_{0}, \vy_{0})}^2 - 2\alpha \norm{\nabla_{\vx} f (\vx_0, \vy_{\star})}^2 \\
    &\le 2\alpha \norm{\nabla_{\vx} f(\vx_{0}, \vy_{0}) - \nabla_{\vx} f (\vx_0, \vy_{\star})}^2 \\
    &\le 2\alpha L_{xy}^2 \norm{\vy_{0} - \vy_{\star}}^2, \\
    \beta \norm{\nabla_{\vy} f(\vx_{0}, \vy_{0})}^2 - \frac{1}{L_y} \norm{\nabla_{\vy} f (\vx_{\star}, \vy_{0})}^2 &\le \beta \norm{\nabla_{\vx} f(\vx_{0}, \vy_{0})}^2 - 2\beta \norm{\nabla_{\vy} f (\vx_{\star}, \vy_{0})}^2 \\
    &\le 2\beta \norm{\nabla_{\vx} f(\vx_{0}, \vy_{0}) - \nabla_{\vy} f (\vx_{\star}, \vy_{0})}^2 \\
    &\le 2\beta L_{xy}^2 \norm{\vx_{0} - \vx_{\star}}^2,
\end{align*}
which boils down to \eqref{eq:simineq}.
\end{proof}

\subsubsection*{Step 2. Iteration Complexity}
Now let us show that \cref{prop:simsimple} can guarantee the same iteration complexity as in \cref{cor:simgda} when
\begin{align*}
    \alpha &= \frac{1}{2}\cdot \min \bigset{\frac{1}{L_x}, \frac{\mu_y}{2 L_{xy}^2}}, \quad \beta = \frac{1}{2}\cdot \min \bigset{\frac{1}{L_y}, \frac{\mu_x}{2 L_{xy}^2}}.
\end{align*}
\begin{proof}
    If $\alpha \le \frac{\mu_y}{4 L_{xy}^2}$ and $\beta \le \frac{\mu_x}{4 L_{xy}^2}$, \cref{prop:simsimple} implies
    \begin{align*}
        \frac{1}{\alpha} \| \vx_1 - \vx_{\star} \|^2 + \frac{1}{\beta} \| \vy_1 - \vy_{\star} \|^2 &\le \bigopen{\frac{1}{\alpha} - \mu_x + 2\beta L_{xy}^2} \norm{\vx_0 - \vx_{\star}}^2 + \bigopen{\frac{1}{\beta} - \mu_y + 2\alpha L_{xy}^2} \norm{\vy_0 - \vy_{\star}}^2 \\
        &\le \bigopen{\frac{1}{\alpha} - \frac{\mu_x}{2}} \norm{\vx_0 - \vx_{\star}}^2 + \bigopen{\frac{1}{\beta} - \frac{\mu_y}{2}} \norm{\vy_0 - \vy_{\star}}^2.
    \end{align*}
    Hence we have $\Psisim_1 \le r \Psisim_0$ for $r = \max \bigset{1 - \alpha \mu_x / 2, 1 - \beta \mu_y / 2}$, and
    \begin{align*}
        \frac{1}{1 - r} \le \max \bigset{\frac{1}{\alpha \mu_x}, \ \frac{1}{\beta \mu_y}} &= \max \bigset{\Theta \bigopen{\kappa_x + \kappa_{xy}^2}, \ \Theta \bigopen{\kappa_y + \kappa_{xy}^2}} \\
        &= \Theta \bigopen{\kappa_x + \kappa_y + \kappa_{xy}^2}.
    \end{align*}
    Therefore it is sufficient to take
    \begin{align*}
        K &= \gO \bigopen{ \bigopen{\kappa_x + \kappa_y + \kappa_{xy}^2} \cdot \log \frac{\Psi^{\text{Sim}}_{0}}{A^{\text{Sim}} \epsilon} }
    \end{align*}
    iterations to ensure that $\| \vz_K - \vz_{\star} \|^2 \le \epsilon$, where $A^{\text{Sim}} = \min \bigset{\frac{1}{\alpha}, \frac{1}{\beta}}$.
\end{proof}
\subsection{\texorpdfstring{Proof of \cref{thm:simgdalb}}{Proof of Theorem 3.3}} \label{sec:thmsimgdalb}

Here we prove \cref{thm:simgdalb} of \cref{sec:3}, restated below for the sake of readability.

\theoremsimgdalb*

\begin{proof}
    We construct the worst-case function as follows:
    \begin{align*}
        f(\vx, \vy) &=
        \frac{1}{2} \begin{bmatrix}
            {x} \\ {s} \\ {t} \\ {y} \\ {u} \\ {v}
        \end{bmatrix}^{\top}
        \begin{bmatrix}
            \mu_x & 0 & 0 & L_{xy} & 0 & 0 \\
            0 & \mu_x & 0 & 0 & 0 & 0 \\
            0 & 0 & L_x & 0 & 0 & 0 \\
            L_{xy} & 0 & 0 & -\mu_y & 0 & 0 \\
            0 & 0 & 0 & 0 & -\mu_y & 0 \\
            0 & 0 & 0 & 0 & 0& -L_y 
        \end{bmatrix}
        \begin{bmatrix}
            {x} \\ {s} \\ {t} \\ {y} \\ {u} \\ {v}
        \end{bmatrix}, \\
    \end{align*}
    where ${\vx = (x, s, t)}$ and ${\vy = (y, u, v)}$. 
    We can easily check that $f$ is a quadratic function ({\it i.e.,} the Hessian is constant) such that $f \in \gF(\mu_x, \mu_y, L_x, L_y, L_{xy})$ and $\vx_\star = \vy_\star = \bm0 \in \R^3$. 

    As a first step, we will find a set of necessary conditions on step sizes for convergence, and then compute (at least) how large the number of iterations $K$ of \textbf{\green{Sim-GDA}} we need to accomplish $\bignorm{\vx_K}^2 + \bignorm{\vy_K}^2 < \epsilon.$
    To this end, we first observe that the $k$-th step of \textbf{\green{Sim-GDA}} satisfies
    \begin{align}
        \begin{bmatrix}
            x_{k+1} \\ y_{k+1}
        \end{bmatrix} &= 
        \underbrace{\begin{bmatrix}
            1 - \alpha \mu_x & - \alpha L_{xy} \\
           \beta L_{xy} & 1 - \beta \mu_y
        \end{bmatrix}}_{\triangleq \mP}
        \begin{bmatrix}
            x_k \\ y_k
        \end{bmatrix}, \label{eq:x_k_y_k}\\
        s_{k+1} &= (1 - \alpha \mu_x) s_k, \label{eq:s_k} \\
        t_{k+1} &= (1 - \alpha L_x) t_k, \label{eq:t_k} \\
        u_{k+1} &= (1 - \beta \mu_y) u_k, \label{eq:u_k} \\
        v_{k+1} &= (1 - \beta L_y) v_k. \label{eq:v_k} 
    \end{align}
    To assure the convergence of iterations \eqref{eq:t_k}~and~\eqref{eq:v_k}, the step sizes $\alpha$ and $\beta$ are required to be 
    \begin{equation}
        \label{eq:stepsize_basic}
        {\alpha < \frac{2}{L_x}} \quad \text{and} \quad {\beta < \frac{2}{L_y}}.
    \end{equation}
    Also, to guarantee $\bignorm{\vx_K}^2 + \bignorm{\vy_K}^2 < \epsilon$, we need from \eqref{eq:s_k}~and~\eqref{eq:u_k} that $s_K^2 < \gO(\epsilon)$ and $u_K^2 < \gO(\epsilon)$, respectively.
    These two necessary conditions require an iteration number of at least:
    \begin{equation}
        \label{eq:iteration_complexity_base}
        K = \Omega\bigopen{\bigopen{{\frac{1}{\alpha \mu_x}} + {\frac{1}{\beta \mu_y}}} \cdot \log\frac{1}{\epsilon}}.
    \end{equation}
    Note that \eqref{eq:stepsize_basic} automatically yields
    \begin{align}
        \frac{1}{\alpha \mu_x} + \frac{1}{\beta \mu_y} = \Omega (\kappa_x + \kappa_y).
        \label{eq:banana}
    \end{align}

    From now on, we deal with the remaining proof case by case with respect to the step sizes $\alpha$ and $\beta$.
    
    \paragraph{Case 1.} Suppose that $\alpha$ and $\beta$ satisfies ${\bigopen{\frac{\alpha\mu_x - \beta\mu_y}{2}}^2 \le \alpha\beta L_{xy}^2}$, which is equivalent to the eigenvalues of the matrix $\mP$ defined in Equation~\eqref{eq:x_k_y_k} being complex.
    We can check that, for $i=\sqrt{-1}$, the eigenvalues of $\mP$ can be expressed as
    \begin{equation*}
        \lambda = 1 - \frac{\alpha\mu_x + \beta\mu_y}{2} \pm {i} \sqrt{{\alpha\beta L_{xy}^2 - \bigopen{\frac{\alpha\mu_x - \beta\mu_y}{2}}^2}}.
    \end{equation*}
    We recall a well-known convergence theory of matrix iteration in \cref{prop:spectralradius}.
    \begin{proposition}[\citet{horn2012matrix}, Theorem~5.6.12, Corollary~5.6.13] 
        \label{prop:spectralradius}
        For a square matrix $\mA\in \R^{m\times m}$ and a sequence of $m$-dimensional vectors $(\vv_k)$, the matrix iteration $\vv_{k+1}  = \mA \vv_k$ converges as $\vv_k \rightarrow \bm0$ with arbitrarily chosen initialization $\vv_0$ if and only if the spectral radius $\rho(\mA)$ of $\mA$ is less than 1. In this case, the convergence rate is written as $O((\rho(\mA)+\epsilon)^k)$, where $\epsilon$ is an any given positive number.
    \end{proposition}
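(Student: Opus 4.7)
The statement is a classical fact from matrix analysis, and the standard approach is via the Jordan canonical form of $\mA$. The plan is to reduce the matrix iteration to the analysis of a single Jordan block, for which an explicit formula for $\mJ^k$ is available, and then to read off both convergence and the rate from the moduli of the eigenvalues.

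First, I would write $\mA = \mP \mJ \mP^{-1}$ where $\mJ$ is the Jordan canonical form. Since $\mA^k = \mP \mJ^k \mP^{-1}$ and $\mP$ is invertible, the iteration $\vv_k = \mA^k \vv_0 \to \bm{0}$ for every initialization $\vv_0$ if and only if $\mJ^k \to \bm{0}$ entrywise. Because $\mJ$ is block diagonal, it suffices to analyze one Jordan block at a time.

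Next, a single Jordan block $\mJ_\lambda$ of size $s$ with eigenvalue $\lambda$ decomposes as $\lambda \mI + \mN$, where $\mN$ is nilpotent with $\mN^s = \bm{0}$. The binomial theorem then yields $\mJ_\lambda^k = \sum_{j=0}^{s-1} \binom{k}{j} \lambda^{k-j} \mN^j$, so every entry of $\mJ_\lambda^k$ is a polynomial in $k$ of degree at most $s-1$ times a power of $\lambda$. From this formula, the forward direction ($\rho(\mA) < 1 \Rightarrow \vv_k \to \bm{0}$) follows because the geometric factor $|\lambda|^k$ with $|\lambda| < 1$ dominates any polynomial growth in $k$. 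The reverse direction follows by choosing $\vv_0$ to be an eigenvector for an eigenvalue $\lambda$ with $|\lambda| \ge 1$; then $\vv_k = \lambda^k \vv_0$, which fails to converge to $\bm{0}$.

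For the rate, I would upper-bound $\|\mJ_\lambda^k\|$ by a sum of terms of the form $k^j |\lambda|^{k-j}$, each of which is dominated by $(\rho(\mA) + \varepsilon)^k$ for any fixed $\varepsilon > 0$ once $k$ is large enough, since $\bigopen{\tfrac{\rho(\mA) + \varepsilon}{\rho(\mA)}}^k$ grows faster than any polynomial. The only mild technical point is absorbing the constants from the similarity transform $\mP, \mP^{-1}$ and from the Jordan block sizes into the implicit constant in the $O(\cdot)$ notation, which is the reason why an arbitrarily small slack $\varepsilon$ is required in the rate. Since this is a textbook result, the paper simply invokes \citet{horn2012matrix} (Theorem~5.6.12 and Corollary~5.6.13), and no deeper argument is needed in the main text.
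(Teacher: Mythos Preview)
Your proposal is correct and matches what the paper does: the paper does not prove this proposition at all but simply cites it as Theorem~5.6.12 and Corollary~5.6.13 of \citet{horn2012matrix}, exactly as you note in your final sentence. The Jordan-form argument you sketch is the standard textbook proof and is entirely appropriate here.
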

    Noting that
    \begin{align*}
        \rho(\mP)^2 &= \bigopen{1 - \frac{\alpha\mu_x + \beta\mu_y}{2}}^2 + \alpha\beta L_{xy}^2 - \bigopen{\frac{\alpha\mu_x - \beta\mu_y}{2}}^2 = 1 - (\alpha\mu_x + \beta\mu_y) + \alpha\beta \bigopen{\mu_x\mu_y + L_{xy}^2},
    \end{align*}
    in order to assure convergence of iteration~\eqref{eq:x_k_y_k}, we need 
    \begin{align*}
        \rho(\mP)^2 &< 1 \ \iff \ \beta < \frac{\mu_x + r \mu_y}{\mu_x\mu_y + L_{xy}^2} \iff  \alpha < \frac{\frac{1}{r} \mu_x + \mu_y}{\mu_x\mu_y + L_{xy}^2},
    \end{align*}
    where $r = \frac{\beta}{\alpha}$ is the ratio of step sizes.
    Combined with \eqref{eq:stepsize_basic}, we have
    \begin{align}
        {\frac{1}{\alpha\mu_x}} &> \max\bigset{{\frac{L_x}{2\mu_x}}, {\frac{rL_y}{2\mu_x}}, \frac{\mu_x\mu_y + L_{xy}^2}{\frac{1}{r} \mu_x^2 + \mu_x\mu_y}}, \label{eq:1_amu} \\
        {\frac{1}{\beta\mu_y}} &> \max\bigset{{\frac{L_x}{2r\mu_y}}, {\frac{L_y}{2\mu_y}}, \frac{\mu_x\mu_y + L_{xy}^2}{\mu_x\mu_y + r \mu_y^2}}. \label{eq:1_bmu}
    \end{align}
    If $r \ge \frac{\mu_x}{\mu_y}$, then by \eqref{eq:1_amu}, ${\frac{1}{\alpha\mu_x}} = \Omega\bigopen{\kappa_x + \kappa_y + \kappa_{xy}^2}$. On the other hand, if $r < \frac{\mu_x}{\mu_y}$, then by \eqref{eq:1_bmu}, ${\frac{1}{\beta\mu_y}} = \Omega\bigopen{\kappa_x + \kappa_y + \kappa_{xy}^2}$. Therefore, we have a desired lower bound of iteration complexity for the first case, deduced from \eqref{eq:iteration_complexity_base}.

    \paragraph{Case 2.} Suppose that $\alpha$ and $\beta$ satisfies ${\bigopen{\frac{\alpha\mu_x - \beta\mu_y}{2}}^2 > \alpha\beta L_{xy}^2 }$.
    Note that this is equivalent to 
    \begin{align}
        \bigabs{\sqrt{\frac{\alpha\mu_x}{\beta\mu_y}} - \sqrt{\frac{\beta\mu_y}{\alpha\mu_x}}} > 2\kappa_{xy}.
        \label{eq:realeigenvalues}
    \end{align}
    If $r \ge \frac{\mu_x}{\mu_y}$, {\it i.e.}, $\frac{\alpha\mu_x}{\beta\mu_y} \le \frac{\beta\mu_y}{\alpha\mu_x}$, then it implies $\frac{\beta\mu_y}{\alpha\mu_x} > 4\kappa_{xy}^2$. Thus, combined with \eqref{eq:banana}, we have 
    \begin{equation*}
        {\frac{1}{\alpha \mu_x}} + {\frac{1}{\beta \mu_y}} = \frac{1}{2}\cdot{\frac{1}{\alpha \mu_x}} + {\frac{1}{\beta\mu_y}}\bigopen{\frac{1}{2}\cdot\frac{{\beta\mu_y}}{{\alpha\mu_x}} + 1} = \Omega\bigopen{\kappa_x + \kappa_y(\kappa_{xy}^2 + 1)} = \Omega\bigopen{\kappa_x + \kappa_y + \kappa_{xy}^2}.
    \end{equation*}
    On the other hand, if $r < \frac{\mu_x}{\mu_y}$, {\it i.e.}, $\frac{\alpha\mu_x}{\beta\mu_y} > \frac{\beta\mu_y}{\alpha\mu_x}$, then it implies $\frac{\alpha\mu_x}{\beta\mu_y} > 4\kappa_{xy}^2$. Thus, combined with \eqref{eq:banana}, we have
    \begin{equation*}
        {\frac{1}{\alpha \mu_x}} + {\frac{1}{\beta \mu_y}} = {\frac{1}{\alpha \mu_x}}\bigopen{1 + \frac{1}{2}\cdot\frac{{\alpha\mu_x}}{{\beta\mu_y}}} + \frac{1}{2}\cdot{\frac{1}{\beta\mu_y}} = \Omega\bigopen{\kappa_x(1 + \kappa_{xy}^2) + \kappa_y} = \Omega\bigopen{\kappa_x + \kappa_y + \kappa_{xy}^2}.
    \end{equation*}
    Therefore, from \eqref{eq:iteration_complexity_base} we can obtain the desired lower bound for the second case as well, which concludes the proof.
\end{proof}
\subsection{\texorpdfstring{Proofs used in \cref{sec:b}}{Proofs used in Appendix B}} \label{sec:simtech}

Here we prove some technical propositions and lemmas used throughout \cref{sec:b}.

\subsubsection{\texorpdfstring{Proof of \cref{prop:simcont}}{Proof of Proposition B.1}} \label{sec:propsimcont}

Here we prove \cref{prop:simcont}, restated below for the sake of readability.

\propsimcont*

\begin{proof}
    Recall that \simgda{} takes updates of the form:
    \begin{equation}
        \begin{aligned}
        \vx_{1} &= \vx_0 - \alpha \nabla_{\vx} f(\vx_0, \vy_0), \\
        \vy_{1} &= \vy_0 + \beta \nabla_{\vy} f(\vx_0, \vy_0).
        \end{aligned}
        \label{eq:simgda}
    \end{equation}
    For simplicity, let us denote $\vz = \begin{bmatrix}
        \vx^{\top} & \vy^{\top}
    \end{bmatrix}^{\top} \in \R^{d_x + d_y}$, and define
    \begin{align*}
        \nu (\vz) &:= \begin{bmatrix}
            \nabla_{\vx} f (\vz) \\
            - \nabla_{\vy} f (\vz)
        \end{bmatrix}.
     \end{align*}
    For instance, $\vz_0 = \begin{bmatrix}
        \vx_0^{\top} & \vy_0^{\top}
    \end{bmatrix}^{\top}$ and $\vz_{\star} = \begin{bmatrix}
        \vx^{\star \top} & \vy^{\star \top}
    \end{bmatrix}^{\top}$.
    
    Let us define matrices $\mA \in \R^{d_x \times d_x}$, $\mB \in \R^{d_x \times d_y}$, and $\mC \in \R^{d_y \times d_y}$ as
    \begin{align*}
        \mA &:= \int_0^1 \nabla_{\vx \vx}^2 f (t \vz_0 + (1-t)\vz_{\star}) dt, \ \
        \mB := \int_0^1 \nabla_{\vx \vy}^2 f (t \vz_0 + (1-t)\vz_{\star}) dt, \ \
        \mC := - \int_0^1 \nabla_{\vy \vy}^2 f (t \vz_0 + (1-t)\vz_{\star}) dt.
    \end{align*}
    Since $f \in \gF (\mu_x, \mu_y, L_x, L_y, L_{xy})$, we have $\mu_x \mI \preceq \mA \preceq L_x \mI$, $\mu_y \mI \preceq \mC \preceq L_y \mI$, and $\norm{\mB} \le L_{xy}$.
    
    Also, by chain rule, we have the following identities:
    \begin{align*}
        \nabla_{\vx} f(\vx_0, \vy_0) &= \mA (\vx_0 - \vx_{\star}) + \mB (\vy_0 - \vy_{\star}), \\
        \nabla_{\vy} f(\vx_0, \vy_0) &= \mB^{\top} (\vx_0 - \vx_{\star}) - \mC (\vy_0 - \vy_{\star}).
    \end{align*}
    
    For simplicity, we assume W.L.O.G. $\vx_{\star} = \bm{0}$ $(\in \R^{d_x})$ and $\vy_{\star} = \bm{0}$ $(\in \R^{d_y})$. Then we have
    \begin{align*}
        \begin{bmatrix}
            \frac{1}{\sqrt{\alpha}} \vx_1 \\ \frac{1}{\sqrt{\beta}} \vy_1
        \end{bmatrix} 
        &= \begin{bmatrix}
            \frac{1}{\sqrt{\alpha}} \vx_0 \\ \frac{1}{\sqrt{\beta}} \vy_0
        \end{bmatrix} - \begin{bmatrix}
            \sqrt{\alpha} \nabla_{\vx} f(\vx_0, \vy_0) \\ - \sqrt{\beta} \nabla_{\vy} f(\vx_0, \vy_0)
        \end{bmatrix}
        = \begin{bmatrix}
            \frac{1}{\sqrt{\alpha}} \vx_0 \\ \frac{1}{\sqrt{\beta}} \vy_0
        \end{bmatrix} - \begin{bmatrix}
            \sqrt{\alpha} (\mA \vx_0 + \mB \vy_0) \\
            - \sqrt{\beta} (\mB^{\top} \vx_0 - \mC \vy_0)
        \end{bmatrix} \\
        &= \begin{bmatrix}
            \frac{1}{\sqrt{\alpha}} \vx_0 \\ \frac{1}{\sqrt{\beta}} \vy_0
        \end{bmatrix} - \begin{bmatrix}
            \alpha \mA & \sqrt{\alpha \beta} \mB \\
            - \sqrt{\alpha \beta} \mB^{\top} & \beta \mC
        \end{bmatrix} \begin{bmatrix}
            \frac{1}{\sqrt{\alpha}} \vx_0 \\ \frac{1}{\sqrt{\beta}} \vy_0
        \end{bmatrix}
        = \begin{bmatrix}
            \mI - \alpha \mA & - \sqrt{\alpha \beta} \mB \\
            \sqrt{\alpha \beta} \mB^{\top} & \mI - \beta \mC
        \end{bmatrix} \begin{bmatrix}
            \frac{1}{\sqrt{\alpha}} \vx_0 \\ \frac{1}{\sqrt{\beta}} \vy_0
        \end{bmatrix}.
    \end{align*}
    This means that it is enough to show that
    \begin{align}
        \bignorm{\begin{bmatrix}
            \mI - \alpha \mA & - \sqrt{\alpha \beta} \mB \\
            \sqrt{\alpha \beta} \mB^{\top} & \mI - \beta \mC
        \end{bmatrix}}^2 &\le r = \max \left\{ \left\| \begin{bmatrix}
                1 - \alpha L_x & - \sqrt{\alpha \beta} L_{xy} \\
                \sqrt{\alpha \beta} L_{xy} & 1 - \beta \mu_y
            \end{bmatrix} \right\|^2, \left\| \begin{bmatrix}
                1 - \alpha \mu_x & - \sqrt{\alpha \beta} L_{xy} \\
                \sqrt{\alpha \beta} L_{xy} & 1 - \beta L_y
            \end{bmatrix} \right\|^2 \right\}, \label{eq:normmax}
    \end{align}
    since if this is true, then we automatically have
    \begin{align*}
        \frac{1}{\alpha} \norm{\vx_1}^2 + \frac{1}{\beta} \norm{\vy_1}^2 &= \bignorm{\begin{bmatrix}
            \frac{1}{\sqrt{\alpha}} \vx_1 \\ \frac{1}{\sqrt{\beta}} \vy_1
        \end{bmatrix}}^2 \le r \bignorm{\begin{bmatrix}
            \frac{1}{\sqrt{\alpha}} \vx_0 \\ \frac{1}{\sqrt{\beta}} \vy_0
        \end{bmatrix}}^2 = r \bigopen{\frac{1}{\alpha} \norm{\vx_0}^2 + \frac{1}{\beta} \norm{\vy_0}^2}.
    \end{align*}
    
    To prove \cref{eq:normmax}, the matrix norm can be bounded via \cref{lem:normineq}.
    \begin{restatable}{lemma}{normineq}
        \label{lem:normineq}
        Suppose that $\mX \in \R^{d_x \times d_x}$, $\mY \in \R^{d_y \times d_y}$, $\mW \in \R^{d_x \times d_y}$ satisfy
        \begin{align*}
            t_x \mI \preceq \mX \preceq s_x \mI, \ \ t_y \mI \preceq \mY \preceq s_y \mI, \ \ \norm{\mW} \le \ell
        \end{align*}
        for some constants $t_x, t_y, s_x, s_y > 0$ and $\ell \ge 0$. Then the block matrix $\mM \in \R^{(d_x + d_y) \times (d_x + d_y)}$ of the form
        \begin{align*}
            \mM &= \begin{bmatrix}
                \mX & -\mW \\
                \mW^{\top} & \mY
            \end{bmatrix}
        \end{align*}
        satisfies the matrix norm inequality
        \begin{align*}
            \| \mM \| &\le \max \left\{ \left\| \begin{bmatrix}
                s_x & -\ell \\
                \ell & t_y
            \end{bmatrix} \right\|, \left\| \begin{bmatrix}
                t_x & -\ell \\
                \ell & s_y
            \end{bmatrix} \right\| \right\}.
        \end{align*}
    \end{restatable}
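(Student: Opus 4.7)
The approach is to exploit the symmetry that appears after right-multiplying $\mM$ by the orthogonal sign-flipping matrix $\mJ = \left[\begin{smallmatrix}\mI_{d_x} & 0 \\ 0 & -\mI_{d_y}\end{smallmatrix}\right]$. Since $\mJ$ is orthogonal we have $\|\mM\| = \|\mM\mJ\|$, and crucially
\[
\mM\mJ = \begin{bmatrix}\mX & \mW \\ \mW^\top & -\mY\end{bmatrix}
\]
is a \emph{symmetric} matrix, so $\|\mM\mJ\| = \max\{\lambda_{\max}(\mM\mJ),\, -\lambda_{\min}(\mM\mJ)\}$. The two terms inside the $\max$ on the right-hand side of the lemma will arise by bounding these two signed extreme eigenvalues separately.

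For $\lambda_{\max}(\mM\mJ)$, I would apply the Rayleigh characterization. Writing a unit vector as $\vz = (\vu, \vv) \in \R^{d_x}\times\R^{d_y}$,
\[
\vz^\top \mM\mJ \vz = \vu^\top\mX\vu + 2\vu^\top\mW\vv - \vv^\top\mY\vv \le s_x\|\vu\|^2 + 2\ell\|\vu\|\|\vv\| - t_y\|\vv\|^2,
\]
using $\mX \preceq s_x\mI$, $\mY\succeq t_y\mI$, and Cauchy--Schwarz with $\|\mW\| \le \ell$. Maximizing the right-hand side over $\|\vu\|^2 + \|\vv\|^2 = 1$ reduces to a $2\times 2$ eigenvalue problem for $\left[\begin{smallmatrix}s_x & \ell \\ \ell & -t_y\end{smallmatrix}\right]$, yielding $\lambda_{\max}(\mM\mJ) \le \tfrac{1}{2}\bigl((s_x - t_y) + \sqrt{(s_x+t_y)^2+4\ell^2}\bigr)$. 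An analogous argument applied to $-\mM\mJ$, now using the opposite inclusions $\mX \succeq t_x\mI$ and $\mY \preceq s_y\mI$, gives $-\lambda_{\min}(\mM\mJ) \le \tfrac{1}{2}\bigl((s_y - t_x) + \sqrt{(t_x+s_y)^2+4\ell^2}\bigr)$.

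The step I view as the main obstacle---though still an elementary calculation---is matching the squares of these two Rayleigh bounds with the two $2\times 2$ matrix norms on the right-hand side of the lemma. Explicitly, I would compute the top eigenvalue of the Gram matrix of $\left[\begin{smallmatrix}\lambda & -\ell \\ \ell & \mu\end{smallmatrix}\right]$ to obtain the closed form $\tfrac{1}{2}\bigl(\lambda^2 + \mu^2 + 2\ell^2 + |\lambda-\mu|\sqrt{(\lambda+\mu)^2+4\ell^2}\bigr)$, and then verify algebraically that this equals the square of the corresponding Rayleigh bound---with equality when $s_x \ge t_y$ (resp.\ $s_y \ge t_x$), and a strict improvement otherwise, which still preserves the inequality. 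Taking the maximum of the two signed bounds then concludes the proof.
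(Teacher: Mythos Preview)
Your proof is correct and follows essentially the same route as the paper: symmetrize via the sign-flip $\mJ$, bound $\lambda_{\max}$ and $-\lambda_{\min}$ of the resulting symmetric matrix separately via the Rayleigh quotient, and reduce each to a $2\times 2$ problem. Your use of Cauchy--Schwarz to control the cross term $\vu^\top\mW\vv$ is in fact the cleaner variant the paper records in a post-hoc remark (attributed to a reviewer); the paper's original argument takes a longer detour through the SVD of $\mW$ to reach the same $2\times 2$ bound, so your version is, if anything, more direct.
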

    We prove \cref{lem:normineq} in \cref{sec:lemnormineq}.
    
    By observing that $1 - \alpha L_x > 0$, $1 - \beta L_y > 0$ and
    \begin{align*}
        (1 - \alpha L_x) \mI \preceq \mI - \alpha \mA \preceq (1 - \alpha \mu_x) \mI, \ \ (1 - \beta L_y) \mI \preceq \mI - \beta \mC \preceq (1 - \beta \mu_y) \mI, \ \ \| \sqrt{\alpha \beta} \mB \| \le \sqrt{\alpha \beta} L_{xy},
    \end{align*}
    we can use \cref{lem:normineq} with $\mX = \mI - \alpha \mA$, $\mY = \mI - \beta \mC$, $\mW = \sqrt{\alpha \beta} \mB$, and
    \begin{align*}
        t_x &= 1 - \alpha L_x, \ \ t_y =1 - \beta L_y, \ \ s_x = 1 - \alpha \mu_x, \ \ s_y = 1 - \beta \mu_y, \ \ \ell = \sqrt{\alpha \beta} L_{xy}
    \end{align*}
    which immediately proves \cref{eq:normmax}, and therefore \cref{prop:simcont}.
\end{proof}

\subsubsection{\texorpdfstring{Proof of \cref{prop:normopt}}{Proof of Proposition B.2}} \label{sec:propnormopt}

Here we prove \cref{prop:normopt}, restated below for the sake of readability.

\propnormopt*

\begin{proof}
    Recall that we define
    \begin{align*}
        f_{\kappa} (\zeta) &= \frac{\kappa - 1}{2} \cdot \zeta + \sqrt{\bigopen{1 - \frac{\kappa + 1}{2} \cdot \zeta}^2 + \zeta^2 \kappa_{xy}^2}.
    \end{align*}
    Then the first two results of the proposition are direct consequences of \cref{lem:optrate}.
    \begin{restatable}{lemma}{lemoptrate}
        \label{lem:optrate}
        Suppose that $A, B, C \ge 0$ and $A < B$. Then for the function $f : (0, \infty) \rightarrow (0, \infty)$ of the following form:
        \begin{align*}
            f(x) &= Ax + \sqrt{(1-Bx)^2 + C^2 x^2},
        \end{align*}
        the minimizer is equal to
        \begin{align*}
            x_{\star} &= \frac{1}{D} \cdot \frac{2(C+D)(B-A)}{(C+D)^2 + (B-A)^2},
        \end{align*}
        and the minimum value attained at $x_{\star}$ is equal to
        \begin{align*}
            f (x_{\star}) = \frac{(C+D)^2 - (B-A)^2}{(C+D)^2 + (B-A)^2},
        \end{align*}
        where $D = \sqrt{B^2 + C^2 - A^2}$.
    \end{restatable}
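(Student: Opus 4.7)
The plan is a one-variable convex-optimization exercise: verify that $f$ is convex with a unique critical point, solve $f'(x)=0$ in closed form, and then algebraically reconcile the result with the two displayed formulas.

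First, I would note that $f(x) = Ax + \|(1-Bx,\,Cx)\|_2$ is the sum of a linear function and the Euclidean norm of an affine function, hence convex; a short computation (or the observation that $2g''g - (g')^2 = 4C^2$ where $g(x)$ is the radicand) shows $f$ is strictly convex whenever $C>0$. The degenerate case $C=0$ reduces to $f(x) = Ax + |1-Bx|$, directly minimized at $x=1/B$ with value $A/B$; a quick substitution confirms this matches both claimed formulas in that case. So it suffices to find the unique critical point when $C>0$. Setting $f'(x)=0$ produces
\begin{equation*}
A\sqrt{(1-Bx)^2 + C^2 x^2} \;=\; B - (B^2+C^2)x,
\end{equation*}
which already forces $x \le B/(B^2+C^2)$ since the left-hand side is nonnegative. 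Squaring and invoking the identity $D^2 = B^2+C^2-A^2 \ge 0$ (valid since $A<B$) reduces the equation to a quadratic in $x$ whose discriminant I expect to simplify to the perfect square $4A^2 C^2 D^2$, yielding the admissible root $x_\star = \frac{BD-AC}{(B^2+C^2)D}$.

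Next, I would reconcile this with the displayed form $x_\star = \frac{2(C+D)(B-A)}{D\bigl[(C+D)^2 + (B-A)^2\bigr]}$ by cross-multiplying and showing that the residue is a polynomial multiple of $B^2+C^2-A^2-D^2$, which vanishes by definition of $D$. For the minimum value, substituting the critical-point equation back into $f$ gives the one-liner
\begin{equation*}
f(x_\star) \;=\; A x_\star + \frac{B-(B^2+C^2)x_\star}{A} \;=\; \frac{B - D^2 x_\star}{A} \;=\; \frac{AB + CD}{B^2+C^2},
\end{equation*}
and the same trick reconciles this with the displayed form $\frac{(C+D)^2 - (B-A)^2}{(C+D)^2 + (B-A)^2}$: the cross-multiplied identity collapses, after expanding, to a multiple of $B^2+C^2-A^2-D^2 = 0$.

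The main obstacle is purely algebraic bookkeeping: selecting the correct sign after squaring, and spotting that the discriminant simplification and both equality checks all reduce to the single identity $D^2 = B^2+C^2-A^2$. There is no conceptual subtlety — the elegance lies in how cleanly everything telescopes once $D$ is introduced as the right change of variable.
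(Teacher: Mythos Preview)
Your proposal is correct and complete; the calculus is straightforward and your algebraic checks (the discriminant collapsing to $4A^2C^2D^2$, and both cross-multiplications reducing to multiples of $B^2+C^2-A^2-D^2$) all go through exactly as you describe.

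The paper takes a genuinely different, more geometric route. It introduces $R=\sqrt{B^2+C^2}=\sqrt{A^2+D^2}$ and angles $\phi,\psi$ via $\sin\phi=A/R$, $\sin\psi=B/R$ (so $\cos\phi=D/R$, $\cos\psi=C/R$), then performs a sequence of substitutions $x\mapsto y=\tan\psi-Rx\sec\psi\mapsto\theta=\sinh^{-1}y$ to rewrite $f$ as $\sin\phi\sin\psi+\cos\psi(\cosh\theta-\sin\phi\,\sinh\theta)$. Minimizing over $\theta$ gives $y_\star=\tan\phi$, from which $f(x_\star)=\cos(\psi-\phi)$ and $x_\star=\frac{1}{R\cos\phi}\sin(\psi-\phi)$ pop out; the displayed formulas then follow from half-angle identities for $\tan\frac{\psi-\phi}{2}$. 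Compared to your approach, the paper's substitution reveals an underlying trigonometric structure (the minimum value is literally a cosine of an angle difference), which explains \emph{why} the answer has its particular rational form and avoids any quadratic-formula discriminant bookkeeping. Your approach, by contrast, is shorter, entirely elementary, and requires no cleverness beyond recognizing that everything must collapse via the single defining relation $D^2=B^2+C^2-A^2$; it is the more efficient proof, though it treats the displayed formulas as targets to verify rather than quantities that emerge naturally.
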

    We prove \cref{lem:optrate} in \cref{sec:lemoptrate}.

    We can use $\zeta$ as $x$ and plug in the following values into \cref{lem:optrate}:
    \begin{align*}
        A &= \frac{\kappa - 1}{2}, \ \ B = \frac{\kappa + 1}{2}, \ \ C = \kappa_{xy}, \ \ D = \sqrt{B^2 + C^2 - A^2} = \sqrt{\kappa + \kappa_{xy}^2},
    \end{align*}
    which yields
    \begin{align*}
        B - A &= 1, \ \ C + D = \kappa_{xy} + \sqrt{\kappa + \kappa_{xy}^2}.
    \end{align*}
    Then, for the choice
    \begin{align*}
        \zeta^{\star} &= \frac{1}{D} \cdot \frac{2(C+D)(B-A)}{(C+D)^2 + (B-A)^2} = \frac{1}{\sqrt{\kappa + \kappa_{xy}^2}} \cdot \frac{2 \bigopen{\kappa_{xy} + \sqrt{\kappa + \kappa_{xy}^2} \ }}{1 + \bigopen{\kappa_{xy} + \sqrt{\kappa + \kappa_{xy}^2} \ }^2},
    \end{align*}
    we can obtain the optimal value
    \begin{align*}
        f_{\kappa} (\zeta^{\star}) &= \frac{1}{D} \cdot \frac{(C+D)^2 - (B-A)^2}{(C+D)^2 + (B-A)^2} = \frac{\bigopen{\kappa_{xy} + \sqrt{\kappa + \kappa_{xy}^2} \ }^2 - 1}{\bigopen{\kappa_{xy} + \sqrt{\kappa + \kappa_{xy}^2} \ }^2 + 1}.
    \end{align*}

    The last result of the proposition is a direct consequence of the following lemma.
    \begin{restatable}{lemma}{fzetaineq}
        \label{lem:fzetaineq}
        Suppose that $A_1, A_2, B_1, B_2, C \ge 0$ satisfies $A_1 \le B_1$, $A_2 \le B_2$, and $A_2 - A_1 = B_2 - B_1 \ge 0$. Then for the functions $f_1, f_2 : (0, \infty) \rightarrow (0, \infty)$ of the following form:
        \begin{align*}
            f_1 (x) &= A_1 x + \sqrt{(1 - B_1 x)^2 + C^2 x^2}, \ \ f_2 (x) = A_2 x + \sqrt{(1 - B_2 x)^2 + C^2 x^2},
        \end{align*}
        we have $f_1 (x) \le f_2 (x)$ for all $x > 0$.
    \end{restatable}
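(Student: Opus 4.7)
The plan is to view the transition from $f_1$ to $f_2$ as a one-parameter deformation along which the common increment $\Delta := A_2 - A_1 = B_2 - B_1 \ge 0$ is turned on continuously. Concretely, for fixed $x > 0$, define
\begin{equation*}
    g(t) = (A_1 + t) x + \sqrt{(1 - (B_1 + t) x)^2 + C^2 x^2}, \qquad t \in [0, \Delta].
\end{equation*}
Then $g(0) = f_1(x)$ and $g(\Delta) = f_2(x)$, so it suffices to show that $g$ is non-decreasing on $[0, \Delta]$.

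Differentiating in $t$ gives
\begin{equation*}
    g'(t) = x \left( 1 - \frac{1 - (B_1 + t) x}{\sqrt{(1 - (B_1 + t) x)^2 + C^2 x^2}} \right).
\end{equation*}
For any real $s$ and nonnegative $r$, the elementary bound $\frac{s}{\sqrt{s^2 + r^2}} \le 1$ holds (with equality only in the degenerate case $r = 0$, $s > 0$). Applying this with $s = 1 - (B_1 + t) x$ and $r = C x$, and using $x > 0$, yields $g'(t) \ge 0$ for all $t \in [0, \Delta]$.

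Hence $g$ is non-decreasing, which gives $f_1(x) = g(0) \le g(\Delta) = f_2(x)$, as desired. The hypotheses $A_i \le B_i$ are not used in this direct argument; they enter only via the context (ensuring $f_1, f_2$ arise from the intended setting in Proposition~\ref{prop:normopt}). There is no real obstacle here — once the common-increment reparametrization is in place, the monotonicity reduces to the one-line Cauchy–Schwarz–type bound $s/\sqrt{s^2 + r^2} \le 1$.
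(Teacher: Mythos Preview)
Your argument is correct. The deformation $g(t)$ interpolates between $f_1(x)$ and $f_2(x)$, and the derivative computation is right; the inequality $s/\sqrt{s^2+r^2}\le 1$ does the rest. The only edge case is $C=0$ with $(B_1+t)x=1$, where $g$ has a cusp, but $g$ remains continuous and is non-decreasing on either side, so the conclusion still holds. You are also right that $A_i\le B_i$ play no role in the inequality itself.

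The paper takes a different route: it rearranges $f_1(x)\le f_2(x)$ into
\[
\sqrt{\bigl(s+\tfrac{D}{2}\bigr)^2+C^2}-\sqrt{\bigl(s-\tfrac{D}{2}\bigr)^2+C^2}\le D,
\]
with $D=A_2-A_1=B_2-B_1$ and $s=1/x-(B_1+B_2)/2$, and then argues that the left side, viewed as a function of $s$, is increasing with limits $\pm D$ at $\pm\infty$. That is essentially the reverse triangle inequality in $\mathbb{R}^2$, though the paper does not phrase it that way. Your derivative-along-a-deformation approach is more direct and avoids the substitution and limit argument; the paper's version has the minor advantage of making the geometric content (a Lipschitz bound for the Euclidean norm) visible once one recognizes it as a triangle inequality. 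Either way, the lemma is elementary and your proof is clean.
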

    We prove \cref{lem:fzetaineq} in \cref{sec:lemfzetaineq}.
    
    If $\kappa_x \ge \kappa_y$, we can plug in the following values:
    \begin{align*}
        A_1 &= \frac{\kappa_y - 1}{2}, \ \ A_2 = \frac{\kappa_x - 1}{2}, \ \ B_1 = \frac{\kappa_y + 1}{2}, \ \ B_2 = \frac{\kappa_x + 1}{2}, \ \ C = \kappa_{xy},
    \end{align*}
    so that $A_2 - A_1 = B_2 - B_1 = \kappa_x - \kappa_y \ge 0$ and \cref{lem:fzetaineq} implies $f_{\kappa_x} (\zeta) \ge f_{\kappa_y} (\zeta)$.
    
    If $\kappa_x \le \kappa_y$, we can change orders as:
    \begin{align*}
        A_1 &= \frac{\kappa_x - 1}{2}, \ \ A_2 = \frac{\kappa_y - 1}{2}, \ \ B_1 = \frac{\kappa_x + 1}{2}, \ \ B_2 = \frac{\kappa_y + 1}{2}, \ \ C = \kappa_{xy},
    \end{align*}
    so that $A_2 - A_1 = B_2 - B_1 = \kappa_y - \kappa_x \ge 0$ and \cref{lem:fzetaineq} implies $f_{\kappa_x} (\zeta) \le f_{\kappa_y} (\zeta)$.
    
    Therefore we can conclude that $f_{\kappa_x} (\zeta) \ge f_{\kappa_y} (\zeta)$ for all $\zeta \in (0, \infty)$ if and only if $\kappa_x \ge \kappa_y$.
\end{proof}

\subsubsection{\texorpdfstring{Proof of \cref{lem:normineq}}{Proof of Lemma B.4}}
\label{sec:lemnormineq}

Here we prove \cref{lem:normineq}, restated below for the sake of readability.

\normineq*

\begin{proof}
    We first observe that the following matrix norms are equal:
    \begin{align*}
        \bignorm{\begin{bmatrix}
            \mX & -\mW \\
            \mW^{\top} & \mY
        \end{bmatrix}} 
        = \bigg{\|} \underbrace{\begin{bmatrix}
            \mX & \mW \\
            \mW^{\top} & -\mY
        \end{bmatrix}}_{\triangleq \mM'} \bigg{\|}.
    \end{align*}

    Let $\lambda_{\max}^{\mM'}$ and $\lambda_{\min}^{\mM'}$ be the maximum and minimum eigenvalues of $\mM'$, respectively. Since $\mM'$ is a symmetric matrix, the matrix norm of $\mM'$ is equal to
    \begin{align}
        \norm{\mM'} &= \max \bigset{| \lambda_{\max}^{\mM'} |, \ | \lambda_{\min}^{\mM'} |}. \label{eq:lmbdmaxmin}
    \end{align}
    Since $\mX \succ 0$ and $-\mY \prec 0$, we can observe that $\mM'$ is neither positive definite nor negative definite\footnote{It is easy if we think of the contrapositive--- any block partition of a PD matrix must have PD block diagonals.}, i.e., $\lambda_{\max}^{\mM'} \ge 0 \ge \lambda_{\min}^{\mM'}$. Hence we can rewrite:
    \begin{align}
        \norm{\mM'} &= \max \bigset{\lambda_{\max}^{\mM'} , \ - \lambda_{\min}^{\mM'}}.
    \end{align}
    
    Given a symmetric matrix $\mS \in \sS^{d}$, we have the following identities:
    \begin{align}
        \lambda_{\max}^{\mS} &= \sup_{\vz \in \R^{d}, \| \vz \| = 1} \vz^{\top} \mS \vz, \quad \lambda_{\min}^{\mS} = \inf_{\vz \in \R^{d}, \| \vz \| = 1} \vz^{\top} \mS \vz, \label{eq:rayleigh}
    \end{align}
    where $\lambda_{\max}^{\mS}$ and $\lambda_{\min}^{\mS}$ are the maximum and minimum eigenvalues of $\mS$, respectively. Moreover, the $\sup$ for $\lambda_{\max}^{\mS}$ and $\inf$ for $\lambda_{\min}^{\mS}$ is attained when the unit vector $\vz$ is aligned with the eigenvectors corresponding to $\lambda_{\max}^{\mS}$ and $\lambda_{\min}^{\mS}$.
    
    Now we will show that
    \begin{align}
        \lambda_{\max}^{\mM'} \le \left\| \begin{bmatrix}
            s_x & -\ell \\
            \ell & t_y
        \end{bmatrix} \right\| \quad \text{and} \quad - \lambda_{\min}^{\mM'} \le \left\| \begin{bmatrix}
            t_x & -\ell \\
            \ell & s_y
        \end{bmatrix} \right\|.
        \label{eq:gulugulu}
    \end{align}
    
    \paragraph{Maximum Eigenvalue.} The maximum eigenvalue of $\mM'$ is equal to
    \begin{align*}
        \lambda_{\max}^{\mM'} &= \sup_{\vz \in \R^{d_x + d_y}, \| \vz \| = 1} \vz^{\top} \mM' \vz \\
        &= \sup_{\substack{p, q \in [0, 1] \\ p^2 + q^2 = 1}} \sup_{\substack{\vx \in \R^{d_x}, \| \vx \| = 1 \\ \vy \in \R^{d_y}, \| \vy \| = 1}} \begin{bmatrix}
            p \vx \\ q \vy
        \end{bmatrix}^{\top}
        \begin{bmatrix}
            \mX & \mW \\
            \mW^{\top} & -\mY
        \end{bmatrix}
        \begin{bmatrix}
            p \vx \\ q \vy
        \end{bmatrix} \\
        &= \sup_{\substack{p, q \in [0, 1] \\ p^2 + q^2 = 1}} \sup_{\substack{\vx \in \R^{d_x}, \| \vx \| = 1 \\ \vy \in \R^{d_y}, \| \vy \| = 1}} \bigopen{p^2 \vx^{\top} \mX \vx + 2 pq \vx^{\top} \mW \vy - q^2 \vy^{\top} \mY \vy},
    \end{align*}
    where we reparameterize $\vz = \begin{bmatrix} p \vx^\top & q \vy^\top \end{bmatrix}^\top$ such that $\vx \in \R^{d_x}$, $\vy \in \R^{d_y}$ satisfies $\| \vx \| = \| \vy \| = 1$, and $p^2 + q^2 = 1$.

    First, suppose that $\ell > 0$, \textit{i.e.}, $\mW \ne 0$.
    Let $\mW = \mU \bm{\Sigma} \mV^{\top}$ be the singular value decomposition of $\mW$, where $\mU = [\vu_1, \dots, \vu_{r}] \in \R^{d_x \times r}$ and $\mV = [\vv_1, \dots, \vv_{r}] \in \R^{d_y \times r}$ are matrices with orthonormal columns and $\bm{\Sigma} = \text{diag} (\sigma_1, \dots, \sigma_r) \in \R^{r \times r}$ is a diagonal matrix with (strictly) positive entries.
    (Note that $1 \le r \le \min \{ d_x, d_y \}$.)
    Assume $\sigma_1 \ge \dots \ge \sigma_r$ W.L.O.G., so that $\| \mW \| \le \ell$ is equivalent to $\sigma_1 \le \ell$.
    Then we have
    \begin{align}
        p^2 \vx^{\top} \mX \vx + 2 pq \vx^{\top} \mW \vy - q^2 \vy^{\top} \mY \vy &= p^2 \vx^{\top} \mX \vx + 2 pq \sum_{k=1}^{r} \sigma_k \vx^{\top} \vu_{k} \vv_{k}^{\top} \vy - q^2 \vy^{\top} \mY \vy \notag \\
        &= p^2 \vx^{\top} \mX \vx + 2 pq \sum_{k=1}^{r} \sigma_k \vu_{k}^{\top} \vx \vy^{\top} \vv_{k}^{\top} - q^2 \vy^{\top} \mY \vy. \label{eq:maxeigenval}
    \end{align}
    Since we aim to show an upper bound of \eqref{eq:maxeigenval}, we now consider another optimization problem over a ``bigger'' search space and try to characterize its optimum value; this value will give us an upper bound of $\lambda_{\max}^{\mM'}$. Namely, we now additionally treat $\vu_1, \dots, \vu_{r}$ and $\vv_1, \dots, \vv_{r}$ in \eqref{eq:maxeigenval} as optimization variables. 
    With this addition, from now we treat the following items as optimization variables:
    \begin{enumerate}
        \item Choice of unit vectors $\vu_1, \dots, \vu_{r} \in \R^{d_x}$ of $\mU$ and $\vv_1, \dots, \vv_{r} \in \R^{d_y}$ of $\mV$
        \item Choice of unit vectors $\vx \in \R^{d_x}$, $\vy \in \R^{d_y}$
        \item Choice of values $p, q \in [0, 1]$ such that $p^2 + q^2 = 1$
    \end{enumerate}
    Our problem boils down to finding the maximum value of (\ref{eq:maxeigenval}) over all possible choices of these variables.
    (Note that the subsequent arguments and the resulting upper bound are true for all cases of $r \le \min \{ d_x, d_y \}$.)

    First, note that our choices of $\vu_1, \dots, \vu_{r}$ and $\vv_1, \dots, \vv_{r}$ only affect the middle term, which is bounded by
    \begin{align*}
        2 pq \sum_{k=1}^{r} \sigma_k \vu_{k}^{\top} \vx \vy^{\top} \vv_{k}^{\top} \le 2 pq \sigma_1,
    \end{align*}
    for which, for any given $\vx$, $\vy$ and $p, q$, the maximum is attained when we choose $\vu_1 = \vx$, $\vv_1 = \vy$. (Note that the terms for $k \ge 2$ all disappear by orthogonality.)

    Now we can observe that over possible choices of $\vx$ and $\vy$, we have
    \begin{align*}
        p^2 \vx^{\top} \mX \vx + 2 pq \sigma_1 - q^2 \vy^{\top} \mY \vy &\le p^2 \lambda_{\max}^{\mX} + 2 pq \sigma_1 - q^2 \lambda_{\min}^{\mY},
    \end{align*}
    where equality holds if the unit vector $\vx$ (or $\vy$) is aligned with the eigenvector corresponding to the maximum (or minimum) eigenvalue of $\mX$ (or $\mY$). We can use the given conditions to obtain
    \begin{align*}
        p^2 \lambda_{\max}^{\mX} + 2 pq \sigma_1 - q^2 \lambda_{\min}^{\mY} &\le p^2 s_x + 2 pq \ell - q^2 t_y = \begin{bmatrix}
            p \\ q
        \end{bmatrix}^{\top}
        \begin{bmatrix}
            s_x & \ell \\
            \ell & -t_y
        \end{bmatrix}
        \begin{bmatrix}
            p \\ q
        \end{bmatrix}.
    \end{align*}
    Finally, if we take the maximum over $p, q \in [0, 1]$ with $p^2 + q^2 = 1$, we have that
    \begin{align*}
        \sup_{\substack{p, q \in [0, 1] \\ p^2 + q^2 = 1}} \begin{bmatrix}
            p \\ q
        \end{bmatrix}^{\top}
        \begin{bmatrix}
            s_x & \ell \\
            \ell & -t_y
        \end{bmatrix}
        \begin{bmatrix}
            p \\ q
        \end{bmatrix} &= \bignorm{\begin{bmatrix}
            s_x & \ell \\
            \ell & -t_y
        \end{bmatrix}} = \bignorm{\begin{bmatrix}
            s_x & -\ell \\
            \ell & t_y
        \end{bmatrix}}
    \end{align*}
    and hence we can conclude that
    \begin{align*}
        \lambda_{\max}^{\mM'} \le \bignorm{\begin{bmatrix}
            s_x & -\ell \\
            \ell & t_y
        \end{bmatrix}}.
    \end{align*}
    For the degenerate case $\ell = 0$, we can just apply $r = 1$ and $\sigma_1 = 0$, which does not hurt the validity of the proof.
    
    \paragraph{Minimum Eigenvalue.} Similarly, the minimum eigenvalue of $\mM'$ is equal to
    \begin{align*}
        \lambda_{\min}^{\mM'} &= \inf_{\vz \in \R^{d_x + d_y}, \| \vz \| = 1} \vz^{\top} \mM' \vz \\
        &= \inf_{\substack{p, q \in [0, 1] \\ p^2 + q^2 = 1}} \inf_{\substack{\vx \in \R^{d_x}, \| \vx \| = 1 \\ \vy \in \R^{d_y}, \| \vy \| = 1}} \begin{bmatrix}
            p \vx \\ q \vy
        \end{bmatrix}^{\top}
        \begin{bmatrix}
            \mX & \mW \\
            \mW^{\top} & -\mY
        \end{bmatrix}
        \begin{bmatrix}
            p \vx \\ q \vy
        \end{bmatrix} \\
        &= \inf_{\substack{p, q \in [0, 1] \\ p^2 + q^2 = 1}} \inf_{\substack{\vx \in \R^{d_x}, \| \vx \| = 1 \\ \vy \in \R^{d_y}, \| \vy \| = 1}} \bigopen{p^2 \vx^{\top} \mX \vx + 2 pq \vx^{\top} \mW \vy - q^2 \vy^{\top} \mY \vy} \\
        &= - \sup_{\substack{p, q \in [0, 1] \\ p^2 + q^2 = 1}} \sup_{\substack{\vx \in \R^{d_x}, \| \vx \| = 1 \\ \vy \in \R^{d_y}, \| \vy \| = 1}} \bigopen{- p^2 \vx^{\top} \mX \vx - 2 pq \vx^{\top} \mW \vy + q^2 \vy^{\top} \mY \vy},
    \end{align*}
    and therefore
    \begin{align*}
        - \lambda_{\min}^{\mM'} &= \sup_{\substack{p, q \in [0, 1] \\ p^2 + q^2 = 1}} \sup_{\substack{\vx \in \R^{d_x}, \| \vx \| = 1 \\ \vy \in \R^{d_y}, \| \vy \| = 1}} \bigopen{- p^2 \vx^{\top} \mX \vx - 2 pq \vx^{\top} \mW \vy + q^2 \vy^{\top} \mY \vy},
    \end{align*}
    where we use the same reparameterization: $\vz = \begin{bmatrix} p \vx^\top & q \vy^\top \end{bmatrix}^\top$ with $\vx \in \R^{d_x}$, $\vy \in \R^{d_y}$ with $\| \vx \| = \| \vy \| = 1$, and $p^2 + q^2 = 1$.

    As in the maximum case, we first assume that $\ell > 0$ and define the singular value decomposition of $\mW$ as $\mW = \mU \bm{\Sigma} \mV^{\top}$.
    Then we can write
    \begin{align}
        - p^2 \vx^{\top} \mX \vx - 2 pq \vx^{\top} \mW \vy + q^2 \vy^{\top} \mY \vy &= - p^2 \vx^{\top} \mX \vx - 2 pq \sum_{k=1}^{r} \sigma_k \vu_{k}^{\top} \vx \vy^{\top} \vv_{k}^{\top} + q^2 \vy^{\top} \mY \vy. \label{eq:mineigenval}
    \end{align}
    to observe that
    \begin{align*}
        - 2 pq \sum_{k=1}^{r} \sigma_k \vu_{k}^{\top} \vx \vy^{\top} \vv_{k}^{\top} \le 2 pq \sigma_1,
    \end{align*}
    for which the maximum is attained when we choose $\vu_1 = \vx$ and $\vv_1 = - \vy$.
    Then we have
    \begin{align*}
        - p^2 \vx^{\top} \mX \vx + 2 pq \sigma_1 + q^2 \vy^{\top} \mY \vy &\le - p^2 \lambda_{\min}^{\mX} + 2 pq \sigma_1 + q^2 \lambda_{\max}^{\mY},
    \end{align*}
    where equality holds if the unit vector $\vx$ (or $\vy$) is aligned with the eigenvector corresponding to the minimum (or maximum) eigenvalue of $\mX$ (or $\mY$).
    We can use the given conditions to obtain
    \begin{align*}
        - p^2 \lambda_{\min}^{\mX} + 2 pq \sigma_1 + q^2 \lambda_{\max}^{\mY} &\le - p^2 t_x + 2 pq \ell - q^2 s_y = \begin{bmatrix}
            p \\ q
        \end{bmatrix}^{\top}
        \begin{bmatrix}
            t_x & \ell \\
            \ell & -s_y
        \end{bmatrix}
        \begin{bmatrix}
            p \\ q
        \end{bmatrix}.
    \end{align*}
    Finally, if we take the maximum over $p, q \in [0, 1]$ with $p^2 + q^2 = 1$, we have that
    \begin{align*}
        \sup_{\substack{p, q \in [0, 1] \\ p^2 + q^2 = 1}} \begin{bmatrix}
            p \\ q
        \end{bmatrix}^{\top}
        \begin{bmatrix}
            t_x & \ell \\
            \ell & -s_y
        \end{bmatrix}
        \begin{bmatrix}
            p \\ q
        \end{bmatrix} &= \bignorm{\begin{bmatrix}
            t_x & \ell \\
            \ell & -s_y
        \end{bmatrix}} = \bignorm{\begin{bmatrix}
            t_x & -\ell \\
            \ell & s_y
        \end{bmatrix}}
    \end{align*}
    and hence we can conclude that
    \begin{align*}
        - \lambda_{\min}^{\mM'} \le \bignorm{\begin{bmatrix}
            t_x & -\ell \\
            \ell & s_y
        \end{bmatrix}}.
    \end{align*}
    Combining the results with \eqref{eq:lmbdmaxmin}, we have
    \begin{align*}
        \norm{\mM'} &= \max \bigset{\lambda_{\max}^{\mM'} , \ - \lambda_{\min}^{\mM'}} = \max \left\{ \left\| \begin{bmatrix}
            s_x & -\ell \\
            \ell & t_y
        \end{bmatrix} \right\|, \left\| \begin{bmatrix}
            t_x & -\ell \\
            \ell & s_y
        \end{bmatrix} \right\| \right\}.
    \end{align*}
    For the degenerate case $\ell = 0$, we can just apply $r = 1$ and $\sigma_1 = 0$, which does not hurt the validity of the proof.
    
    Therefore we have shown \eqref{eq:gulugulu}, which completes the proof of \cref{lem:normineq}.
\end{proof}

\begin{remark*}
    \note{An anonymous reviewer has found a much simpler proof of \cref{lem:normineq}.
    By definition we have}
    \begin{align*} 
        \begin{bmatrix}
        \| \vx \| \\ \| \vy \|
        \end{bmatrix}^{\top}\begin{bmatrix}
            t_x & -l \\ -l & -s_y
        \end{bmatrix}
        \begin{bmatrix}
            \| \vx \| \\ \| \vy \|
        \end{bmatrix} \le \begin{bmatrix}
            \vx \\ \vy
        \end{bmatrix}^{\top} \mM' \begin{bmatrix}
            \vx \\ \vy
        \end{bmatrix} \le 
        \begin{bmatrix}
        \| \vx \| \\ \| \vy \|
        \end{bmatrix}^{\top}
        \begin{bmatrix}
            s_x & l \\ l & -t_y
        \end{bmatrix}
        \begin{bmatrix}
        \| \vx \| \\ \| \vy \|
        \end{bmatrix},
    \end{align*}
    \note{for all $\vx \in \R^{d_x}$ and $\vy \in \R^{d_y}$. 
    Then we immediately obtain the desired inequality as the matrix norm is invariant with respect to multiplication by $-1$ on rows and columns. \hfill $\square$}
\end{remark*}

\subsubsection{\texorpdfstring{Proof of \cref{lem:optrate}}{Proof of Lemma B.5}}
\label{sec:lemoptrate}

Here we prove \cref{lem:optrate}, restated below for the sake of readability.

\lemoptrate*

\begin{proof}
    Observing that $A^2 + D^2 = B^2 + C^2$ by definition, we start by substituting
    \begin{align*}
        R = \sqrt{B^2 + C^2} = \sqrt{A^2 + D^2}, \ \ \sin \phi &= \frac{A}{R}, \ \ \sin \psi = \frac{B}{R}
    \end{align*}
    for $\phi \in [0, \frac{\pi}{2})$ and $\psi \in (0, \frac{\pi}{2}]$. Note that we have $\phi < \psi$ from $A < B$, and
    \begin{align*}
        \cos \phi = \frac{D}{R}, \ \ \cos \psi = \frac{C}{R}.
    \end{align*}
    We can compute
    \begin{align*}
        Ax + \sqrt{(1-Bx)^2 + C^2 x^2} &= R x \sin \phi + \sqrt{(1 - R x \sin \psi)^2 + R^2 x^2 \cos^2 \psi} \\
        &= R x \sin \phi + \sqrt{1 - 2 R x \sin \psi + R^2 x^2}.
    \end{align*}
    By using change of variables as 
    \begin{align*}
        y = \tan \psi - R x \sec \psi \ \Leftrightarrow \ x = \frac{1}{R} \bigopen{\sin \psi - y \cos \psi},
    \end{align*}
    we have $y \in [-\infty, \tan \psi]$, and
    \begin{align*}
        1 - 2 x R \sin \psi + R^2 x^2 = (1 + y^2) \cos^2 \psi.
    \end{align*}
    Plugging in, we can obtain the following reparameterization:
    \begin{align*}
        R x \sin \phi + \sqrt{1 - 2 R x \sin \psi + R^2 x^2} &= \sin \phi \sin \psi - y \sin \phi \cos \psi + \sqrt{1 + y^2} \cos \psi.
    \end{align*}
    We can easily observe that if we again reparameterize as $y = \sinh \theta$, we can write as
    \begin{align}
        \sin \phi \sin \psi - \sin \phi \cos \psi \cdot \sinh \theta + \cos \psi \cdot \cosh \theta = \sin \phi \sin \psi + \cos \psi \bigopen{\cosh \theta - \sin \phi \cdot \sinh \theta}. \label{eq:carrot}
    \end{align}
    The derivative of \eqref{eq:carrot} with respect to $\theta$ is equal to
    \begin{align}
        \cos \psi \bigopen{\sinh \theta - \sin \phi \cdot \cosh \theta} . \label{eq:onion}
    \end{align}
    As the \textit{second} derivative of \eqref{eq:carrot} satisfies $\cos \psi \bigopen{\cosh \theta - \sin \phi \cdot \sinh \theta} \ge \cos \psi \cdot (- \sinh \theta + \cosh \theta) \ge 0$, we have that \eqref{eq:onion} is an increasing function.
    Therefore, the minimizer of \eqref{eq:carrot} must be equal to the point where \eqref{eq:onion} is zero, which is\footnote{To clarify, we are just using the fact that $a \sinh t - b \cosh t = 0$ if $\sinh t = \frac{b}{\sqrt{a^2 - b_2}}$.}
    \begin{align*}
        y_{\star} &= \sinh \theta_{\star} = \frac{\sin \phi}{\sqrt{1 - \sin^2 \phi}} = \frac{\sin \phi}{\cos \phi} = \tan \phi.
    \end{align*}
    Note that we have $\cos \phi > 0$ since $\phi \in [0, \frac{2}{\pi})$, and using the square root expression above, we can compute
    \begin{align}
        \cosh \theta_{\star} - \sin \phi \cdot \sinh \theta_{\star} &= \frac{1}{\sqrt{1 - \sin^2 \phi}} - \frac{\sin^2 \phi}{\sqrt{1 - \sin^2 \phi}} = \sqrt{1 - \sin^2 \phi} = \cos \phi.
        \label{eq:tomato}
    \end{align}
    The range of $y$ contains $y_{\star}$, since $\phi < \psi$ implies $\tan \phi \in [-\infty, \tan \psi)$. We can substitute back as
    \begin{align*}
        x_{\star} &= \frac{1}{R} \bigopen{\sin \psi - \tan \phi \cos \psi} = \frac{1}{R \cos \phi} \bigopen{\cos \phi \sin \psi - \sin \phi \cos \psi} = \frac{1}{R \cos \phi} \sin (\psi - \phi).
    \end{align*}
    By using the trigonometric identity:
    \begin{align}
        \frac{\sin \psi - \sin \phi}{\cos \psi + \cos \phi} &= \frac{2 \cos \bigopen{\frac{\psi + \phi}{2}} \sin \bigopen{\frac{\psi - \phi}{2}}}{2 \cos \bigopen{\frac{\psi + \phi}{2}} \cos \bigopen{\frac{\psi - \phi}{2}}} = \tan \bigopen{\frac{\psi - \phi}{2}},
        \label{eq:potato}
    \end{align}
    we can compute
    \begin{align*}
        \sin (\psi - \phi) &= \frac{2 \tan \bigopen{\frac{\psi - \phi}{2}}}{1 + \tan^2 \bigopen{\frac{\psi - \phi}{2}}} = \frac{2 (\cos \psi + \cos \phi) (\sin \psi - \sin \phi)}{(\cos \psi + \cos \phi)^2 + (\sin \psi - \sin \phi)^2},
    \end{align*}
    and combined with $D = R \cos \phi$ we can conclude that
    \begin{align*}
        x_{\star} &= \frac{1}{D} \cdot \frac{2(C+D)(B-A)}{(C+D)^2 + (B-A)^2}.
    \end{align*}
    
    Also, by \eqref{eq:tomato}, the minimum value can also be computed as
    \begin{align*}
        f_{\star} &= \sin \phi \sin \psi + \cos \psi \bigopen{\cosh \theta_{\star} - \sin \phi \cdot \sinh \theta_{\star}} = \sin \phi \sin \psi + \cos \phi \cos \psi = \cos (\psi - \phi).
    \end{align*}
    By using the trigonometric identity in \eqref{eq:potato}, we can compute
    \begin{align*}
        \cos (\psi - \phi) &= \frac{1 - \tan^2 \bigopen{\frac{\psi - \phi}{2}}}{1 + \tan^2 \bigopen{\frac{\psi - \phi}{2}}} = \frac{(\cos \psi + \cos \phi)^2 - (\sin \psi - \sin \phi)^2}{(\cos \psi + \cos \phi)^2 + (\sin \psi - \sin \phi)^2},
    \end{align*}
    and we can conclude that
    \begin{align*}
        f_{\star} &= \frac{(\cos \psi + \cos \phi)^2 - (\sin \psi - \sin \phi)^2}{(\cos \psi + \cos \phi)^2 + (\sin \psi - \sin \phi)^2} = \frac{(C+D)^2 - (B-A)^2}{(C+D)^2 + (B-A)^2}
    \end{align*}
    as desired.
\end{proof}

\subsubsection{\texorpdfstring{Proof of \cref{lem:fzetaineq}}{Proof of Lemma B.6}}
\label{sec:lemfzetaineq}

Here we prove \cref{lem:fzetaineq}, restated below for the sake of readability.

\fzetaineq*

\begin{proof}
    We must show that for all $x > 0$ we have $f_1 (x) \le f_2 (x)$, i.e.,
    \begin{align*}
        A_1 x + \sqrt{\bigopen{1 - B_1 x}^2 + C^2 x^2} &\le A_2 x + \sqrt{\bigopen{1 - B_2 x}^2 + C^2 x^2},
    \end{align*}
    
    which is equivalent to
    \begin{align*}
        \sqrt{\bigopen{\frac{1}{x} - B_1}^2 + C^2} - \sqrt{\bigopen{\frac{1}{x} - B_2}^2 + C^2} &\le A_2 - A_1.
    \end{align*}
    Let us substitute as follows:
    \begin{align*}
        D = A_2 - A_1 = B_2 - B_1, \ \ s = \frac{1}{x} - \frac{B_1 + B_2}{2},
    \end{align*}
    where $D \ge 0$ and $s > - \frac{B_1 + B_2}{2}$ by assumption. We are left to show that
    \begin{align}
        \sqrt{\bigopen{s + \frac{D}{2}}^2 + C^2} - \sqrt{\bigopen{s - \frac{D}{2}}^2 + C^2} &\le D. \label{eq:sibelius}
    \end{align}
    If $D = 0$, then we can observe that both sides become $0$ and hence \eqref{eq:sibelius} is indeed true.
    
    If $D > 0$, the LHS of \eqref{eq:sibelius} as a function of $s$ is a (monotonically) increasing function.
    Moreover, since
    \begin{align*}
        \lim_{s \rightarrow - \infty} \sqrt{\bigopen{s + \frac{D}{2}}^2 + C^2} - \sqrt{\bigopen{s - \frac{D}{2}}^2 + C^2} &= -D, \\
        \lim_{s \rightarrow \infty} \sqrt{\bigopen{s + \frac{D}{2}}^2 + C^2} - \sqrt{\bigopen{s - \frac{D}{2}}^2 + C^2} &= D,
    \end{align*}
    the range of the LHS is equal to $(-D, D)$, including when $C = 0$, which completes the proof.
\end{proof}
\section{\texorpdfstring{Proofs used in \cref{sec:4}}{Proofs used in Section 4}} \label{sec:c}

Here we prove all theorems related to \altgda{} presented in \cref{sec:4}.
\begin{itemize}
    \item In \cref{sec:appaltgda} we prove \cref{thm:altgda} which yields a contraction inequality for \altgda{}. 
    \item In \cref{sec:coraltgda} we prove \cref{cor:altgda} which derives the corresponding iteration complexity upper bound.
    \item In \cref{sec:alttech} we prove the two main propositions introduced in \cref{sec:c}.
\end{itemize}

\paragraph{Notations.} For notational simplicity, in \cref{sec:b} we define and use the following notations for gradients:
\begin{align*}
    \vg_{ij}^{x} &:= \nabla_{\vx} f(\vx_i, \vy_j), \quad \vg_{ij}^{y} := \nabla_{\vy} f(\vx_i, \vy_j).
\end{align*}
In particular, we will use indices $i, j \in \{0, 1, \star\}$ throughout the proof.

\subsection{\texorpdfstring{Proof of \cref{thm:altgda}}{Proof of Theorem 4.1}} \label{sec:appaltgda}

Here we prove \cref{thm:altgda} of \cref{sec:4}, restated below for the sake of readability.

\theoremaltgda*

\begin{proof}
    Note that the Lyapunov function $\Psi_k^{\text{Alt}}$ for \altgda{} can be written as
    \begin{align}
        \begin{aligned}
            \Psi^{\text{Alt}}_k &= \bigopen{\frac{1}{\alpha} - \mu_x} \norm{\vx_k - \vx_{\star}}^2 + 2 \bigopen{\frac{1}{\beta} - \mu_y} \norm{\vy_k - \vy_{\star}}^2 \\
            &\phantom{=} + \bigopen{\frac{1}{\alpha} - \mu_x} \norm{\vx_{k+1} - \vx_{\star}}^2 - \alpha (1 - \alpha L_x) \norm{\nabla_{\vx} f(\vx_k, \vy_k)}^2.
        \end{aligned}
        \label{eq:lyapaltorig}
    \end{align}
    The proof consists of two steps; in \textsc{Step 1} we prove that $\Psi_k^{\text{Alt}}$ is a valid Lyapunov function, and in \textsc{Step 2} we show that $\Psi_{k+1}^{\text{Alt}} \le r \Psi_{k}^{\text{Alt}}$ holds for the contraction rate $r$ given as in \cref{thm:altgda}.
    For notational simplicity, W.L.O.G. we equivalently show that the statement holds for $k=0$ and \textit{any} choice of initialization $(\vx_0, \vy_0)$.
    (This is indeed safe because we can apply the results to each of the iterates of the whole sequence $\{ (\vx_k, \vy_k) \}_{k \ge 0}$ generated by \altgda{}.)

    \subsubsection*{Step 1. Validity of Lyapunov Function}
    
    Here we show that there exists some constant $A^{\text{Alt}}$ such that we have $\Psi_{0}^{\text{Alt}} \ge A^{\text{Alt}} \bigopen{\norm{\vx_0 - \vx_{\star}}^2 + \norm{\vy_0 - \vy_{\star}}^2}$ for any choice of initialization $(\vx_0, \vy_0)$, which is equivalent to showing that $\Psi_k^{\text{Alt}}$ is a valid Lyapunov function. \cref{prop:altvalid} yields a lower bound inequality from which we can derive such a constant $A^{\text{Alt}}$.
    
    \begin{restatable}{proposition}{propaltvalid}
        \label{prop:altvalid}
        For $f \in \gF (\mu_x, \mu_y, L_x, L_y, L_{xy})$ and \textbf{\red{Alt-GDA}} with step sizes given as in \cref{thm:altgda}, we have
        \begin{align}
            \Psi^{\emph{Alt}}_0 &\ge \bigopen{\frac{1}{2 \alpha} - \mu_x} \norm{\vx_0 - \vx_{\star}}^2 + 2 \bigopen{\frac{3}{4 \beta} - \mu_y} \norm{\vy_0 - \vy_{\star}}^2 + \bigopen{\frac{1}{\alpha} - \mu_x} \norm{\vx_{1} - \vx_{\star}}^2. \label{eq:altvalid}
        \end{align} 
        for \textit{any} choice of initialization $(\vx_0, \vy_0)$.
    \end{restatable}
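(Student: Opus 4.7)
The plan is to substitute the definition of $\Psi_0^{\text{Alt}}$ from \eqref{eq:lyapaltorig} directly into the target inequality \eqref{eq:altvalid}. The $\bigopen{\frac{1}{\alpha}-\mu_x}\norm{\vx_1-\vx_\star}^2$ terms cancel identically on both sides, the $-\mu_x$ and $-\mu_y$ contributions cancel as well, and the remaining $\vx$- and $\vy$-distance coefficients simplify via $\frac{1}{\alpha}-\frac{1}{2\alpha}=\frac{1}{2\alpha}$ and $2\bigopen{\frac{1}{\beta}-\frac{3}{4\beta}}=\frac{1}{2\beta}$. Hence \eqref{eq:altvalid} is equivalent to the single scalar inequality
\begin{equation*}
\alpha(1-\alpha L_x)\,\norm{\nabla_{\vx} f(\vx_0,\vy_0)}^2 \;\le\; \frac{1}{2\alpha}\norm{\vx_0-\vx_\star}^2 + \frac{1}{2\beta}\norm{\vy_0-\vy_\star}^2,
\end{equation*}
so the whole proposition reduces to controlling the only ``negative'' term inside $\Psi_0^{\text{Alt}}$ by the scaled squared distances to the optimum.

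To establish this reduced inequality, I would first use $\alpha(1-\alpha L_x)\le\alpha$, which holds because $\alpha L_x\in[0,\tfrac{1}{2}]$ under the assumed step size. Using the optimality condition $\nabla_{\vx} f(\vx_\star,\vy_\star)=\vzero$, I would then split $\nabla_{\vx} f(\vx_0,\vy_0)$ at the intermediate point $(\vx_\star,\vy_0)$ and invoke $\norm{\va+\vb}^2\le 2\norm{\va}^2+2\norm{\vb}^2$ together with the $L_x$- and $L_{xy}$-Lipschitz gradient bounds from \cref{def:lipg} to obtain
\begin{equation*}
\norm{\nabla_{\vx} f(\vx_0,\vy_0)}^2 \;\le\; 2L_x^2\norm{\vx_0-\vx_\star}^2 + 2L_{xy}^2\norm{\vy_0-\vy_\star}^2.
\end{equation*}
Matching coefficients then reduces the task to checking $2\alpha L_x^2\le\frac{1}{2\alpha}$ and $2\alpha L_{xy}^2\le\frac{1}{2\beta}$, i.e., $\alpha\le\frac{1}{2L_x}$ and $\alpha\beta\le\frac{1}{4L_{xy}^2}$. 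The first is granted directly by assumption; the second follows by multiplying the cross-bounds $\alpha\le\frac{\sqrt{\mu_y}}{2L_{xy}\sqrt{L_x}}$ and $\beta\le\frac{\sqrt{\mu_x}}{2L_{xy}\sqrt{L_y}}$ and then using $\sqrt{\mu_x\mu_y}\le\sqrt{L_xL_y}$.

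The only real obstacle is a piece of bookkeeping: the slightly unusual coefficients $\frac{1}{2\alpha}$ and $\frac{3}{4\beta}$ appearing on the right-hand side of \eqref{eq:altvalid} appear calibrated precisely so that the slack between $\Psi_0^{\text{Alt}}$ and the claimed lower bound exactly matches what the Young-style split of the gradient norm can afford, leaving essentially no room to spare on either factor. Beyond this matching of constants, no ingredient outside the Lipschitz gradient conditions and the triangle inequality is needed, so once the reduction above is carried out, the verification is essentially mechanical.
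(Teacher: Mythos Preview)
Your proposal is correct and follows essentially the same route as the paper: both reduce \eqref{eq:altvalid} to bounding $\alpha(1-\alpha L_x)\norm{\nabla_{\vx} f(\vx_0,\vy_0)}^2$ by $\frac{1}{2\alpha}\norm{\vx_0-\vx_\star}^2+\frac{1}{2\beta}\norm{\vy_0-\vy_\star}^2$, apply the same triangle-inequality split $\norm{\nabla_{\vx} f(\vx_0,\vy_0)}^2\le 2L_x^2\norm{\vx_0-\vx_\star}^2+2L_{xy}^2\norm{\vy_0-\vy_\star}^2$, and then verify the two coefficient inequalities using $\alpha\le\frac{1}{2L_x}$ and the product of the cross step-size bounds. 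The only cosmetic difference is that you drop the factor $(1-\alpha L_x)$ at the outset whereas the paper carries it one line further before discarding it.
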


    While we defer the proof of \cref{prop:altvalid} to \cref{sec:propaltvalid}, here we see that
    \begin{align*}
        \bigopen{\frac{1}{2 \alpha} - \mu_x} \norm{\vx_0 - \vx_{\star}}^2 + 2 \bigopen{\frac{3}{4 \beta} - \mu_y} \norm{\vy_0 - \vy_{\star}}^2 + \bigopen{\frac{1}{\alpha} - \mu_x} \norm{\vx_{1} - \vx_{\star}}^2 &\ge A^{\text{Alt}} \bigopen{\norm{\vx_0 - \vx_{\star}}^2 + \norm{\vy_0 - \vy_{\star}}^2}
    \end{align*}
    shows the validity of $\Psi^{\text{Alt}}_0$ for $A^{\text{Alt}} = \min \bigset{\frac{1}{2 \alpha} - \mu_x, 2 \bigopen{\frac{3}{4 \beta} - \mu_y}} > 0$.

    (Note that $\alpha \le \frac{1}{2 L_{x}} < \frac{1}{2 \mu_x}$ and $\beta \le \frac{1}{2 L_{y}} < \frac{1}{2 \mu_y} < \frac{3}{4 \mu_y}$ implies $A^{\text{Alt}} > 0$.)

    \subsubsection*{Step 2. Contraction Inequality}
    
    Here we show that $\Psialt_1 \le r \Psialt_0$ for any choice of initialization $(\vx_0, \vy_0)$, which is equivalent to showing that $\Psialt_{k+1} \le r \Psialt_{k}$ for all $k$.
    \cref{prop:altstepone} yields a one-step contraction inequality that applies to \altgda{} with $\alpha < \frac{1}{2 L_x}$ and $\beta < \frac{1}{2 L_y}$, \textit{i.e.,} when the step sizes are small enough.

    \begin{restatable}{proposition}{propaltstepone}
        \label{prop:altstepone}
        For $f \in \gF (\mu_x, \mu_y, L_x, L_y, L_{xy})$ and \textbf{\red{Alt-GDA}} with step sizes $\alpha \le \frac{1}{2 L_x}$ and $\beta \le \frac{1}{2 L_y}$, we have
        \begin{align}
            \begin{aligned}
            & {\phantom{\le}} \bigopen{\frac{1}{\alpha} - 2 \beta^2 L_y L_{xy}^2} \| \vx_1 - \vx_{\star} \|^2 + 2 \bigopen{\frac{1}{\beta} - \alpha^2 L_x L_{xy}^2} \| \vy_1 - \vy_{\star} \|^2 + \frac{1}{\alpha} \| \vx_2 - \vx_{\star} \|^2 - \alpha (1 - \alpha L_x) \norm{\vg_{11}^{x}}^2 \\
            &\le \bigopen{\frac{1}{\alpha} - \mu_x} \norm{\vx_0 - \vx_{\star}}^2 + 2 \bigopen{\frac{1}{\beta} - \mu_y} \norm{\vy_0 - \vy_{\star}}^2 + \bigopen{\frac{1}{\alpha} - \mu_x} \norm{\vx_1 - \vx_{\star}}^2 - \alpha (1 - \alpha L_x) \norm{\vg_{00}^{x}}^2
            \end{aligned} \label{eq:altineq}
        \end{align}
        for all $\vx_0 \in \R^{d_x}, \vy_0 \in \R^{d_y}$.
    \end{restatable}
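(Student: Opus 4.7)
The plan is to adapt the simpler proof of Proposition B.1 for Sim-GDA, carefully handling the mixed gradient arguments $\vg_{00}^x$, $\vg_{10}^y$, $\vg_{11}^x$ produced by the alternating updates. First I would expand each of the three scaled distances $\frac{1}{\alpha}\|\vx_1 - \vx_\star\|^2$, $\frac{2}{\beta}\|\vy_1 - \vy_\star\|^2$, and $\frac{1}{\alpha}\|\vx_2 - \vx_\star\|^2$ via the Alt-GDA update, producing the previous scaled distance, a gradient--displacement inner product, and a squared-gradient term for each. Applying strong convexity of $f(\cdot, \vy_0)$ and $f(\cdot, \vy_1)$ and strong concavity of $f(\vx_1, \cdot)$ to the three inner products converts the leading coefficients into $\frac{1}{\alpha} - \mu_x$, $\frac{1}{\beta} - \mu_y$, $\frac{1}{\alpha} - \mu_x$ at the price of introducing the function differences $f(\vx_0, \vy_0) - f(\vx_\star, \vy_0)$, $f(\vx_1, \vy_\star) - f(\vx_1, \vy_0)$, and $f(\vx_1, \vy_1) - f(\vx_\star, \vy_1)$. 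Summing these with weights $(1, 2, 1)$ and cancelling the common $(\frac{1}{\alpha} - \mu_x)\|\vx_1 - \vx_\star\|^2$, the claim reduces to
\begin{align*}
\alpha(2 - \alpha L_x)\|\vg_{00}^x\|^2 &+ 2\beta\|\vg_{10}^y\|^2 + \alpha^2 L_x \|\vg_{11}^x\|^2 \\
&\le 2\Phi + 2\beta^2 L_y L_{xy}^2 \|\vx_1 - \vx_\star\|^2 + 2\alpha^2 L_x L_{xy}^2 \|\vy_1 - \vy_\star\|^2,
\end{align*}
where $\Phi$ denotes the weighted sum of the three function differences; the modified left-hand coefficients arise from absorbing $-\alpha(1 - \alpha L_x)\|\vg_{00}^x\|^2$ from $\Psi_0^{\text{Alt}}$ and moving $-\alpha(1 - \alpha L_x)\|\vg_{11}^x\|^2$ to the left using $\Psi_1^{\text{Alt}}$.

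I would then peel off the squared gradients on the left via two directional smoothness bounds aligned with the alternating updates. The descent lemma for $f(\cdot, \vy_0)$ at $\vx_0 \mapsto \vx_1 = \vx_0 - \alpha \vg_{00}^x$ gives $\alpha(2 - \alpha L_x)\|\vg_{00}^x\|^2 \le 2(f(\vx_0, \vy_0) - f(\vx_1, \vy_0))$, telescoping cleanly with the $f(\vx_0, \vy_0) - f(\vx_\star, \vy_0)$ summand inside $\Phi$; symmetrically the ascent lemma for $f(\vx_1, \cdot)$ at $\vy_0 \mapsto \vy_1$ gives $\beta(2 - \beta L_y)\|\vg_{10}^y\|^2 \le 2(f(\vx_1, \vy_1) - f(\vx_1, \vy_0))$, which eats most of $2\beta\|\vg_{10}^y\|^2$ and leaves only the residual $\beta^2 L_y \|\vg_{10}^y\|^2$. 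After these cancellations the remaining function differences regroup (via adding and subtracting $f(\vx_\star, \vy_\star)$) into $4(f(\vx_1, \vy_\star) - f(\vx_\star, \vy_\star)) + 2(f(\vx_\star, \vy_\star) - f(\vx_\star, \vy_0)) + 2(f(\vx_\star, \vy_\star) - f(\vx_\star, \vy_1))$, each nonnegative by the Nash property; via the smooth-convex inequality at the minimizer $\vx_\star$ of $f(\cdot, \vy_\star)$ and the maximizer $\vy_\star$ of $f(\vx_\star, \cdot)$, these dominate $\frac{2}{L_x}\|\nabla_\vx f(\vx_1, \vy_\star)\|^2 + \frac{1}{L_y}\|\nabla_\vy f(\vx_\star, \vy_0)\|^2 + \frac{1}{L_y}\|\nabla_\vy f(\vx_\star, \vy_1)\|^2$.

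It remains to dispatch the residuals $\beta^2 L_y \|\vg_{10}^y\|^2$ and $\alpha^2 L_x \|\vg_{11}^x\|^2$, which I would split via $\|a\|^2 \le 2\|a - b\|^2 + 2\|b\|^2$ combined with $L_{xy}$-Lipschitzness, pivoting through gradient-at-optimum terms: $\|\vg_{10}^y\|^2 \le 2 L_{xy}^2 \|\vx_1 - \vx_\star\|^2 + 2\|\nabla_\vy f(\vx_\star, \vy_0)\|^2$ and $\|\vg_{11}^x\|^2 \le 2 L_{xy}^2 \|\vy_1 - \vy_\star\|^2 + 2\|\nabla_\vx f(\vx_1, \vy_\star)\|^2$. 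Under $\alpha \le \frac{1}{2L_x}$ and $\beta \le \frac{1}{2L_y}$ one checks $2\alpha^2 L_x \le \frac{2}{L_x}$ and $2\beta^2 L_y \le \frac{1}{L_y}$, so the optimum-gradient residuals are absorbed by the lower bounds from the previous step, leaving exactly the targeted $2\beta^2 L_y L_{xy}^2 \|\vx_1 - \vx_\star\|^2$ and $2\alpha^2 L_x L_{xy}^2 \|\vy_1 - \vy_\star\|^2$ on the right. The main obstacle is the meticulous coefficient bookkeeping across the whole chain: the intermediate value $f(\vx_1, \vy_0)$ introduced by the alternating step must be matched exactly by the descent-lemma telescoping, and the constant-$2$ factors from triangle and smoothness inequalities must align so that the optimum-gradient residuals cancel; any off-by-$2$ slip breaks the absorption and forces spurious terms into the final bound.
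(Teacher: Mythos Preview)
Your proposal is correct and follows essentially the same route as the paper's proof. The only cosmetic difference is organizational: the paper expands $\tfrac{1}{\alpha}\|\vx_1-\vx_\star\|^2$ via the identity that produces $-2\langle \vg_{00}^x,\vx_1-\vx_\star\rangle-\alpha\|\vg_{00}^x\|^2$ and then splits that inner product into a strong-convexity piece (toward $\vx_\star$) and a smoothness piece (toward $\vx_1$), whereas you expand in the standard way to get $-2\langle \vg_{00}^x,\vx_0-\vx_\star\rangle+\alpha\|\vg_{00}^x\|^2$, apply strong convexity to the inner product alone, and invoke the descent lemma separately to absorb $\alpha(2-\alpha L_x)\|\vg_{00}^x\|^2$; algebraically the two are identical. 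Your telescoping of the function differences into $4(f(\vx_1,\vy_\star)-f(\vx_\star,\vy_\star))+2(f(\vx_\star,\vy_\star)-f(\vx_\star,\vy_0))+2(f(\vx_\star,\vy_\star)-f(\vx_\star,\vy_1))$, the smooth--convex lower bounds $\tfrac{2}{L_x}\|\vg_{1\star}^x\|^2$, $\tfrac{1}{L_y}\|\vg_{\star 0}^y\|^2$, $\tfrac{1}{L_y}\|\vg_{\star 1}^y\|^2$, and the final absorption step under $\alpha\le\tfrac{1}{2L_x}$, $\beta\le\tfrac{1}{2L_y}$ all match the paper exactly (the $\tfrac{1}{L_y}\|\vg_{\star 1}^y\|^2$ term is indeed unused and simply dropped). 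The phrase ``cancelling the common $(\tfrac{1}{\alpha}-\mu_x)\|\vx_1-\vx_\star\|^2$'' is slightly imprecise---that term is not cancelled but carried through as part of the target right-hand side---yet your reduced inequality is exactly right.
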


    We prove \cref{prop:altstepone} in \cref{sec:propaltstepone}.
    
    Note that the choices of step sizes in \cref{thm:altgda} indeed satisfy $\alpha \le \frac{1}{2L_x}$ and $\beta \le \frac{1}{2L_y}$. Assume W.L.O.G. that $\vx_{\star} = \bm{0}$ $(\in \R^{d_x})$ and $\vy_{\star} = \bm{0}$ $(\in \R^{d_y})$. Observing that the RHS of (\ref{eq:altineq}) is exactly $\Psialt_0$, it is enough to show that
    \begin{align}
    \begin{aligned}
        \Psialt_1 &= \bigopen{\frac{1}{\alpha} - \mu_x} \norm{\vx_1 - \vx_{\star}}^2 + 2 \bigopen{\frac{1}{\beta} - \mu_y} \norm{\vy_1 - \vy_{\star}}^2 + \bigopen{\frac{1}{\alpha} - \mu_x} \norm{\vx_2 - \vx_{\star}}^2 - \alpha (1 - \alpha L_x) \norm{\vg_{11}^{x}}^2 \\
        &\le r \bigopen{\frac{1}{\alpha} - 2 \beta^2 L_y L_{xy}^2} \| \vx_1 - \vx_{\star} \|^2 + 2 r \bigopen{\frac{1}{\beta} - \alpha^2 L_x L_{xy}^2} \| \vy_1 - \vy_{\star} \|^2 + \frac{r}{\alpha} \| \vx_2 - \vx_{\star} \|^2 - r \alpha (1 - \alpha L_x) \norm{\vg_{11}^{x}}^2,
    \end{aligned} \label{eq:altcontraction}
    \end{align}
    after which we can combine the results as $r \cdot (\text{\ref{eq:altineq}}) + (\text{\ref{eq:altcontraction}})$ to obtain $\Psialt_1 \le r \Psialt_0$.
    
    Since $r \ge \frac{\frac{1}{\alpha} - \mu_x}{\frac{1}{\alpha} - 2 \beta^2 L_y L_{xy}^2}$, we have
    \begin{align*}
        \bigopen{\frac{1}{\alpha} - \mu_x} \norm{\vx_1 - \vx_{\star}}^2 \le r \bigopen{\frac{1}{\alpha} - 2 \beta^2 L_y L_{xy}^2} \| \vx_1 - \vx_{\star} \|^2.
    \end{align*}
    Since $r \ge \frac{\frac{1}{\beta} - \mu_y}{\frac{1}{\beta} - \alpha^2 L_x L_{xy}^2}$, we have
    \begin{align*}
        2 \bigopen{\frac{1}{\beta} - \mu_y} \norm{\vy_1 - \vy_{\star}}^2 \le 2 r \bigopen{\frac{1}{\beta} - \alpha^2 L_x L_{xy}^2} \| \vy_1 - \vy_{\star} \|^2.
    \end{align*}
    Since $r \ge \frac{\frac{1}{\alpha} - \mu_x}{\frac{1}{\alpha}}$, we have
    \begin{align*}
        \bigopen{\frac{1}{\alpha} - \mu_x} \norm{\vx_2 - \vx_{\star}}^2 \le \frac{r}{\alpha} \| \vx_2 - \vx_{\star} \|^2.
    \end{align*}
    Also, we can observe that $\alpha\le \frac{1}{2 L_x}$ and $\beta \le \frac{1}{2} \sqrt{\frac{\mu_x}{L_y}} \cdot \frac{1}{L_{xy}}$ implies
    \begin{align*}
        \alpha \beta^2 \le \frac{1}{2 L_x} \cdot \frac{\mu_x}{4 L_y L_{xy}^2} < \frac{1}{2 L_y L_{xy}^2} \ \ \wedge \ \  
        2 \beta^2 L_y L_{xy}^2 < 4 \beta^2 L_y L_{xy}^2 \le \mu_x \ \ \Rightarrow \ \ \frac{\frac{1}{\alpha} - \mu_x}{\frac{1}{\alpha} - 2 \beta^2 L_y L_{xy}^2} \in (0,1),
    \end{align*}
    and that $\alpha \le \frac{1}{2} \sqrt{\frac{\mu_y}{L_x}} \cdot \frac{1}{L_{xy}}$ and $\beta \le \frac{1}{2 L_y}$ implies
    \begin{align*}
         \alpha^2 \beta \le \frac{\mu_y}{4 L_x L_{xy}^2} \cdot \frac{1}{2 L_y} < \frac{1}{L_x L_{xy}^2} \ \ \wedge \ \  
        \alpha^2 L_x L_{xy}^2 < 4 \alpha^2 L_x L_{xy}^2 \le \mu_y \ \ \Rightarrow \ \ \frac{\frac{1}{\beta} - \mu_y}{\frac{1}{\beta} - \alpha^2 L_x L_{xy}^2} \in (0,1).
    \end{align*}
    Since it is obvious that $\frac{\frac{1}{\alpha} - \mu_x}{\frac{1}{\alpha}} \in (0,1)$, we can observe that
    \begin{align*}
        r &= \max \bigset{\frac{\frac{1}{\alpha} - \mu_x}{\frac{1}{\alpha} - 2 \beta^2 L_y L_{xy}^2}, \ \frac{\frac{1}{\beta} - \mu_y}{\frac{1}{\beta} - \alpha^2 L_x L_{xy}^2}, \ \frac{\frac{1}{\alpha} - \mu_x}{\frac{1}{\alpha}}} \in (0,1)
    \end{align*}
    and therefore
    \begin{align*}
        - \alpha (1 - \alpha L_x) \norm{\vg_{11}^{x}}^2 \le - r \alpha (1 - \alpha L_x) \norm{\vg_{11}^{x}}^2,
    \end{align*}
    which shows $r \in (0,1)$ and (\ref{eq:altcontraction}), and--altogether with \cref{prop:altstepone}--proves the given statement.
\end{proof}
\subsection{\texorpdfstring{Proof of \cref{cor:altgda}}{Proof of Corollary 4.2}} \label{sec:coraltgda}

Here we prove \cref{cor:altgda} of \cref{sec:4}, restated below for the sake of readability.

\coraltgda*

\begin{proof}
    From \cref{thm:altgda}, we have
    \begin{align*}
        \frac{1}{1 - r} &= \max \bigset{\frac{\frac{1}{\alpha} - 2 \beta^2 L_y L_{xy}^2}{\mu_x - 2 \beta^2 L_y L_{xy}^2}, \ \frac{\frac{1}{\beta} - \alpha^2 L_x L_{xy}^2}{\mu_y - \alpha^2 L_x L_{xy}^2}, \ \frac{1}{\alpha \mu_x}}.
    \end{align*}
    From $\beta \le \frac{1}{2} \cdot \sqrt{\frac{\mu_x}{L_y}} \cdot \frac{1}{L_{xy}}$, we have
    \begin{align*}
        \frac{\frac{1}{\alpha} - 2 \beta^2 L_y L_{xy}^2}{\mu_x - 2 \beta^2 L_y L_{xy}^2} \le \frac{\frac{1}{\alpha} - \frac{1}{2} \mu_x}{\mu_x - \frac{1}{2} \mu_x} \le \frac{2}{\alpha \mu_x}.
    \end{align*}
    From $\alpha \le \frac{1}{2} \cdot \sqrt{\frac{\mu_y}{L_x}} \cdot \frac{1}{L_{xy}}$, we have
    \begin{align*}
        \frac{\frac{1}{\beta} - \alpha^2 L_x L_{xy}^2}{\mu_y - \alpha^2 L_x L_{xy}^2} \le \frac{\frac{1}{\beta} - \frac{1}{4} \mu_y}{\mu_y - \frac{1}{4} \mu_y} \le \frac{4}{3 \beta \mu_y}.
    \end{align*}
    We can deduce that
    \begin{align*}
        \frac{1}{1 - r} \le \max \bigset{\frac{2}{\alpha \mu_x}, \ \frac{4}{3 \beta \mu_y}} &= \max \bigset{\Theta \bigopen{\kappa_x + \kappa_{xy} \sqrt{\kappa_x}}, \ \Theta \bigopen{\kappa_y + \kappa_{xy} \sqrt{\kappa_y}}} \\
        &= \Theta \bigopen{\kappa_x + \kappa_y + \kappa_{xy} (\sqrt{\kappa_x} + \sqrt{\kappa_y})}.
    \end{align*}
    Therefore it is sufficient to take
    \begin{align*}
        K &= \gO \bigopen{ \bigopen{\kappa_x + \kappa_y + \kappa_{xy} (\sqrt{\kappa_x} + \sqrt{\kappa_y})} \cdot \log \frac{\Psi^{\text{Alt}}_{0}}{A^{\text{Alt}} \epsilon} }
    \end{align*}
    iterations to ensure that $\| \vz_K - \vz_{\star} \|^2 \le \epsilon$.

    Finally, we can check that $\alpha \le \frac{1}{2 L_{x}} < \frac{1}{2 \mu_x}$ and $\beta \le \frac{1}{2 L_{y}} < \frac{1}{2 \mu_y} < \frac{3}{4 \mu_y}$ implies $A^{\text{Alt}} > 0$.
\end{proof}
\subsection{\texorpdfstring{Proofs used in \cref{sec:c}}{Proofs used in Appendix C}} \label{sec:alttech}

Here we prove the propositions introduced in \cref{sec:c}.

\subsubsection{\texorpdfstring{Proof of \cref{prop:altvalid}}{Proof of Proposition C.1}} \label{sec:propaltvalid}

Here we prove \cref{prop:altvalid}, restated below for the sake of readability.

\propaltvalid*

\begin{proof}
    For simplicity let us assume W.L.O.G. that $\vx_{\star} = \bm{0}$ $(\in \R^{d_x})$ and $\vy_{\star} = \bm{0}$ $(\in \R^{d_y})$.
    
    By triangle inequality and Lipschitz gradients, we have
    \begin{align*}
        \norm{{\vg_{00}^{x}}}^2 &\le 2 \norm{{\vg_{00}^{x} - \vg_{\star 0}^{x}}}^2 + 2 \norm{{\vg_{\star 0}^{x}}}^2 \le 2 L_x^2 \norm{{\vx_0}}^2 + 2 L_{xy}^2 \norm{{\vy_0}}^2.
    \end{align*}
    Therefore, we can obtain
    \begin{align*}
        & {\phantom{\ge}} \bigopen{\frac{1}{\alpha} - \mu_x} \norm{\vx_0}^2 + 2 \bigopen{\frac{1}{\beta} - \mu_y} \norm{\vy_0}^2 + \bigopen{\frac{1}{\alpha} - \mu_x} \norm{\vx_1}^2 - \alpha (1 - \alpha L_x) \norm{\vg_{00}^{x}}^2 \\
        & \ge \bigopen{\frac{1}{\alpha} - \mu_x - 2 \alpha (1 - \alpha L_x) L_x^2} \norm{\vx_0}^2 + 2 \bigopen{\frac{1}{\beta} - \mu_y - \alpha (1 - \alpha L_x) L_{xy}^2} \norm{\vy_0}^2 + \bigopen{\frac{1}{\alpha} - \mu_x} \norm{\vx_1}^2.
    \end{align*}
    Since $\alpha \le \frac{1}{2L_x}$, we have
    \begin{align*}
        \frac{1}{\alpha} - \mu_x - 2 \alpha (1 - \alpha L_x) L_x^2 &\ge \frac{1}{\alpha} - \mu_x - 2 \alpha L_x^2 \ge \frac{1}{\alpha} - \mu_x - \frac{1}{2 \alpha} = \frac{1}{2 \alpha} - \mu_x.
    \end{align*}
    Since $\alpha \le \frac{1}{2} \sqrt{\frac{\mu_y}{L_x}} \cdot \frac{1}{L_{xy}}$ and $\beta \le \frac{1}{2} \sqrt{\frac{\mu_x}{L_y}} \cdot \frac{1}{L_{xy}}$, we have
    \begin{align*}
        \frac{1}{\beta} - \mu_y - \alpha (1 - \alpha L_x) L_{xy}^2 \ge \frac{1}{\beta} - \mu_y - \alpha L_{xy}^2 \ge \frac{1}{\beta} - \mu_y - \frac{1}{4 \beta} \sqrt{\frac{\mu_y}{L_x}} \cdot \sqrt{\frac{\mu_x}{L_y}} \ge \frac{1}{\beta} - \mu_y - \frac{1}{4 \beta} = \frac{3}{4 \beta} - \mu_y.
    \end{align*}
    Therefore we have
    \begin{align*}
        & {\phantom{\ge}} \bigopen{\frac{1}{\alpha} - \mu_x - 2 \alpha (1 - \alpha L_x) L_x^2} \norm{\vx_0}^2 + 2 \bigopen{\frac{1}{\beta} - \mu_y - \alpha (1 - \alpha L_x) L_{xy}^2} \norm{\vy_0}^2 + \bigopen{\frac{1}{\alpha} - \mu_x} \norm{\vx_1}^2 \\
        &\ge \bigopen{\frac{1}{2 \alpha} - \mu_x} \norm{\vx_0}^2 + 2 \bigopen{\frac{3}{4 \beta} - \mu_y} \norm{\vy_0}^2 + \bigopen{\frac{1}{\alpha} - \mu_x} \norm{\vx_1}^2,
    \end{align*}
    which proves that \eqref{eq:altvalid} is indeed true. 
\end{proof}

\subsubsection{\texorpdfstring{Proof of \cref{prop:altstepone}}{Proof of Proposition C.2}} \label{sec:propaltstepone}

Here we prove \cref{prop:altstepone}, restated below for the sake of readability.

\propaltstepone*

\begin{proof}
Recall that \altgda{} takes updates of the form:
\begin{align*}
    \vx_{1} &= \vx_{0} - \alpha \nabla_{\vx} f(\vx_{0}, \vy_{0}) = \vx_{0} - \alpha \vg_{00}^{x}, \\
    \vy_{1} &= \vy_{0} + \beta \nabla_{\vy} f(\vx_{1}, \vy_{0}) = \vy_{0} + \beta \vg_{10}^{y}.
\end{align*}
From this, we can deduce that
\begin{align*}
    \frac{1}{\alpha} \| \vx_1 - \vx_{\star} \|^2 &= \frac{1}{\alpha} \norm{\vx_0 - \vx_{\star}}^2 + \frac{2}{\alpha} \inner{\vx_1 - \vx_0, \vx_1 - \vx_{\star}} - \frac{1}{\alpha} \norm{\vx_1 - \vx_0}^2 \\
    &= \frac{1}{\alpha} \norm{\vx_0 - \vx_{\star}}^2 - 2 \inner{\vg_{00}^{x}, \vx_1 - \vx_{\star}} - \alpha \norm{\vg_{00}^{x}}^2, \\
    \frac{2}{\beta} \| \vy_1 - \vy_{\star} \|^2 &= \frac{2}{\beta} \norm{\vy_0 - \vy_{\star}}^2 + \frac{2}{\beta} \inner{\vy_1 - \vy_0, (\vy_0 - \vy_{\star}) + (\vy_1 - \vy_{\star})} \\
    &= \frac{2}{\beta} \norm{\vy_0 - \vy_{\star}}^2 + 2 \inner{\vg_{10}^{y}, \vy_0 - \vy_{\star}} + 2 \inner{\vg_{10}^{y}, \vy_1 - \vy_{\star}}, \\
    \frac{1}{\alpha} \| \vx_2 - \vx_{\star} \|^2 &= \frac{1}{\alpha} \norm{\vx_1 - \vx_{\star}}^2 + \frac{2}{\alpha} \inner{\vx_2 - \vx_1, \vx_1 - \vx_{\star}} + \frac{1}{\alpha} \norm{\vx_2 - \vx_1}^2 \\
    &= \frac{1}{\alpha} \norm{\vx_1 - \vx_{\star}}^2 - 2 \inner{\vg_{11}^{x}, \vx_1 - \vx_{\star}} + \alpha \norm{\vg_{11}^{x}}^2,
\end{align*}
which sums up to
\begin{align}
    \begin{aligned}
        &{\phantom{=}} \frac{1}{\alpha} \| \vx_1 - \vx_{\star} \|^2 + \frac{2}{\beta} \| \vy_1 - \vy_{\star} \|^2 + \frac{1}{\alpha} \| \vx_2 - \vx_{\star} \|^2 \\
        &= \frac{1}{\alpha} \norm{\vx_0 - \vx_{\star}}^2 + \frac{2}{\beta} \norm{\vy_0 - \vy_{\star}}^2 + \frac{1}{\alpha} \| \vx_1 - \vx_{\star} \|^2 - \alpha \norm{\vg_{00}^{x}}^2 + \alpha \norm{\vg_{11}^{x}}^2 \\
        &{\phantom{=}} - 2 \inner{\vg_{00}^{x}, \vx_1 - \vx_{\star}} + 2 \inner{\vg_{10}^{y}, \vy_0 - \vy_{\star}} + 2 \inner{\vg_{10}^{y}, \vy_1 - \vy_{\star}} - 2 \inner{\vg_{11}^{x}, \vx_1 - \vx_{\star}}.
    \end{aligned} \label{eq:altcont1}
\end{align}
Then $\mu_x$-strong convexity and $L_x$-Lipschitz gradients\footnote{Note that the Lipschitz gradient conditions for $L_x$ and $L_y$ are equivalent to the widely used notion of \textit{smoothness} in convex optimization literature.} of $f(\cdot, \vy_0)$ yields:
\begin{align}
    2 \inner{\vg_{00}^{x}, {\vx_{\star} - \vx_0}} = 2 \inner{\nabla_{\vx} f({\vx_0}, \vy_0), {\vx_{\star} - \vx_0}} &\le - \mu_x \norm{{\vx_0 - \vx_{\star}}}^2 - 2 (f({\vx_0}, \vy_0) - f({\vx_{\star}}, \vy_0)), \label{eq:altproof1} \\
    2 \inner{\vg_{00}^{x}, {\vx_0 - \vx_1}} = - 2 \inner{\nabla_{\vx} f({\vx_0}, \vy_0), {\vx_1 - \vx_0}} &\le L_x \norm{{\vx_1 - \vx_0}}^2 + 2 (f({\vx_0}, \vy_0) - f({\vx_1}, \vy_0)). \label{eq:altproof2}
\end{align}
Similarly, $\mu_y$-strong concavity and $L_y$-Lipschitz gradients of $f(\vx_1, \cdot)$ yields:
\begin{align}
    2 \inner{\vg_{10}^{y}, {\vy_0 - \vy_{\star}}} = - 2 \inner{\nabla_{\vy} f(\vx_1, {\vy_0}), {\vy_{\star} - \vy_0}} &\le - \mu_y \norm{{\vy_0 - \vy_{\star}}}^2 - 2 (f(\vx_1, {\vy_{\star}}) - f(\vx_1, {\vy_0})), \label{eq:altproof3} \\
    2 \inner{\vg_{10}^{y}, {\vy_1 - \vy_0}} =  2 \inner{\nabla_{\vy} f(\vx_1, {\vy_0}), {\vy_1 - \vy_0}} &\le L_y \norm{{\vy_1 - \vy_0}}^2 + 2 (f(\vx_1, {\vy_1}) - f(\vx_1, {\vy_0})). \label{eq:altproof4}
\end{align}
Finally, $\mu_x$-strong convexity of $f(\cdot, \vy_1)$ yields:
\begin{align}
    2 \inner{\vg_{11}^{x}, {\vx_{\star} - \vx_1}} = 2 \inner{\nabla_{\vx} f({\vx_1}, \vy_1), {\vx_{\star} - \vx_1}} &\le - \mu_x \norm{{\vx_1 - \vx_{\star}}}^2 - 2 (f({\vx_1}, \vy_1) - f({\vx_{\star}}, \vy_1)). \label{eq:altproof5}
\end{align}
From now, for simplicity we assume W.L.O.G. ${\vx_{\star} = \bm{0}}$ $(\in \R^{d_x})$ and ${\vy_{\star} = \bm{0}}$ $(\in \R^{d_y})$. 

From $(\text{\ref{eq:altproof1}}) + (\text{\ref{eq:altproof2}})$ we have
\begin{align*}
    - 2 \inner{\vg_{00}^{x}, \vx_1} &= 2 \inner{\vg_{00}^{x}, {\vx_{\star} - \vx_0}} + 2 \inner{\vg_{00}^{x}, {\vx_0 - \vx_1}} \\
    &\le - \mu_x \norm{{\vx_0}}^2 + L_x \norm{{\vx_1 - \vx_0}}^2 + 2 (f(\vx_{\star}, \vy_0) - f(\vx_1, \vy_0)) \\
    &= - \mu_x \norm{{\vx_0}}^2 + \alpha^2 L_x \norm{{\vg_{00}^{x}}}^2 + 2 (f(\vx_{\star}, \vy_0) - f(\vx_1, \vy_0)).
\end{align*}
From $2 \times (\text{\ref{eq:altproof3}}) + (\text{\ref{eq:altproof4}})$ we have
\begin{align*}
    2 \inner{\vg_{10}^{y}, \vy_0} + 2 \inner{\vg_{10}^{y}, \vy_1} &= 4 \inner{\vg_{10}^{y}, {\vy_0 - \vy_{\star}}} + 2 \inner{\vg_{10}^{y}, {\vy_1 - \vy_0}} \\
    &\le - 2 \mu_y \norm{{\vy_0}}^2 + L_y \norm{{\vy_1 - \vy_0}}^2 - 2 (2 f(\vx_1, \vy_{\star}) - f(\vx_1, \vy_0) - f(\vx_1, \vy_1)) \\
    &= - 2 \mu_y \norm{{\vy_0}}^2 + \beta^2 L_y \norm{{\vg_{10}^{y}}}^2 - 2 (2 f(\vx_1, \vy_{\star}) - f(\vx_1, \vy_0) - f(\vx_1, \vy_1)).
\end{align*}
Finally, $(\text{\ref{eq:altproof5}})$ translates into
\begin{align*}
    - 2 \inner{\vg_{11}^{x}, \vx_1} &= 2 \inner{\vg_{11}^{x}, {\vx_{\star} - \vx_1}} \le - \mu_x \norm{{\vx_1}}^2 - 2 (f(\vx_1, \vy_1) - f(\vx_{\star}, \vy_1)).
\end{align*}
We can properly plug in the above equations to \cref{eq:altcont1} to obtain
\begin{align*}
    \frac{1}{\alpha} \| \vx_1 \|^2 + \frac{2}{\beta} \| \vy_1 \|^2 + \frac{1}{\alpha} \| \vx_2 \|^2 &\le \bigopen{\frac{1}{\alpha} - \mu_x} \norm{{\vx_0}}^2 + 2 \bigopen{\frac{1}{\beta} - \mu_y} \norm{{\vy_0}}^2 + \bigopen{\frac{1}{\alpha} - \mu_x} \norm{{\vx_1}}^2 \\
    & {\phantom{\le}} - \alpha (1 - \alpha L_x) \norm{\vg_{00}^{x}}^2 + \alpha \norm{\vg_{11}^{x}}^2 + \beta^2 L_y \norm{{\vg_{10}^{y}}}^2 \\
    & {\phantom{\le}} - 2 (2 f(\vx_1, \vy_{\star}) - f(\vx_{\star}, \vy_0) - f(\vx_{\star}, \vy_1)).
\end{align*}
Since $f$ is convex-concave and has Lipschitz gradients, we have
\begin{align*}
    - 2 (f(\vx_1, \vy_{\star}) - f(\vx_{\star}, \vy_{\star})) &\le - \frac{1}{L_x} \norm{\nabla_{\vx} f (\vx_1, \vy_{\star})}^2 = - \frac{1}{L_x} \norm{{\vg_{1 \star}^{x}}}^2, \\
    - 2 (f(\vx_{\star}, \vy_{\star}) - f(\vx_{\star}, \vy_0)) &\le - \frac{1}{L_y} \norm{\nabla_{\vy} f (\vx_{\star}, \vy_0)}^2 = - \frac{1}{L_y} \norm{{\vg_{\star 0}^{y}}}^2, \\
    - 2 (f(\vx_{\star}, \vy_{\star}) - f(\vx_{\star}, \vy_1)) &\le - \frac{1}{L_y} \norm{\nabla_{\vy} f (\vx_{\star}, \vy_1)}^2 = - \frac{1}{L_y} \norm{{\vg_{\star 1}^{y}}}^2.
\end{align*}
Therefore we have
\begin{align*}
    \frac{1}{\alpha} \| \vx_1 \|^2 + \frac{2}{\beta} \| \vy_1 \|^2 + \frac{1}{\alpha} \| \vx_2 \|^2 &\le \bigopen{\frac{1}{\alpha} - \mu_x} \norm{{\vx_0}}^2 + 2 \bigopen{\frac{1}{\beta} - \mu_y} \norm{{\vy_0}}^2 + \bigopen{\frac{1}{\alpha} - \mu_x} \norm{{\vx_1}}^2 \\
    & {\phantom{\le}} - \alpha (1 - \alpha L_x) \norm{\vg_{00}^{x}}^2 + \alpha \norm{\vg_{11}^{x}}^2 - \frac{2}{L_x} \norm{{\vg_{1 \star}^{x}}}^2 \\
    & {\phantom{\le}} + \beta^2 L_y \norm{{\vg_{10}^{y}}}^2 - \frac{1}{L_y} \norm{{\vg_{\star 0}^{y}}}^2 - \frac{1}{L_y} \norm{{\vg_{\star 1}^{y}}}^2 \\
    &= \bigopen{\frac{1}{\alpha} - \mu_x} \norm{{\vx_0}}^2 + 2 \bigopen{\frac{1}{\beta} - \mu_y} \norm{{\vy_0}}^2 + \bigopen{\frac{1}{\alpha} - \mu_x} \norm{{\vx_1}}^2 \\
    & {\phantom{\le}} + \alpha^2 L_x \norm{{\vg_{11}^{x}}}^2  - \frac{2}{L_x} \norm{{\vg_{1 \star}^{x}}}^2 + \beta^2 L_y \norm{{\vg_{10}^{y}}}^2 - \frac{1}{L_y} \norm{{\vg_{\star 0}^{y}}}^2 \\
    & {\phantom{\le}} - \alpha (1 - \alpha L_x) \norm{\vg_{00}^{x}}^2 + \alpha (1 - \alpha L_x) \norm{\vg_{11}^{x}}^2 - \frac{1}{L_y} \norm{{\vg_{\star 1}^{y}}}^2.
\end{align*}
By triangle inequality and the Lipschitz gradient condition for $L_{xy}$, we have the following inequalities:
\begin{align*}
    \norm{{\vg_{10}^{y}}}^2 - 2 \norm{{\vg_{\star 0}^{y}}}^2 &\le 2 \| {\vg_{10}^{y}} - {\vg_{\star 0}^{y}} \|^2 \le 2 L_{xy}^{2} \| {\vx_1} \|^2, \\
    \norm{{\vg_{11}^{x}}}^2 - 2 \norm{{\vg_{1 \star}^{x}}}^2 &\le 2 \| {\vg_{11}^{x}} - {\vg_{1 \star}^{x}} \|^2 \le 2 L_{xy}^{2} \| {\vy_1} \|^2.
\end{align*}
If $\alpha \le \frac{1}{2 L_x} \le \frac{1}{\sqrt{2} L_x}$ and $\beta \le \frac{1}{2 L_y} \le \frac{1}{\sqrt{2} L_y}$, then we have
\begin{align*}
    \alpha^2 L_x \norm{{\vg_{11}^{x}}}^2 - \frac{1}{L_x} \norm{{\vg_{1 \star}^{x}}}^2 + \beta^2 L_y \norm{{\vg_{10}^{y}}}^2 - \frac{1}{L_y} \norm{{\vg_{\star 0}^{y}}}^2 &\le \alpha^2 L_x \bigopen{\norm{{\vg_{11}^{x}}}^2 - 2 \norm{{\vg_{1 \star}^{x}}}^2} + \beta^2 L_y \bigopen{\norm{{\vg_{10}^{y}}}^2 - 2 \norm{{\vg_{\star 0}^{y}}}^2} \\
    &\le 2 \alpha^2 L_x L_{xy}^2 \norm{{\vy_1}}^2 + 2 \beta^2 L_y L_{xy}^2 \norm{{\vx_1}}^2,
\end{align*}
and hence
\begin{align*}
    \frac{1}{\alpha} \| \vx_1 \|^2 + \frac{2}{\beta} \| \vy_1 \|^2 + \frac{1}{\alpha} \| \vx_2 \|^2 &\le \bigopen{\frac{1}{\alpha} - \mu_x} \norm{{\vx_0}}^2 + 2 \bigopen{\frac{1}{\beta} - \mu_y} \norm{{\vy_0}}^2 + \bigopen{\frac{1}{\alpha} - \mu_x} \norm{{\vx_1}}^2 \\
    & {\phantom{\le}} + 2 \alpha^2 L_x L_{xy}^2 \norm{{\vy_1}}^2 + 2 \beta^2 L_y L_{xy}^2 \norm{{\vx_1}}^2 \\
    & {\phantom{\le}} - \alpha (1 - \alpha L_x) \norm{\vg_{00}^{x}}^2 + \alpha (1 - \alpha L_x) \norm{\vg_{11}^{x}}^2 - \frac{1}{L_x} \norm{{\vg_{1 \star}^{x}}}^2 - \frac{1}{L_y} \norm{{\vg_{\star 1}^{y}}}^2 \\
    &\le \bigopen{\frac{1}{\alpha} - \mu_x} \norm{{\vx_0}}^2 + 2 \bigopen{\frac{1}{\beta} - \mu_y} \norm{{\vy_0}}^2 + \bigopen{\frac{1}{\alpha} - \mu_x} \norm{{\vx_1}}^2 \\
    & {\phantom{\le}} + 2 \alpha^2 L_x L_{xy}^2 \norm{{\vy_1}}^2 + 2 \beta^2 L_y L_{xy}^2 \norm{{\vx_1}}^2 - \alpha (1 - \alpha L_x) \norm{\vg_{00}^{x}}^2 + \alpha (1 - \alpha L_x) \norm{\vg_{11}^{x}}^2.
\end{align*}
Rearranging terms, we immediately have \eqref{eq:altineq}.
\end{proof}
\section{\texorpdfstring{Proofs used in \cref{sec:5}}{Proofs used in Section 5}}
\label{sec:d}

Here we prove all theorems related to \alexgda{} on SCSC Lipschitz gradient problems presented in \cref{sec:5}.
\begin{itemize}
    \item In \cref{sec:appalexgda} we prove \cref{thm:alexgda} which yields a contraction inequality for \alexgda{}{}. 
    \item In \cref{sec:coralexgda} we prove \cref{cor:alexgda} which derives the corresponding iteration complexity upper bound.
    \item In \cref{sec:thmalexgdalb} we prove \cref{thm:alexgdalb} which yields a matching lower bound for \alexgda{}.
    \item In \cref{sec:propeglb} we prove \cref{prop:eglb} which shows that the same lower bound holds for EG.
    \item In \cref{sec:alexgdatech} we prove technical propositions and lemmas used throughout the proofs in \cref{sec:d}.
\end{itemize}

\subsection{\texorpdfstring{Proof of \cref{thm:alexgda}}{Proof of Theorem 5.1}} \label{sec:appalexgda}

Here we prove \cref{thm:alexgda} of \cref{sec:5}, restated below for the sake of readability.

\theoremalexgda*

\begin{proof}
    Before starting the main proof, we characterize the step size condition as follows.
    
    \paragraph{Finer Step Size Condition.} We assume that the step sizes $\alpha, \beta > 0$ satisfy
        \begin{align}
            \alpha &\le \frac{C_1}{L_x}, \quad \beta \le \frac{C_2}{L_y}, \quad \alpha \le \frac{C_3}{L_{xy}} \sqrt{\frac{\mu_y}{\mu_x}}, \quad \beta \le \frac{C_4}{L_{xy}} \sqrt{\frac{\mu_x}{\mu_y}} \label{eq:alexgdastepsize}
        \end{align}
        for constants $C_1, C_2, C_3, C_4 > 0$ satisfying
        \begin{align}
            \begin{aligned}
                C_1 &\le \frac{\gamma - 1}{2 \gamma^2}, \quad C_2 \le \frac{\delta - 1}{2 \delta^2}, \\
                C_3 &\le \min \bigset{\frac{1}{3 \gamma - 2}, \frac{\delta - 1}{2 (\gamma - 1) \delta}, \frac{1}{2(\gamma - 1)(\delta - 1)}}, \\
                C_4 &\le \min \bigset{\frac{1}{3 \delta - 2}, \frac{\gamma - 1}{2 \gamma (\delta - 1)}, \frac{1}{2(\gamma - 1)(\delta - 1)}}.
            \end{aligned} 
            \label{eq:constineq}
        \end{align}

    (By choosing $C = \min \{ C_1, C_2, C_3, C_4 \}$, we can obtain the simpler form given in the theorem statement.)

    We show a few inequalities involving $C_1, C_2, C_3, C_4 > 0$ for future purposes.\footnote{Note that all arguments in the upper bounds of the constants given in (\ref{eq:constineq}) are all strictly positive whenever $\gamma > 1$ and $\delta > 1$.}

    First, we have
    \begin{align}
        C_1 &\le \frac{\gamma - 1}{2 \gamma^2} \le \frac{1}{2 \gamma} \le \frac{1}{2}, \quad
        C_2 \le \frac{\delta - 1}{2 \delta^2} \le \frac{1}{2 \delta} \le \frac{1}{2}. \label{eq:constsix}
    \end{align}

    Since $C_1 \le \frac{\gamma - 1}{2 \gamma^2} \le \frac{\gamma - 1}{\gamma^2}$ and $C_4 \le \frac{\gamma - 1}{2 \gamma (\delta - 1)} \le \frac{\gamma - 1}{\gamma (\delta - 1)}$, we have
    \begin{align}
         \gamma^2 C_1 + \gamma (\delta - 1) C_4 &\le 2 (\gamma - 1). \label{eq:sun}
    \end{align}
    Since $C_1 \le \frac{\gamma - 1}{2 \gamma^2} \le \frac{1}{\gamma + 1}$ and $C_4 \le \frac{1}{3 \delta - 2}$, we have
    \begin{align}
        (\gamma + 1) C_1 + (3 \delta - 2) C_4 &\le 2. \label{eq:moon}
    \end{align}
    Therefore, by $\eqref{eq:sun} + (\gamma - 1) \times \eqref{eq:moon}$ we have
    \begin{align}
        (2 \gamma^2 - 1) C_1 + \bigopen{4 \gamma \delta - 3 \gamma - 3 \delta + 2} C_4 &\le 4 (\gamma - 1). \label{eq:constone}
    \end{align}
    Since $C_2 \le {\frac{\delta - 1}{2 \delta^2}}$ and $C_3 \le {\frac{\delta - 1}{2 (\gamma - 1) \delta}}$, we have
    \begin{align}
        \delta^2 C_2 + (\gamma - 1) \delta C_3 &\le {\delta - 1}. \label{eq:consttwo}
    \end{align}
    Since $C_2 \le {\frac{\delta - 1}{2 \delta^2}} \le \frac{1}{\delta + 1}$ and $C_3 \le \frac{1}{3 \gamma - 2}$, we have
    \begin{align}
        (\delta + 1) C_2 + (3 \gamma - 2) C_3 &\le 2. \label{eq:constthree}
    \end{align}
    Since $C_1 \le \frac{1}{2}$ and $C_3 \le \frac{1}{2(\gamma - 1)(\delta - 1)}$, we have
    \begin{align}
        C_1 + (\gamma - 1) (\delta - 1) C_3 &\le 1, \label{eq:constfour}
    \end{align}
    and as $C_4 \le \frac{1}{2(\gamma - 1)(\delta - 1)}$, we similarly have
    \begin{align}
        C_1 + (\gamma - 1) (\delta - 1) C_4 &\le 1. \label{eq:constfive}
    \end{align}
    We also note that since $C_3 \le \frac{\delta - 1}{2 (\gamma - 1) \delta}$ and $C_4 \le \frac{\gamma - 1}{2 \gamma (\delta - 1)}$, we have
    \begin{align}
        C_3 C_4 \le \frac{1}{4 \delta \gamma}
    \end{align}
    which, along with $\gamma, \delta > 1$, directly implies the followings:
    \begin{align}
        4 C_3 C_4 &\le 1, \label{eq:constseven} \\
        4 (\delta - 1) C_3 C_4 &\le 1. \label{eq:consteight}
    \end{align}

    Now we proceed to the main proof of \cref{thm:alexgda}.

    For $k \ge 1$, the Lyapunov function $\Psi_k^{\text{Alex}}$ can be written as
    \begin{align*}
        \Psi_{k}^{\text{Alex}} &= \frac{1}{\alpha} \| \vx_k - \vx_{\star} \|^2 + \frac{2}{\beta} \| \vy_k - \vy_{\star} \|^2 + \frac{1}{\alpha} \| \vx_{k+1} - \vx_{\star} \|^2 - \alpha \norm{\nabla_{\vx} f(\vx_k, \tilde{\vy}_k)}^2 \\
        &\phantom{\le} + {(\delta - 1) \beta \norm{\nabla_{\vy} f(\tilde{\vx}_{k}, \vy_{k-1})}^2} + \frac{(\gamma - 1) (\delta - 1) \alpha \beta}{1 - \alpha \mu_x} \cdot L_{xy} \sqrt{\frac{\mu_y}{\mu_x}} \cdot \norm{\nabla_{\vx} f(\vx_{k-1}, \tilde{\vy}_{k-1})}^2,
    \end{align*}
    and for $k = 0$ as
    \begin{align*}
        \Psi_{0}^{\text{Alex}} &= \frac{1}{\alpha} \| \vx_0 - \vx_{\star} \|^2 + \frac{2}{\beta} \| \vy_0 - \vy_{\star} \|^2 + \frac{1}{\alpha} \| \vx_{1} - \vx_{\star} \|^2 - \alpha \norm{\nabla_{\vx} f(\vx_0, \tilde{\vy}_0)}^2 \\
        &\phantom{\le} + \frac{(\gamma - 1) (\delta - 1) \alpha \beta}{(1 - \alpha \mu_x)(1 - \beta \mu_y)} \cdot L_{xy} \sqrt{\frac{\mu_y}{\mu_x}} \cdot \norm{\nabla_{\vx} f(\vx_{0}, \tilde{\vy}_{0})}^2.
    \end{align*}
    
    Similarly as in the proof of \cref{thm:altgda}, the proof consists of two steps| in \textsc{Step 1} we prove that $\Psi_k^{\text{Alt}}$ is a valid Lyapunov function, and in \textsc{Step 2} we show that $\Psi_{k+1}^{\text{Alt}} \le r \Psi_{k}^{\text{Alt}}$ holds for the contraction rate $r$ given as in \cref{thm:alexgda}.

    \subsubsection*{Step 1. Validity of Lyapunov Function}
    
    Here we show that there exists some constant $A^{\text{Alex}}$ such that we have $\Psi_{k}^{\text{Alex}} \ge A^{\text{Alex}} \bigopen{\norm{\vx_k - \vx_{\star}}^2 + \norm{\vy_k - \vy_{\star}}^2}$, \textit{i.e.}, $\Psi_k^{\text{Alex}}$ is a valid Lyapunov function. 
    \cref{prop:digvalid} yields a lower bound inequality from which we can derive such a constant $A^{\text{Alex}}$.
    
    \begin{restatable}{proposition}{propdigvalid}
        \label{prop:digvalid}
        Suppose that we run \alexgda{} with $\gamma, \delta > 0$ and step sizes $\alpha, \beta$ satisfying \eqref{eq:alexgdastepsize},~and~\eqref{eq:constineq}. Then we have
        \begin{align}
            \Psi^{\emph{Alex}}_k &\ge \frac{1}{2 \alpha} \| \vx_k \|^2 + \frac{1}{2 \beta} \| \vy_k \|^2 + \frac{1}{\alpha} \| \vx_{k+1} \|^2
            \label{eq:digvalid}
        \end{align} 
        for all $(\vx_k, \vy_k)$, both when $k \ge 1$ and $k = 0$.
    \end{restatable}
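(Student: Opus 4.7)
My plan is to prove the bound \eqref{eq:digvalid} by absorbing the lone negative term $-\alpha \|\nabla_{\vx} f(\vx_k, \tilde{\vy}_k)\|^2$ using a combination of (i) the positive distance terms $\frac{1}{\alpha}\|\vx_k - \vx_\star\|^2 + \frac{2}{\beta}\|\vy_k - \vy_\star\|^2$ and (ii) the extra positive gradient-norm terms already included in the Lyapunov function. Without loss of generality I take $\vx_\star = \vzero$ and $\vy_\star = \vzero$, and I drop the last nonnegative term involving $\|\nabla_{\vx} f(\vx_{k-1}, \tilde{\vy}_{k-1})\|^2$ throughout, since it does not need to be used to establish this lower bound.

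For the case $k \ge 1$, I first unfold the extrapolation relation. Subtracting the $\vy$-update from the $\tilde{\vy}$-update in \cref{alg:alexgda} yields $\tilde{\vy}_k - \vy_k = (\delta-1)\beta\, \nabla_{\vy} f(\tilde{\vx}_k, \vy_{k-1})$, so $(1+\epsilon)$-type triangle inequality gives $\|\tilde{\vy}_k\|^2 \le 2\|\vy_k\|^2 + 2(\delta-1)^2\beta^2 \|\nabla_{\vy} f(\tilde{\vx}_k, \vy_{k-1})\|^2$. Combining this with the triangle inequality together with the $(L_x, L_y, L_{xy})$-Lipschitz gradient conditions applied to $\nabla_{\vx}f$ around $(\vx_\star, \vy_\star)=\vzero$, I obtain
\begin{align*}
    \|\nabla_{\vx} f(\vx_k, \tilde{\vy}_k)\|^2 \;\le\; 2L_x^2 \|\vx_k\|^2 + 4 L_{xy}^2 \|\vy_k\|^2 + 4(\delta-1)^2 \beta^2 L_{xy}^2 \|\nabla_{\vy} f(\tilde{\vx}_k, \vy_{k-1})\|^2.
\end{align*}
Multiplying by $\alpha$ and subtracting from $\Psi^{\text{Alex}}_k$ leaves the coefficients $\frac{1}{\alpha} - 2\alpha L_x^2$ on $\|\vx_k\|^2$, $\frac{2}{\beta} - 4\alpha L_{xy}^2$ on $\|\vy_k\|^2$, and $(\delta-1)\beta - 4\alpha(\delta-1)^2\beta^2 L_{xy}^2$ on $\|\nabla_{\vy} f(\tilde{\vx}_k, \vy_{k-1})\|^2$.

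To conclude, I need each coefficient to dominate the target: $\frac{1}{\alpha} - 2\alpha L_x^2 \ge \frac{1}{2\alpha}$ (equivalently $\alpha L_x \le \tfrac12$, from $\alpha \le C_1/L_x$ with $C_1 \le \tfrac12$ by \eqref{eq:constsix}); $\frac{2}{\beta} - 4\alpha L_{xy}^2 \ge \frac{1}{2\beta}$ (equivalently $\alpha\beta L_{xy}^2 \le \tfrac38$, implied by $\alpha\beta L_{xy}^2 \le C_3 C_4 \le \tfrac14$ via \eqref{eq:constseven}); and $(\delta - 1)\beta - 4\alpha(\delta-1)^2\beta^2 L_{xy}^2 \ge 0$, equivalent to $4(\delta-1)\alpha\beta L_{xy}^2 \le 1$, secured by $4(\delta-1)C_3 C_4 \le 1$ in \eqref{eq:consteight}. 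Preserving $\frac{1}{\alpha}\|\vx_{k+1}\|^2$ requires no work since the negative term does not touch it. For $k=0$ the proof is strictly easier, since $\tilde{\vy}_0 = \vy_0$ and the intermediate step involving $\|\nabla_{\vy} f(\tilde{\vx}_k, \vy_{k-1})\|^2$ vanishes; the same two coefficient checks on $\|\vx_0\|^2$ and $\|\vy_0\|^2$ then suffice.

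The plan has no serious obstacle beyond book-keeping: the main thing to be careful about is lining up the factor of $2$ in the cross term $\|\tilde{\vy}_k\|^2 \le 2\|\vy_k\|^2 + \cdots$, which is what propagates into the $4\alpha L_{xy}^2$ coefficient on $\|\vy_k\|^2$ (versus $2\alpha L_{xy}^2$ for $k=0$); but the step-size budget $C_3 C_4 \le \tfrac14$ is slack enough to cover both cases uniformly, so no tighter tuning of the constants $C_1,\ldots,C_4$ beyond those stated in \eqref{eq:constineq} is needed.
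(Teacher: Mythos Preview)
Your proposal is correct and follows essentially the same route as the paper: drop the last nonnegative term, expand $\|\nabla_{\vx} f(\vx_k,\tilde\vy_k)\|^2$ via triangle inequality and the $L_x,L_{xy}$-Lipschitz conditions using $\tilde\vy_k-\vy_k=(\delta-1)\beta\nabla_{\vy}f(\tilde\vx_k,\vy_{k-1})$, and then verify the three coefficient bounds from \eqref{eq:constsix}, \eqref{eq:constseven}, \eqref{eq:consteight}. The only cosmetic difference is that the paper records the slightly stronger $\frac{2}{\beta}-4\alpha L_{xy}^2\ge\frac{1}{\beta}$ (equivalently $\alpha\beta L_{xy}^2\le\tfrac14$) before concluding, whereas you stop at the $\tfrac{1}{2\beta}$ target actually stated in \eqref{eq:digvalid}; both are covered by $C_3C_4\le\tfrac14$.
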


    While we defer the proof of \cref{prop:digvalid} to \cref{sec:propdigvalid}, here we see that this implies
    \begin{align*}
        \frac{1}{2 \alpha} \| \vx_k \|^2 + \frac{1}{\beta} \| \vy_k \|^2 + \frac{1}{\alpha} \| \vx_{k+1} \|^2 &\ge A^{\text{Alex}} \bigopen{\norm{\vx_k}^2 + \norm{\vy_k}^2}
    \end{align*}
    for $A^{\text{Alex}} = \min \bigset{\frac{1}{2 \alpha}, \frac{1}{\beta}} > 0$.

    \subsubsection*{Step 2. Contraction Inequality}

    Note that this time we can't simply take $k = 0$ as in the proof of \cref{thm:altgda}, since for \alexgda{} there exists a slight difference between the first iterate and the rest, as we have briefly explained in \cref{sec:5}.

    To deal with this subtlety, here we allow ourselves to set $k = 0$ W.L.O.G. by focusing on a set of iterates given by
    
    \begin{minipage}[c]{0.45\textwidth}
    \begin{align*}
        \tilde{\vx}_{0} &= \vx_0 - \xi (\gamma - 1) \alpha \nabla_{\vx} f(\vx_{-1}, \tilde{\vy}_{-1}), \\
        \tilde{\vy}_0 &= \vy_0 + \xi (\delta - 1) \beta \nabla_{\vy} f(\tilde{\vx}_{0}, \vy_{-1}),
    \end{align*}
    \end{minipage}
    \begin{minipage}[c]{0.45\textwidth}
    \begin{align}
        \begin{aligned}
            \tilde{\vx}_{1} &= \vx_0 - \gamma \alpha \nabla_{\vx} f(\vx_0, \tilde{\vy}_0), \\
            \vx_{1} &= \vx_0 - \alpha \nabla_{\vx} f(\vx_0, \tilde{\vy_0}), \\
            \tilde{\vy}_{1} &= \vy_0 + \delta \beta \nabla_{\vy} f(\tilde{\vx}_{1}, \vy_0), \\
            \vy_{1} &= \vy_0 + \beta \nabla_{\vy} f(\tilde{\vx}_{1}, \vy_0), \\
            \tilde{\vx}_{2} &= \vx_1 - \gamma \alpha \nabla_{\vx} f(\vx_1, \tilde{\vy}_1), \\
            \vx_{2} &= \vx_1 - \alpha \nabla_{\vx} f(\vx_1, \tilde{\vy_1}),
        \end{aligned} \label{eq:alexgdaiter}
    \end{align}
    \end{minipage}
    
    \medskip
    where we can have either $\xi = 0$ or $1$.

    If $\xi = 0$, then we simply have $\vx_0 = \tilde{\vx}_0$ and $\vy_0 = \tilde{\vy}_0$, just as in the case of $k= 0$ of \alexgda{}. If $\xi = 1$, then we can bring the iterates $\tilde{\vx}_0$ and $\tilde{\vy}_0$ from the previous step, which corresponds to the case of $k \ge 1$ of \alexgda{}.
    Therefore it is safe to set $k = 0$ W.L.O.G., and it suffices to show a contraction inequality that holds for any iterates given by \eqref{eq:alexgdaiter} (including both cases of $\xi = 0$ and $1$), which we can apply to all iterates of the algorithm including both $k \ge 1$ and $k = 0$.

    \cref{prop:alexgda} gives us the main inequality which leads to the desired contraction inequality.
    
    \begin{restatable}{proposition}{propalexgda}
        \label{prop:alexgda}
        For $f \in \gF (\mu_x, \mu_y, L_x, L_y, L_{xy})$ and iterates given by (\ref{eq:alexgdaiter}) with $\gamma, \delta > 0$ and step sizes $\alpha, \beta$ satisfying \eqref{eq:alexgdastepsize},~and~\eqref{eq:constineq}, we have the contraction inequality
        \begin{align}
            \begin{aligned}
                &\phantom{\le} \frac{1}{\alpha} \| \vx_1 - \vx_{\star} \|^2 + \frac{2}{\beta} \| \vy_1 - \vy_{\star} \|^2 + \frac{1}{\alpha} \| \vx_2 - \vx_{\star} \|^2 \\
                &\phantom{\le} - \alpha \norm{\nabla_{\vx} f(\vx_1, \tilde{\vy}_1)}^2 + {(\delta - 1) \beta \norm{\nabla_{\vy} f(\tilde{\vx}_{1}, \vy_{0})}^2} + \xi (\gamma - 1) (\delta - 1) \alpha \beta L_{xy} \sqrt{\frac{\mu_y}{\mu_x}} \cdot \norm{\nabla_{\vx} f(\vx_{0}, \tilde{\vy}_{0})}^2 \\
                &\le \bigopen{\frac{1}{\alpha} - \mu_x} \| \vx_0 - \vx_{\star} \|^2 + 2 \bigopen{\frac{1}{\beta} - \mu_y} \| \vy_0 - \vy_{\star} \|^2 + \bigopen{\frac{1}{\alpha} - \mu_x} \| \vx_1 - \vx_{\star} \|^2 \\
                &\phantom{\le} - \alpha \norm{\nabla_{\vx} f(\vx_0, \tilde{\vy}_0)}^2 + \xi (\gamma - 1) (\delta - 1) \alpha \beta L_{xy} \sqrt{\frac{\mu_y}{\mu_x}} \cdot \norm{\nabla_{\vx} f(\vx_{-1}, \tilde{\vy}_{-1})}^2,
            \end{aligned}
            \label{eq:alexgdacontraction}
        \end{align}
        where $\xi = 0$ or $1$.
    \end{restatable}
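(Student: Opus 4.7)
The plan is to mirror the strategy of the Alt-GDA contraction proof (\cref{prop:altstepone}), with additional care for the extrapolation structure. First, I would expand the three quantities $\frac{1}{\alpha}\|\vx_1 - \vx_{\star}\|^2$, $\frac{2}{\beta}\|\vy_1 - \vy_{\star}\|^2$, and $\frac{1}{\alpha}\|\vx_2 - \vx_{\star}\|^2$ by substituting the explicit update rules of \alexgda{} from \eqref{eq:alexgdaiter} and using the identity $\|\va - \vc\|^2 = \|\vb - \vc\|^2 \pm 2\langle \va - \vb, \vb - \vc\rangle \pm \|\va - \vb\|^2$. The key distinction from Alt-GDA is that the three gradients $\nabla_{\vx} f(\vx_0, \tilde{\vy}_0)$, $\nabla_{\vy} f(\tilde{\vx}_1, \vy_0)$, and $\nabla_{\vx} f(\vx_1, \tilde{\vy}_1)$ driving the iterates are now evaluated at \emph{extrapolated} arguments.

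Next, I would apply $\mu_x$-strong convexity of $f(\cdot, \tilde{\vy}_0)$ and $f(\cdot, \tilde{\vy}_1)$ together with $\mu_y$-strong concavity of $f(\tilde{\vx}_1, \cdot)$, plus $L_x$/$L_y$-Lipschitz gradient inequalities, to convert each inner product into a sum of $f$-value differences and squared-distance terms, following the template of \eqref{eq:altproof1}-\eqref{eq:altproof5}. The $f$-values then telescope through the saddle-point optimality condition, with the convex-concavity bounds $2(f(\vx, \vy_{\star}) - f(\vx_{\star}, \vy_{\star})) \ge \frac{1}{L_x}\|\nabla_{\vx} f(\vx, \vy_{\star})\|^2$ converting these gaps into squared-gradient terms exactly as in Alt-GDA.

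The genuinely new ingredient is handling the extrapolation offsets $\tilde{\vy}_0 - \vy_0 = \xi(\delta - 1)\beta \nabla_{\vy} f(\tilde{\vx}_0, \vy_{-1})$ and $\tilde{\vx}_1 - \vx_1 = -(\gamma - 1)\alpha \nabla_{\vx} f(\vx_0, \tilde{\vy}_0)$. Because the SCSC inequalities must be applied at the extrapolated anchors, reconnecting back to the actual iterates introduces cross-differences such as $\nabla_{\vx} f(\vx_0, \tilde{\vy}_0) - \nabla_{\vx} f(\vx_0, \vy_0)$ which are controlled by $L_{xy}\|\tilde{\vy}_0 - \vy_0\|$. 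Unfolding these via the offset identities produces precisely the squared-gradient term $\xi(\gamma - 1)(\delta - 1)\alpha\beta L_{xy}\sqrt{\mu_y / \mu_x}\|\nabla_{\vx} f(\vx_{-1}, \tilde{\vy}_{-1})\|^2$ that appears on the RHS of \eqref{eq:alexgdacontraction}; the $\sqrt{\mu_y / \mu_x}$ weighting emerges from a Young-type split of the cross-term, balancing $\vx$- and $\vy$-side norms according to the problem's geometry. The symmetric offset on the $\vx$-side similarly generates the $(\delta - 1)\beta \|\nabla_{\vy} f(\tilde{\vx}_1, \vy_0)\|^2$ term on the LHS.

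The main obstacle I foresee is coefficient matching at the end. After all manipulations, every squared-gradient term—for $\nabla_{\vx} f(\vx_0, \tilde{\vy}_0)$, $\nabla_{\vy} f(\tilde{\vx}_1, \vy_0)$, $\nabla_{\vx} f(\vx_1, \tilde{\vy}_1)$, and the two saddle-point residuals $\nabla_{\vx} f(\vx_1, \vy_{\star})$ and $\nabla_{\vy} f(\vx_{\star}, \vy_0)$—must carry a coefficient of the right sign and magnitude to reproduce the LHS of \eqref{eq:alexgdacontraction}. Verifying non-negativity and tightness of these coefficients is exactly what the constant constraints \eqref{eq:sun}-\eqref{eq:consteight} are engineered for, with each inequality corresponding to one squared-gradient coefficient. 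I expect this bookkeeping—rather than any conceptual difficulty—to be the most technically demanding part, and the particular weightings chosen in the Lyapunov $\Psi_k^{\text{Alex}}$ (notably the $1/(1 - \alpha\mu_x)$ factor) are what make this accounting close.
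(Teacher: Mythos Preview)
Your plan is essentially the paper's three-step argument (expand via update rules; apply SCSC and Lipschitz inequalities; bound the extrapolation-induced cross terms via $L_{xy}$-Lipschitzness plus weighted AM-GM; verify signs using \eqref{eq:constone}--\eqref{eq:consteight}), and it would succeed. Two small deviations from the paper are worth flagging. First, the paper does \emph{not} use the sharp bound $2(f(\vx,\vy_\star)-f(\vx_\star,\vy_\star))\ge \tfrac{1}{L_x}\|\nabla_{\vx} f(\vx,\vy_\star)\|^2$ here; it simply discards the saddle-point gaps via plain convexity/concavity (their \eqref{eq:jupiter}), so the residuals $\nabla_{\vx} f(\vx_1,\vy_\star)$ and $\nabla_{\vy} f(\vx_\star,\vy_0)$ never enter the bookkeeping---your tighter route works but adds unnecessary terms to track. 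Second, the $L_{xy}$ cross-differences do not come from ``reconnecting the SCSC anchors''; they arise from the $L_x,L_y$-smoothness inequalities applied across displacements like $\vx_1-\tilde{\vx}_1=(\gamma-1)\alpha\nabla_{\vx} f(\vx_0,\tilde{\vy}_0)$, which produce inner products such as $\langle \nabla_{\vx} f(\vx_1,\tilde{\vy}_1),\nabla_{\vx} f(\vx_0,\tilde{\vy}_0)\rangle$ that are then split as $\langle \nabla_{\vx} f(\vx_1,\tilde{\vy}_0),\cdot\rangle + \langle \nabla_{\vx} f(\vx_1,\tilde{\vy}_1)-\nabla_{\vx} f(\vx_1,\tilde{\vy}_0),\cdot\rangle$ before invoking $L_{xy}$ on the second piece.
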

    
    We prove \cref{prop:alexgda} in \cref{sec:propalexgda}.
    Note that we can simplify the step size conditions as given in the theorem statement by choosing $C$ as the minimum of the upper bounds of the constants given in \eqref{eq:constineq}.

    First, let us assume that $\xi = 1$. 
    Note that by \cref{prop:alexgda} we have
    \begin{align*}
        &\phantom{\le} \frac{1}{\alpha} \| \vx_1 - \vx_{\star} \|^2 + \frac{2}{\beta} \| \vy_1 - \vy_{\star} \|^2 + \frac{1}{\alpha} \| \vx_2 - \vx_{\star} \|^2 \\
        &\phantom{\le} - \alpha \norm{\nabla_{\vx} f(\vx_1, \tilde{\vy}_1)}^2 + {(\delta - 1) \beta \norm{\nabla_{\vy} f(\tilde{\vx}_{1}, \vy_{0})}^2} + (\gamma - 1) (\delta - 1) \alpha \beta L_{xy} \sqrt{\frac{\mu_y}{\mu_x}} \cdot \norm{\nabla_{\vx} f(\vx_{0}, \tilde{\vy}_{0})}^2 \\
        &\le \bigopen{\frac{1}{\alpha} - \mu_x} \| \vx_0 - \vx_{\star} \|^2 + 2 \bigopen{\frac{1}{\beta} - \mu_y} \| \vy_0 - \vy_{\star} \|^2 + \bigopen{\frac{1}{\alpha} - \mu_x} \| \vx_1 - \vx_{\star} \|^2 \\
        &\phantom{\le} - \alpha \norm{\nabla_{\vx} f(\vx_0, \tilde{\vy}_0)}^2 + (\gamma - 1) (\delta - 1) \alpha \beta L_{xy} \sqrt{\frac{\mu_y}{\mu_x}} \cdot \norm{\nabla_{\vx} f(\vx_{-1}, \tilde{\vy}_{-1})}^2.
    \end{align*}
    
    We can add $\frac{1}{1 - \alpha \mu_x} \cdot (\gamma - 1) (\delta - 1) \alpha^2 \beta L_{xy} \sqrt{\mu_x \mu_y} \norm{\nabla_{\vx} f(\vx_0, \tilde{\vy}_0)}^2$ to both sides so that we have
    \begin{align}
        \begin{aligned}
            &\phantom{\le} \frac{1}{\alpha} \| \vx_1 - \vx_{\star} \|^2 + \frac{2}{\beta} \| \vy_1 - \vy_{\star} \|^2 + \frac{1}{\alpha} \| \vx_2 - \vx_{\star} \|^2 - \alpha \norm{\nabla_{\vx} f(\vx_1, \tilde{\vy}_1)}^2 \\
            &\phantom{\le} + {(\delta - 1) \beta \norm{\nabla_{\vy} f(\tilde{\vx}_{1}, \vy_{0})}^2} + \frac{(\gamma - 1) (\delta - 1) \alpha \beta}{1 - \alpha \mu_x} L_{xy} \sqrt{\frac{\mu_y}{\mu_x}} \cdot \norm{\nabla_{\vx} f(\vx_{0}, \tilde{\vy}_{0})}^2 \\
            &\le \bigopen{\frac{1}{\alpha} - \mu_x} \| \vx_0 - \vx_{\star} \|^2 + 2 \bigopen{\frac{1}{\beta} - \mu_y} \| \vy_0 - \vy_{\star} \|^2 + \bigopen{\frac{1}{\alpha} - \mu_x} \| \vx_1 - \vx_{\star} \|^2 \\
            &\phantom{\le} - \alpha \bigopen{1 - \frac{(\gamma - 1) (\delta - 1) \alpha \beta}{1 - \alpha \mu_x} L_{xy} \sqrt{\mu_x \mu_y}} \norm{\nabla_{\vx} f(\vx_0, \tilde{\vy}_0)}^2 + (\gamma - 1) (\delta - 1) \alpha \beta L_{xy} \sqrt{\frac{\mu_y}{\mu_x}} \cdot \norm{\nabla_{\vx} f(\vx_{-1}, \tilde{\vy}_{-1})}^2.
        \end{aligned}
        \label{eq:brahms}
    \end{align}
    Now let us define $r$ as
    \begin{align*}
        r &= \max \bigset{1 - \alpha \mu_x, 1 - \beta \mu_y}.
    \end{align*}
    Since $r \ge 1 - \alpha \mu_x$ and $r \ge 1 - \beta \mu_y$, we have
    \begin{align}
        \begin{aligned}
            \bigopen{\frac{1}{\alpha} - \mu_x} \| \vx_0 - \vx_{\star} \|^2 &\le r \cdot \frac{1}{\alpha} \| \vx_0 - \vx_{\star} \|^2, \\
            2 \bigopen{\frac{1}{\beta} - \mu_y} \| \vy_0 - \vy_{\star} \|^2 &\le r \cdot \frac{2}{\beta} \| \vy_0 - \vy_{\star} \|^2, \\
            \bigopen{\frac{1}{\alpha} - \mu_x} \| \vx_1 - \vx_{\star} \|^2 &\le r \cdot \frac{1}{\alpha} \| \vx_1 - \vx_{\star} \|^2.
        \end{aligned}
        \label{eq:tchaikovsky}
    \end{align}
    Since $r \ge 1 - \alpha \mu_x$, we have
    \begin{align*}
        (\gamma - 1) (\delta - 1) \alpha \beta L_{xy} \sqrt{\frac{\mu_y}{\mu_x}} \cdot \norm{\nabla_{\vx} f(\vx_{-1}, \tilde{\vy}_{-1})}^2 &\le r \frac{(\gamma - 1) (\delta - 1) \alpha \beta}{1 - \alpha \mu_x} L_{xy} \sqrt{\frac{\mu_y}{\mu_x}} \cdot \norm{\nabla_{\vx} f(\vx_{-1}, \tilde{\vy}_{-1})}^2.
    \end{align*}

    Now we will show that the following holds for the negative gradient terms:
    \begin{align}
        - \alpha \bigopen{1 - \frac{(\gamma - 1) (\delta - 1) \alpha \beta}{1 - \alpha \mu_x} L_{xy} \sqrt{\mu_x \mu_y}} \norm{\nabla_{\vx} f(\vx_0, \tilde{\vy}_0)}^2 &\le - r \alpha \norm{\nabla_{\vx} f(\vx_0, \tilde{\vy}_0)}^2.
        \label{eq:schumann}
    \end{align}
    
    Observe that 
    \begin{align}
        \frac{1}{1 - \alpha \mu_x} \le \frac{1}{1 - \alpha L_x} \le \frac{1}{1 - C_1}. \label{eq:mozart}
    \end{align}
    Recalling inequality \eqref{eq:constfive}, we have
    \begin{align*}
        C_1 + (\gamma - 1) (\delta - 1) C_4 &\le 1,
    \end{align*}
    which, combined with \eqref{eq:mozart}, gives
    \begin{align*}
        \frac{1}{1 - \alpha \mu_x} (\gamma - 1) (\delta - 1) C_4 \le \frac{1}{1 - C_1} (\gamma - 1) (\delta - 1) C_4 \le 1.
    \end{align*}
    The condition $\beta \le \frac{C_4}{L_{xy}} \sqrt{\frac{\mu_x}{\mu_y}}$ then yields
    \begin{align*}
        \frac{1 - \alpha \mu_x}{(\gamma - 1) (\delta - 1) \alpha \beta L_{xy} \sqrt{\mu_x \mu_y}} &\ge \frac{1 - \alpha \mu_x}{(\gamma - 1) (\delta - 1) C_4 \alpha \mu_x} \ge \frac{1}{\alpha \mu_x}.
    \end{align*}

    Similarly, recalling inequality \eqref{eq:constfour}, we have
    \begin{align*}
        C_1 + (\gamma - 1) (\delta - 1) C_3 &\le 1,
    \end{align*}
    which, combined with \eqref{eq:mozart}, gives
    \begin{align*}
        \frac{1}{1 - \alpha \mu_x} (\gamma - 1) (\delta - 1) C_3 \le \frac{1}{1 - C_1} (\gamma - 1) (\delta - 1) C_3 \le 1.
    \end{align*}
    The conditions $\alpha \le \frac{C_3}{L_{xy}} \sqrt{\frac{\mu_y}{\mu_x}}$ then yields
    \begin{align*}
        \frac{1 - \alpha \mu_x}{(\gamma - 1) (\delta - 1) \alpha \beta L_{xy} \sqrt{\mu_x \mu_y}} &\ge \frac{1 - \alpha \mu_x}{(\gamma - 1) (\delta - 1) C_3 \beta \mu_y} \ge \frac{1}{\beta \mu_y}.
    \end{align*}
    
    Therefore we have
    \begin{align*}
        \frac{1 - \alpha \mu_x}{(\gamma - 1) (\delta - 1) \alpha \beta L_{xy} \sqrt{\mu_x \mu_y}} \ge \frac{1}{1 - r} = \max \bigset{\frac{1}{\alpha \mu_x}, \frac{1}{\beta \mu_y}},
    \end{align*}
    or equivalently
    \begin{align*}
        1 - \frac{(\gamma - 1) (\delta - 1) \alpha \beta}{1 - \alpha \mu_x} L_{xy} \sqrt{\mu_x \mu_y} \ge r,
    \end{align*}
    from which \eqref{eq:schumann} immediately follows.
    Finally, we can just add:
    \begin{align}
        0 \le r {(\delta - 1) \beta \norm{\nabla_{\vy} f(\tilde{\vx}_{0}, \vy_{-1})}^2}.
        \label{eq:rachmaninoff}
    \end{align}
    Aggregating \eqref{eq:brahms}, \eqref{eq:tchaikovsky}, \eqref{eq:schumann}, and \eqref{eq:rachmaninoff}, we can obtain
    \begin{align*}
        &\phantom{\le} \frac{1}{\alpha} \| \vx_1 - \vx_{\star} \|^2 + \frac{2}{\beta} \| \vy_1 - \vy_{\star} \|^2 + \frac{1}{\alpha} \| \vx_2 - \vx_{\star} \|^2 \\
        &\phantom{\le} - \alpha \norm{\nabla_{\vx} f(\vx_1, \tilde{\vy}_1)}^2 + {(\delta - 1) \beta \norm{\nabla_{\vy} f(\tilde{\vx}_{1}, \vy_{0})}^2} + \frac{(\gamma - 1) (\delta - 1) \alpha \beta}{1 - \alpha \mu_x} L_{xy} \sqrt{\frac{\mu_y}{\mu_x}} \cdot \norm{\nabla_{\vx} f(\vx_{0}, \tilde{\vy}_{0})}^2 \\
        &\le r \bigopen{\frac{1}{\alpha} \| \vx_0 - \vx_{\star} \|^2 + \frac{2}{\beta} \| \vy_0 - \vy_{\star} \|^2 + \frac{1}{\alpha} \| \vx_1 - \vx_{\star} \|^2} \\
        &\phantom{\le} - r \alpha \norm{\nabla_{\vx} f(\vx_0, \tilde{\vy}_0)}^2 + r {(\delta - 1) \beta \norm{\nabla_{\vy} f(\tilde{\vx}_{0}, \vy_{-1})}^2} + r \frac{(\gamma - 1) (\delta - 1) \alpha \beta}{1 - \alpha \mu_x} L_{xy} \sqrt{\frac{\mu_y}{\mu_x}} \cdot \norm{\nabla_{\vx} f(\vx_{-1}, \tilde{\vy}_{-1})}^2
    \end{align*}
    which -- as $\xi = 1$ corresponds to iterates of \alexgda{} for $k \ge 1$ -- concludes that $\Psi_{k+1}^{\text{Alex}} \le r \Psi_{k}^{\text{Alex}}$ for $k \ge 1$.

    Now suppose that $\xi = 0$. Then by \cref{prop:alexgda} we have
    \begin{align*}
        &\phantom{=} \frac{1}{\alpha} \| \vx_1 - \vx_{\star} \|^2 + \frac{2}{\beta} \| \vy_1 - \vy_{\star} \|^2 + \frac{1}{\alpha} \| \vx_2 - \vx_{\star} \|^2 - \alpha \norm{\nabla_{\vx} f(\vx_1, \tilde{\vy}_1)}^2 + {(\delta - 1) \beta \norm{\nabla_{\vy} f(\tilde{\vx}_{1}, \vy_{0})}^2} \\
        &\le \bigopen{\frac{1}{\alpha} - \mu_x} \| \vx_0 - \vx_{\star} \|^2 + 2 \bigopen{\frac{1}{\beta} - \mu_y} \| \vy_0 - \vy_{\star} \|^2 + \bigopen{\frac{1}{\alpha} - \mu_x} \| \vx_1 - \vx_{\star} \|^2 - \alpha \norm{\nabla_{\vx} f(\vx_0, \tilde{\vy}_0)}^2.
    \end{align*}
    Then, since $1 - \beta \mu_y \le r \le 1$ and \eqref{eq:tchaikovsky} holds for this case as well, we have
    \begin{align*}
        &\phantom{\le} \frac{1}{\alpha} \| \vx_1 - \vx_{\star} \|^2 + \frac{2}{\beta} \| \vy_1 - \vy_{\star} \|^2 + \frac{1}{\alpha} \| \vx_2 - \vx_{\star} \|^2 \\
        &\phantom{\le} - \alpha \norm{\nabla_{\vx} f(\vx_1, \tilde{\vy}_1)}^2 + {(\delta - 1) \beta \norm{\nabla_{\vy} f(\tilde{\vx}_{1}, \vy_{0})}^2} + \frac{(\gamma - 1) (\delta - 1) \alpha \beta}{1 - \alpha \mu_x}\cdot  L_{xy} \sqrt{\frac{\mu_y}{\mu_x}} \cdot \norm{\nabla_{\vx} f(\vx_{0}, \tilde{\vy}_{0})}^2 \\
        &\le \bigopen{\frac{1}{\alpha} - \mu_x} \| \vx_0 - \vx_{\star} \|^2 + 2 \bigopen{\frac{1}{\beta} - \mu_y} \| \vy_0 - \vy_{\star} \|^2 + \bigopen{\frac{1}{\alpha} - \mu_x} \| \vx_1 - \vx_{\star} \|^2 \\
        &\phantom{\le} - \alpha \norm{\nabla_{\vx} f(\vx_0, \tilde{\vy}_0)}^2 + \frac{(\gamma - 1) (\delta - 1) \alpha \beta}{1 - \alpha \mu_x}\cdot  L_{xy} \sqrt{\frac{\mu_y}{\mu_x}} \cdot \norm{\nabla_{\vx} f(\vx_{0}, \tilde{\vy}_{0})}^2 \\
        &\le r \bigopen{\frac{1}{\alpha} \| \vx_0 - \vx_{\star} \|^2 + \frac{2}{\beta} \| \vy_0 - \vy_{\star} \|^2 + \frac{1}{\alpha} \| \vx_1 - \vx_{\star} \|^2} \\
        &\phantom{\le} - r \alpha \norm{\nabla_{\vx} f(\vx_0, \tilde{\vy}_0)}^2 + \frac{r}{1 - \beta \mu_y} \cdot \frac{(\gamma - 1) (\delta - 1) \alpha \beta}{1 - \alpha \mu_x} \cdot L_{xy} \sqrt{\frac{\mu_y}{\mu_x}} \cdot \norm{\nabla_{\vx} f(\vx_{0}, \tilde{\vy}_{0})}^2
    \end{align*}
    which -- as $\xi = 0$ corresponds to the iterate of \alexgda{} for $k = 0$ -- concludes that $\Psi_{1}^{\text{Alex}} \le r \Psi_{0}^{\text{Alex}}$.
\end{proof}
\subsection{\texorpdfstring{Proof of \cref{cor:alexgda}}{Proof of Corollary 5.2}} \label{sec:coralexgda}

Here we prove \cref{cor:alexgda} of \cref{sec:5}, restated below for the sake of readability.

\coralexgda*

\begin{proof}
    In \cref{thm:alexgda} we have shown that $\Psi_{k+1}^{\text{Alex}} \le r \Psi_{k}^{\text{Alex}}$ for all $k \ge 0$ with $r = \max \bigset{1 - \alpha \mu_x, 1 - \beta \mu_y}$.
    
    Since we choose $\alpha = \Theta (\min \bigset{\frac{1}{L_x}, \frac{\sqrt{\mu_y}}{L_{xy} \sqrt{\mu_x}}})$ and $\beta = \Theta (\min \bigset{\frac{1}{L_y}, \frac{\sqrt{\mu_x}}{L_{xy} \sqrt{\mu_y}}})$, we have
    \begin{align*}
        \frac{1}{1 - r} &= \max \bigset{\frac{1}{\alpha \mu_x}, \frac{1}{\beta \mu_y}} \\
        &= \Theta \bigopen{\max \bigset{\frac{L_x}{\mu_x}, \frac{L_y}{\mu_y}, \frac{L_{xy}}{\sqrt{\mu_x \mu_y}}}} = \Theta \bigopen{\kappa_x + \kappa_y + \kappa_{xy}}.
    \end{align*}
    
    Therefore it is sufficient to run
    \begin{align*}
        K  &= \gO \bigopen{\bigopen{\kappa_x + \kappa_y + \kappa_{xy}} \cdot \log \frac{\Psi_{0}^{\text{Alex}}}{A^{\text{Alex}} \epsilon}}
    \end{align*}
    iterations to ensure that $\| \vz_K - \vz_{\star} \|^2 \le \epsilon$, where $A^{\text{Alex}} = \min \bigset{\frac{1}{2 \alpha}, \frac{1}{\beta}}$.
\end{proof}
\subsection{\texorpdfstring{Proof of \cref{thm:alexgdalb}}{Proof of Theorem 5.3}} \label{sec:thmalexgdalb}

Here we prove \cref{thm:alexgdalb} of \cref{sec:5}, restated below for the sake of readability.

\thmalexgdalb*

\begin{proof}
    We use the same worst-case function as in \cref{thm:simgdalb}:
    \begin{align*}
        f(\vx, \vy) &= 
        \frac{1}{2} \begin{bmatrix}
            {x} \\ {s} \\ {t} \\ {y} \\ {u} \\ {v}
        \end{bmatrix}^{\top}
        \begin{bmatrix}
            \mu_x & 0 & 0 & L_{xy} & 0 & 0 \\
            0 & \mu_x & 0 & 0 & 0 & 0 \\
            0 & 0 & L_x & 0 & 0 & 0 \\
            L_{xy} & 0 & 0 & -\mu_y & 0 & 0 \\
            0 & 0 & 0 & 0 & -\mu_y & 0 \\
            0 & 0 & 0 & 0 & 0& -L_y 
        \end{bmatrix}
        \begin{bmatrix}
            {x} \\ {s} \\ {t} \\ {y} \\ {u} \\ {v}
        \end{bmatrix}, \\
    \end{align*}
    where ${\vx = (x, s, t)}$ and ${\vy = (y, u, v)}$. 
    It can be easily checked that $f$ is a quadratic function ({\it i.e.,} Hessian is constant) such that $f \in \gF(\mu_x, \mu_y, L_x, L_y, L_{xy})$ and $\vx_\star = \vy_\star = \bm{0} \in \R^3$.

    We first observe that if we let
    \begin{align*}
        \mA &= 
        \begin{bmatrix}
            \mu_x & 0 & 0 \\
            0 & \mu_x & 0 \\
            0 & 0 & L_x
        \end{bmatrix}, \quad
        \mB = 
        \begin{bmatrix}
            L_{xy} & 0 & 0 \\
            0 & 0 & 0 \\
            0 & 0 & 0
        \end{bmatrix}, \quad
        \mC =
        \begin{bmatrix}
            \mu_y & 0 & 0 \\
            0 & \mu_y & 0 \\
            0 & 0 & L_y
        \end{bmatrix},
    \end{align*}
    then the $k$-th step of \alexgda{} satisfies
    \begin{align*}
        \begin{bmatrix}
            \vx_{k+1} \\ \tilde{\vx}_{k+1} \\ \vy_{k+1} \\ \tilde{\vy}_{k+1}
        \end{bmatrix}
        &=
        \begin{bmatrix}
            \mI & \bm{0} & \bm{0} & \bm{0} \\ 
            \bm{0} & \mI & \bm{0} & \bm{0} \\ 
            \bm{0} & \beta \mB^{\top} & \mI - \beta \mC & \bm{0} \\
            \bm{0} & \delta \beta \mB^{\top} & \mI - \delta \beta \mC & \bm{0}
        \end{bmatrix}
        \begin{bmatrix}
            \mI - \alpha \mA & \bm{0} & \bm{0} & - \alpha \mB \\ 
            \mI - \gamma \alpha \mA & \bm{0} & \bm{0} & - \gamma \alpha \mB \\ 
            \bm{0} & \bm{0} & \mI & \bm{0} \\ 
            \bm{0} & \bm{0} & \bm{0} & \mI
        \end{bmatrix}
        \begin{bmatrix}
            \vx_{k} \\ \tilde{\vx}_{k} \\ \vy_{k} \\ \tilde{\vy}_{k}
        \end{bmatrix} \\
        &=
        \begin{bmatrix}
            \mI - \alpha \mA & \bm{0} & \bm{0} & - \alpha \mB \\ 
            \mI - \gamma \alpha \mA & \bm{0} & \bm{0} & - \gamma \alpha \mB \\ 
            \beta \mB^{\top} (\mI - \alpha \mA) & \bm{0} & \mI - \beta \mC &  - \gamma \alpha \beta \mB^{\top} \mB \\ 
            \delta \beta \mB^{\top} (\mI - \gamma \alpha \mA) & \bm{0} & \mI - \delta \beta \mC & - \gamma \delta \alpha \beta \mB^{\top} \mB
        \end{bmatrix}
        \begin{bmatrix}
            \vx_{k} \\ \tilde{\vx}_{k} \\ \vy_{k} \\ \tilde{\vy}_{k}
        \end{bmatrix}.
    \end{align*}

    Therefore we have the following coordinate-wise updates:
    \begin{align}
        \begin{bmatrix}
            x_{k+1} \\ \tilde{x}_{k+1} \\ y_{k+1} \\ \tilde{y}_{k+1}
        \end{bmatrix} &= 
        \underbrace{\begin{bmatrix}
            1 - \alpha \mu_x & 0 & 0 & - \alpha L_{xy} \\
            1 - \gamma \alpha \mu_x & 0 & 0 & - \gamma \alpha L_{xy} \\
            \beta L_{xy} (1 - \gamma \alpha \mu_x) & 0 & 1 - \beta \mu_y & - \gamma \alpha \beta L_{xy}^2 \\
            \delta \beta L_{xy} (1 - \gamma \alpha \mu_x) & 0 & 1 - \delta \beta \mu_y & - \gamma \delta \alpha \beta L_{xy}^2
        \end{bmatrix}}_{\triangleq \mP}
        \begin{bmatrix}
            x_k \\ \tilde{x}_{k} \\ y_{k} \\ y_k
        \end{bmatrix}, \label{eq:x_k_y_k_dig}\\
        s_{k+1} &= (1 - \alpha \mu_x) s_k, \ \ \tilde{s}_{k+1} = (1 - \gamma \alpha \mu_x) s_k, \label{eq:s_k_dig} \\
        t_{k+1} &= (1 - \alpha L_x) t_k, \ \ \tilde{t}_{k+1} = (1 - \gamma \alpha L_x) t_k, \label{eq:t_k_dig} \\
        u_{k+1} &= (1 - \beta \mu_y) u_k, \ \ \tilde{u}_{k+1} = (1 - \delta \beta \mu_y) u_k, \label{eq:u_k_dig} \\
        v_{k+1} &= (1 - \beta L_y) v_k, \ \ \tilde{v}_{k+1} = (1 - \delta \beta L_y) v_k. \label{eq:v_k_dig} 
    \end{align}
    To assure the convergence of iterations \eqref{eq:t_k_dig}~and~\eqref{eq:v_k_dig}, the step sizes $\alpha$ and $\beta$ are required to be 
    \begin{equation}
        \label{eq:step_base_dig}
        {\alpha < \frac{2}{L_x}} \quad \text{and} \quad {\beta < \frac{2}{L_y}}.
    \end{equation}
    Also, to guarantee $\bignorm{\vx_K}^2 + \bignorm{\vy_K}^2 < \epsilon$, we need from \eqref{eq:s_k_dig}~and~\eqref{eq:u_k_dig} that $s_K^2 < \gO(\epsilon)$ and $u_K^2 < \gO(\epsilon)$, respectively.
    These two necessary conditions require an iteration number of at least:
    \begin{equation}
        \label{eq:cpxty_base_dig}
        K = \Omega\bigopen{\bigopen{{\frac{1}{\alpha \mu_x}} + {\frac{1}{\beta \mu_y}}} \cdot \log\frac{1}{\epsilon}},
    \end{equation}
    and $\alpha L_x, \beta L_y = \gO (1)$ from \eqref{eq:cpxty_base_dig} yields
    \begin{align}
        \frac{1}{\alpha \mu_x} + \frac{1}{\beta \mu_y} = \Omega (\kappa_x + \kappa_y).
        \label{eq:screen}
    \end{align}
    
    Now, in order to ensure convergence of iteration \eqref{eq:x_k_y_k_dig}, we need the following matrix
    \begin{align*}
        \mP &= \begin{bmatrix}
            1 - \alpha \mu_x & 0 & 0 & - \alpha L_{xy} \\
            1 - \gamma \alpha \mu_x & 0 & 0 & - \gamma \alpha L_{xy} \\
            \beta L_{xy} (1 - \gamma \alpha \mu_x) & 0 & 1 - \beta \mu_y & - \gamma \alpha \beta L_{xy}^2 \\
            \delta \beta L_{xy} (1 - \gamma \alpha \mu_x) & 0 & 1 - \delta \beta \mu_y & - \gamma \delta \alpha \beta L_{xy}^2
        \end{bmatrix}
    \end{align*}
    to have a spectral radius smaller than one.
    Hence it suffices to show that $\rho(\mP) < 1$ implies that $\frac{1}{\alpha \mu_x} + \frac{1}{\beta \mu_y} = \Omega(\kappa_{xy})$.
    
    Suppose that $\lambda$ is an eigenvalue of $\mP$. Then we must have
    \begin{align*}
        \det (\lambda \mI - \mP) &= 
        \begin{vmatrix}
            (1 - \lambda) - \alpha \mu_x & 0 & 0 & - \alpha L_{xy} \\
            1 - \gamma \alpha \mu_x & - \lambda & 0 & - \gamma \alpha L_{xy} \\
            \beta L_{xy} (1 - \gamma \alpha \mu_x) & 0 & (1 - \lambda) - \beta \mu_y & - \gamma \alpha \beta L_{xy}^2 \\
            \delta \beta L_{xy} (1 - \gamma \alpha \mu_x) & 0 & 1 - \delta \beta \mu_y & - \lambda - \gamma \delta \alpha \beta L_{xy}^2
        \end{vmatrix} \\
        &= - \lambda \cdot 
        \begin{vmatrix}
            (1 - \lambda) - \alpha \mu_x & 0 & - \alpha L_{xy} \\
            \beta L_{xy} (1 - \gamma \alpha \mu_x) & (1 - \lambda) - \beta \mu_y & - \gamma \alpha \beta L_{xy}^2 \\
            \delta \beta L_{xy} (1 - \gamma \alpha \mu_x) & 1 - \delta \beta \mu_y & - \lambda - \gamma \delta \alpha \beta L_{xy}^2
        \end{vmatrix} = 0.
    \end{align*}
    We can compute
    \begin{align*}
        &\phantom{=} \begin{vmatrix}
            (1 - \lambda) - \alpha \mu_x & 0 & - \alpha L_{xy} \\
            \beta L_{xy} (1 - \gamma \alpha \mu_x) & (1 - \lambda) - \beta \mu_y & - \gamma \alpha \beta L_{xy}^2 \\
            \delta \beta L_{xy} (1 - \gamma \alpha \mu_x) & 1 - \delta \beta \mu_y & - \lambda - \gamma \delta \alpha \beta L_{xy}^2.
        \end{vmatrix} \\
        &= \bigopen{(1 - \lambda) - \alpha \mu_x} \bigopen{(1 - \lambda) - \beta \mu_y} \bigopen{- \lambda - \gamma \delta \alpha \beta L_{xy}^2} - \alpha \beta L_{xy}^2 (1 - \gamma \alpha \mu_x) (1 - \delta \beta \mu_y) \\
        &\phantom{=} + \delta \alpha \beta L_{xy}^2 (1 - \gamma \alpha \mu_x) \bigopen{(1 - \lambda) - \beta \mu_y} + \gamma \alpha \beta L_{xy}^2 (1 - \delta \beta \mu_y) \bigopen{(1 - \lambda) - \alpha \mu_x}.
    \end{align*}
    Substituting $\lambda = 1 - t$ and $\phi = \alpha \beta L_{xy}^2$, we can obtain a simpler expression:
    \begin{align*}
        &\phantom{=} \bigopen{t - \alpha \mu_x} \bigopen{t - \beta \mu_y} \bigopen{t - 1 - \gamma \delta \phi} - \phi (1 - \gamma \alpha \mu_x) (1 - \delta \beta \mu_y) \\
        &\phantom{=} + \delta \phi (1 - \gamma \alpha \mu_x) \bigopen{t - \beta \mu_y} + \gamma \phi (1 - \delta \beta \mu_y) \bigopen{t - \alpha \mu_x} \\
        &= \bigopen{t - \alpha \mu_x} \bigopen{t - \beta \mu_y} \bigopen{t - 1} - \gamma \delta \phi \bigopen{t - \alpha \mu_x} \bigopen{t - \beta \mu_y} - \phi (1 - \gamma \alpha \mu_x) (1 - \delta \beta \mu_y) \\
        &\phantom{=} + \delta \phi (1 - \gamma \alpha \mu_x) \bigopen{t - \beta \mu_y} + \gamma \phi (1 - \delta \beta \mu_y) \bigopen{t - \alpha \mu_x} \\
        &= \bigopen{t - \alpha \mu_x} \bigopen{t - \beta \mu_y} \bigopen{t - 1} - \phi \bigopen{(1 - \gamma \alpha \mu_x) - \gamma \bigopen{t - \alpha \mu_x}} \bigopen{(1 - \delta \beta \mu_y) - \delta \bigopen{t - \beta \mu_y}} \\
        &= \bigopen{t - \alpha \mu_x} \bigopen{t - \beta \mu_y} \bigopen{t - 1} - \phi (1 - \gamma t) (1 - \delta t).
    \end{align*}
    Therefore the eigenvalue $\lambda$ must be $0$ or take the form of $1-t^*$, where $t^*$ is a root of the following cubic equation:
    \begin{align*}
        \bigopen{t - \alpha \mu_x} \bigopen{t - \beta \mu_y} \bigopen{t - 1} - \phi (1 - \gamma t) (1 - \delta t) = 0.
    \end{align*}
    We can expand as
    \begin{align*}
        t^3 - (1 + \alpha \mu_x + \beta \mu_y + \gamma \delta \phi) t^2 + (\alpha \mu_x + \beta \mu_y + \alpha \beta \mu_x \mu_y + (\gamma + \delta) \phi) t - (\alpha \beta \mu_x \mu_y + \phi) = 0.
    \end{align*}
    Hence we have a cubic equation of the form $t^3 - p t^2 + qt - r = 0$ with coefficients given by
    \begin{align}
        \begin{aligned}
        p &= 1 + \alpha \mu_x + \beta \mu_y + \gamma \delta \phi, \\
        q &= \alpha \mu_x + \beta \mu_y + \alpha \beta \mu_x \mu_y + (\gamma + \delta) \phi, \\
        r &= \alpha \beta \mu_x \mu_y + \phi.
        \end{aligned} \label{eq:stupidme}
    \end{align}
    Note that we obviously have $p, q, r > 0$.

    There exists a well-known characterization of cubic polynomials having roots with absolute values less than one.
    
    \begin{proposition}[\citet{grove2004periodicities}, Theorem 1.4]
        \label{prop:routhhurwitz}
        Consider a cubic polynomial $x^3 + a_2x^2 + a_1x + a_0$, where $a_0$, $a_1$, and $a_2$ are real numbers. Then a necessary and sufficient condition that all roots of the polynomial are contained in the open disk $|x|<1$ is 
        \begin{align}
            |a_2 + a_0| < 1 + a_1, \quad |a_2 - 3a_0| < 3 - a_1, \quad a_0(a_0-a_2) + a_1 - 1 < 0. \label{eq:degthree}
        \end{align}
    \end{proposition}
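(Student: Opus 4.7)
The plan is to reduce this claim about roots in the open unit disk to a claim about roots in the open left half-plane, to which we can apply the classical Routh--Hurwitz criterion. Concretely, I will use the Möbius transformation $x = \frac{1+w}{1-w}$, which is a bijection from the open unit disk $\{|x|<1\}$ onto the open left half-plane $\{\operatorname{Re}(w) < 0\}$ (with the unit circle mapped to the imaginary axis, and $x = 1$ mapped to $w = \infty$). Hence the roots $x_1, x_2, x_3$ of $p(x) = x^3 + a_2 x^2 + a_1 x + a_0$ all lie in the open unit disk if and only if the roots $w_i = (x_i-1)/(x_i+1)$ of the transformed cubic all have strictly negative real part. (The transformation is well-defined on the roots because $x_i = -1$ would mean $p(-1) = 0$, which is ruled out by the first condition to be verified.)

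Next, I will compute $(1-w)^3 \, p\!\left(\frac{1+w}{1-w}\right)$ explicitly by expanding $(1+w)^j (1-w)^{3-j}$ for $j = 0,1,2,3$ and collecting like powers. The result will be a cubic polynomial
\begin{align*}
q(w) = \alpha_3 w^3 + \alpha_2 w^2 + \alpha_1 w + \alpha_0,
\end{align*}
whose coefficients are simple linear combinations of $1, a_0, a_1, a_2$; in particular $\alpha_3 = 1 - a_2 + a_1 - a_0 = -p(-1)$ and $\alpha_0 = 1 + a_2 + a_1 + a_0 = p(1)$, with $\alpha_1, \alpha_2$ of an analogous shape. The roots of $q$ are exactly the $w_i$.

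I will then invoke the Routh--Hurwitz criterion for a cubic $\alpha_3 w^3 + \alpha_2 w^2 + \alpha_1 w + \alpha_0$ with $\alpha_3 > 0$: all roots have negative real part if and only if $\alpha_0, \alpha_1, \alpha_2 > 0$ and $\alpha_2 \alpha_1 > \alpha_3 \alpha_0$. The sign conditions $\alpha_3, \alpha_0 > 0$ collapse to $|a_2 + a_0| < 1 + a_1$, and $\alpha_1, \alpha_2 > 0$ collapse to $|a_2 - 3 a_0| < 3 - a_1$, matching the first two inequalities of \eqref{eq:degthree}. For the determinant condition, I will exploit the fortunate algebraic fact that $\alpha_0, \alpha_3$ and $\alpha_1, \alpha_2$ each form ``sum/difference'' pairs about $1+a_1$ and $3-a_1$ respectively, so $\alpha_3 \alpha_0 = (1+a_1)^2 - (a_0+a_2)^2$ and $\alpha_2 \alpha_1 = (3-a_1)^2 - (3a_0 - a_2)^2$; subtracting and simplifying will yield exactly $a_0(a_0 - a_2) + a_1 - 1 < 0$, the third inequality.

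The only subtlety|and what I view as the main ``obstacle''|is bookkeeping: one must verify that the Möbius transformation is a genuine bijection on \emph{roots} (i.e., exclude the degenerate cases $x_i = 1$ and $w = \infty$), and ensure that $\alpha_3 > 0$ is implied by the stated conditions so that Routh--Hurwitz applies in its standard form. Both are immediate: $\alpha_3 = 1 - a_2 + a_1 - a_0 > 0$ is precisely one half of the first inequality $|a_2+a_0| < 1 + a_1$, and the nonvanishing of $p(\pm 1)$ follows from the same strict inequalities. Beyond that, the proof is a direct expansion and comparison, so no deep idea is needed past the Möbius--Routh--Hurwitz reduction.
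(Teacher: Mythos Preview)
Your approach is correct and complete. The M\"obius substitution $x=\frac{1+w}{1-w}$ followed by the Routh--Hurwitz criterion is the standard derivation of this Schur--Cohn-type condition, and your computation of the coefficients $\alpha_0,\dots,\alpha_3$ and the identity $\alpha_2\alpha_1-\alpha_3\alpha_0=-8\bigl(a_0(a_0-a_2)+a_1-1\bigr)$ checks out. The one bookkeeping point you gloss over is that for the \emph{necessity} direction you need $\alpha_3>0$ to come from the hypothesis ``all roots in the open disk'' rather than from the conditions \eqref{eq:degthree}; this is immediate since $\alpha_3=-p(-1)=\prod_i(1+x_i)$ factors as a positive real times $|1+x_j|^2$ for any complex-conjugate pair, but it is worth stating.

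As for comparison: the paper does not actually prove this proposition. It is quoted verbatim as Theorem~1.4 of \citet{grove2004periodicities} and used as a black box (together with its quartic analogue, \cref{prop:routhhurwitztwo}) to control the spectral radius of various iteration matrices. So your write-up supplies a self-contained argument where the paper simply cites the literature; there is no alternative proof in the paper to compare against.
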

    
    Also, the following corollary suggests that the coefficients are all bounded (by constants) for such cases.
        
        
    \begin{corollary}
        \label{cor:routhhurwitzbd}
        For coefficients $a_0, a_1, a_2$ satisfying \eqref{eq:degthree}, we have $|a_2| < 3$, $|a_1| < 3$, and $|a_0| < 1$.
    \end{corollary}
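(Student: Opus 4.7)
The plan is to deduce the corollary as a direct consequence of \cref{prop:routhhurwitz} combined with Vieta's formulas. Specifically, \cref{prop:routhhurwitz} tells us that the three conditions in \eqref{eq:degthree} are equivalent to all (possibly complex) roots $r_1, r_2, r_3$ of $x^3 + a_2 x^2 + a_1 x + a_0$ lying in the open disk $|x| < 1$. I would then invoke Vieta's formulas:
\begin{align*}
    a_2 &= -(r_1 + r_2 + r_3), \quad a_1 = r_1 r_2 + r_1 r_3 + r_2 r_3, \quad a_0 = -r_1 r_2 r_3,
\end{align*}
which hold regardless of whether the roots are real or come in complex conjugate pairs (since the coefficients are real).

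Applying the triangle inequality together with $|r_i| < 1$ for each $i \in \{1, 2, 3\}$, we obtain $|a_2| \le |r_1| + |r_2| + |r_3| < 3$ and $|a_0| = |r_1| \cdot |r_2| \cdot |r_3| < 1$. For the middle coefficient, $|a_1| \le |r_1 r_2| + |r_1 r_3| + |r_2 r_3| < 3$ follows in the same way.

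This is essentially a one-line argument once \cref{prop:routhhurwitz} is in hand, so I do not anticipate any substantive obstacle. The only minor subtlety is ensuring that the Vieta's argument genuinely covers complex roots, but since absolute value is multiplicative on $\mathbb{C}$ and the triangle inequality works on $\mathbb{C}$, the bounds go through without change.
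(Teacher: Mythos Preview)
Your argument is correct. By \cref{prop:routhhurwitz}, the conditions \eqref{eq:degthree} are equivalent to all three roots lying in the open unit disk, and Vieta's formulas together with the triangle inequality immediately give the claimed bounds on $|a_0|$, $|a_1|$, $|a_2|$.

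Your route differs from the paper's. The paper does not go through the roots at all; it manipulates the three inequalities in \eqref{eq:degthree} directly. Adding the first two yields $-1 < a_1 < 3$; combining the first and third gives $|a_2 + a_0| < a_0(a_2 - a_0) + 2$ together with $|a_2 + a_0| < 4$, and the resulting region in the $(a_2, a_0)$-plane is a parallelogram with vertices $(\pm 3, \pm 1)$ and $(\mp 1, \pm 1)$, from which $|a_2| < 3$ and $|a_0| < 1$ are read off. Your approach is cleaner and more conceptual: once \cref{prop:routhhurwitz} is available, the corollary is a one-line consequence of Vieta. The paper's approach, by contrast, extracts the bounds purely from the algebraic form of \eqref{eq:degthree} without invoking the root characterization, and in fact yields the slightly sharper $a_1 > -1$ along the way (not that this extra information is used anywhere).
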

    \begin{proof}
        It is easy to see that $-1 < a_1 < 3$ from the first two conditions.
        
        Also, the first and the last condition together imply that 
        \begin{align*}
            |a_2 + a_0| - 1 < a_1 < a_0(a_2-a_0) + 1.
        \end{align*}
        This is a subset of the region
        \begin{align*}
            |a_2 + a_0| < 4 \quad \wedge \quad |a_2 + a_0| < a_0(a_2-a_0) + 2.
        \end{align*}
        The range of such $(a_2, a_0)$ is equal to a parallelogram with endpoints $(-3, -1)$, $(1, -1)$, $(-1, 1)$, $(3, 1)$, which implies $|a_2| < 3$ and $|a_0| < 1$.
    \end{proof} 
    
    Plugging back in $t = 1 - \lambda$, we can write the cubic polynomial in terms of $p, q, r$, and $\lambda$ as
    \begin{align*}
        (1 - \lambda)^3 - p (1 - \lambda)^2 + q(1 - \lambda) - r &= 0 \\
        \Leftrightarrow \ \ \lambda^3 + (-3 + p) \lambda^2 + (3 - 2p + q) \lambda + (-1 + p - q + r) &= 0.
    \end{align*}
    By \cref{cor:routhhurwitzbd}, we can observe that a necessary condition for $\rho(\mP) < 1$ is that%
    \begin{align*}
        |3-p| &< 3, \ \ |3 - 2p + q| < 3, \ \ |1 - p + q - r| < 1.
    \end{align*}
    We can simply deduce that $p < 6$, which implies $q < 12$ and finally $r < 14$.
    
    Therefore we can conclude that all of the coefficients in \eqref{eq:stupidme} are of order $\gO (1)$.
    In particular, this implies $\phi = \alpha \beta L_{xy}^2 = \gO (1)$ in order to assure convergence, which concludes that
    \begin{align}
        \frac{1}{\alpha \mu_x} + \frac{1}{\beta \mu_y} &\ge \frac{2}{\sqrt{\alpha \beta \mu_x \mu_y}} = \frac{2 \kappa_{xy}}{\sqrt{\alpha \beta L_{xy}^2}} = \Omega (\kappa_{xy}). 
        \label{eq:rice}
    \end{align}
    Combining \eqref{eq:screen} and \eqref{eq:rice}, we have
    \begin{align*}
        \frac{1}{\alpha \mu_x} + \frac{1}{\beta \mu_y} &= \Omega (\kappa_x + \kappa_y + \kappa_{xy})
    \end{align*}
    and therefore from \eqref{eq:cpxty_base_dig} we can show a lower bound of
    \begin{equation*}\Omega\bigopen{\bigopen{\kappa_x + \kappa_y + \kappa_{xy}} \cdot \log\frac{1}{\epsilon}}.
    \end{equation*}
\end{proof}
\subsection{\texorpdfstring{Proof of \cref{prop:eglb}}{Proof of Proposition 5.4}} \label{sec:propeglb}

Here we prove \cref{prop:eglb} of \cref{sec:5}, restated below for the sake of readability.

\propeglb*

\begin{proof}
    Recall that \textbf{EG} takes updates of the form:
    \begin{align*}
        \vx_{k+\frac{1}{2}} &= \vx_{k} - \alpha \nabla_{\vx} f(\vx_{k}, \vy_{k}), \\
        \vy_{k+\frac{1}{2}} &= \vy_{k} + \beta \nabla_{\vy} f(\vx_{k}, \vy_{k}), \\
        \vx_{k+1} &= \vx_{k} - \alpha \nabla_{\vx} f(\vx_{k+\frac{1}{2}}, \vy_{k+\frac{1}{2}}), \\
        \vy_{k+1} &= \vy_{k} + \beta \nabla_{\vy} f(\vx_{k+\frac{1}{2}}, \vy_{k+\frac{1}{2}}).
    \end{align*}

    We use the same worst-case function as in \cref{thm:simgdalb}:
    \begin{align*}
        f(\vx, \vy) &= 
        \frac{1}{2} \begin{bmatrix}
            {x} \\ {s} \\ {t} \\ {y} \\ {u} \\ {v}
        \end{bmatrix}^{\top}
        \begin{bmatrix}
            \mu_x & 0 & 0 & L_{xy} & 0 & 0 \\
            0 & \mu_x & 0 & 0 & 0 & 0 \\
            0 & 0 & L_x & 0 & 0 & 0 \\
            L_{xy} & 0 & 0 & -\mu_y & 0 & 0 \\
            0 & 0 & 0 & 0 & -\mu_y & 0 \\
            0 & 0 & 0 & 0 & 0& -L_y 
        \end{bmatrix}
        \begin{bmatrix}
            {x} \\ {s} \\ {t} \\ {y} \\ {u} \\ {v}
        \end{bmatrix}, \\
    \end{align*}
    where ${\vx = (x, s, t)}$ and ${\vy = (y, u, v)}$. 
    It can be easily checked that $f$ is a quadratic function ({\it i.e.,} Hessian is constant) such that $f \in \gF(\mu_x, \mu_y, L_x, L_y, L_{xy})$ and $\vx_\star = \vy_\star = \bm{0} \in \R^3$.

    Let us define
    \begin{align*}
        \mA &= 
        \begin{bmatrix}
            \mu_x & 0 & 0 \\
            0 & \mu_x & 0 \\
            0 & 0 & L_x
        \end{bmatrix}, \quad
        \mB = 
        \begin{bmatrix}
            L_{xy} & 0 & 0 \\
            0 & 0 & 0 \\
            0 & 0 & 0
        \end{bmatrix}, \quad
        \mC =
        \begin{bmatrix}
            \mu_y & 0 & 0 \\
            0 & \mu_y & 0 \\
            0 & 0 & L_y
        \end{bmatrix}.
    \end{align*}

    We first observe that the $k$-th step of \textbf{EG} satisfies
    \begin{align*}
        \begin{bmatrix}
            \vx_{k+\frac{1}{2}} \\ \vy_{k+\frac{1}{2}}
        \end{bmatrix}
        &=
        \underbrace{\begin{bmatrix}
            \mI - \alpha \mA & - \alpha \mB \\ 
            \beta \mB^{\top} & \mI - \beta \mC
        \end{bmatrix}}_{\triangleq \mM_{\text{Sim}}}
        \begin{bmatrix}
            \vx_{k} \\ \vy_{k}
        \end{bmatrix}, \\
        \begin{bmatrix}
            \vx_{k+1} \\ \vy_{k+1}
        \end{bmatrix}
        &=
        \begin{bmatrix}
            \vx_{k} \\ \vy_{k}
        \end{bmatrix} +
        \begin{bmatrix}
            - \alpha \mA & - \alpha \mB \\ 
            \beta \mB^{\top} & - \beta \mC
        \end{bmatrix}
        \begin{bmatrix}
            \vx_{k+\frac{1}{2}} \\ \vy_{k+\frac{1}{2}}
        \end{bmatrix} \\
        &=
        \begin{bmatrix}
            \vx_{k} \\ \vy_{k}
        \end{bmatrix} +
        \begin{bmatrix}
            - \alpha \mA & - \alpha \mB \\ 
            \beta \mB^{\top} & - \beta \mC
        \end{bmatrix}
        \begin{bmatrix}
            \mI - \alpha \mA & - \alpha \mB \\ 
            \beta \mB^{\top} & \mI - \beta \mC
        \end{bmatrix}
        \begin{bmatrix}
            \vx_{k} \\ \vy_{k}
        \end{bmatrix} \\
        &= (\mI + (\mM_{\text{Sim}} - \mI) \mM_{\text{Sim}}) 
        \begin{bmatrix}
            \vx_{k} \\ \vy_{k}
        \end{bmatrix} = (\underbrace{\mI - \mM_{\text{Sim}} + \mM_{\text{Sim}}^2}_{\triangleq \mM_{\text{EG}}}) 
        \begin{bmatrix}
            \vx_{k} \\ \vy_{k}
        \end{bmatrix}.
    \end{align*}
    Hence we have that $\lambda_{\text{Sim}}$ is an eigenvalue of $\mM_{\text{Sim}}$ if and only if $\lambda_{\text{EG}} = 1 - \lambda_{\text{Sim}} + \lambda_{\text{Sim}}^2$ is an eigenvalue of $\mM_{\text{EG}}$.
    Note that the matrix $\mM_{\text{Sim}}$ is identical to the updates made by \simgda{} on the same lower bound function $f$, which allows us to utilize some results from \cref{sec:thmsimgdalb}.

    Let us define
    \begin{align*}
        \mP &\triangleq \begin{bmatrix}
            1 - \alpha \mu_x & - \alpha L_{xy} \\
            \beta L_{xy} & 1 - \beta \mu_y
        \end{bmatrix}.
    \end{align*}
    Then the $k$-th step of \textbf{EG} satisfies
    \begin{align}
        \begin{bmatrix}
            x_{k+1} \\ y_{k+1}
        \end{bmatrix} &= (\mI - \mP + \mP^2)
        \begin{bmatrix}
            x_k \\ y_k
        \end{bmatrix}, \label{eq:x_k_y_k_eg}\\
        s_{k+1} &= (1 - \alpha \mu_x + \alpha^2 \mu_x^2) s_k, \label{eq:s_k_eg} \\
        t_{k+1} &= (1  - \alpha L_x + \alpha^2 L_x^2) t_k, \label{eq:t_k_eg} \\
        u_{k+1} &= (1 - \beta \mu_y + \beta^2 \mu_y^2) u_k, \label{eq:u_k_eg} \\
        v_{k+1} &= (1 - \beta L_y + \beta^2 L_y^2) v_k. \label{eq:v_k_eg} 
    \end{align}
    We can see that the eigenvalues of $\mM_{\text{EG}}$ must be either $\lambda_{\text{EG}} = 1 - \lambda_{P} + \lambda_{P}^2$, where $\lambda_{P}$ is an eigenvalue of $\mP$, which can be explicitly computed as
    \begin{align}
        \lambda_{P} &= 1 - \frac{\alpha \mu_x + \beta \mu_y}{2} \pm \sqrt{\left( \frac{\alpha \mu_x - \beta \mu_y}{2} \right)^2 - \alpha \beta L_{xy}^2}
        \label{eq:case1}
    \end{align}
    or among the following values: 
    \begin{align}
        1 - \alpha \mu_x, \ \ 1 - \alpha L_x, \ \ 1 - \beta \mu_y, \ \ \text{and} \ \ 1 - \beta L_y.
        \label{eq:case2}
    \end{align}
    For the (real) eigenvalues in \eqref{eq:case2}, we can deduce that the corresponding eigenvalues of $\mM_{\text{EG}}$ are
    \begin{align*}
        1 - \alpha \mu_x + \alpha^2 \mu_x^2, \ \ 1  - \alpha L_x + \alpha^2 L_x^2, \ \ 1 - \beta \mu_y + \beta^2 \mu_y^2, \ \ \text{and} \ \ 1 - \beta L_y + \beta^2 L_y^2,
    \end{align*}
    all being strictly larger than the corresponding values in \eqref{eq:case2}.
    Hence, for the convergence of iterations \eqref{eq:t_k_eg}~and~\eqref{eq:v_k_eg}, the step sizes $\alpha$ and $\beta$ are required to satisfy
    \begin{equation*}
        0 < \alpha L_x (1 - \alpha L_x) < 2 \quad \text{and} \quad 0 < \beta L_y (1 - \beta L_y) < 2,
    \end{equation*}
    which (as $\alpha, \beta > 0$) is simply equivalent to
    \begin{equation}
        \label{eq:stepsize_basic_2}
        {\alpha < \frac{1}{L_x}} \quad \text{and} \quad {\beta < \frac{1}{L_y}}.
    \end{equation}
    Also, to guarantee $\bignorm{\vx_K}^2 + \bignorm{\vy_K}^2 < \epsilon$, we need from \eqref{eq:s_k_eg}~and~\eqref{eq:u_k_eg} that $s_K^2 < \gO(\epsilon)$ and $u_K^2 < \gO(\epsilon)$, respectively.
    These two necessary conditions require an iteration number of at least:
    \begin{equation}
        \label{eq:iteration_complexity_base_2}
        K = \Omega\bigopen{\bigopen{{\frac{1}{\alpha \mu_x (1 - \alpha \mu_x)}} + {\frac{1}{\beta \mu_y (1 - \beta \mu_y)}}} \cdot \log\frac{1}{\epsilon}} = \Omega\bigopen{\bigopen{{\frac{1}{\alpha \mu_x}} + {\frac{1}{\beta \mu_y}}} \cdot \log\frac{1}{\epsilon}}.
    \end{equation}
    Note that \eqref{eq:stepsize_basic_2} automatically yields
    \begin{align}
        \frac{1}{\alpha \mu_x} + \frac{1}{\beta \mu_y} = \Omega (\kappa_x + \kappa_y).
        \label{eq:hello}
    \end{align}
    

    Now we focus on the $x, y$ coordinates to complete the proof.
    We do a similar case-by-case analysis as in our proof of \cref{thm:simgdalb} in \cref{sec:thmsimgdalb}, based on whether the eigenvalues in \eqref{eq:case1} are real or complex.
    
    \paragraph{Case 1.} If the eigenvalues $\lambda_{P}$ in \eqref{eq:case1} are real, then we have
    \begin{align*}
        \bigabs{\sqrt{\frac{\alpha\mu_x}{\beta\mu_y}} - \sqrt{\frac{\beta\mu_y}{\alpha\mu_x}}} > 2\kappa_{xy}
    \end{align*}
    as in \eqref{eq:realeigenvalues} of \cref{sec:thmsimgdalb}.
    By the same logic as in \textbf{Case 2} of \cref{sec:thmsimgdalb}, we have
    \begin{align}
        \frac{1}{\alpha \mu_x} + \frac{1}{\beta \mu_y} = \Omega (\kappa_x + \kappa_y + \kappa_{xy}^2) = \Omega (\kappa_x + \kappa_y + \kappa_{xy}).
        \label{eq:david}
    \end{align}
    
    \paragraph{Case 2.} Suppose that the eigenvalues in \eqref{eq:case1} are complex.
    If we substitute as
    \begin{align*}
        s = \frac{\alpha \mu_x + \beta \mu_y}{2}, \ \ p = \sqrt{\alpha \beta \mu_x \mu_y}, \ \ K = \kappa_{xy}^2,
    \end{align*}
    then \eqref{eq:case1} can be written as:
    \begin{align*}
        \lambda_{P} &= 1 - s \pm i \sqrt{(K+1) p^2 - s^2}.
    \end{align*}
    As we consider the case when the eigenvalues are complex, here we must have
    \begin{align*}
        s^2 &\le (K+1)p^2. 
    \end{align*}
    We can explicitly compute
    \begin{align*}
        \lambda_{\text{EG}} &= \lambda_{P} + (1 - \lambda_{P})^2 \\
        &= 1 - s \pm i \sqrt{(K+1) p^2 - s^2} + \left( s \mp i \sqrt{(K+1) p^2 - s^2} \right)^2 \\
        &= 1 - s + s^2 - ((K+1) p^2 - s^2) \pm i \left( (1 - 2s) \sqrt{(K+1) p^2 - s^2} \right)
    \end{align*}
    
    Therefore $| \lambda_{\text{EG}} |^2$ can be expressed as 
    \begin{align*}
        &\phantom{=} \left( 1 - s + s^2 - ((K+1) p^2 - s^2) \right)^2 + (1 - 2s)^2 ((K+1) p^2 - s^2) \\
        &= (1 - s + s^2)^2 - 2 (1 - s + s^2) ((K+1) p^2 - s^2) + ((K+1) p^2 - s^2)^2 + (1 - 2s)^2 ((K+1) p^2 - s^2) \\
        &= (1 - s + s^2)^2 - (2(1 - s + s^2) - (1 - 2s)^2) ((K+1) p^2 - s^2) + ((K+1) p^2 - s^2)^2 \\
        &= (1 - s + s^2)^2 - (1 + 2s - 2s^2) ((K+1) p^2 - s^2) + ((K+1) p^2 - s^2)^2 \\
        &= (1 - s + s^2)^2 + ((K+1) p^2 + s^2 - 2s - 1) ((K+1) p^2 - s^2) \\
        &= (1 - s + s^2)^2 + (K+1)^2 p^4 - s^4 - (2s + 1) ((K+1) p^2 - s^2) \\
        &= (1 - s + s^2)^2 + (2s + 1) s^2 - s^4 - (K+1) p^2 (2s + 1) + (K+1)^2 p^4 \\
        &= 1 - 2s + 4s^2 - (K+1) p^2 (2s + 1) + (K+1)^2 p^4.
    \end{align*}
    
    Note that $| \lambda_{\text{EG}} | < 1$ is equivalent to
    \begin{align*}
        - (K+1) p^2 (2s + 1) + (K+1)^2 p^4 &< 2s - 4s^2.
    \end{align*}
    
    If this is true, then substituting $t = (K+1) p^2$ we obtain the following region:
    \begin{align*}
        t^2 - t (2s + 1) + 4s^2 - 2s &< 0, \ \ s^2 \le t
    \end{align*}
    which is the upper region of the interior of an ellipse cut by a parabola.
    This region is bounded, and we can compute the range of $t$ as $0 < t < 1 + 2 / \sqrt{3}$.
    Therefore we have $t = \gO (1)$, and since we can substitute back as
    \begin{align*}
        t &= (K+1) p^2 = (\kappa_{xy}^2 + 1) \alpha \beta \mu_x \mu_y,
    \end{align*}
    we can observe that $(\kappa_{xy}^2 + 1) \alpha \beta \mu_x \mu_y = \gO (1)$, and therefore
    \begin{align*}
        \frac{1}{\alpha \mu_x} + \frac{1}{\beta \mu_y} &\ge \frac{2}{\sqrt{\alpha \beta \mu_x \mu_y}} = \frac{2 \sqrt{(\kappa_{xy}^2 + 1)}}{\sqrt{(\kappa_{xy}^2 + 1) \alpha \beta \mu_x \mu_y}} = \Omega(\kappa_{xy}).
    \end{align*}
    Aggregating with \eqref{eq:hello}, we can observe that
    \begin{align*}
        \frac{1}{\alpha \mu_x} + \frac{1}{\beta \mu_y} = \Omega(\kappa_x + \kappa_y + \kappa_{xy}),
    \end{align*}
    and hence the lower bound iteration complexity holds for all possible cases of convergence.
\end{proof}
\subsection{\texorpdfstring{Proofs used in \cref{sec:d}}{Proofs used in Appendix D}} \label{sec:alexgdatech}

Here we prove some technical propositions and lemmas used throughout \cref{sec:d}.

\subsubsection{\texorpdfstring{Proof of \cref{prop:digvalid}}{Proof of Proposition D.1}} \label{sec:propdigvalid}

Here we prove \cref{prop:digvalid}, restated below for the sake of readability.

\propdigvalid*

\begin{proof}
    For simplicity let us assume W.L.O.G. that $\vx_{\star} = \bm{0}$ $(\in \R^{d_x})$ and $\vy_{\star} = \bm{0}$ $(\in \R^{d_y})$.

    For $k \ge 1$, we have
    \begin{align*}
        \Psi_{k}^{\text{Alex}} &\ge \frac{1}{\alpha} \| \vx_k \|^2 + \frac{2}{\beta} \| \vy_k \|^2 + \frac{1}{\alpha} \| \vx_{k+1} \|^2 - \alpha \norm{\nabla_{\vx} f(\vx_k, \tilde{\vy}_k)}^2 + {(\delta - 1) \beta \norm{\nabla_{\vy} f(\tilde{\vx}_{k}, \vy_{k-1})}^2}.
    \end{align*}
    
    By triangle inequality and Lipschitz gradients, we have
    \begin{align*}
        \norm{\nabla_{\vx} f(\vx_k, \tilde{\vy}_k)}^2 &\le 2 \norm{\nabla_{\vx} f(\vx_k, \tilde{\vy}_k) - \nabla_{\vx} f(\vx_{\star}, \tilde{\vy}_k)}^2 + 2 \norm{\nabla_{\vx} f(\vx_{\star}, \tilde{\vy}_k)}^2 \\
        &\le 2 L_x^2 \norm{\vx_k}^2 + 2 L_{xy}^2 \norm{\tilde{\vy}_k}^2 \\
        &\le 2 L_x^2 \norm{\vx_k}^2 + 4 L_{xy}^2 \norm{\vy_k}^2 + 4 L_{xy}^2 \norm{\tilde{\vy}_k - \vy_k}^2 \\
        &\le 2 L_x^2 \norm{\vx_k}^2 + 4 L_{xy}^2 \norm{\vy_k}^2 + 4 (\delta - 1)^2 \beta^2 L_{xy}^2 \norm{\nabla_{\vy} f(\tilde{\vx}_{k}, \vy_{k-1})}^2
    \end{align*}
    Therefore, we can obtain
    \begin{align*}
        & {\phantom{\ge}} \frac{1}{\alpha} \| \vx_k \|^2 + \frac{2}{\beta} \| \vy_k \|^2 + \frac{1}{\alpha} \| \vx_{k+1} \|^2 - \alpha \norm{\nabla_{\vx} f(\vx_k, \tilde{\vy}_k)}^2 + {(\delta - 1) \beta \norm{\nabla_{\vy} f(\tilde{\vx}_{k}, \vy_{k-1})}^2} \\
        &\ge \bigopen{\frac{1}{\alpha} - 2 \alpha L_x^2} \norm{\vx_k}^2 + 2 \bigopen{\frac{1}{\beta} - 2 \alpha L_{xy}^2} \norm{\vy_k}^2 + \frac{1}{\alpha} \norm{\vx_{k+1}}^2 \\
        &\phantom{\ge} + (\delta - 1) \beta \bigopen{1 - 4 (\delta - 1) \alpha \beta L_{xy}^2} \norm{\nabla_{\vy} f(\tilde{\vx}_{k}, \vy_{k-1})}^2.
    \end{align*}
    Since $\alpha \le \frac{C_1}{L_x}$ and $C_1 \le \frac{1}{2}$ (by \eqref{eq:constsix}), we have
    \begin{align*}
        \frac{1}{\alpha} - 2 \alpha L_x^2 &\ge \frac{1}{\alpha} - \frac{2 C_1^2}{\alpha} \ge \frac{1}{2 \alpha}.
    \end{align*}
    Since $\alpha \le \frac{C_3}{L_{xy}} \sqrt{\frac{\mu_y}{\mu_x}}$, $\beta \le \frac{C_4}{L_{xy}} \sqrt{\frac{\mu_x}{\mu_y}}$, and $4 C_3 C_4 \le 1$ (by \eqref{eq:constseven}), we have
    \begin{align*}
        \frac{1}{\beta} - 2 \alpha L_{xy}^2 \ge \frac{1}{\beta} - \frac{2 C_3 C_4}{\beta} \sqrt{\frac{\mu_y}{\mu_x}} \cdot \sqrt{\frac{\mu_x}{\mu_y}} = \frac{1}{\beta} - \frac{2 C_3 C_4}{\beta} \ge \frac{1}{2 \beta}.
    \end{align*}
    Finally, since $4 (\delta - 1) C_3 C_4 \le 1$ (by \eqref{eq:consteight}), we have
    \begin{align*}
        4 (\delta - 1) \alpha \beta L_{xy}^2 &\le 4 (\delta - 1) C_3 C_4 \le 1
    \end{align*}
     and therefore we can cancel out the last term by
     \begin{align*}
         (\delta - 1) \beta \bigopen{1 - 4 (\delta - 1) \alpha \beta L_{xy}^2} \norm{\nabla_{\vy} f(\tilde{\vx}_{k}, \vy_{k-1})}^2 \ge 0.
     \end{align*}
    Therefore we have
    \begin{align*}
        & {\phantom{\ge}} \frac{1}{\alpha} \| \vx_k \|^2 + \frac{2}{\beta} \| \vy_k \|^2 + \frac{1}{\alpha} \| \vx_{k+1} \|^2 - \alpha \norm{\nabla_{\vx} f(\vx_k, \tilde{\vy}_k)}^2 + {(\delta - 1) \beta \norm{\nabla_{\vy} f(\tilde{\vx}_{k}, \vy_{k-1})}^2} \\
        &\ge \frac{1}{2 \alpha} \| \vx_k \|^2 + \frac{1}{\beta} \| \vy_k \|^2 + \frac{1}{\alpha} \| \vx_{k+1} \|^2,
    \end{align*}
    which implies that \eqref{eq:digvalid} is indeed true for $k = 1$.
    
    For $k = 0$, we have
    \begin{align*}
        \Psi_{0}^{\text{Alex}} &\ge \frac{1}{\alpha} \| \vx_0 \|^2 + \frac{2}{\beta} \| \vy_0 \|^2 + \frac{1}{\alpha} \| \vx_{1} \|^2 - \alpha \norm{\nabla_{\vx} f(\vx_0, \vy_0)}^2,
    \end{align*}
    where we note that $\tilde{\vy}_0 = \vy_0$.
    By triangle inequality and Lipschitz gradients, we have
    \begin{align*}
        \norm{\nabla_{\vx} f(\vx_0, {\vy}_0)}^2 &\le 2 \norm{\nabla_{\vx} f(\vx_0, \vy_0) - \nabla_{\vx} f(\vx_{\star}, \vy_0)}^2 + 2 \norm{\nabla_{\vx} f(\vx_{\star}, \vy_0)}^2 \le 2 L_x^2 \norm{\vx_0}^2 + 2 L_{xy}^2 \norm{\vy_0}^2.
    \end{align*}
    Therefore, we can obtain
    \begin{align*}
        & {\phantom{\ge}} \frac{1}{\alpha} \| \vx_0 \|^2 + \frac{2}{\beta} \| \vy_0 \|^2 + \frac{1}{\alpha} \| \vx_{1} \|^2 - \alpha \norm{\nabla_{\vx} f(\vx_0, \vy_0)}^2 \ge \bigopen{\frac{1}{\alpha} - 2 \alpha L_x^2} \norm{\vx_0}^2 + 2 \bigopen{\frac{1}{\beta} - \alpha L_{xy}^2} \norm{\vy_0}^2 + \frac{1}{\alpha} \norm{\vx_{1}}^2.
    \end{align*}
    Since $\alpha \le \frac{C_1}{L_x}$ and $C_1 \le \frac{1}{2}$ (by \eqref{eq:constsix}), we have
    \begin{align*}
        \frac{1}{\alpha} - 2 \alpha L_x^2 &\ge \frac{1}{\alpha} - \frac{2 C_1^2}{\alpha} \ge \frac{1}{2 \alpha}.
    \end{align*}
    Since $\alpha \le \frac{C_3}{L_{xy}} \sqrt{\frac{\mu_y}{\mu_x}}$, $\beta \le \frac{C_4}{L_{xy}} \sqrt{\frac{\mu_x}{\mu_y}}$, and $C_3 C_4 \le \frac{1}{4}$ (by \eqref{eq:constseven}), we have
    \begin{align*}
        \frac{1}{\beta} - 2 \alpha L_{xy}^2 \ge \frac{1}{\beta} - \frac{2 C_3 C_4}{\beta} \sqrt{\frac{\mu_y}{\mu_x}} \cdot \sqrt{\frac{\mu_x}{\mu_y}} = \frac{1}{\beta} - \frac{2 C_3 C_4}{\beta} \ge \frac{1}{2 \beta}.
    \end{align*}
    Therefore we have
    \begin{align*}
        & {\phantom{\ge}} \frac{1}{\alpha} \| \vx_0 \|^2 + \frac{2}{\beta} \| \vy_0 \|^2 + \frac{1}{\alpha} \| \vx_{1} \|^2 - \alpha \norm{\nabla_{\vx} f(\vx_0, \vy_0)}^2 \ge \frac{1}{2 \alpha} \| \vx_0 \|^2 + \frac{1}{\beta} \| \vy_0 \|^2 + \frac{1}{\alpha} \| \vx_{1} \|^2,
    \end{align*}
    which implies that \eqref{eq:digvalid} is indeed true for $k = 0$.
\end{proof}

\subsubsection{\texorpdfstring{Proof of \cref{prop:alexgda}}{Proof of Proposition D.2}} \label{sec:propalexgda}

Here we prove \cref{prop:alexgda}, restated below for the sake of readability.

\propalexgda*

\begin{proof}
    While the proof of the proposition is quite technical and complicated, we can largely divide the proof into three large steps.
    In \textsc{Step 1}, we use the basic notions of strong convexity (and/or strong concavity) and the Lipschitz gradient conditions involving $L_x$ and $L_y$ (\textit{i.e., smoothness} in convex optimization literature) to obtain an inequality between terms from the previous and next iterates.
    In \textsc{Step 2}, we use the $L_{xy}$-Lipschitz gradient conditions to cope with the intermediate inner product terms.
    In \textsc{Step 3}, we use the given step size conditions to cancel out the gradient norm terms as much as possible, which leaves us with the inequality given in the proposition statement.
    
    \subsubsection*{Step 1. Basic Transformations}

    We start with
    \begin{align}
        \begin{aligned}
        \frac{1}{\alpha} \| \vx_1 - \vx_{\star} \|^2 &= \frac{1}{\alpha} \norm{\vx_0 - \vx_{\star}}^2 + \frac{2}{\alpha} \inner{\vx_1 - \vx_0, \vx_0 - \vx_{\star}} + \frac{1}{\alpha} \norm{\vx_1 - \vx_0}^2 \\
        &= \frac{1}{\alpha} \norm{\vx_0 - \vx_{\star}}^2 - 2 \inner{\nabla_{\vx} f(\vx_0, \tilde{\vy}_0), \vx_0 - \vx_{\star}} + \alpha \norm{\nabla_{\vx} f(\vx_0, \tilde{\vy}_0)}^2, \\
        \frac{2}{\beta} \| \vy_1 - \vy_{\star} \|^2 &= \frac{2}{\beta} \norm{\vy_0 - \vy_{\star}}^2 + \frac{4}{\beta} \inner{\vy_1 - \vy_0, \vy_0 - \vy_{\star}} + \frac{2}{\beta} \norm{\vy_1 - \vy_0}^2 \\
        &= \frac{2}{\beta} \norm{\vy_0 - \vy_{\star}}^2 + 4 \inner{\nabla_{\vy} f(\tilde{\vx}_1, \vy_0), \vy_0 - \vy_{\star}} + 2 \beta \norm{\nabla_{\vy} f(\tilde{\vx}_1, \vy_0)}^2, \\
        \frac{1}{\alpha} \| \vx_2 - \vx_{\star} \|^2 &= \frac{1}{\alpha} \norm{\vx_1 - \vx_{\star}}^2 + \frac{2}{\alpha} \inner{\vx_2 - \vx_1, \vx_1 - \vx_{\star}} + \frac{1}{\alpha} \norm{\vx_2 - \vx_1}^2 \\
        &= \frac{1}{\alpha} \norm{\vx_1 - \vx_{\star}}^2 - 2 \inner{\nabla_{\vx} f(\vx_1, \tilde{\vy}_1), \vx_1 - \vx_{\star}} + \alpha \norm{\nabla_{\vx} f(\vx_1, \tilde{\vy}_1)}^2.
        \end{aligned} \label{eq:mercury}
    \end{align}
    
    By strong convexity (concavity), we have
    \begin{align}
        \begin{aligned}
        - 2 \inner{\nabla_{\vx} f({\vx_0}, \tilde{\vy}_0), {\vx_{0} - \vx_{\star}}} &\le - \mu_x \norm{{\vx_0 - \vx_{\star}}}^2 - 2 (f({\vx_0}, \tilde{\vy}_0) - f({\vx_{\star}}, \tilde{\vy}_0)), \\
        4 \inner{\nabla_{\vy} f(\tilde{\vx}_1, {\vy_0}), {\vy_0 - \vy_{\star}}} &\le - 2 \mu_y \norm{{\vy_0 - \vy_{\star}}}^2 + 4 (f(\tilde{\vx}_1, {\vy_0}) - f(\tilde{\vx}_1, {\vy_{\star}})), \\
        - 2 \inner{\nabla_{\vx} f({\vx_1}, \tilde{\vy}_1), {\vx_{1} - \vx_{\star}}} &\le - \mu_x \norm{{\vx_1 - \vx_{\star}}}^2 - 2 (f({\vx_1}, \tilde{\vy}_1) - f({\vx_{\star}}, \tilde{\vy}_1)).
        \end{aligned} \label{eq:venus}
    \end{align}
    
    Since $f$ has Lipschitz gradients, we have
    \begin{align}
        \begin{aligned}
        2 \inner{\nabla_{\vx} f({\vx_0}, \tilde{\vy}_0), {\vx_{0} - \tilde{\vx}_{1}}} &\le L_x \norm{{\vx_0 - \tilde{\vx}_{1}}}^2 + 2 (f({\vx_0}, \tilde{\vy}_0) - f({\tilde{\vx}_1}, \tilde{\vy}_0)), \\
        - 2 \inner{\nabla_{\vy} f(\tilde{\vx}_1, {\vy_0}), {\vy_{0} - \tilde{\vy}_0}} &\le L_y \norm{{\vy_{0} - \tilde{\vy}_0}}^2 - 2 (f(\tilde{\vx}_1, {\vy_0}) - f(\tilde{\vx}_1, {\tilde{\vy}_0})), \\
        - 2 \inner{\nabla_{\vy} f(\tilde{\vx}_1, {\vy_0}), {\vy_{0} - \tilde{\vy}_1}} &\le L_y \norm{{\vy_{0} - \tilde{\vy}_1}}^2 - 2 (f(\tilde{\vx}_1, {\vy_0}) - f(\tilde{\vx}_1, {\tilde{\vy}_1})), \\
        2 \inner{\nabla_{\vx} f({\vx_1}, \tilde{\vy}_1), {\vx_{1} - \tilde{\vx}_{1}}} &\le L_x \norm{{\vx_1 - \tilde{\vx}_{1}}}^2 + 2 (f({\vx_1}, \tilde{\vy}_1) - f({\tilde{\vx}_1}, \tilde{\vy}_1)).
        \end{aligned} \label{eq:earth}
    \end{align}
    
    Rearranging the above conditions, we have
    \begin{align}
        \begin{aligned}
        - 2 (f(\vx_0, \tilde{\vy}_0) - f(\tilde{\vx}_1, \tilde{\vy}_0)) &\le - \gamma \alpha (2 - \gamma \alpha L_x) \norm{\nabla_{\vx} f(\vx_0, \tilde{\vy}_0)}^2, \\
        2 (f(\tilde{\vx}_1, \vy_0) - f(\tilde{\vx}_1, \tilde{\vy}_0)) &\le -2 \xi (\delta - 1) \beta \inner{\nabla_{\vy} f(\tilde{\vx}_1, \vy_0), \nabla_{\vy} f(\tilde{\vx}_0, \vy_{-1})} + \xi^2 (\delta - 1)^2 \beta^2 L_y \norm{\nabla_{\vy} f(\tilde{\vx}_0, \vy_{-1})}^2, \\
        2 (f(\tilde{\vx}_1, \vy_0) - f(\tilde{\vx}_1, \tilde{\vy}_1)) &\le - \delta \beta (2 - \delta \beta L_y) \norm{\nabla_{\vy} f(\tilde{\vx}_1, \vy_0)}^2, \\
        - 2 (f(\vx_1, \tilde{\vy}_1) - f(\tilde{\vx}_1, \tilde{\vy}_1)) &\le - 2 (\gamma - 1) \alpha \inner{\nabla_{\vx} f(\vx_1, \tilde{\vy}_1), \nabla_{\vx} f(\vx_0, \tilde{\vy}_0)} + (\gamma - 1)^2 \alpha^2 L_x \norm{\nabla_{\vx} f(\vx_0, \tilde{\vy}_0)}^2.
        \end{aligned} \label{eq:mars}
    \end{align}
    
    Since $f$ is convex, we have
    \begin{align}
        \begin{aligned}
        2 (f(\vx_{\star}, \tilde{\vy}_0) - f(\vx_{\star}, \vy_{\star})) &\le 0, \ \
        -4 (f(\tilde{\vx}_{1}, \vy_{\star}) - f(\vx_{\star}, \vy_{\star})) \le 0, \ \
        2 (f(\vx_{\star}, \tilde{\vy}_1) - f(\vx_{\star}, \vy_{\star})) \le 0.
        \end{aligned} \label{eq:jupiter}
    \end{align}
    
    Summing up \eqref{eq:mercury}, \eqref{eq:venus}, \eqref{eq:mars}, and \eqref{eq:jupiter}, we have
    \begin{align}
        \begin{aligned}
        & \frac{1}{\alpha} \| \vx_1 - \vx_{\star} \|^2 + \frac{2}{\beta} \| \vy_1 - \vy_{\star} \|^2 + \frac{1}{\alpha} \| \vx_2 - \vx_{\star} \|^2 \\
        &\le \bigopen{\frac{1}{\alpha} - \mu_x} \| \vx_0 - \vx_{\star} \|^2 + 2 \bigopen{\frac{1}{\beta} - \mu_y} \| \vy_0 - \vy_{\star} \|^2 + \bigopen{\frac{1}{\alpha} - \mu_x} \| \vx_1 - \vx_{\star} \|^2 \\
        &\phantom{\le} + \alpha \norm{\nabla_{\vx} f(\vx_0, \tilde{\vy}_0)}^2 + 2 \beta \norm{\nabla_{\vy} f(\tilde{\vx}_1, \vy_0)}^2 + \alpha \norm{\nabla_{\vx} f(\vx_1, \tilde{\vy}_1)}^2 \\
        &\phantom{\le} - \gamma \alpha (2 - \gamma \alpha L_x) \norm{\nabla_{\vx} f(\vx_0, \tilde{\vy}_0)}^2 + \xi^2 (\delta - 1)^2 \beta^2 L_y \norm{\nabla_{\vy} f(\tilde{\vx}_0, \vy_{-1})}^2 \\
        &\phantom{\le} - \delta \beta (2 - \delta \beta L_y) \norm{\nabla_{\vy} f(\tilde{\vx}_1, \vy_0)}^2 + (\gamma - 1)^2 \alpha^2 L_x \norm{\nabla_{\vx} f(\vx_0, \tilde{\vy}_0)}^2 \\
        &\phantom{\le} - 2 (\gamma - 1) \alpha \inner{\nabla_{\vx} f(\vx_1, \tilde{\vy}_1), \nabla_{\vx} f(\vx_0, \tilde{\vy}_0)} - 2 \xi (\delta - 1) \beta \inner{\nabla_{\vy} f(\tilde{\vx}_1, \vy_0), \nabla_{\vy} f(\tilde{\vx}_0, \vy_{-1})}. 
        \end{aligned}
        \label{eq:totalone}
    \end{align}

    \subsubsection*{Step 2. Using the $L_{xy}$ Conditions}

    By definition, the Lipschitz gradient condition for $L_{x}$ and $L_{y}$ yields the following inequalities:
    \begin{align*}
        \norm{\nabla_{\vx} f(\vx_0, \tilde{\vy}_0) - \nabla_{\vx} f(\vx_1, \tilde{\vy}_0)} &\le L_x \norm{\vx_0 - \vx_1}, \\
        \norm{\nabla_{\vy} f(\tilde{\vx}_0, \vy_{-1}) - \nabla_{\vy} f(\tilde{\vx}_0, \vy_0)} &\le L_y \norm{\vy_{-1} - \vy_0},
    \end{align*}
    which implies
    \begin{align*}
        \inner{\nabla_{\vx} f(\vx_0, \tilde{\vy}_0) - \nabla_{\vx} f(\vx_1, \tilde{\vy}_0), \vx_0 - \vx_1} &\le L_x \norm{\vx_0 - \vx_1}^2, \\
        - \inner{\nabla_{\vy} f(\tilde{\vx}_0, \vy_{-1}) - \nabla_{\vy} f(\tilde{\vx}_0, \vy_0), \vy_{-1} - \vy_0} &\le L_y \norm{\vy_{-1} - \vy_0}^2,
    \end{align*}
    or equivalently,
    \begin{align*}
        (1 - \alpha L_x) \norm{\nabla_{\vx} f(\vx_0, \tilde{\vy}_0)}^2 &\le \inner{\nabla_{\vx} f(\vx_0, \tilde{\vy}_0), \nabla_{\vx} f(\vx_1, \tilde{\vy}_0)}, \\
        \xi^2 (1 - \beta L_y) \norm{\nabla_{\vy} f(\tilde{\vx}_0, \vy_{-1})}^2 &\le \xi \inner{\nabla_{\vy} f(\tilde{\vx}_0, \vy_0), \nabla_{\vy} f(\tilde{\vx}_0, \vy_{-1})}.
    \end{align*}
    Note that since $\xi^2 = \xi$ for both $\xi = 0$ or $1$, the inequality for the $\vy$ side is equivalent to
    \begin{align*}
        \xi (1 - \beta L_y) \norm{\nabla_{\vy} f(\tilde{\vx}_0, \vy_{-1})}^2 &\le \xi \inner{\nabla_{\vy} f(\tilde{\vx}_0, \vy_0), \nabla_{\vy} f(\tilde{\vx}_0, \vy_{-1})}.
    \end{align*}
    
    Therefore we can obtain the below inequalities:
    \begin{align*}
        - 2 (\gamma - 1) \alpha \inner{\nabla_{\vx} f(\vx_0, \tilde{\vy}_0), \nabla_{\vx} f(\vx_1, \tilde{\vy}_0)} &\le - 2 (\gamma - 1) \alpha (1 - \alpha L_x) \norm{\nabla_{\vx} f(\vx_0, \tilde{\vy}_0)}^2, \\
        - 2 \xi (\delta - 1) \beta \inner{\nabla_{\vy} f(\tilde{\vx}_0, \vy_0), \nabla_{\vy} f(\tilde{\vx}_0, \vy_{-1})} &\le - 2 \xi (\delta - 1) \beta (1 - \beta L_y) \norm{\nabla_{\vy} f(\tilde{\vx}_0, \vy_{-1})}^2.
    \end{align*}

    Now we can use the Lipschitz gradient condition for $L_{xy}$ to obtain
    \begin{align*}
        & - 2 \inner{\nabla_{\vx} f(\vx_1, \tilde{\vy}_1) - \nabla_{\vx} f(\vx_1, \tilde{\vy}_0), \nabla_{\vx} f(\vx_0, \tilde{\vy}_0)} \\
        &\le 2 \norm{\nabla_{\vx} f(\vx_1, \tilde{\vy}_1) - \nabla_{\vx} f(\vx_1, \tilde{\vy}_0)} \cdot \norm{\nabla_{\vx} f(\vx_0, \tilde{\vy}_0)} \\
        &\le 2 L_{xy} \norm{\tilde{\vy}_1 - \tilde{\vy}_0} \cdot \norm{\nabla_{\vx} f(\vx_0, \tilde{\vy}_0)} \\
        &= 2 L_{xy} \norm{(\tilde{\vy}_1 - \vy_0) - (\tilde{\vy}_0 - \vy_0)} \cdot \norm{\nabla_{\vx} f(\vx_0, \tilde{\vy}_0)} \\
        &= 2 L_{xy} \norm{\delta \beta \nabla_{\vy} f(\tilde{\vx}_1, \vy_0) - \xi (\delta - 1) \beta \nabla_{\vy} f(\tilde{\vx}_0, \vy_{-1})} \cdot \norm{\nabla_{\vx} f(\vx_0, \tilde{\vy}_0)} \\
        &\le 2 \delta \beta L_{xy} \norm{\nabla_{\vy} f(\tilde{\vx}_1, \vy_0)} \cdot \norm{\nabla_{\vx} f(\vx_0, \tilde{\vy}_0)} + 2 \xi (\delta - 1) \beta L_{xy} \norm{\nabla_{\vy} f(\tilde{\vx}_0, \vy_{-1})} \cdot \norm{\nabla_{\vx} f(\vx_0, \tilde{\vy}_0)} \\
        &\le \delta \beta L_{xy} \bigopen{\sqrt{\frac{\mu_y}{\mu_x}} \cdot \norm{\nabla_{\vx} f(\vx_0, \tilde{\vy}_0)}^2 + \sqrt{\frac{\mu_x}{\mu_y}} \cdot \norm{\nabla_{\vy} f(\tilde{\vx}_1, \vy_0)}^2} \\
        &\phantom{\le} + \xi (\delta - 1) \beta L_{xy} \bigopen{\sqrt{\frac{\mu_y}{\mu_x}} \cdot \norm{\nabla_{\vx} f(\vx_0, \tilde{\vy}_0)}^2 + \sqrt{\frac{\mu_x}{\mu_y}} \cdot \norm{\nabla_{\vy} f(\tilde{\vx}_0, \vy_{-1})}^2},
    \end{align*}
    where we use AM-GM for the last inequality.
    
    Similarly, we can obtain
    \begin{align*}
        & - 2 \xi \inner{\nabla_{\vy} f(\tilde{\vx}_1, \vy_0) - \nabla_{\vy} f(\tilde{\vx}_0, \vy_0), \nabla_{\vy} f(\tilde{\vx}_0, \vy_{-1})} \\
        &\le 2 \xi \norm{\nabla_{\vy} f(\tilde{\vx}_1, \vy_0) - \nabla_{\vy} f(\tilde{\vx}_0, \vy_0)} \cdot \norm{\nabla_{\vy} f(\tilde{\vx}_0, \vy_{-1})} \\
        &\le 2 \xi L_{xy} \norm{\tilde{\vx}_1 - \tilde{\vx}_0} \cdot \norm{\nabla_{\vy} f(\tilde{\vx}_0, \vy_{-1})} \\
        &= 2 \xi L_{xy} \norm{(\tilde{\vx}_1 - \vx_0) - (\tilde{\vx}_0 - \vx_0)} \cdot \norm{\nabla_{\vy} f(\tilde{\vx}_0, \vy_{-1})} \\
        &= 2 \xi L_{xy} \bignorm{\gamma \alpha \nabla_{\vx} f(\vx_0, \tilde{\vy}_0) - \xi (\gamma - 1) \alpha \nabla_{\vx} f(\vx_{-1}, \tilde{\vy}_{-1})} \cdot \norm{\nabla_{\vy} f(\tilde{\vx}_0, \vy_{-1})} \\
        &= 2 \xi L_{xy} \bignorm{\gamma \alpha \nabla_{\vx} f(\vx_0, \tilde{\vy}_0) - (\gamma - 1) \alpha \nabla_{\vx} f(\vx_{-1}, \tilde{\vy}_{-1})} \cdot \norm{\nabla_{\vy} f(\tilde{\vx}_0, \vy_{-1})} \\
        &\le 2 \xi \gamma \alpha L_{xy} \norm{\nabla_{\vx} f(\vx_0, \tilde{\vy}_0)} \cdot \norm{\nabla_{\vy} f(\tilde{\vx}_0, \vy_{-1})} + 2 \xi (\gamma - 1) \alpha L_{xy} \norm{\nabla_{\vx} f(\vx_{-1}, \tilde{\vy}_{-1})} \cdot \norm{\nabla_{\vy} f(\tilde{\vx}_0, \vy_{-1})} \\
        &\le \gamma \xi \alpha L_{xy} \bigopen{\sqrt{\frac{\mu_y}{\mu_x}} \cdot \norm{\nabla_{\vx} f(\vx_0, \tilde{\vy}_0)}^2 + \sqrt{\frac{\mu_x}{\mu_y}} \cdot \norm{\nabla_{\vy} f(\tilde{\vx}_0, \vy_{-1})}^2} \\
        &\phantom{\le} + (\gamma - 1) \xi \alpha L_{xy} \bigopen{\sqrt{\frac{\mu_y}{\mu_x}} \cdot \norm{\nabla_{\vx} f(\vx_{-1}, \tilde{\vy}_{-1})}^2 + \sqrt{\frac{\mu_x}{\mu_y}} \cdot \norm{\nabla_{\vy} f(\tilde{\vx}_0, \vy_{-1})}^2},
    \end{align*}
    where the third equality is true for both $\xi = 1$ and $\xi = 0$ (where everything just becomes zero).
    
    From this, we can deduce that
    \begin{align*}
        & - 2 (\gamma - 1) \alpha \inner{\nabla_{\vx} f(\vx_1, \tilde{\vy}_1), \nabla_{\vx} f(\vx_0, \tilde{\vy}_0)} - 2 \xi (\delta - 1) \beta \inner{\nabla_{\vy} f(\tilde{\vx}_1, \vy_0), \nabla_{\vy} f(\tilde{\vx}_0, \vy_{-1})} \\
        &= - 2 (\gamma - 1) \alpha \inner{\nabla_{\vx} f(\vx_1, \tilde{\vy}_0), \nabla_{\vx} f(\vx_0, \tilde{\vy}_0)} - 2 \xi (\delta - 1) \beta \inner{\nabla_{\vy} f(\tilde{\vx}_0, \vy_0), \nabla_{\vy} f(\tilde{\vx}_0, \vy_{-1})} \\
        &\phantom{=} - 2 (\gamma - 1) \alpha \inner{\nabla_{\vx} f(\vx_1, \tilde{\vy}_1) - \nabla_{\vx} f(\vx_1, \tilde{\vy}_0), \nabla_{\vx} f(\vx_0, \tilde{\vy}_0)} \\
        &\phantom{=} - 2 \xi (\delta - 1) \beta \inner{\nabla_{\vy} f(\tilde{\vx}_1, \vy_0) - \nabla_{\vy} f(\tilde{\vx}_0, \vy_0), \nabla_{\vy} f(\tilde{\vx}_0, \vy_{-1})} \\
        &\le - 2 (\gamma - 1) \alpha (1 - \alpha L_x) \norm{\nabla_{\vx} f(\vx_0, \tilde{\vy}_0)}^2 - 2 \xi (\delta - 1) \beta (1 - \beta L_y) \norm{\nabla_{\vy} f(\tilde{\vx}_0, \vy_{-1})}^2 \\
        &\phantom{=} + (\gamma - 1) \delta \alpha \beta L_{xy} \bigopen{\sqrt{\frac{\mu_y}{\mu_x}} \cdot \norm{\nabla_{\vx} f(\vx_0, \tilde{\vy}_0)}^2 + \sqrt{\frac{\mu_x}{\mu_y}} \cdot \norm{\nabla_{\vy} f(\tilde{\vx}_1, \vy_0)}^2} \\
        &\phantom{=} + \xi (\gamma - 1) (\delta - 1) \alpha \beta L_{xy} \bigopen{\sqrt{\frac{\mu_y}{\mu_x}} \cdot \norm{\nabla_{\vx} f(\vx_0, \tilde{\vy}_0)}^2 + \sqrt{\frac{\mu_x}{\mu_y}} \cdot \norm{\nabla_{\vy} f(\tilde{\vx}_0, \vy_{-1})}^2} \\
        &\phantom{=} + \xi \gamma (\delta - 1) \alpha \beta L_{xy} \bigopen{\sqrt{\frac{\mu_y}{\mu_x}} \cdot \norm{\nabla_{\vx} f(\vx_0, \tilde{\vy}_0)}^2 + \sqrt{\frac{\mu_x}{\mu_y}} \cdot \norm{\nabla_{\vy} f(\tilde{\vx}_0, \vy_{-1})}^2} \\
        &\phantom{\le} + \xi (\gamma - 1) (\delta - 1) \alpha \beta L_{xy} \bigopen{\sqrt{\frac{\mu_y}{\mu_x}} \cdot \norm{\nabla_{\vx} f(\vx_{-1}, \tilde{\vy}_{-1})}^2 + \sqrt{\frac{\mu_x}{\mu_y}} \cdot \norm{\nabla_{\vy} f(\tilde{\vx}_0, \vy_{-1})}^2}.
    \end{align*}
    
    Applying this to \eqref{eq:totalone}, we have
    \begin{align}
        \begin{aligned}
        & \frac{1}{\alpha} \| \vx_1 - \vx_{\star} \|^2 + \frac{2}{\beta} \| \vy_1 - \vy_{\star} \|^2 + \frac{1}{\alpha} \| \vx_2 - \vx_{\star} \|^2 \\
        &\le \bigopen{\frac{1}{\alpha} - \mu_x} \| \vx_0 - \vx_{\star} \|^2 + 2 \bigopen{\frac{1}{\beta} - \mu_y} \| \vy_0 - \vy_{\star} \|^2 + \bigopen{\frac{1}{\alpha} - \mu_x} \| \vx_1 - \vx_{\star} \|^2 \\
        &\phantom{\le} + \alpha \norm{\nabla_{\vx} f(\vx_0, \tilde{\vy}_0)}^2 + 2 \beta \norm{\nabla_{\vy} f(\tilde{\vx}_1, \vy_0)}^2 + \alpha \norm{\nabla_{\vx} f(\vx_1, \tilde{\vy}_1)}^2 \\
        &\phantom{\le} - \gamma \alpha (2 - \gamma \alpha L_x) \norm{\nabla_{\vx} f(\vx_0, \tilde{\vy}_0)}^2 + \xi^2 (\delta-1)^2 \beta^2 L_y \norm{\nabla_{\vy} f(\tilde{\vx}_0, \vy_{-1})}^2 \\
        &\phantom{\le} - \delta \beta (2 - \delta \beta L_y) \norm{\nabla_{\vy} f(\tilde{\vx}_1, \vy_0)}^2 + (\gamma - 1)^2 \alpha^2 L_x \norm{\nabla_{\vx} f(\vx_0, \tilde{\vy}_0)}^2 \\
        &\phantom{\le} - 2 (\gamma - 1) \alpha (1 - \alpha L_x) \norm{\nabla_{\vx} f(\vx_0, \tilde{\vy}_0)}^2 - 2 \xi (\delta - 1) \beta (1 - \beta L_y) \norm{\nabla_{\vy} f(\tilde{\vx}_0, \vy_{-1})}^2 \\
        &\phantom{\le} + (\gamma - 1) \delta \alpha \beta L_{xy} \bigopen{\sqrt{\frac{\mu_y}{\mu_x}} \cdot \norm{\nabla_{\vx} f(\vx_0, \tilde{\vy}_0)}^2 + \sqrt{\frac{\mu_x}{\mu_y}} \cdot \norm{\nabla_{\vy} f(\tilde{\vx}_1, \vy_0)}^2} \\
        &\phantom{\le} + \xi (\gamma - 1) (\delta - 1) \alpha \beta L_{xy} \bigopen{\sqrt{\frac{\mu_y}{\mu_x}} \cdot \norm{\nabla_{\vx} f(\vx_0, \tilde{\vy}_0)}^2 + \sqrt{\frac{\mu_x}{\mu_y}} \cdot \norm{\nabla_{\vy} f(\tilde{\vx}_0, \vy_{-1})}^2} \\
        &\phantom{\le} + \xi \gamma (\delta - 1) \alpha \beta L_{xy} \bigopen{\sqrt{\frac{\mu_y}{\mu_x}} \cdot \norm{\nabla_{\vx} f(\vx_0, \tilde{\vy}_0)}^2 + \sqrt{\frac{\mu_x}{\mu_y}} \cdot \norm{\nabla_{\vy} f(\tilde{\vx}_0, \vy_{-1})}^2} \\
        &\phantom{\le} + \xi (\gamma - 1) (\delta - 1) \alpha \beta L_{xy} \bigopen{\sqrt{\frac{\mu_y}{\mu_x}} \cdot \norm{\nabla_{\vx} f(\vx_{-1}, \tilde{\vy}_{-1})}^2 + \sqrt{\frac{\mu_x}{\mu_y}} \cdot \norm{\nabla_{\vy} f(\tilde{\vx}_0, \vy_{-1})}^2} . 
        \end{aligned}
        \label{eq:totaltwo}
    \end{align}

    \subsubsection*{Step 3. Simplify using Step Size Conditions}

    Let us gather all $\nabla_{\vx}$ terms in \eqref{eq:totaltwo}, and define the sum of all such terms as
    \begin{align*}
        S_{\vx} &= \alpha \norm{\nabla_{\vx} f(\vx_0, \tilde{\vy}_0)}^2 + \alpha \norm{\nabla_{\vx} f(\vx_1, \tilde{\vy}_1)}^2 - \gamma \alpha (2 - \gamma \alpha L_x) \norm{\nabla_{\vx} f(\vx_0, \tilde{\vy}_0)}^2 \\
        &\phantom{=} + (\gamma - 1)^2 \alpha^2 L_x \norm{\nabla_{\vx} f(\vx_0, \tilde{\vy}_0)}^2 - 2 (\gamma - 1) \alpha (1 - \alpha L_x) \norm{\nabla_{\vx} f(\vx_0, \tilde{\vy}_0)}^2 \\
        &\phantom{=} + (\gamma - 1) \delta \alpha \beta L_{xy} \sqrt{\frac{\mu_y}{\mu_x}} \cdot \norm{\nabla_{\vx} f(\vx_0, \tilde{\vy}_0)}^2 + \xi (\gamma - 1) (\delta - 1) \alpha \beta L_{xy} \sqrt{\frac{\mu_y}{\mu_x}} \cdot \norm{\nabla_{\vx} f(\vx_0, \tilde{\vy}_0)}^2 \\
        &\phantom{=} + \xi \gamma (\delta - 1) \alpha \beta L_{xy} \sqrt{\frac{\mu_y}{\mu_x}} \cdot \norm{\nabla_{\vx} f(\vx_0, \tilde{\vy}_0)}^2 + \xi (\gamma - 1) (\delta - 1) \alpha \beta L_{xy} \sqrt{\frac{\mu_y}{\mu_x}} \cdot \norm{\nabla_{\vx} f(\vx_{-1}, \tilde{\vy}_{-1})}^2.
    \end{align*}
    
    
    Rearranging terms, we have
    \begin{align*}
        S_{\vx} &= \alpha \norm{\nabla_{\vx} f(\vx_1, \tilde{\vy}_1)}^2 - \alpha \norm{\nabla_{\vx} f(\vx_0, \tilde{\vy}_0)}^2 \\
        &\phantom{=} - \xi (\gamma - 1) (\delta - 1) \alpha \beta L_{xy} \sqrt{\frac{\mu_y}{\mu_x}} \cdot \norm{\nabla_{\vx} f(\vx_{0}, \tilde{\vy}_{0})}^2 + \xi (\gamma - 1) (\delta - 1) \alpha \beta L_{xy} \sqrt{\frac{\mu_y}{\mu_x}} \cdot \norm{\nabla_{\vx} f(\vx_{-1}, \tilde{\vy}_{-1})}^2 \\
        &\phantom{=} + \bigopen{2 \alpha - \gamma \alpha (2 - \gamma \alpha L_x) + (\gamma - 1)^2 \alpha^2 L_x - 2 (\gamma - 1) \alpha (1 - \alpha L_x)} \norm{\nabla_{\vx} f(\vx_0, \tilde{\vy}_0)}^2 \\
        &\phantom{=} + \bigopen{(\gamma - 1) \delta + \xi (3 \gamma - 2) (\delta - 1)} \alpha \beta L_{xy} \sqrt{\frac{\mu_y}{\mu_x}} \cdot \norm{\nabla_{\vx} f(\vx_0, \tilde{\vy}_0)}^2 \\
        &= \alpha \norm{\nabla_{\vx} f(\vx_1, \tilde{\vy}_1)}^2 - \alpha \norm{\nabla_{\vx} f(\vx_0, \tilde{\vy}_0)}^2 \\
        &\phantom{=} - \xi (\gamma - 1) (\delta - 1) \alpha \beta L_{xy} \sqrt{\frac{\mu_y}{\mu_x}} \cdot \norm{\nabla_{\vx} f(\vx_{0}, \tilde{\vy}_{0})}^2 + \xi (\gamma - 1) (\delta - 1) \alpha \beta L_{xy} \sqrt{\frac{\mu_y}{\mu_x}} \cdot \norm{\nabla_{\vx} f(\vx_{-1}, \tilde{\vy}_{-1})}^2 \\
        &\phantom{=} - \alpha \bigopen{4 (\gamma - 1) - (2 \gamma^2 - 1) \alpha L_x - \bigopen{(\gamma - 1) \delta + \xi (3 \gamma - 2) (\delta - 1)} \beta L_{xy} \sqrt{\frac{\mu_y}{\mu_x}}} \cdot \norm{\nabla_{\vx} f(\vx_0, \tilde{\vy}_0)}^2 \\
        &\le \alpha \norm{\nabla_{\vx} f(\vx_1, \tilde{\vy}_1)}^2 - \alpha \norm{\nabla_{\vx} f(\vx_0, \tilde{\vy}_0)}^2 \\
        &\phantom{=} - \xi (\gamma - 1) (\delta - 1) \alpha \beta L_{xy} \sqrt{\frac{\mu_y}{\mu_x}} \cdot \norm{\nabla_{\vx} f(\vx_{0}, \tilde{\vy}_{0})}^2 + \xi (\gamma - 1) (\delta - 1) \alpha \beta L_{xy} \sqrt{\frac{\mu_y}{\mu_x}} \cdot \norm{\nabla_{\vx} f(\vx_{-1}, \tilde{\vy}_{-1})}^2 \\
        &\phantom{=} - \alpha \bigopen{4 (\gamma - 1) - (2 \gamma^2 - 1) C_1 - \bigopen{(\gamma - 1) \delta + \xi (3 \gamma - 2) (\delta - 1)} C_4} \norm{\nabla_{\vx} f(\vx_0, \tilde{\vy}_0)}^2,
    \end{align*}
    where we use $\alpha \le \frac{C_1}{L_x}$ and $\beta \le \frac{C_4}{L_{xy}} \sqrt{\frac{\mu_x}{\mu_y}}$.
    
    Since we have from \eqref{eq:constone}:
    \begin{align*}
        4(\gamma - 1) &\ge (2 \gamma^2 - 1) C_1 + \bigopen{4 \gamma \delta - 3 \gamma - 3 \delta + 2} C_4 \\
        &= (2 \gamma^2 - 1) C_1 + \bigopen{(\gamma - 1) \delta + (3 \gamma - 2) (\delta - 1)} C_4 \\
        &\ge (2 \gamma^2 - 1) C_1 + \bigopen{(\gamma - 1) \delta + \xi (3 \gamma - 2) (\delta - 1)} C_4,
    \end{align*}
    we can deduce that
    \begin{align*}
        - \alpha \bigopen{4 (\gamma - 1) - (2 \gamma^2 - 1) C_1 - \bigopen{(\gamma - 1) \delta + \xi (3 \gamma - 2) (\delta - 1)} C_4} \norm{\nabla_{\vx} f(\vx_0, \tilde{\vy}_0)}^2 \le 0
    \end{align*}
    and therefore
    \begin{align}
    \begin{aligned}
        S_{\vx} &\le \alpha \norm{\nabla_{\vx} f(\vx_1, \tilde{\vy}_1)}^2 - \alpha \norm{\nabla_{\vx} f(\vx_0, \tilde{\vy}_0)}^2 \\
        &\phantom{=} - \xi (\gamma - 1) (\delta - 1) \alpha \beta L_{xy} \sqrt{\frac{\mu_y}{\mu_x}} \cdot \norm{\nabla_{\vx} f(\vx_{0}, \tilde{\vy}_{0})}^2 + \xi (\gamma - 1) (\delta - 1) \alpha \beta L_{xy} \sqrt{\frac{\mu_y}{\mu_x}} \cdot \norm{\nabla_{\vx} f(\vx_{-1}, \tilde{\vy}_{-1})}^2.
    \end{aligned} \label{eq:xxxxxxxx}
    \end{align}
    
    Similarly, Let us gather all $\nabla_{\vy}$ terms in \eqref{eq:totaltwo}, and define the sum all such terms as
    \begin{align*}
        S_{\vy} &= 2 \beta \norm{\nabla_{\vy} f(\tilde{\vx}_1, \vy_0)}^2 + \xi^2 (\delta - 1)^2 \beta^2 L_y \norm{\nabla_{\vy} f(\tilde{\vx}_0, \vy_{-1})}^2 \\
        &\phantom{=} - \delta \beta (2 - \delta \beta L_y) \norm{\nabla_{\vy} f(\tilde{\vx}_1, \vy_0)}^2 - 2 \xi (\delta - 1) \beta (1 - \beta L_y) \norm{\nabla_{\vy} f(\tilde{\vx}_0, \vy_{-1})}^2 \\
        &\phantom{=}+ (\gamma - 1) \delta \alpha \beta L_{xy} \sqrt{\frac{\mu_x}{\mu_y}} \cdot \norm{\nabla_{\vy} f(\tilde{\vx}_1, \vy_0)}^2 + \xi (\gamma - 1) (\delta - 1) \alpha \beta L_{xy} \sqrt{\frac{\mu_x}{\mu_y}} \cdot \norm{\nabla_{\vy} f(\tilde{\vx}_0, \vy_{-1})}^2 \\
        &\phantom{=} + \xi \gamma (\delta - 1) \alpha \beta L_{xy} \sqrt{\frac{\mu_x}{\mu_y}} \cdot \norm{\nabla_{\vy} f(\tilde{\vx}_0, \vy_{-1})}^2 + \xi (\gamma - 1) (\delta - 1) \alpha \beta L_{xy} \sqrt{\frac{\mu_x}{\mu_y}} \cdot \norm{\nabla_{\vy} f(\tilde{\vx}_0, \vy_{-1})}^2.
    \end{align*}
    
    Rearranging terms, we have
    \begin{align*}
        S_{\vy} &= \bigopen{2 \beta - \delta \beta (2 - \delta \beta L_y) + (\gamma - 1) \delta \alpha \beta L_{xy} \sqrt{\frac{\mu_x}{\mu_y}}} \cdot \norm{\nabla_{\vy} f(\tilde{\vx}_1, \vy_0)}^2 \\
        &\phantom{=} + \bigopen{\xi^2 (\delta - 1)^2 \beta^2 L_y - 2 \xi (\delta - 1) \beta (1 - \beta L_y) + \xi (3 \gamma - 2) (\delta - 1) \alpha \beta L_{xy} \sqrt{\frac{\mu_x}{\mu_y}}} \cdot \norm{\nabla_{\vy} f(\tilde{\vx}_0, \vy_{-1})}^2 \\
        &= - \beta \bigopen{2 (\delta - 1) - \delta^2 \beta L_y - (\gamma - 1) \delta \alpha L_{xy} \sqrt{\frac{\mu_x}{\mu_y}}} \norm{\nabla_{\vy} f(\tilde{\vx}_{1}, \vy_{0})}^2 \\
        &\phantom{=} - \xi (\delta - 1) \beta \bigopen{2 - (\xi (\delta - 1) + 2) \beta L_y - (3 \gamma - 2) \alpha L_{xy} \sqrt{\frac{\mu_x}{\mu_y}}} \norm{\nabla_{\vy} f(\tilde{\vx}_{0}, \vy_{-1})}^2 \\
        &\le - \beta \bigopen{2 (\delta - 1) - \delta^2 C_2 - (\gamma - 1) \delta C_3} \norm{\nabla_{\vy} f(\tilde{\vx}_1, \vy_0)}^2 \\
        &\phantom{=} - \xi (\delta - 1) \beta \bigopen{2 - (\xi (\delta - 1) + 2) C_2 - (3 \gamma - 2) C_3} \norm{\nabla_{\vy} f(\tilde{\vx}_0, \vy_{-1})}^2,
    \end{align*}
    where we use $\beta \le \frac{C_2}{L_y}$ and $\alpha \le \frac{C_3}{L_{xy}} \sqrt{\frac{\mu_y}{\mu_x}}$.
    
    Since we have from (\ref{eq:consttwo}) and (\ref{eq:constthree}):
    \begin{align*}
    {\delta - 1} &\ge \delta^2 C_2 + (\gamma - 1) \delta C_3, \\
    \quad 2 &\ge (\delta + 1) C_2 + (3 \gamma - 2) C_3 \ge (\xi (\delta - 1) + 2) C_2 + (3 \gamma - 2) C_3,
    \end{align*}
    we can deduce that
    \begin{align}
        S_{\vy} &\le - (\delta -1 ) \beta \norm{\nabla_{\vy} f(\tilde{\vx}_1, \vy_0)}^2. \label{eq:yyyyyyyy}
    \end{align}
    
    By \eqref{eq:xxxxxxxx}~and~\eqref{eq:yyyyyyyy}, we can observe that \eqref{eq:totaltwo} boils down to
    \begin{align*}
        &\phantom{\le} \frac{1}{\alpha} \| \vx_1 - \vx_{\star} \|^2 + \frac{2}{\beta} \| \vy_1 - \vy_{\star} \|^2 + \frac{1}{\alpha} \| \vx_2 - \vx_{\star} \|^2 \\
        &\le \bigopen{\frac{1}{\alpha} - \mu_x} \| \vx_0 - \vx_{\star} \|^2 + 2 \bigopen{\frac{1}{\beta} - \mu_y} \| \vy_0 - \vy_{\star} \|^2 + \bigopen{\frac{1}{\alpha} - \mu_x} \| \vx_1 - \vx_{\star} \|^2 + S_{\vx} + S_{\vy} \\
        &\le \bigopen{\frac{1}{\alpha} - \mu_x} \| \vx_0 - \vx_{\star} \|^2 + 2 \bigopen{\frac{1}{\beta} - \mu_y} \| \vy_0 - \vy_{\star} \|^2 + \bigopen{\frac{1}{\alpha} - \mu_x} \| \vx_1 - \vx_{\star} \|^2 \\
        &\phantom{\le} + \alpha \norm{\nabla_{\vx} f(\vx_1, \tilde{\vy}_1)}^2 - \alpha \norm{\nabla_{\vx} f(\vx_0, \tilde{\vy}_0)}^2 \\
        &\phantom{\le} - \xi (\gamma - 1) (\delta - 1) \alpha \beta L_{xy} \sqrt{\frac{\mu_y}{\mu_x}} \cdot \norm{\nabla_{\vx} f(\vx_{0}, \tilde{\vy}_{0})}^2 + \xi (\gamma - 1) (\delta - 1) \alpha \beta L_{xy} \sqrt{\frac{\mu_y}{\mu_x}} \cdot \norm{\nabla_{\vx} f(\vx_{-1}, \tilde{\vy}_{-1})}^2 \\
        &\phantom{\le} - (\delta -1 ) \beta \norm{\nabla_{\vy} f(\tilde{\vx}_1, \vy_0)}^2,
    \end{align*}
    or equivalently
    \begin{align*}
        &\phantom{\le} \frac{1}{\alpha} \| \vx_1 - \vx_{\star} \|^2 + \frac{2}{\beta} \| \vy_1 - \vy_{\star} \|^2 + \frac{1}{\alpha} \| \vx_2 - \vx_{\star} \|^2 \\
        &\phantom{\le} - \alpha \norm{\nabla_{\vx} f(\vx_1, \tilde{\vy}_1)}^2 {+ (\delta -1 ) \beta \norm{\nabla_{\vy} f(\tilde{\vx}_1, \vy_0)}^2} + \xi (\gamma - 1) (\delta - 1) \alpha \beta L_{xy} \sqrt{\frac{\mu_y}{\mu_x}} \cdot \norm{\nabla_{\vx} f(\vx_{0}, \tilde{\vy}_{0})}^2 \\
        &\le \bigopen{\frac{1}{\alpha} - \mu_x} \| \vx_0 - \vx_{\star} \|^2 + 2 \bigopen{\frac{1}{\beta} - \mu_y} \| \vy_0 - \vy_{\star} \|^2 + \bigopen{\frac{1}{\alpha} - \mu_x} \| \vx_1 - \vx_{\star} \|^2 \\
        &\phantom{\le} - \alpha \norm{\nabla_{\vx} f(\vx_0, \tilde{\vy}_0)}^2 + \xi (\gamma - 1) (\delta - 1) \alpha \beta L_{xy} \sqrt{\frac{\mu_y}{\mu_x}} \cdot \norm{\nabla_{\vx} f(\vx_{-1}, \tilde{\vy}_{-1})}^2,
    \end{align*}
    which is identical to \eqref{eq:alexgdacontraction} and therefore concludes the proof.
\end{proof}

\section{\texorpdfstring{Proofs used in \cref{sec:6}}{Proofs used in Section 6}}
\label{sec:e}

Here we prove all theorems related to \alexgda{} on bilinear problems presented in \cref{sec:6}.
\begin{itemize}
    \item In \cref{sec:alexgdabilinearconvergentstepsize} we prove \cref{thm:alexgdabilinearconvergentstepsize} which shows the exact condition for linear convergence of \alexgda{} on bilinear problems. 
    \item In \cref{sec:alexgdabilinearcomplexity} we prove \cref{thm:alexgdabilinearcomplexity} which obtains iteration complexity of \alexgda{} for bilinear problems.
    \item In \cref{sec:alexgdabilineartech} we prove technical propositions and lemmas used throughout the proofs in \cref{sec:e}.
\end{itemize}

\subsection{\texorpdfstring{Proof of \cref{thm:alexgdabilinearconvergentstepsize}}{Proof of Theorem 6.1}} \label{sec:alexgdabilinearconvergentstepsize}

Here we prove \cref{thm:alexgdabilinearconvergentstepsize} of \cref{sec:6}, restated below for the sake of readability.

\thmalexgdabilinearconvergentstepsize*

\begin{proof}
    For a bilinear problem $f(\vx, \vy) = \vx^\top \mB \vy$, each iteration ($k\ge 0$) of \alexgda{} is written as $\tilde{\vy}_0 = \vy_0$ and
    \begin{align*}
    \vx_{k+1} &= \vx_k - \alpha \mB \tilde{\vy}_k, \\
    \tilde{\vx}_{k+1} &= \vx_k - \gamma\alpha \mB \tilde{\vy}_k, \\
    \vy_{k+1} &= \vy_k + \beta \mB^{\top} \tilde{\vx}_{k+1} = \beta \mB^{\top}\vx_k + \vy_k - \gamma\alpha\beta \mB^{\top} \mB \tilde{\vy}_k, \\
    \tilde{\vy}_{k+1} &= \vy_k + \delta\beta \mB^{\top} \tilde{\vx}_{k+1} = \delta\beta \mB^{\top}\vx_k + \vy_k - \gamma\alpha\delta\beta \mB^{\top} \mB \tilde{\vy}_k.
    \end{align*}
    This can be represented in the following matrix iteration:
    \begin{align}
    \label{eq:alexgda_update_bilinear_matrix}
    \vw_{k+1} = \begin{bmatrix}
        \vx_{k+1} \\ \vy_{k+1} \\ \tilde{\vy}_{k+1}
    \end{bmatrix}
    =
    \begin{bmatrix}
        \mI & \vzero & -\alpha \mB \\
        \beta \mB^\top & \mI & - \gamma\alpha\beta \mB^\top \mB \\
        \delta\beta \mB^\top & \mI & - \gamma\alpha\delta\beta \mB^\top \mB
    \end{bmatrix}
    \begin{bmatrix}
        \vx_{k} \\ \vy_{k} \\ \tilde{\vy}_{k}
    \end{bmatrix} = \mM \vw_k.
    \end{align} 
    Consider a reduced form of singular value decomposition (SVD) of $\mB=\mU\bm\Sigma\mV^\top$: $\mU \in \R^{d_x\times s}, \mV \in \R^{d_y\times s}, \bm\Sigma \in \R^{s\times s}$ where $s=\rank(\mB)$. 
    Note that $\mU^\top \mU = \mI$, $\mV^\top\mV = \mI$, and $\bm\Sigma = \operatorname{diag}(\sigma_1, \ldots, \sigma_s)$ is a diagonal matrix with non-zero diagonal entries ($0<\mu_{xy}\le \sigma_i \le L_{xy}$ for all $i=1, \ldots, s$).
    Then the power of the matrix $\mM$ defined in \cref{eq:alexgda_update_bilinear_matrix} can be decomposed as follows for $k\ge 1$.
    \begin{align*}
        \mM^k = \underbrace{\begin{bmatrix}
            \mU & \vzero & \vzero \\
            \vzero & \mV & \vzero \\
            \vzero & \vzero & \mV 
        \end{bmatrix}}_{=: \mW}
        \underbrace{\begin{bmatrix}
            \mI & \vzero & -\alpha \bm\Sigma \\
            \beta \bm\Sigma & \mI & -\gamma\alpha\beta \bm\Sigma^2 \\
            \delta\beta \bm\Sigma & \mI & -\gamma\alpha\delta\beta \bm\Sigma^2 
        \end{bmatrix}^k}_{=: \widetilde{\mM}^k} \begin{bmatrix}
            \mU^\top & \vzero & \vzero \\
            \vzero & \mV^\top & \vzero \\
            \vzero & \vzero & \mV^\top 
        \end{bmatrix} + \begin{bmatrix}
            \mI-\mU\mU^\top & \vzero & \vzero \\
            \vzero & \mI-\mV\mV^\top & \vzero \\
            \vzero & \mI-\mV\mV^\top & \vzero
        \end{bmatrix}
    \end{align*}
    From this matrix decomposition, a decomposition of the ambient space $\R^{d_x+d_y+d_y}$ naturally arises: a space $\gN = \nullspace(\mB) \times \nullspace(\mB^\top) \times \nullspace(\mB^\top)$ and its orthogonal complement $\gN^\perp = \row(\mB)\times \row(\mB^\top)\times\row(\mB^\top)$. The $\gN$-component of the iterate $\vw_k$ is always fixed as $$\begin{bmatrix}
        (\mI-\mU\mU^\top)\vx_0 \\ (\mI-\mV\mV^\top)\vy_0 \\ (\mI-\mV\mV^\top)\vy_0
    \end{bmatrix}$$
    and does not move at all,
    while the $\gN^\perp$-component of $\vw_k$ belong to $\gN^\perp$ even after each iteration.
    Since $\nullspace(\mB) \times \nullspace(\mB^\top)$ is the space of all Nash equilibria of the bilinear problem, now it is enough to show that the $\gN^\perp$-component converges to the origin; as a result, the iterates $(\vx_k, \vy_k)$ converge to a Nash equilibrium
    \begin{align}
        \vz_\star := ((\mI-\mU\mU^\top)\vx_0, (\mI-\mV\mV^\top)\vy_0). \label{eq:bilinearNash}
    \end{align}
    To this end, we may assume that the initial iterate $\vw_0$ belongs to $\gN^\perp$ from now on. Then, by reasoning above, every iterate $\vw_k$ belongs to $\gN^\perp$ and satisfies
    \begin{align}
    \label{eq:alexgdabilinear_matrixiteration}
        \vw_k = \mW \widetilde{\mM}^k \mW^\top \vw_0.
    \end{align}
    We first claim that it suffices to show $\rho(\widetilde{\mM})<1$ to obtain (the necessary and sufficient condition for) the convergence $\vw_k \rightarrow \bm0$. To prove the claim, let $\widetilde{\vw}_k := \mW^\top \vw_k$. Then we have $\widetilde{\vw}_k = \widetilde\mM^k \widetilde{\vw}_0$. By applying the theory of matrix iteration (\cref{prop:spectralradius}), $\rho(\widetilde\mM)<1$ if and only if $\widetilde\vw_k \rightarrow \bm0$. Moreover, since $\vw_k \in \gN^\perp$, $\mW \widetilde\vw_k = \mW\mW^\top\vw_k = \vw_k$, and thus $\widetilde\vw_k \rightarrow \bm0$ if and only if $\vw_k \rightarrow \bm0$. Therefore, the rest of the proof is dedicated to finding the condition for $\rho(\widetilde\mM)<1$.

    Note that the matrix $\widetilde{\mM}\in\R^{3s\times 3s}$ does not have 1 as an eigenvalue. If it does, there exist vectors $\va, \vb, \vc \in \R^s$ such that  $\widetilde{\mM}\begin{bmatrix}
        \va^\top & \vb^\top & \vc^\top
    \end{bmatrix}^\top = \begin{bmatrix}
        \va^\top & \vb^\top & \vc^\top
    \end{bmatrix}^\top$. It implies that
    \begin{align*}
        \va - \alpha \bm\Sigma \vc &= \va, \\
        \beta \bm\Sigma\va + \vb - \gamma\alpha\beta\bm\Sigma^2\vc &= \vb, \\
        \delta\beta \bm\Sigma\va + \vb - \gamma\alpha\delta\beta\bm\Sigma^2\vc &= \vc,
    \end{align*}
    which implies that $\va=\vb=\vc=0$ because $\bm\Sigma$ is nonsingular.
    Thus, 1 cannot have an associated
    nonzero eigenvector of $\mM$.

    To inspect the eigenvalues of $\widetilde{\mM}$, we now apply the theory of Schur complement \citep{haynsworth1968schur, zhang2006schur}: namely, $\det\bigopen{\begin{bmatrix} A&B\\C&D \end{bmatrix}} = \det(A)\det(D-CA^{-1}B)$. Writing the characteristic polynomial of $\widetilde{\mM}$, 
    \begin{align*}
        \det(\lambda\mI - \widetilde{\mM}) &= \det\bigopen{\begin{bmatrix}
            (\lambda-1)\mI & \vzero & \alpha \bm\Sigma \\
            -\beta \bm\Sigma & (\lambda-1)\mI & \gamma\alpha\beta \bm\Sigma^2 \\
            -\delta\beta \bm\Sigma & -\mI & \lambda\mI + \gamma\alpha\delta\beta \bm\Sigma^2
        \end{bmatrix}} \\
        &= \det\bigopen{\begin{bmatrix}
            (\lambda-1)\mI & \vzero \\
            -\beta \bm\Sigma & (\lambda-1)\mI \\
        \end{bmatrix}} \det\bigopen{\lambda\mI + \gamma\alpha\delta\beta \bm\Sigma^2 + \frac{\alpha}{\lambda-1} \begin{bmatrix}
            \delta\beta\bm\Sigma & \mI
        \end{bmatrix} \begin{bmatrix}
            \mI & \vzero \\
            -\frac{\beta}{\lambda-1}\bm\Sigma & \mI
        \end{bmatrix}^{-1} \begin{bmatrix}
            \bm\Sigma \\ \gamma\beta\bm\Sigma^2
        \end{bmatrix}} \\
        &= (\lambda-1)^{2s} \det\bigopen{\lambda\mI + \gamma\alpha\delta\beta \bm\Sigma^2 + \frac{\alpha}{\lambda-1} \begin{bmatrix}
            \delta\beta\bm\Sigma & \mI
        \end{bmatrix} \begin{bmatrix}
            \mI & \vzero \\
            \frac{\beta}{\lambda-1}\bm\Sigma & \mI
        \end{bmatrix} \begin{bmatrix}
            \bm\Sigma \\ \gamma\beta\bm\Sigma^2
        \end{bmatrix}} \\
        &= \det\bigopen{\lambda(\lambda-1)^2\mI + \alpha\beta\bigopen{\gamma(\lambda-1)+1}\bigopen{\delta(\lambda-1)+1}\bm\Sigma^2} = 0.
    \end{align*}
    Hence, for each eigenvalue $\sigma_i^2$ of $\bm\Sigma^2$, the roots $\lambda$ of a cubic polynomial
    \begin{align}
        P_i(\lambda) &:= \lambda(\lambda-1)^2 + \alpha\beta\sigma_i^2 \bigopen{\gamma(\lambda-1)+1}\bigopen{\delta(\lambda-1)+1} \label{eq:alexgda_polynomial}\\
        &= \lambda^3 - (2-\phi_i\gamma\delta)\lambda^2 + \bigset{1- \phi_i(2\gamma\delta-\gamma-\delta)}\lambda + \phi_i (\gamma-1)(\delta-1)\nonumber
    \end{align}
    are eigenvalues of $\widetilde{\mM}$, where  $\phi_i := \alpha\beta\sigma_i^2 > 0$.
    To obtain a necessary and sufficient condition of $|\lambda|<1$, we apply \cref{prop:routhhurwitz}:
    \begin{gather}
        \phi_i\gamma\delta - 2 + \phi_i(\gamma-1)(\delta-1) < 2 - \phi_i(2\gamma\delta - \gamma - \delta), \label{eq:secc5_5}\\
        \phi_i\gamma\delta - 2 + \phi_i(\gamma-1)(\delta-1) > -2 + \phi_i(2\gamma\delta - \gamma - \delta), \label{eq:secc5_6}\\
        \phi_i\gamma\delta - 2 - 3\phi_i(\gamma-1)(\delta-1) < 2 + \phi_i(2\gamma\delta - \gamma - \delta), \label{eq:secc5_7}\\
        \phi_i\gamma\delta - 2 - 3\phi_i(\gamma-1)(\delta-1) > -2 - \phi_i(2\gamma\delta - \gamma - \delta), \label{eq:secc5_8}\\
        \phi_i(\gamma-1)(\delta-1)(\phi_i(\gamma-1)(\delta-1) - \phi_i\gamma\delta + 2) + 1 - \phi_i(2\gamma\delta - \gamma - \delta) < 1, \label{eq:secc5_9}
    \end{gather}
    which are equivalent to
    \begin{gather}
        \phi_i \stackrel{\eqref{eq:secc5_6}}{>} 0, \quad \text{\color{gray} (which is already true,)} \\
        \gamma+\delta \stackrel{\eqref{eq:secc5_8}}{>} \frac{3}{2}, \label{eq:secc5_1}\\
        \phi_i(2\gamma-1)(2\delta-1) \stackrel{\eqref{eq:secc5_5}}{<} 4, \label{eq:secc5_2}\\
        \phi_i(1-4(\gamma-1)(\delta-1))\stackrel{\eqref{eq:secc5_7}}{<}4, \label{eq:secc5_3}\\
        -(\gamma-1)(\delta-1)(\gamma+\delta-1)\phi_i \stackrel{\eqref{eq:secc5_9}}{<} \gamma+\delta-2. \label{eq:secc5_4}
    \end{gather}

    To make these conditions more concise and interpretable, we conduct a case analysis on $\gamma$ and $\delta$ to know which condition among them is essential for having $|\lambda|<1$ (in fact, $\gamma+\delta>\frac{3}{2}$ is not enough yet!) and to identify what condition on $\phi_i$ should suffice for each case.

    \paragraph{Case 1. $(\gamma-1)(\delta-1) \ge 0$ and $\gamma+\delta>2$.} Note that \cref{eq:secc5_4} is true. Also, $(2\gamma-1)(2\delta-1) > 1-4(\gamma-1)(\delta-1)$ since
    \begin{align*}
        (2\gamma-1)(2\delta-1) - 1+4(\gamma-1)(\delta-1) &= 2(4\gamma\delta-3(\gamma+\delta) + 2)\\
        &= 8(\gamma-1)(\delta-1) + 2(\gamma+\delta-2) > 0.
    \end{align*}
    Thus, \cref{eq:secc5_2} implies \cref{eq:secc5_3}. It means that \cref{eq:secc5_2} alone is enough: $|\lambda|<1$ if
    \begin{align*}
        \phi_i<\frac{4}{(2\gamma-1)(2\delta-1)}.
    \end{align*}

    \paragraph{Case 2. $(\gamma-1)(\delta-1) \ge 0$ and $\frac{3}{2} < \gamma+\delta \le 2$.} For this case, it is impossible to satisfy all four conditions \eqref{eq:secc5_1}--\eqref{eq:secc5_4} at the same time. We prove it by contradiction. Note that $0 < (2\gamma-1)(2\delta-1) < 1-4(\gamma-1)(\delta-1)$ since 
    \begin{align*}
        (2\gamma-1)(2\delta-1) &= 4(\gamma-1)(\delta-1) + 2(\gamma+\delta) - 3 > 0, \\
        (2\gamma-1)(2\delta-1) - 1+4(\gamma-1)(\delta-1) &= 2(4\gamma\delta-3(\gamma+\delta) + 2) \\
        &\le 2((\gamma+\delta)^2-3(\gamma+\delta) + 2) \\
        &= 2(\gamma+\delta - 1)(\gamma+\delta - 2) < 0.
    \end{align*}
    From \cref{eq:secc5_3} and \cref{eq:secc5_4}, it must hold that
    \begin{align*}
         \frac{2-\gamma-\delta}{(\gamma-1)(\delta-1)(\gamma+\delta-1)} < \phi_i < \frac{4}{1-4(\gamma-1)(\delta-1)}.
    \end{align*}
    However, it implies that 
    \begin{align*}
        &4(\gamma-1)(\delta-1)(\gamma+\delta-1) - (2-\gamma-\delta)(1-4(\gamma-1)(\delta-1)) \\
        &= 4\gamma\delta-3(\gamma+\delta) + 2 > 0,
    \end{align*}
    which is a contradiction.

    \paragraph{Case 3. $(\gamma-1)(\delta-1) < 0$ and $\frac{3}{2} < \gamma+\delta \le 2$.} This case is also impossible since it contradicts \cref{eq:secc5_4}.

    \paragraph{Case 4. $(\gamma-1)(\delta-1) < 0$, $\gamma+\delta > 2$, and $4\gamma\delta-3(\gamma+\delta) + 2 \ge 0$.} In this case, it holds that 
    \begin{align*}
        0<\frac{-(\gamma-1)(\delta-1)(\gamma+\delta-1)}{\gamma+\delta-2} \le \frac{1-4(\gamma-1)(\delta-1)}{4}  \le \frac{(2\gamma-1)(2\delta-1)}{4}
    \end{align*}
    since 
    \begin{align*}
        (2\gamma-1)(2\delta-1) - 1+4(\gamma-1)(\delta-1) = 2(4\gamma\delta-3(\gamma+\delta) + 2) \ge 0
    \end{align*}
    and
    \begin{align*}
        &4(\gamma-1)(\delta-1)(\gamma+\delta-1) + (\gamma + \delta - 2)(1-4(\gamma-1)(\delta-1)) \\
        &= 4\gamma\delta-3(\gamma+\delta) + 2 \ge 0.
    \end{align*}
    
    Thus, \cref{eq:secc5_2} implies \cref{eq:secc5_3} and \cref{eq:secc5_4}. Since the rightmost term is positive, we have
    $|\lambda|<1$ if
    \begin{align*}
        \phi_i<\frac{4}{(2\gamma-1)(2\delta-1)}.
    \end{align*}

    \paragraph{Case 5. $(\gamma-1)(\delta-1) < 0$, $\gamma+\delta > 2$, and $4\gamma\delta-3(\gamma+\delta) + 2 < 0$.} In this case, it holds that
    \begin{align*}
            \frac{(2\gamma-1)(2\delta-1)}{4} < \frac{1-4(\gamma-1)(\delta-1)}{4} < \frac{-(\gamma-1)(\delta-1)(\gamma+\delta-1)}{\gamma+\delta-2}
    \end{align*}
    Thus, \cref{eq:secc5_4} implies \cref{eq:secc5_2} and \cref{eq:secc5_3}. Since the rightmost term is positive, we have $|\lambda|<1$ if 
    \begin{align*}
        \phi_i < \frac{\gamma+\delta-2}{-(\gamma-1)(\delta-1)(\gamma+\delta-1)}.
    \end{align*}

    Combining all these five cases,
    \begin{enumerate}
        \item (\textbf{Case 2} + \textbf{Case 3}) If $\gamma+\delta \le 2$, the polynomial $P_i(\lambda)$ must have a root outside of the open unit disk; 
        hence, the matrix iteration in \cref{eq:alexgdabilinear_matrixiteration} diverges.
        \item (\textbf{Case 1} + \textbf{Case 4}) If $\gamma+\delta > 2$ and $4\gamma\delta-3(\gamma+\delta) + 2 \ge 0$ (which includes the case of $\gamma+\delta > 2$, $\gamma\ge1$, and $\delta\ge1$), all the roots of the polynomial $P_i(\lambda)$ lie on the open unit disk $|\lambda|<1$ if 
        $$\phi_i < \frac{4}{(2\gamma-1)(2\delta-1)}.$$
        Hence if we choose step sizes $\alpha$ and $\beta$ such that $$\alpha\beta < \frac{4}{(2\gamma-1)(2\delta-1)L_{xy}^2},$$ then all the eigenvalues of $\widetilde{\mM}$ lie on the open unit disk; 
        the matrix iteration in \cref{eq:alexgdabilinear_matrixiteration} does converge.
        \item (\textbf{Case 5}) If $\gamma+\delta > 2$ and $4\gamma\delta-3(\gamma+\delta) + 2 < 0$,
        all the roots of the polynomial $P_i(\lambda)$ lie on the open unit disk $|\lambda|<1$ if 
        $$\phi_i < \frac{\gamma+\delta-2}{-(\gamma-1)(\delta-1)(\gamma+\delta-1)}.$$
        Hence if we choose step sizes $\alpha$ and $\beta$ such that $$\alpha\beta < \frac{\gamma+\delta-2}{-(\gamma-1)(\delta-1)(\gamma+\delta-1)L_{xy}^2},$$ then all the eigenvalues of $\widetilde{\mM}$ lie on the open unit disk; 
        the matrix iteration in \cref{eq:alexgdabilinear_matrixiteration} does converge.
    \end{enumerate}

    This proves the theorem.
    
\end{proof}

\subsection{\texorpdfstring{Proof of \cref{thm:alexgdabilinearcomplexity}}{Proof of Theorem 6.2}} \label{sec:alexgdabilinearcomplexity}

Here we prove \cref{thm:alexgdabilinearcomplexity} of \cref{sec:6}, restated below for the sake of readability.

\thmalexgdabilinearcomplexity*

\begin{proof}
    Recall that the Nash equilibrium that the iterates converges to is already characterized in \cref{eq:bilinearNash}. So, as in the proof in \cref{sec:alexgdabilinearcomplexity}, we again assume that $\vw_0$ (defined in \cref{eq:alexgda_update_bilinear_matrix}) belongs to $\gN^\perp = \row(\mB)\times\row(\mB^\top)\times\row(\mB^\top)$ and we inspect the convergence (to $\vzero$) of the sequence \eqref{eq:alexgdabilinear_matrixiteration}. For this reason, we analyze the spectral radius of the matrix $\widetilde{\mM}$ (defined in \cref{eq:alexgda_update_bilinear_matrix}). This will directly give us a convergence rate as well as iteration complexity ($\tilde{\gO}\bigopen{\frac{1}{1-\rho(\widetilde{\mM})}}$).
    
    We divide the proof into two parts: the case of general parameters $\gamma\ge 1$ and $\delta\ge 1$, and the case of $\delta=1$. 
    Throughout the proof, we keep the notation consistent with the proof of \cref{thm:alexgdabilinearconvergentstepsize} in \cref{sec:alexgdabilinearconvergentstepsize}.

    \subsubsection{\texorpdfstring{The Case of General $\gamma\ge 1$ and $\delta\ge 1$}{}}

    We have to find an upper bound of $|\lambda|$ which is strictly smaller than 1, whose difference with 1 is not negligible.
    Hence, we use a slightly smaller bound $\phi_i\le \frac{2}{(2\gamma-1)(2\delta-1)}$ than that in \cref{thm:alexgdabilinearconvergentstepsize}.
    
    With some substitutions
    \begin{align*}
        \psi_i := \alpha\beta\gamma\delta\sigma_i^2 = \gamma\delta\phi_i > 0, \quad \Gamma := 1-\frac{1}{\gamma} \in [0,1), \quad \Delta := 1-\frac{1}{\delta}\in [0,1),
    \end{align*} 
    we can rewrite the polynomial $P_i(\lambda)$ as
    \begin{align}
    \label{eq:alexpolynomial}
        P_i(\lambda) &= \lambda(\lambda-1)^2 + \psi_i (\lambda - \Gamma)(\lambda - \Delta) \\
        &= \lambda^3 - (2-\psi_i)\lambda^2 + \bigset{1- \psi_i(\Gamma+\Delta)}\lambda + \psi_i \Gamma \Delta. \nonumber
    \end{align}
    
    Since $P_i(0) = \psi_i \Gamma \Delta \ge 0$ and 
    \begin{align*}
        P_i\bigopen{-\frac{1}{2}} = -\frac{9}{8} + \psi_i\bigopen{\Gamma+\frac{1}{2}}\bigopen{\Delta+\frac{1}{2}} <0
    \end{align*}
    holds because 
    \begin{align*}
        \psi_i &\le \frac{2}{(\Gamma+1)(\Delta+1)} = \frac{2\gamma\delta}{(2\gamma-1)(2\delta-1)} = \frac{9\gamma\delta}{2\bigopen{3\gamma-\frac{3}{2}}\bigopen{3\delta-\frac{3}{2}}} \\
        &< \frac{9\gamma\delta}{2\bigopen{3\gamma-2}\bigopen{3\delta-2}} = \frac{9}{8\bigopen{\Gamma+\frac{1}{2}}\bigopen{\Delta+\frac{1}{2}}}.
    \end{align*}
    Thus, there exists a non-positive real root $-r\in (-\frac{1}{2}, 0]$.

    We can show that there is no positive real root if $\psi_i$ is small enough.
    \begin{restatable}{proposition}{propnopositiveroot}
        \label{prop:nopositiveroot}
        The polynomial $P_i(\lambda)$ defined in \cref{eq:alexpolynomial} has no positive real root if 
        \begin{align*}
            \psi_i |\Gamma-\Delta| \le \min\bigset{(1-\Gamma)^2, (1-\Delta)^2}.
        \end{align*}
    \end{restatable}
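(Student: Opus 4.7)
The plan is to prove $P_i(\lambda) > 0$ for every $\lambda > 0$, which immediately rules out positive real roots. Without loss of generality I would first swap $\Gamma$ and $\Delta$ if needed so that $\Gamma \le \Delta$; since both lie in $[0,1)$ this turns the hypothesis into the single inequality $\psi_i(\Delta-\Gamma) \le (1-\Delta)^2$, because $\min\bigset{(1-\Gamma)^2,(1-\Delta)^2} = (1-\Delta)^2$.

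Next I would split $(0,\infty)$ into the three intervals $(0,\Gamma]$, $(\Gamma,\Delta)$, and $[\Delta,\infty)$, handling the two outer ones by inspection. On both of these intervals the quadratic factor $(\lambda-\Gamma)(\lambda-\Delta)$ does not change sign, so both summands defining $P_i$ are non-negative; furthermore, $\lambda(\lambda-1)^2$ vanishes on $\lambda > 0$ only at $\lambda = 1$, and at $\lambda = 1$ the coupling term $\psi_i(1-\Gamma)(1-\Delta)$ is strictly positive because $\psi_i > 0$ and $\Gamma,\Delta < 1$. The degenerate case $\Gamma = \Delta$ is even easier, since then the coupling term becomes $\psi_i(\lambda-\Gamma)^2 \ge 0$ everywhere.

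The heart of the argument is the middle interval $\lambda \in (\Gamma,\Delta)$, where the coupling term is negative and could in principle dominate. Here I plan to combine two pointwise bounds. The first is plain monotonicity: $(1-\lambda)^2 \ge (1-\Delta)^2$ on $[\Gamma,\Delta] \subset [0,1)$, strict for $\lambda < \Delta$. The second is a one-line algebraic identity,
\begin{equation*}
\lambda(\Delta-\Gamma) - (\lambda-\Gamma)(\Delta-\lambda) = (\lambda-\Gamma)^2 + \Gamma(\Delta-\Gamma) \ge 0,
\end{equation*}
which yields $(\lambda-\Gamma)(\Delta-\lambda) \le \lambda(\Delta-\Gamma)$, strict for $\lambda > \Gamma$. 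Combined with the hypothesis, these give
\begin{equation*}
P_i(\lambda) = \lambda(1-\lambda)^2 - \psi_i(\lambda-\Gamma)(\Delta-\lambda) \ge \lambda\bigopen{(1-\Delta)^2 - \psi_i(\Delta-\Gamma)} \ge 0,
\end{equation*}
with strict positivity on $(\Gamma,\Delta)$ inherited from the strictness of the two pointwise bounds.

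The main obstacle is really just spotting the correct algebraic inequality $(\lambda-\Gamma)(\Delta-\lambda) \le \lambda(\Delta-\Gamma)$; once written down it is immediate from completing the square. Everything else is case splitting and a monotonicity check, so I expect no serious difficulty beyond carefully tracking which inequality is strict in which region.
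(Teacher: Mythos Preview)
Your proof is correct and follows essentially the same route as the paper's: both reduce to the interval $[\Gamma,\Delta]$, apply the bound $(1-\lambda)^2 \ge (1-\Delta)^2$ there, and then show the resulting expression is non-negative under the hypothesis $\psi_i(\Delta-\Gamma)\le (1-\Delta)^2$. The only difference is in that last step---the paper checks that the vertex of the reduced quadratic $Q(\lambda)=\lambda(1-\Delta)^2+\psi_i(\lambda-\Gamma)(\lambda-\Delta)$ lies at or to the left of $\Gamma$, whereas you use the algebraic inequality $(\lambda-\Gamma)(\Delta-\lambda)\le \lambda(\Delta-\Gamma)$, which is a slightly cleaner finish.
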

    The proof of this proposition can be found in \cref{sec:nopositiveroot}. From the root coefficient relationship, we know that the sum of three roots of $P_i(\lambda)$ equals $2-\psi_i>0$, which holds because $\psi_i \le \frac{2}{(\Gamma+1)(\Delta+1)}<2$. Hence, $P_i(\lambda)$ must have a single real root $-r \le0$ and two complex conjugate roots $c$ and $\bar{c}$, where $\Re[c] > 0$.

    Note that we have another bound for the unique real root. Plugging in $\lambda=-r$ to $P_i(\lambda)=0$, we have
    \begin{gather}
        \bigset{1-\psi_i(\Gamma+\Delta)}(-r) + \psi_i \Gamma\Delta = r^3 + (2-\psi_i)r^2 \ge 0, \nonumber\\
        \therefore r\le \frac{\psi_i \Gamma\Delta}{1-\psi_i(\Gamma+\Delta)}. \label{eq:realrootbound}
    \end{gather}

    Again from the root coefficient relationship, we know that
    \begin{align*}
        -r + 2\Re[c] &= 2-\psi_i, \\
        -2r\Re[c]+|c|^2 &= 1-\psi_i(\Gamma+\Delta).
    \end{align*}
    Plugging one into another, we have an expression of the squared absolute value of a complex root in terms of $r$ as
    \begin{align}
    \label{eq:alexgda_bilinear_complexrootbound}
        \begin{aligned}
            |c|^2 &= 1-\psi_i(\Gamma+\Delta)+r(2-\psi_i+r) \\
        &\stackrel{\eqref{eq:realrootbound}}{\le}  1-\psi_i(\Gamma+\Delta) + \frac{\psi_i \Gamma\Delta}{1-\psi_i(\Gamma+\Delta)}\bigopen{2-\psi_i + \frac{\psi_i \Gamma\Delta}{1-\psi_i(\Gamma+\Delta)}} \\
        &= 1- \psi_i \bigset{\Gamma+\Delta - \frac{ \Gamma\Delta}{1-\psi_i(\Gamma+\Delta)}\bigopen{2-\psi_i + \frac{\psi_i \Gamma\Delta}{1-\psi_i(\Gamma+\Delta)}}}
        \end{aligned}
    \end{align}
    To show that $|c|^2$ is strictly smaller than 1, we want to show that 
    \begin{align*}
        \Gamma+\Delta - \frac{ \Gamma\Delta}{1-\psi_i(\Gamma+\Delta)}\bigopen{2-\psi_i + \frac{\psi_i \Gamma\Delta}{1-\psi_i(\Gamma+\Delta)}}>0.
    \end{align*}
    In fact, this is shown in the following proposition.
    \begin{restatable}{proposition}{propforbilinearcomplexity}
        \label{prop:forbilinearcomplexity}
        \begin{align*}
            \Gamma+\Delta - \frac{ \Gamma\Delta}{1-\psi_i(\Gamma+\Delta)}\bigopen{2-\psi_i + \frac{\psi_i \Gamma\Delta}{1-\psi_i(\Gamma+\Delta)}} \ge \frac{1}{4}(\Gamma+\Delta-2\Gamma\Delta) > 0
        \end{align*}
        if $\psi_i \le \frac{\Gamma+\Delta-2\Gamma\Delta}{2(\Gamma+\Delta)^2}.$
    \end{restatable}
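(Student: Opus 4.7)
The plan is to reduce the messy expression to a simple algebraic inequality in the symmetric functions $s := \Gamma+\Delta$ and $p := \Gamma\Delta$, and then apply AM-GM plus the fact that $\Gamma,\Delta\in[0,1)$. Observe that under the hypotheses of the theorem ($\gamma,\delta\ge 1$, $\gamma+\delta>2$) we have $\Gamma,\Delta\in[0,1)$ with at least one of them strictly positive, so $p<1$ and $s-2p = \Gamma(1-\Delta)+\Delta(1-\Gamma)>0$. The last quantity is exactly $\Gamma+\Delta-2\Gamma\Delta$ on the RHS, and the strict positivity claim of the proposition is immediate from this.

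First, I would use the step-size bound to control $u := 1-\psi_i s$ from below. Since $\psi_i \le \frac{s-2p}{2s^2}$, we get $\psi_i s \le \frac{s-2p}{2s}$, so
\begin{align*}
    u \;\ge\; 1 - \frac{s-2p}{2s} \;=\; \frac{s+2p}{2s}, \qquad \frac{1}{u} \le \frac{2s}{s+2p}.
\end{align*}
Combined with the crude bounds $2-\psi_i \le 2$ and $\psi_i \le \frac{s-2p}{2s^2}$, this gives
\begin{align*}
    \frac{p}{u}\bigopen{2-\psi_i + \frac{\psi_i p}{u}}
    &\le \frac{2sp}{s+2p}\bigopen{2 + \frac{(s-2p)p}{s(s+2p)}}
    = \frac{4sp}{s+2p} + \frac{2p^2(s-2p)}{(s+2p)^2}.
\end{align*}

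Second, I would subtract this upper bound from $s$ and simplify. Using $s-\frac{4sp}{s+2p}=\frac{s(s-2p)}{s+2p}$, the LHS of the target inequality is at least
\begin{align*}
    \frac{s(s-2p)}{s+2p} - \frac{2p^2(s-2p)}{(s+2p)^2} \;=\; \frac{(s-2p)\bigset{s(s+2p)-2p^2}}{(s+2p)^2}.
\end{align*}
Since $s-2p>0$, dividing through reduces the desired bound $\ge \frac{s-2p}{4}$ to
\begin{align*}
    4s(s+2p) - 8p^2 \;\ge\; (s+2p)^2,
    \qquad\text{i.e.,}\qquad 3s^2 + 4sp - 12p^2 \;\ge\; 0.
\end{align*}

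Finally, I would dispatch this last inequality via AM-GM: $s=\Gamma+\Delta \ge 2\sqrt{\Gamma\Delta}=2\sqrt{p}$, so $s^2\ge 4p$; and since $\Gamma,\Delta\in[0,1)$ give $p<1$ and hence $p\ge p^2$, we obtain $3s^2 \ge 12p \ge 12p^2$, which together with $4sp\ge 0$ finishes the proof. I expect the main obstacle to be making sure the crude bounds $2-\psi_i\le 2$ and $\psi_i\le\frac{s-2p}{2s^2}$ in Step 1 are slack enough so that the resulting inequality $3s^2+4sp-12p^2\ge 0$ still holds, but the two-fold slack built into dividing by $4$ on the right-hand side of the proposition (one factor to absorb the coefficient losses, and one to leave a clean $p<1 \Rightarrow p\ge p^2$ argument) turns out to be exactly enough.
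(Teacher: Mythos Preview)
Your proof is correct and takes a genuinely different, more streamlined route than the paper.

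The paper first rewrites the LHS as a ratio with denominator $(1-\psi_i(\Gamma+\Delta))^2\in(0,1]$, drops the denominator, and is left with a quadratic $R(\psi_i)=a\psi_i^2-b\psi_i+c$ in $\psi_i$. It then (i) computes the discriminant $b^2-4ac$ explicitly and factors it to show the roots are real and positive, (ii) argues that the smaller root exceeds $c/b>\bar x:=\tfrac{s-2p}{2s^2}$ so that $R$ is decreasing on $[0,\bar x]$, and (iii) evaluates $R(\bar x)$ by a second long expansion to reach the bound $\tfrac14(s-2p)$. Each of steps (i) and (iii) involves a page of symbolic manipulation in $\Gamma,\Delta$.

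Your argument bypasses the quadratic analysis entirely: you plug the hypothesis $\psi_i\le\bar x$ directly into the two places where $\psi_i$ appears (once via $u\ge\frac{s+2p}{2s}$, once via $\psi_i\le\bar x$ itself), collapse everything to the single inequality $3s^2+4sp-12p^2\ge 0$, and finish with AM--GM ($s^2\ge 4p$) together with $p<1\Rightarrow p\ge p^2$. The one subtlety---that you are multiplying an upper bound on $\frac{p}{u}\ge 0$ by an upper bound on the parenthesized factor whose sign is not a~priori controlled---is harmless because the resulting upper bound $2+\frac{(s-2p)p}{s(s+2p)}$ is nonnegative, so the product bound holds regardless. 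Your route is shorter and more transparent; the paper's route has the minor advantage of making the role of the specific threshold $\bar x=\tfrac{s-2p}{2s^2}$ (as a point just below the first root of $R$) more visible.
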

    The proof of this proposition can be found in \cref{sec:forbilinearcomplexity}. 
    Therefore, gathering the fact that $|-r|^2 < \frac{1}{4}$, \cref{eq:alexgda_bilinear_complexrootbound}, and \cref{prop:forbilinearcomplexity}, for every root $\lambda$ of the polynomial $P_i(\lambda)$,
    \begin{align}
    \label{eq:alexgda_bilinear_eigenvaluebound}
        \begin{aligned}
            |\lambda|^2 &< \max\bigset{\frac{1}{4}, 1-\frac{\psi_i}{4}(\Gamma+\Delta-2\Gamma\Delta)} \\
            &= \max\bigset{\frac{1}{4}, 1-\frac{1}{4}\alpha\beta\sigma_i^2 (\gamma+\delta-2)},
        \end{aligned}
    \end{align}
    where
    \begin{align*}
        \psi_i \le \min\bigset{\frac{2}{(\Gamma+1)(\Delta+1)}, \frac{\min\bigset{(1-\Gamma)^2, (1-\Delta)^2}}{|\Gamma-\Delta|}, \frac{\Gamma+\Delta-2\Gamma\Delta}{2(\Gamma+\Delta)^2}},
    \end{align*}
    or equivalently,
    \begin{gather}
        \alpha\beta\sigma_i^2 \le \frac{1}{C_{\gamma,\delta}}, \nonumber\\
        \text{where } C_{\gamma,\delta} := \max\bigset{\frac{(2\gamma-1)(2\delta-1)}{2}, |\gamma-\delta|\max\bigset{\gamma, \delta}^2, \frac{2(2\gamma\delta-\gamma-\delta)^2}{\gamma+\delta-2}}. \label{eq:cgammadelta}
    \end{gather}
    Hence, if we choose step sizes $\alpha$ and $\beta$ such that $\alpha\beta = \frac{1}{C_{\gamma,\delta} L_{xy}^2}$, the bound in \cref{eq:alexgda_bilinear_eigenvaluebound} holds for all $i=1, ..., s$, thereby we obtain a strict upper bound of spectral radius of the matrix $\widetilde{\mM}$ as follows:
    \begin{align*}
        \rho(\widetilde{\mM})^2 &< \max\bigset{\frac{1}{4}, 1-\frac{1}{4}\alpha\beta\mu_{xy}^2 (\gamma+\delta-2)}, \\
        &= \max\bigset{\frac{1}{4}, 1-\frac{\gamma+\delta-2}{4C_{\gamma,\delta}} \frac{\mu_{xy}^2}{L_{xy}^2}}.
    \end{align*}
    In conclusion, the matrix iteration in \cref{eq:alexgdabilinear_matrixiteration} can satisfy $\norm{\vw_k}^2 < \epsilon$ with
    \begin{align*}
        k=\gO\bigopen{\max\bigset{1, \frac{C_{\gamma,\delta}}{\gamma+\delta-2}\cdot\frac{L_{xy}^2}{\mu_{xy}^2}} \log\bigopen{\frac{\norm{\vw_0}^2}{\eps}}}
    \end{align*}
    iterations.

    \begin{remark}
        One may notice that the constant $C_{\gamma,\delta}$ defined in \cref{eq:cgammadelta} may grow as $\gamma^3$ or $\delta^3$, which can make the range of step size with certified convergence rate shrink and degrade the iteration complexity.
        However, when $\delta=1$, our analysis gets simpler and we can choose an optimal set of parameters $\alpha$, $\beta$, and $\gamma$ to ``optimize'' the spectral radius (and thus the convergence rate).
    \end{remark} 

    \subsubsection{\texorpdfstring{The case of $\delta=1$ ($\gamma > 1$)}{}}
    
    Let us go back to the polynomial $P_i(\lambda)$ (\cref{eq:alexgda_polynomial}).
    If $\delta=1$ (and thus we choose $\gamma>1$), the polynomial becomes
    \begin{align*}
        P^{(\delta=1)}_i(\lambda) = \lambda \bigset{(\lambda-1)^2 + \alpha\beta\sigma_i^2(\gamma\lambda-(\gamma-1))}.
    \end{align*}
    So, we know one root exactly: $\lambda=0$.
    Since we want a small absolute value of eigenvalues but 0 is a trivial lower bound of $|\lambda|$, we only have to care about the other two roots: $(\lambda-1)^2 + \alpha\beta\sigma_i^2(\gamma\lambda-(\gamma-1)) = 0$, or 
    \begin{align*}
        \lambda_0 &:= 1-\frac{\gamma\alpha\beta\sigma_i^2 - \sqrt{(2-\gamma\alpha\beta\sigma_i^2)^2 -4(1-(\gamma-1)\alpha\beta\sigma_i^2)}}{2}, \\
        \lambda_1 &:= 1-\frac{\gamma\alpha\beta\sigma_i^2 + \sqrt{(2-\gamma\alpha\beta\sigma_i^2)^2 -4(1-(\gamma-1)\alpha\beta\sigma_i^2)}}{2}.
    \end{align*}

    The maximum absolute value of eigenvalues can be calculated as 
    \begin{align}
        &\max\bigset{|\lambda_0|, |\lambda_1|} \nonumber\\
        &= \begin{dcases}
            \sqrt{1- (\gamma-1)\alpha\beta\sigma_i^2} & \text{if } (2-\gamma\alpha\beta\sigma_i^2)^2 \le 4(1-(\gamma-1)\alpha\beta\sigma_i^2), \\
            \bigabs{1-\frac{\gamma\alpha\beta\sigma_i^2}{2}}+\frac{\sqrt{(2-\gamma\alpha\beta\sigma_i^2)^2 -4(1-(\gamma-1)\alpha\beta\sigma_i^2)}}{2} & \text{if } (2-\gamma\alpha\beta\sigma_i^2)^2 > 4(1-(\gamma-1)\alpha\beta\sigma_i^2).
        \end{dcases} \nonumber\\
        &= \begin{dcases}
            \sqrt{1- (\gamma-1)\alpha\beta\sigma_i^2} & \text{if } \gamma^2\alpha\beta\sigma_i^2\le 4, \\
            \bigabs{1-\frac{\gamma\alpha\beta\sigma_i^2}{2}}+\frac{\sqrt{(\gamma^2\alpha\beta\sigma_i^2- 4)\alpha\beta\sigma_i^2}}{2} & \text{if }  \gamma^2\alpha\beta\sigma_i^2> 4.
        \end{dcases} \label{eq:delta=1_spectralradius}\\
        &=: r(\alpha, \beta, \gamma, \sigma_i^2) \nonumber
    \end{align}
    
    Thus, if we want to optimize the spectral radius $\rho(\widetilde{\mM})$ (which directly gives the convergence rate exponent) by choosing parameters $\alpha$, $\beta$, and $\gamma$, we have to solve the following minimax problem:
    \begin{align*}
         \min_{\alpha, \beta, \gamma} \,\max_{i=1,\ldots,s} r(\alpha, \beta, \gamma, \sigma_i^2).
    \end{align*}
    
    Suppose $L_{xy} = \sigma_1 \ge \cdots \ge \sigma_s = \mu_{xy}$. We consider 3 cases:
    
    \paragraph{Case 1. $\gamma^2\alpha\beta L_{xy}^2\le 4$.}
    In this case, $\gamma^2\alpha\beta \sigma_i^2\le 4$ holds for all $i = 1, \dots, s$, and then $r(\alpha, \beta, \gamma, \sigma_i^2) = \sqrt{1- (\gamma-1)\alpha\beta\sigma_i^2}$ is a decreasing function of $\sigma_i^2$. Hence, it suffices to
    minimize $\sqrt{1-(\gamma-1)\alpha\beta \mu_{xy}^2} $ over $\alpha$, $\beta$, and $\gamma$.
    The optimal choice of $\alpha\beta$ is $\frac{4}{\gamma^2 L_{xy}^2}$ which comes from the condition $\gamma^2\alpha\beta L_{xy}^2\le 4$, so we minimize $\sqrt{1-\frac{4(\gamma-1)\mu_{xy}^2}{\gamma^2 L_{xy}^2}}$ over $\gamma$. 
    The optimal $\gamma$ is $2$, so the optimal spectral radius is $\sqrt{1-\frac{\mu_{xy}^2}{L_{xy}^2}}$, which can be obtained with $\alpha\beta=\frac{1}{L_{xy}^2}$ and $\gamma=2$. 
    
    \paragraph{Case 2. $\gamma^2\alpha\beta \mu_{xy}^2 \ge 4$.} 
    Note that 
    \begin{align*}
        \bigabs{1-\frac{\gamma\alpha\beta\sigma_i^2}{2}}+\frac{\sqrt{(\gamma^2\alpha\beta\sigma_i^2- 4)\alpha\beta\sigma_i^2}}{2}
    \end{align*}
    is an increasing function in terms of $\sigma_i^2 \ge \frac{4}{\gamma^2 \alpha\beta}$. This can be shown by proving that
    \begin{align*}
        1-\frac{\gamma\alpha\beta\sigma_i^2}{2}+\frac{\sqrt{(\gamma^2\alpha\beta\sigma_i^2- 4)\alpha\beta\sigma_i^2}}{2}
    \end{align*}
    and
    \begin{align*}
        -1+\frac{\gamma\alpha\beta\sigma_i^2}{2}+\frac{\sqrt{(\gamma^2\alpha\beta\sigma_i^2- 4)\alpha\beta\sigma_i^2}}{2}
    \end{align*}
    are both increasing functions in terms of $\sigma_i^2$. The latter case is easy, so we show for the former one: using the derivative in $\sigma_i^2$,
    \begin{align*}
        \frac{d}{d \sigma_i^2} \bigopen{-\gamma\alpha\beta\sigma_i^2+\sqrt{(\gamma^2\alpha\beta\sigma_i^2- 4)\alpha\beta\sigma_i^2}} =  -\gamma\alpha\beta + \frac{\gamma^2\alpha^2\beta^2\sigma_i^2 - 2\alpha\beta}{\sqrt{\gamma^2\alpha^2\beta^2\sigma_i^4- 4\alpha\beta\sigma_i^2}} >0,
    \end{align*}
    So it suffices to minimize 
    \begin{align*}
        \bigabs{1-\frac{\gamma\alpha\beta L_{xy}^2}{2}}+\frac{\sqrt{(\gamma^2\alpha\beta L_{xy}^2- 4)\alpha\beta L_{xy}^2}}{2}
    \end{align*}
    over $\alpha$, $\beta$, and $\gamma$. 
    In fact, this is also an increasing function in terms of $\alpha\beta \ge \frac{4}{\gamma^2\mu_{xy}^2}$ and the optimal choice of $\alpha\beta$ is $\frac{4}{\gamma^2\mu_{xy}^2}$ (which comes from the condition $\gamma^2\alpha\beta \mu_{xy}^2 \ge 4$). So it is left to minimize
    \begin{align*}
        \bigabs{1-\frac{2L_{xy}^2}{\gamma \mu_{xy}^2}}+\frac{2 L_{xy}}{\gamma \mu_{xy}}\sqrt{\frac{L_{xy}^2}{\mu_{xy}^2}-1}
    \end{align*}
    over $\gamma$. This has a minimum $\sqrt{1-\frac{\mu_{xy}^2}{L_{xy}^2}}$ at $\gamma=\frac{2L_{xy}^2}{\mu_{xy}^2}$. Hence, the optimal spectral radius is $\sqrt{1-\frac{\mu_{xy}^2}{L_{xy}^2}}$, which is achieved with $\gamma=\frac{2L_{xy}^2}{\mu_{xy}^2}$ and $\alpha\beta=\frac{\mu_{xy}^2}{L_{xy}^4}$.
    
    \paragraph{Case 3. $\gamma^2\alpha\beta \mu_{xy}^2 \le 4 \le \gamma^2\alpha\beta L_{xy}^2$.} Maximizing $r(\alpha,\beta,\gamma,\sigma_i^2)$ over $i=1, \ldots, s$, we only need to obtain
    \begin{align*}
        \min_{\alpha,\beta, \gamma} \max\bigset{\sqrt{1-(\gamma-1)\alpha\beta \mu_{xy}^2},\,
        \bigabs{1-\frac{\gamma\alpha\beta L_{xy}^2}{2}}+\frac{\sqrt{(\gamma^2\alpha\beta L_{xy}^2- 4)\alpha\beta L_{xy}^2}}{2}}.
    \end{align*}
  
    For a fixed $\gamma$, the optimal $X := \alpha\beta$ is uniquely attained when
    \begin{align}
        \label{eq:optimalX-condition}
        \sqrt{1-(\gamma-1) \mu_{xy}^2 X} = 
        \bigabs{1-\frac{\gamma L_{xy}^2 X}{2}}+\frac{\sqrt{\gamma^2 L_{xy}^4 X^2- 4L_{xy}^2 X}}{2},
    \end{align}
    because the left hand side decreases in $X$ but the right hand side increases in $X$, as well as 
    \begin{align}
        \label{eq:leftislarge}
        \sqrt{1-(\gamma-1) \mu_{xy}^2 \frac{4}{\gamma^2 L_{xy}^2}} \ge \bigabs{1-\frac{\gamma L_{xy}^2} {2} \frac{4}{\gamma^2 L_{xy}^2}} + 0
    \end{align}
    and
    \begin{align}
    \label{eq:rightislarge}
        \sqrt{1-(\gamma-1) \mu_{xy}^2 \frac{4}{\gamma^2 \mu_{xy}^2}} \le 
        \bigabs{1-\frac{\gamma L_{xy}^2}{2} \frac{4}{\gamma^2 \mu_{xy}^2}}+\frac{\sqrt{\gamma^2 L_{xy}^4 \bigopen{\frac{4}{\gamma^2 \mu_{xy}^2}}^2- 4L_{xy}^2 \bigopen{\frac{4}{\gamma^2 \mu_{xy}^2}}}}{2}.
    \end{align}
    \cref{eq:leftislarge} can be shown as
    \begin{align*}
        \bigopen{1-(\gamma-1) \mu_{xy}^2 \frac{4}{\gamma^2 L_{xy}^2}} - \bigopen{1-\frac{2}{\gamma}}^2 = \frac{4(\gamma-1)}{\gamma^2}\bigopen{1-\frac{\mu_{xy}^2}{L_{xy}^2}} \ge 0.
    \end{align*}
    In addition, \cref{eq:rightislarge} can be shown as the following case analysis: if $\gamma \le \frac{L_{xy}^2}{\mu_{xy}^2} + 1$ then
    \begin{align*}
        \bigabs{1-\frac{2L_{xy}^2}{\gamma\mu_{xy}^2}}^2 - \bigopen{1- \frac{4(\gamma-1)}{\gamma^2}} = \frac{4}{\gamma^2}\bigopen{\frac{L_{xy}^2}{\mu_{xy}^2}-1}\bigopen{\frac{L_{xy}^2}{\mu_{xy}^2} + 1 - \gamma}\ge 0,
    \end{align*}
    if $\frac{L_{xy}^2}{\mu_{xy}^2} + 1 < \gamma \le \frac{2L_{xy}^2}{\mu_{xy}^2}$ then
    \begin{align*}
       \frac{2L_{xy}}{\gamma \mu_{xy}}\sqrt{\frac{L_{xy}^2}{\mu_{xy}^2}-1} + \bigopen{\frac{2L_{xy}^2}{\gamma\mu_{xy}^2}-1} - \bigopen{1-\frac{2}{\gamma}} \ge \frac{2}{\gamma}\bigset{\frac{L_{xy}}{\mu_{xy}} \sqrt{\frac{L_{xy}^2}{\mu_{xy}^2}-1} - \bigopen{\frac{L_{xy}^2}{\mu_{xy}^2} - 1}} \ge 0,
    \end{align*}
    and if $\gamma > \frac{2L_{xy}^2}{\mu_{xy}^2}$,
    \begin{align*}
        \frac{2L_{xy}}{\gamma \mu_{xy}}\sqrt{\frac{L_{xy}^2}{\mu_{xy}^2}-1} + \bigopen{1-\frac{2L_{xy}^2}{\gamma\mu_{xy}^2}} - \bigopen{1-\frac{2}{\gamma}} = \frac{2}{\gamma}\bigset{\frac{L_{xy}}{\mu_{xy}} \sqrt{\frac{L_{xy}^2}{\mu_{xy}^2}-1} - \bigopen{\frac{L_{xy}^2}{\mu_{xy}^2} - 1}} \ge 0.
    \end{align*}

    Solving \cref{eq:optimalX-condition},
    \begin{gather*}
        1-(\gamma-1) \mu_{xy}^2 X = 
        {1-\gamma L_{xy}^2 X + \frac{\gamma^2 L_{xy}^4}{4} X^2}
        +\frac{\gamma^2 L_{xy}^4 X^2- 4L_{xy}^2 X}{4}
        +2\bigabs{1-\frac{\gamma L_{xy}^2 X}{2}}\frac{\sqrt{\gamma^2 L_{xy}^4 X^2- 4L_{xy}^2 X}}{2}, \\
        \bigset{(\gamma+1) L_{xy}^2 -(\gamma-1) \mu_{xy}^2} X -
        \frac{\gamma^2 L_{xy}^4}{2} X^2
        = \bigabs{1-\frac{\gamma L_{xy}^2 X}{2}}\sqrt{\gamma^2 L_{xy}^4 X^2- 4L_{xy}^2 X},\\
        \begin{aligned}
            &\bigset{(\gamma+1) L_{xy}^2 -(\gamma-1) \mu_{xy}^2}^2 X^2 - \bigset{(\gamma+1) L_{xy}^2 -(\gamma-1) \mu_{xy}^2} \gamma^2 L_{xy}^4 X^3 + \frac{\gamma^4 L_{xy}^8}{4} X^4 \\
            &= \bigopen{1 - \gamma L_{xy}^2 X + \frac{\gamma^2 L_{xy}^4 X^2}{4}}\bigopen{\gamma^2 L_{xy}^4 X^2- 4L_{xy}^2 X} \\
            &= - 4L_{xy}^2 X + \bigopen{\gamma^2 + 4\gamma } L_{xy}^4 X^2 - \bigopen{\gamma^3+\gamma^2}L_{xy}^6 X^3 + \frac{\gamma^4 L_{xy}^8}{4} X^4,
        \end{aligned}
    \end{gather*}
    \begin{align}
        \label{eq:optimalX-quadratic}
        4L_{xy}^2 - \underbrace{\bigopen{(2\gamma - 1) L_{xy}^4 + 2(\gamma^2-1) L_{xy}^2 \mu_{xy}^2 -(\gamma-1)^2 \mu_{xy}^4}}_{=:B(\gamma)} X + (\gamma^3-\gamma^2)L_{xy}^4 \mu_{xy}^2 X^2 = 0.
    \end{align}
    The discriminant equals
    \begin{align}
        \label{eq:difficultdiscriminant}
        \begin{aligned}
            &B(\gamma)^2-16(\gamma^3-\gamma^2)L_{xy}^6\mu_{xy}^2 \\
            &=\bigopen{L_{xy}^2 -(\gamma-1)\mu_{xy}^2}^2 \bigopen{(2\gamma-1)^2 L_{xy}^4 - 2 L_{xy}^2\mu_{xy}^2 (2\gamma^2 - \gamma - 1) + (\gamma-1)^2 \mu_{xy}^4} \\
            &= \bigopen{L_{xy}^2 -(\gamma-1)\mu_{xy}^2}^2 \bigset{\bigopen{(2\gamma+1)L_{xy}^2 - (\gamma-1)\mu_{xy}^2}^2 - (2\sqrt{2\gamma} L_{xy}^2)^2} \ge 0,
        \end{aligned}
    \end{align}
    where the last inequality is due to
    \begin{align*}
        (2\gamma+1)L_{xy}^2 - (\gamma-1)\mu_{xy}^2 \ge (\gamma+2)L_{xy}^2 \ge 2\sqrt{2\gamma} L_{xy}^2.
    \end{align*}
    Solving the quadratic equation in \cref{eq:optimalX-quadratic}, there are two possible optimal choices of $X$.
    \begin{align*}
        X = \frac{B(\gamma) \pm \sqrt{B(\gamma)^2-16(\gamma^3-\gamma^2)L_{xy}^6\mu_{xy}^2}}{2(\gamma^3-\gamma^2)L_{xy}^4\mu_{xy}^2}
    \end{align*}
    Nevertheless, we take only the minus sign to maximize the value of $\sqrt{1-(\gamma-1)\mu_{xy}^2 X}$ among possible $X$'s. This is because, if we took the plus sign, the $X$ would be a solution of
    \begin{align*}
        \sqrt{1-(\gamma-1) \mu_{xy}^2 X} = 
        -\bigabs{1-\frac{\gamma L_{xy}^2 X}{2}}+\frac{\sqrt{\gamma^2 L_{xy}^4 X^2- 4L_{xy}^2 X}}{2},
    \end{align*}
    but would not be a solution of \cref{eq:optimalX-condition}.
    In other words, the optimal choice of $X$ given a fixed $\gamma$ is
    \begin{align}
        \label{eq:optimalX}
        X_*(\gamma) := \frac{B(\gamma) - \sqrt{B(\gamma)^2-16(\gamma^3-\gamma^2)L_{xy}^6\mu_{xy}^2}}{2(\gamma^3-\gamma^2)L_{xy}^4\mu_{xy}^2}.
    \end{align}
    Putting this into the left hand side of \cref{eq:optimalX-condition}, now we need to minimize
    \begin{align*}
        \sqrt{1-\frac{B(\gamma) - \sqrt{B(\gamma)^2-16(\gamma^3-\gamma^2)L_{xy}^6\mu_{xy}^2}}{2\gamma^2L_{xy}^4}}.
    \end{align*}
    Here we utilize the following fact.
    \begin{restatable}{proposition}{propdifficultfunction}
    \label{prop:difficultfunction}
        Recall that $B(\gamma)=(2\gamma - 1) L_{xy}^4 + 2(\gamma^2-1) L_{xy}^2 \mu_{xy}^2 -(\gamma-1)^2 \mu_{xy}^4$. Then,
        \begin{align*}
            h(\gamma) := \frac{B(\gamma) - \sqrt{B(\gamma)^2-16(\gamma^3-\gamma^2)L_{xy}^6\mu_{xy}^2}}{\gamma^2}
        \end{align*}
        is increasing for $\gamma\in \bigclosed{1, 1+\frac{L_{xy}^2}{\mu_{xy}^2}}$ and decreasing for $\gamma\in \left[1+\frac{L_{xy}^2}{\mu_{xy}^2}, \infty\right)$.
    \end{restatable}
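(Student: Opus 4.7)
The plan is to reduce the monotonicity analysis to the sign of a single quantity via implicit differentiation of the defining quadratic $F(h,\gamma) := \gamma^2 h(\gamma)^2 - 2B(\gamma) h(\gamma) + 16(\gamma-1)L_{xy}^6\mu_{xy}^2 = 0$. The first step is to invoke the factorization (\ref{eq:difficultdiscriminant}) to write $\sqrt{B(\gamma)^2 - 16(\gamma^3-\gamma^2)L_{xy}^6\mu_{xy}^2} = |A(\gamma)|\sqrt{E(\gamma)}$, where $A(\gamma) := L_{xy}^2 - (\gamma-1)\mu_{xy}^2$ and $E(\gamma) := [(2\gamma+1)L_{xy}^2 - (\gamma-1)\mu_{xy}^2]^2 - 8\gamma L_{xy}^4 \ge 0$. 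Crucially, $A(\gamma^*) = 0$ at $\gamma^* := 1 + L_{xy}^2/\mu_{xy}^2$, so $h$ admits the piecewise-smooth representation $h(\gamma) = (B(\gamma) \mp A(\gamma)\sqrt{E(\gamma)})/\gamma^2$ on $[1,\gamma^*]$ and $[\gamma^*,\infty)$ respectively, joining continuously at $\gamma^*$ with value $h(\gamma^*) = B(\gamma^*)/(\gamma^*)^2 = 4L_{xy}^4\mu_{xy}^2/(L_{xy}^2+\mu_{xy}^2)$ --- a closed form matching the optimal rate $\sqrt{(L_{xy}^2-\mu_{xy}^2)/(L_{xy}^2+\mu_{xy}^2)}$ asserted in \cref{thm:alexgdabilinearcomplexity}.

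Next, implicit differentiation gives $(\gamma^2 h - B)\, h'(\gamma) = B'(\gamma) h(\gamma) - \gamma h(\gamma)^2 - 8L_{xy}^6\mu_{xy}^2$. Since $\gamma^2 h - B = -|A|\sqrt{E} \le 0$ with equality only at $\gamma^*$, the sign of $h'(\gamma)$ is opposite to that of $N(\gamma) := B'(\gamma)h(\gamma) - \gamma h(\gamma)^2 - 8L_{xy}^6\mu_{xy}^2$. A direct calculation using the closed form for $h(\gamma^*)$ and $B'(\gamma^*)$ verifies $N(\gamma^*) = 0$. To locate this critical point and rule out others, the plan is to eliminate $h$ from the coupled system $\{F = 0,\, F_\gamma = 0\}$ via the linear combination $F - (\gamma/2)F_\gamma$, which is linear in $h$ and yields $h_{\mathrm{crit}}(\gamma) = 4(\gamma-2)L_{xy}^6\mu_{xy}^2/[(\gamma-1)(L_{xy}^4+\mu_{xy}^4) - 2L_{xy}^2\mu_{xy}^2]$. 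Substituting back into $F = 0$ reduces to a degree-four polynomial equation $2\gamma^2(\gamma-2)^2 L_{xy}^6\mu_{xy}^2 - B(\gamma)(\gamma-2)D_0(\gamma) + 2(\gamma-1)D_0(\gamma)^2 = 0$, with $D_0 := (\gamma-1)(L_{xy}^4+\mu_{xy}^4) - 2L_{xy}^2\mu_{xy}^2$, and a direct check shows both $\gamma = 1$ and $\gamma = \gamma^*$ are roots. The root $\gamma = 1$ is spurious, since $h_{\mathrm{crit}}(1) = 2L_{xy}^4$ coincides with the \emph{upper} branch $y_+(1)$ rather than $h(1) = y_-(1) = 0$; so $\gamma^*$ is the only genuine critical point of $h = y_-$ among these two candidates.

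The main obstacle is showing there are no other critical points of $h$ on $(1,\infty)$. A boundary argument offers a clean route: $h(1) = 0$, $h(\gamma^*) > 0$, and $h(\gamma) \to 0$ as $\gamma \to \infty$ (provable by an asymptotic cancellation that makes $B(\gamma) + A(\gamma)\sqrt{E(\gamma)} = O(\gamma)$ at leading order when $A < 0$), so $h$ necessarily attains an interior maximum on $(1,\infty)$. After dividing the quartic above by the known factor $(\gamma-1)A(\gamma)$ (up to the constant $-\mu_{xy}^2$), the residual quadratic factor should have its real roots --- if any --- lie on the $y_+$ branch, verifiable by comparing $h_{\mathrm{crit}}(\gamma)$ to the explicit formulas for $y_\pm$. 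This final sign-control step, together with the boundary argument, pins down $\gamma^*$ as the unique interior maximum of $h$, thereby forcing strict monotonicity (increasing on $[1,\gamma^*]$, decreasing on $[\gamma^*,\infty)$) as claimed.
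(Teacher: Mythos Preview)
Your route is genuinely different from the paper's, and the setup is correct, but the step you yourself label ``the main obstacle'' is a real gap, not a formality.

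\textbf{How the paper proceeds.} Using the same factorization \eqref{eq:difficultdiscriminant}, the paper works with the two \emph{signed} branches $F(\gamma)=(B-A\sqrt{M})/\gamma^2$ and $G(\gamma)=(B+A\sqrt{M})/\gamma^2$ (smooth on all of $[1,\infty)$, crossing exactly at $\gamma^*$), so that $h=\min\{F,G\}$ with $h=F$ on $[1,\gamma^*]$ and $h=G$ on $[\gamma^*,\infty)$. It then differentiates $F$ and $G$ directly and establishes the \emph{global} inequalities $F'\ge 0$ and $G'\le 0$ on $[1,\infty)$. The key computational step is the polynomial identity (with $\kappa=L_{xy}/\mu_{xy}$)
\[
\bigl[(2\kappa^6{+}\kappa^4{-}2\kappa^2{+}1)\gamma^2-(\kappa^2{+}1)(3\kappa^4{-}\kappa^2{+}2)\gamma+(\kappa^2{+}1)^3\bigr]^2-\bigl[(\kappa^4{+}1)\gamma-(\kappa^2{+}1)^2\bigr]^2 M(\gamma)\;=\;8\kappa^6(\kappa^2{-}1)^2\,\gamma^3(\gamma{-}1)\ \ge\ 0,
\]
combined with a short argument that the quadratic in the first bracket is nonnegative on $[1,\infty)$. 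This yields a stronger conclusion than needed (monotonicity of both smooth branches everywhere) and avoids any root-by-root analysis.

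\textbf{Where your argument stands.} Your implicit-differentiation/resultant reduction is valid: the relation $(\gamma^2h-B)h'=N(\gamma)$ is correct, $N(\gamma^*)=0$ checks out, and the elimination to the quartic $2\gamma^2(\gamma-2)^2L_{xy}^6\mu_{xy}^2-B(\gamma-2)D_0+2(\gamma-1)D_0^2=0$ with roots $\gamma=1$ and $\gamma=\gamma^*$ is accurate. Two points, however, are not yet closed. First, $(\gamma^*,h^*)$ is a \emph{singular point} of $\{F(h,\gamma)=0\}$ (both $F_h$ and $F_\gamma$ vanish there, since the smooth branches $F,G$ cross transversally), so its appearance in the quartic is geometric and does not by itself say anything about $h'$; your argument ultimately needs only that $N$ keeps a fixed sign on $(1,\gamma^*)$ and on $(\gamma^*,\infty)$ along $y_-$, which is fine, but the quartic conflates critical points of $F$, critical points of $G$, and this singular locus (indeed, $\gamma=1$ is a zero of $G'$, not of $h'$). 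Second, after factoring out $(\gamma-1)(\gamma-\gamma^*)$ you must still \emph{prove} that the residual quadratic has no root in $(1,\infty)$ lying on the $y_-$ branch. Your boundary argument ($h(1)=0$, $h(\gamma^*)>0$, $h\to 0$) guarantees \emph{some} interior maximum but cannot rule out, say, an additional local max--min pair on $(1,\gamma^*)$. Carrying out that branch check is essentially equivalent in content to the paper's polynomial identity above, so you should not expect to bypass a computation of comparable size.
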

    The proof can be found in \cref{sec:difficultfunction}.
    Thus, the optimal value of $\gamma$ is 
    \begin{align*}
        \gamma_* = 1+\frac{L_{xy}^2}{\mu_{xy}^2}. 
    \end{align*}
    In this case,
    \begin{align*}
        B(\gamma_*) &= \bigopen{\frac{2L_{xy}^2}{\mu_{xy}^2}+1}L_{xy}^4 +2\bigopen{\frac{L_{xy}^2}{\mu_{xy}^2}+2}L_{xy}^4 - L_{xy}^4\\
        &= 4 \bigopen{\frac{L_{xy}^2}{\mu_{xy}^2}+1} L_{xy}^4 ,
    \end{align*}
    from which we can check
    \begin{align*}
        B(\gamma_*)^2 - 16(\gamma_*^3-\gamma_*^2)L_{xy}^6 \mu_{xy}^2 = 0.
    \end{align*}
    Therefore, the optimal $X$ in \cref{eq:optimalX} becomes much simpler:
    \begin{align*}
        X_*(\gamma_*) &= \frac{B(\gamma_*)}{2(\gamma_*^3-\gamma_*^2)L_{xy}^4\mu_{xy}^2} 
        \\
        &= \frac{4 \bigopen{\frac{L_{xy}^2}{\mu_{xy}^2}+1} L_{xy}^4 }{2 \bigopen{1+\frac{L_{xy}^2}{\mu_{xy}^2}}^2 L_{xy}^6}\\
        &= \frac{2\mu_{xy}^2}{L_{xy}^2 (L_{xy}^2 + \mu_{xy}^2)},
    \end{align*}
    and the corresponding spectral radius is
    \begin{align*}
        \sqrt{1-(\gamma_*-1)\cdot X_*(\gamma_*)\cdot\mu_{xy}^2} &= \sqrt{1-\frac{2 \mu_{xy}^2}{L_{xy}^2 + \mu_{xy}^2}} \\
        &= \sqrt{\frac{L_{xy}^2 - \mu_{xy}^2}{L_{xy}^2 + \mu_{xy}^2}}
    \end{align*}
    which is an even better (\textit{i.e.}, smaller) spectral radius than those in \textbf{Case 1} and \textbf{Case 2}. This concludes the proof.
\end{proof}

\subsection{\texorpdfstring{Proofs used in \cref{sec:e}}{Proofs used in Appendix E}} \label{sec:alexgdabilineartech}

Here we prove some technical propositions and lemmas used throughout \cref{sec:e}.

\subsubsection{\texorpdfstring{Proof of \cref{prop:nopositiveroot}}{Proof of Proposition E.1}} \label{sec:nopositiveroot}

Here we prove \cref{prop:nopositiveroot}, restated below for the sake of readability.

\propnopositiveroot*

\begin{proof}
    Without loss of generality, suppose $0\le \Gamma \le \Delta < 1$. 
    Let $p(\lambda) = \lambda(\lambda-1)^2$ and $q(\lambda) = -\psi_i (\lambda-\Gamma) (\lambda-\Delta)$; simply $P_i(\lambda) = p(\lambda)-q(\lambda)$.
    Since $p(\lambda)\ge 0$ for $\lambda \ge 0$ and $q(\lambda)\ge 0$ only if $\lambda\in[\Gamma, \Delta] \subset [0,1]$, $P_i(\lambda)$ can have a positive root only in the interval $[\Gamma, \Delta]$. 
    So it suffices to show that $P_i(\lambda)>0$ for $\lambda\in[\Gamma, \Delta]$ for proving the proposition.

    Note that, for $\lambda\in[\Gamma, \Delta]$, $P_i(\lambda) \ge \lambda(1-\Delta)^2 + \psi_i(\lambda-\Gamma)(\lambda-\Delta)=: Q(\lambda)$. 
    Now it suffices to show  $Q(\lambda)>0$ for $\lambda\in[\Gamma, \Delta]$.

    Note that $Q(\lambda)$ is a quadratic polynomial and
    \begin{align*}
        Q(\lambda) = \psi_i\bigopen{\lambda - \frac{\psi_i (\Gamma+\Delta)-(1-\Delta)^2}{2\psi_i}}^2 -\frac{\bigset{\psi_i (\Gamma+\Delta)-(1-\Delta)^2}^2}{4\psi_i} + \psi_i\Gamma\Delta.
    \end{align*}
    Since $0 < Q(\Gamma)=\Gamma(1-\Delta)^2 \le Q(\Delta)=\Delta(1-\Delta)^2$, we can ensure $Q(\lambda)>0$ on $[\Gamma, \Delta]$ if 
    $\frac{\psi_i (\Gamma+\Delta)-(1-\Delta)^2}{2\psi_i} \le \Gamma$. 
    It is equivalent to $\psi_i(\Delta-\Gamma)\le (1-\Delta)^2$, which proves the proposition.
\end{proof}

\subsubsection{\texorpdfstring{Proof of \cref{prop:forbilinearcomplexity}}{Proof of Proposition E.2}} \label{sec:forbilinearcomplexity}

Here we prove \cref{prop:forbilinearcomplexity}, restated below for the sake of readability.

\propforbilinearcomplexity*

\begin{proof}
    Since $1-\psi_i(\Gamma+\Delta) \in (0, 1]$, the left hand side can be lower bounded as
    \begin{align}
        \label{eq:alexgda_bilinear_LHSlowerbound}
        \begin{aligned}
            &\Gamma+\Delta - \frac{ \Gamma\Delta}{1-\psi_i(\Gamma+\Delta)}\bigopen{2-\psi_i + \frac{\psi_i \Gamma\Delta}{1-\psi_i(\Gamma+\Delta)}}  \\
            &= \frac{(\Gamma+\Delta)(1-\psi_i(\Gamma+\Delta))^2 - \Gamma\Delta\bigopen{(2-\psi_i)(1-\psi_i(\Gamma+\Delta)) + \psi_i \Gamma\Delta}}{(1-\psi_i(\Gamma+\Delta))^2} \\
            &\ge (\Gamma+\Delta)(1-\psi_i(\Gamma+\Delta))^2 - \Gamma\Delta\bigopen{(2-\psi_i)(1-\psi_i(\Gamma+\Delta)) + \psi_i \Gamma\Delta},
        \end{aligned}
    \end{align}
    Which is a quadratic polynomial of $\psi_i$. Let
    \begin{align*}
        R(x) &:= (\Gamma+\Delta)(1-x(\Gamma+\Delta))^2 - \Gamma\Delta\bigopen{(2-x)(1-x(\Gamma+\Delta)) + x \Gamma\Delta} \\
        &=\underbrace{\bigset{(\Gamma+\Delta) \bigopen{\Gamma^2+\Gamma\Delta+\Delta^2}}}_{=a>0}x^2 - \underbrace{\bigset{(\Gamma+\Delta-\Gamma\Delta)^2 + \Gamma^2+\Gamma\Delta+\Delta^2}}_{=b>0}x + \underbrace{\bigset{\Gamma+\Delta-2\Gamma\Delta}}_{=c>0}.
    \end{align*}
    The discriminant of $R(x)$ is 
    \begin{align*}
        D &= b^2 - 4ac \\
        &= \bigset{(\Gamma+\Delta-\Gamma\Delta)^2 + \Gamma^2+\Gamma\Delta+\Delta^2}^2 - 4(\Gamma+\Delta) \bigopen{\Gamma^2+\Gamma\Delta+\Delta^2}\bigset{\Gamma+\Delta-2\Gamma\Delta}\\
        &= \Gamma^2\Delta^2 \bigset{8 (\Gamma+\Delta)^2 + (-1 + \Gamma\Delta)^2 - 4 (\Gamma+\Delta) (1 + \Gamma\Delta)} \\
        &= \Gamma^2\Delta^2 \bigset{\Gamma^2 (\Delta^2 - 4 \Delta + 8) - 2 \Gamma ( 2 \Delta^2 - 7\Delta + 2) + 8 \Delta^2 - 4 \Delta + 1} \\
        &= \Gamma^2\Delta^2\bigset{(\Delta^2 - 4 \Delta + 8)\bigopen{\Gamma - \frac{2 \Delta^2 - 7\Delta + 2}{\Delta^2 - 4 \Delta + 8}}^2 + \frac{(\Delta^2 - 4 \Delta + 8)(8 \Delta^2 - 4 \Delta + 1) - (2 \Delta^2 - 7\Delta + 2)^2}{\Delta^2 - 4 \Delta + 8}} \\
        &= \Gamma^2\Delta^2\bigset{((\Delta-2)^2 + 4)\bigopen{\Gamma - \frac{2 \Delta^2 - 7\Delta + 2}{\Delta^2 - 4 \Delta + 8}}^2 + \frac{4(\Delta^2 + 1)(\Delta-1)^2 + 16\Delta^2}{(\Delta-2)^2 + 4}} \ge 0,
    \end{align*}
    so $R(x)$ must have two (possibly identical) positive real roots. 
    This means that if we find a lower bound $\bar{x}>0$ for the roots, we can confirm that $R(x)\ge R(\bar{x})$ for all $x\in[0, \bar{x}]$.
    Using the fact $\sqrt{1-x} \le 1-\frac{x}{2}$ for all $x\le 1$, we have
    \begin{align*}
        \frac{b-\sqrt{b^2-4ac}}{2a} &\ge \frac{b}{2a}\bigopen{1-1+\frac{2ac}{b^2}} \\
        &= \frac{c}{b} \\
        &= \frac{\Gamma+\Delta-2\Gamma\Delta}{(\Gamma+\Delta-\Gamma\Delta)^2 + \Gamma^2+\Gamma\Delta+\Delta^2} \\
        &>\frac{\Gamma+\Delta-2\Gamma\Delta}{2(\Gamma+\Delta)^2} =: \bar{x}.
    \end{align*}
    Continuing from \cref{eq:alexgda_bilinear_LHSlowerbound}, since we assumed $\psi_i \le \bar{x}$,
    \begin{align*}
        &R(\psi_i) \ge R(\bar{x}) \\
        &= (\Gamma+\Delta)\bigopen{1-\frac{\Gamma+\Delta-2\Gamma\Delta}{2(\Gamma+\Delta)}}^2 - \Gamma\Delta\bigopen{\bigopen{2-\frac{\Gamma+\Delta-2\Gamma\Delta}{2(\Gamma+\Delta)^2}}\bigopen{1-\frac{\Gamma+\Delta-2\Gamma\Delta}{2(\Gamma+\Delta)}} + \frac{\Gamma+\Delta-2\Gamma\Delta}{2(\Gamma+\Delta)^2} \Gamma\Delta} \\
        &= \frac{(\Gamma+\Delta+2\Gamma\Delta)^2}{4(\Gamma+\Delta)} - \Gamma\Delta\bigopen{\bigopen{2-\frac{\Gamma+\Delta-2\Gamma\Delta}{2(\Gamma+\Delta)^2}}\frac{\Gamma+\Delta+2\Gamma\Delta}{2(\Gamma+\Delta)} + \frac{\Gamma+\Delta-2\Gamma\Delta}{2(\Gamma+\Delta)^2} \Gamma\Delta} \\
        &\ge \frac{(\Gamma+\Delta+2\Gamma\Delta)^2}{4(\Gamma+\Delta)} - \Gamma\Delta\bigopen{\frac{\Gamma+\Delta+2\Gamma\Delta}{\Gamma+\Delta} + \frac{\Gamma+\Delta-2\Gamma\Delta}{2(\Gamma+\Delta)^2} \Gamma\Delta} \\
        &= \frac{(\Gamma+\Delta+2\Gamma\Delta)(\Gamma+\Delta-2\Gamma\Delta)}{4(\Gamma+\Delta)} -  \frac{\Gamma+\Delta-2\Gamma\Delta}{2(\Gamma+\Delta)^2} \Gamma^2\Delta^2 \\
        &= \frac{\Gamma+\Delta -2\Gamma\Delta}{4(\Gamma+\Delta)^2}\bigset{(\Gamma+\Delta+2\Gamma\Delta)(\Gamma+\Delta) - 2\Gamma^2\Delta^2} \\
        &\ge \frac{1}{4}(\Gamma+\Delta -2\Gamma\Delta) > 0.
    \end{align*}
    which concludes the proof of the proposition.
\end{proof}

\subsubsection{\texorpdfstring{Proof of \cref{prop:difficultfunction}}{Proof of Proposition E.4}} \label{sec:difficultfunction}

Here we prove \cref{prop:difficultfunction}, restated below for the sake of readability.

\propdifficultfunction*

\begin{proof}
    From the calculation in \cref{eq:difficultdiscriminant}, 
    \begin{align*}
        h(\gamma) &= \frac{B(\gamma) -\bigabs{L_{xy}^2 -(\gamma-1)\mu_{xy}^2} \sqrt{(2\gamma-1)^2 L_{xy}^4 - 2 L_{xy}^2\mu_{xy}^2 (2\gamma^2 - \gamma - 1) + (\gamma-1)^2 \mu_{xy}^4}}{\gamma^2} \\
        &= \min\bigset{F(\gamma), G(\gamma)}
    \end{align*}
    where
    \begin{align*}
        F(\gamma) &= \frac{B(\gamma) -\bigopen{L_{xy}^2 -(\gamma-1)\mu_{xy}^2} \sqrt{(2\gamma-1)^2 L_{xy}^4 - 2 L_{xy}^2\mu_{xy}^2 (2\gamma^2 - \gamma - 1) + (\gamma-1)^2 \mu_{xy}^4}}{\gamma^2}, \\
        G(\gamma) &= \frac{B(\gamma) + \bigopen{L_{xy}^2 -(\gamma-1)\mu_{xy}^2} \sqrt{(2\gamma-1)^2 L_{xy}^4 - 2 L_{xy}^2\mu_{xy}^2 (2\gamma^2 - \gamma - 1) + (\gamma-1)^2 \mu_{xy}^4}}{\gamma^2}.
    \end{align*}
    We want to show that $F(\gamma)$ is increasing and $G(\gamma)$ is decreasing for $\gamma \in [1,\infty)$. 
    Let
    \begin{align*}
        J(\gamma) &:= \frac{B(\gamma)}{\gamma^2}= \bigopen{\frac{2\gamma-1}{\gamma^2}}L_{xy}^4  + 2\bigopen{1-\frac{1}{\gamma^2}} L_{xy}^2\mu_{xy}^2 - \bigopen{1-\frac{1}{\gamma}}^2 \mu_{xy}^4, \\
        K(\gamma) &:= \frac{L_{xy}^2 - (\gamma-1)\mu_{xy}^2}{\gamma^2}, \\
        M(\gamma) &:= (2\gamma-1)^2 L_{xy}^4 - 2 L_{xy}^2\mu_{xy}^2 (2\gamma^2 - \gamma - 1) + (\gamma-1)^2 \mu_{xy}^4, 
    \end{align*}
    so that
    \begin{align*}
        F(\gamma) &= J(\gamma) - K(\gamma)\sqrt{M(\gamma)}, \\
        G(\gamma) &= J(\gamma) + K(\gamma)\sqrt{M(\gamma)}.
    \end{align*}
    Then,
    \begin{align*}
        J'(\gamma) &= -2\bigopen{\frac{\gamma-1}{\gamma^3}} (L_{xy}^{4}+\mu_{xy}^4) + \frac{4}{\gamma^3} L_{xy}^2\mu_{xy}^2, \\
        K'(\gamma) &= \frac{-2L_{xy}^2 + (\gamma-2)\mu_{xy}^2}{\gamma^3}, \\
        M'(\gamma) &= 4(2\gamma-1)L_{xy}^4 - 2(4\gamma-1) L_{xy}^2\mu_{xy}^2 + 2(\gamma-1)\mu_{xy}^4.
    \end{align*}
    So,
    \begin{align*}
        F'(\gamma) &= \frac{2J'(\gamma)\sqrt{M(\gamma)} - 2K'(\gamma) M(\gamma) - K(\gamma) M'(\gamma)}{2\sqrt{M(\gamma)}}, \\
        G'(\gamma) &= \frac{2J'(\gamma)\sqrt{M(\gamma)} + 2K'(\gamma) M(\gamma) + K(\gamma) M'(\gamma)}{2\sqrt{M(\gamma)}}.
    \end{align*}
    We proceed the calculation with $\kappa := \frac{L_{xy}}{\mu_{xy}}\ge 1$. 
    \begin{align}
        \frac{\gamma^3}{2\mu_{xy}^6} F'(\gamma)\sqrt{M(\gamma)} &= \frac{\gamma^3}{\mu_{xy}^6}\bigopen{\frac{1}{2}J'(\gamma)\sqrt{M(\gamma)} - \frac{1}{2}K'(\gamma) M(\gamma) - \frac{1}{4}K(\gamma) M'(\gamma)} \nonumber\\
        &= \bigopen{-(\gamma-1)(\kappa^{4}+1)+ 2\kappa^2}\sqrt{(2\gamma-1)^2 \kappa^4 - 2 (2\gamma^2 - \gamma - 1) \kappa^2 + (\gamma-1)^2} \nonumber\\
        & \quad - \frac{1}{2}\bigopen{-2\kappa^2 + (\gamma-2)}\bigopen{(2\gamma-1)^2 \kappa^4 - 2 (2\gamma^2 - \gamma - 1) \kappa^2  + (\gamma-1)^2} \nonumber\\
        & \quad - \frac{\gamma}{2} \bigopen{\kappa^2 - (\gamma-1)} \bigopen{(4\gamma-2)\kappa^4 - (4\gamma-1) \kappa^2 + (\gamma-1)} \nonumber\\
        &\begin{aligned}
            &= \bigopen{-(\kappa^{4}+1)\gamma + (\kappa^2+ 1)^2}\sqrt{(2\kappa^2-1)^2 \gamma^2 -(4\kappa^4 -2\kappa^2 +2)\gamma + (\kappa^2+1)^2}\\
        & \quad + (2\kappa^6 + \kappa^4 - 2\kappa^2 + 1)\gamma^2 - (\kappa^2+1)(3\kappa^4 - \kappa^2 + 2) \gamma + (\kappa^2 + 1)^3.
        \end{aligned} \label{eq:jaewook}
    \end{align}
    We show that this is indeed nonnegative for $\gamma\ge1$ and $\kappa\ge 1$. To this end, note that,
    \begin{align*}
        &(2\kappa^6 + \kappa^4 - 2\kappa^2 + 1)\gamma^2 - (\kappa^2+1)(3\kappa^4 - \kappa^2 + 2) \gamma + (\kappa^2 + 1)^3 \\
        &= 2\gamma^2 + (\kappa^2+1)\bigset{(2\kappa^4 - \kappa^2 - 1) \gamma^2 - (3\kappa^4 - \kappa^2 + 2) \gamma + (\kappa^2 + 1)^2} \\
        &\ge 2\bigset{(2\kappa^4 - \kappa^2) \gamma^2 - (3\kappa^4 - \kappa^2 + 2) \gamma + (\kappa^2 + 1)^2} \\
        &= 2\bigset{(2\kappa^4 - \kappa^2)\bigopen{\gamma - \frac{3\kappa^4 - \kappa^2 + 2}{4\kappa^4 - 2\kappa^2}}^2 -\frac{(3\kappa^4 - \kappa^2 + 2)^2-4(\kappa^2 + 1)^2(2\kappa^4 - \kappa^2)}{4(2\kappa^4 - \kappa^2)}} \\
        &= 2\bigset{(2\kappa^4 - \kappa^2)\bigopen{\gamma - \frac{3\kappa^4 - \kappa^2 + 2}{4\kappa^4 - 2\kappa^2}}^2 -\frac{\kappa^8 - 18\kappa^6 + 13\kappa^4 + 4}{4(2\kappa^4 - \kappa^2)}}.
    \end{align*}
    Note that $2\kappa^4 - \kappa^2>0$. Also,
    (i) if $1 \le \kappa < 4$ then
    \begin{align*}
        \kappa^8 - 18\kappa^6 + 13\kappa^4 + 4 &=(\kappa-1) (\kappa+1) (\kappa^3 - 5 \kappa^2 + 4 \kappa -2) (\kappa^3 + 5 \kappa^2 + 4 \kappa +2) \\
        &=(\kappa-1) (\kappa+1) \bigopen{(\kappa-4)(\kappa-1)\kappa-2} (\kappa^3 + 5 \kappa^2 + 4 \kappa +2) < 0;
    \end{align*}
    (ii) if $\kappa\ge 4$ then $\frac{3\kappa^4 - \kappa^2 + 2}{4\kappa^4 - 2\kappa^2} < 1$ and
    \begin{align*}
         (2\kappa^4 - \kappa^2)\cdot 1^2 - (3\kappa^4 - \kappa^2 + 2)\cdot 1 + (\kappa^2 + 1)^2 = 2(\kappa^2 - 1) + 1 > 0.
    \end{align*}
    By (i) and (ii), 
    \begin{align}
        \label{eq:hellohellohello}
        \begin{aligned}
            &(2\kappa^6 + \kappa^4 - 2\kappa^2 + 1)\gamma^2 - (\kappa^2+1)(3\kappa^4 - \kappa^2 + 2) \gamma + (\kappa^2 + 1)^3 \\
            &\ge 2\bigset{(2\kappa^4 - \kappa^2)\bigopen{\gamma - \frac{3\kappa^4 - \kappa^2 + 2}{4\kappa^4 - 2\kappa^2}}^2 -\frac{\kappa^8 - 18\kappa^6 + 13\kappa^4 + 4}{4(2\kappa^4 - \kappa^2)}} > 0.
        \end{aligned}
    \end{align}
    So if $1\le \gamma < \frac{(\kappa^2+1)^2}{\kappa^4 + 1}$, $\bigopen{-(\kappa^{4}+1)\gamma + (\kappa^2+ 1)^2} \ge 0$, which proves the non-negativity of \cref{eq:jaewook}.
    In addition, observe that the following inequality holds for all $\gamma\ge 1$: 
    \begin{align}
        &\bigset{(2\kappa^6 + \kappa^4 - 2\kappa^2 + 1)\gamma^2 - (\kappa^2+1)(3\kappa^4 - \kappa^2 + 2) \gamma + (\kappa^2 + 1)^3 }^2 \nonumber\\
        &\quad -\bigopen{(\kappa^{4}+1)\gamma - (\kappa^2+ 1)^2}^2\bigset{(2\kappa^2-1)^2 \gamma^2 -(4\kappa^4 -2\kappa^2 +2)\gamma + (\kappa^2+1)^2} \nonumber\\
        &= 8\kappa^6(\kappa^2-1)^2 (\gamma^4 -\gamma^3) \ge 0. \label{eq:hellohellohellohello}
    \end{align}
    This also proves the non-negativity of \cref{eq:jaewook} in the case of $\gamma \ge \frac{(\kappa^2+1)^2}{\kappa^4 + 1}$. 
    As a result, we just showed that $F'(\gamma)\ge 0$ for $\gamma\ge1$ and $\kappa\ge 1$. We now turn to prove $G'(\gamma)\le 0$.
    \begin{align*}
        \frac{\gamma^3}{2\mu_{xy}^6} G'(\gamma)\sqrt{M(\gamma)} &= \frac{\gamma^3}{\mu_{xy}^6}\bigopen{\frac{1}{2}J'(\gamma)\sqrt{M(\gamma)} + \frac{1}{2}K'(\gamma) M(\gamma) + \frac{1}{4}K(\gamma) M'(\gamma)} \\
        &= \bigopen{-(\gamma-1)(\kappa^{4}+1)+ 2\kappa^2}\sqrt{(2\gamma-1)^2 \kappa^4 - 2 (2\gamma^2 - \gamma - 1) \kappa^2 + (\gamma-1)^2} \\
        & \quad + \frac{1}{2}\bigopen{-2\kappa^2 + (\gamma-2)}\bigopen{(2\gamma-1)^2 \kappa^4 - 2 (2\gamma^2 - \gamma - 1) \kappa^2  + (\gamma-1)^2} \\
        & \quad + \frac{\gamma}{2} \bigopen{\kappa^2 - (\gamma-1)} \bigopen{(4\gamma-2)\kappa^4 - (4\gamma-1) \kappa^2 + (\gamma-1)} \\
        &= \bigopen{-(\kappa^{4}+1)\gamma + (\kappa^2+ 1)^2}\sqrt{(2\kappa^2-1)^2 \gamma^2 -(4\kappa^4 -2\kappa^2 +2)\gamma + (\kappa^2+1)^2}\\
        & \quad - \bigset{(2\kappa^6 + \kappa^4 - 2\kappa^2 + 1)\gamma^2 - (\kappa^2+1)(3\kappa^4 - \kappa^2 + 2) \gamma + (\kappa^2 + 1)^3}.
    \end{align*}
    We show that this is nonpositive for $\gamma\ge 1$ and $\kappa\ge 1$. To this end, note that again from \cref{eq:hellohellohello},
    \begin{align*}
        (2\kappa^6 + \kappa^4 - 2\kappa^2 + 1)\gamma^2 - (\kappa^2+1)(3\kappa^4 - \kappa^2 + 2) \gamma + (\kappa^2 + 1)^3 \ge 0.
    \end{align*}
    Also, if $1\le \gamma <\frac{(\kappa^2+1)^2}{\kappa^4 + 1}$, \cref{eq:hellohellohellohello} still holds. On the other hand, if  $\gamma \ge \frac{(\kappa^2+1)^2}{\kappa^4 + 1}$, $\bigopen{-(\kappa^{4}+1)\gamma + (\kappa^2+ 1)^2} \le 0$. These indeed prove that $G'(\gamma)\le 0$ for $\gamma\ge1$ and $\kappa\ge 1$.

    Now we conclude the proof by remarking that $h(\gamma) = F(\gamma)$ if $\gamma\in \bigclosed{1,1+\frac{L_{xy}^2}{\mu_{xy}^2}}$ and $h(\gamma) = G(\gamma)$ if $\gamma\in \left[1+\frac{L_{xy}^2}{\mu_{xy}^2},\infty\right)$.

    
\end{proof}
\newpage
\section{\texorpdfstring{Proof of \cref{prop:ogdlb}}{Proof of Proposition A.1}} \label{sec:f}

Here we prove \cref{prop:ogdlb} of \cref{sec:a}, restated below for the sake of readability.

\propogdlb*

\begin{proof}
    Recall that \textbf{OGD} takes updates of the form:
    \begin{align*}
        \vx_{k+1} &= \vx_{k} - 2 \alpha \nabla_{\vx} f(\vx_{k}, \vy_{k}) + \alpha \nabla_{\vx} f(\vx_{k-1}, \vy_{k-1}), \\
        \vy_{k+1} &= \vy_{k} + 2 \beta \nabla_{\vy} f(\vx_{k}, \vy_{k}) - \beta \nabla_{\vy} f(\vx_{k-1}, \vy_{k-1}).
    \end{align*}

    We use the same worst-case function as in \cref{thm:simgdalb}:
    \begin{align*}
        f(\vx, \vy) &= 
        \frac{1}{2} \begin{bmatrix}
            {x} \\ {s} \\ {t} \\ {y} \\ {u} \\ {v}
        \end{bmatrix}^{\top}
        \begin{bmatrix}
            \mu_x & 0 & 0 & L_{xy} & 0 & 0 \\
            0 & \mu_x & 0 & 0 & 0 & 0 \\
            0 & 0 & L_x & 0 & 0 & 0 \\
            L_{xy} & 0 & 0 & -\mu_y & 0 & 0 \\
            0 & 0 & 0 & 0 & -\mu_y & 0 \\
            0 & 0 & 0 & 0 & 0& -L_y 
        \end{bmatrix}
        \begin{bmatrix}
            {x} \\ {s} \\ {t} \\ {y} \\ {u} \\ {v}
        \end{bmatrix}, \\
    \end{align*}
    where ${\vx = (x, s, t)}$ and ${\vy = (y, u, v)}$. 
    It can be easily checked that $f$ is a quadratic function ({\it i.e.,} Hessian is constant) such that $f \in \gF(\mu_x, \mu_y, L_x, L_y, L_{xy})$ and $\vx_\star = \vy_\star = \bm{0} \in \R^3$.

    Let us define
    \begin{align*}
        \mA &= 
        \begin{bmatrix}
            \mu_x & 0 & 0 \\
            0 & \mu_x & 0 \\
            0 & 0 & L_x
        \end{bmatrix}, \quad
        \mB = 
        \begin{bmatrix}
            L_{xy} & 0 & 0 \\
            0 & 0 & 0 \\
            0 & 0 & 0
        \end{bmatrix}, \quad
        \mC =
        \begin{bmatrix}
            \mu_y & 0 & 0 \\
            0 & \mu_y & 0 \\
            0 & 0 & L_y
        \end{bmatrix}.
    \end{align*}

    We first observe that the $k$-th step of \textbf{OGD} satisfies
    \begin{align*}
        \begin{bmatrix}
            \vx_{k+1} \\ \vy_{k+1} \\ \vx_{k} \\ \vy_{k}
        \end{bmatrix}
        &= \begin{bmatrix}
            \mI - 2 \alpha \mA & - 2 \alpha \mB & \alpha \mA & \alpha \mB \\ 
            2 \beta \mB^{\top} & \mI - 2 \beta \mC & - \beta \mB^{\top} & \beta \mC \\
            \mI & \bm{0} & \bm{0} & \bm{0} \\
            \bm{0} & \mI & \bm{0} & \bm{0} \\
        \end{bmatrix}
        \begin{bmatrix}
            \vx_{k} \\ \vy_{k} \\ \vx_{k-1} \\ \vy_{k-1}
        \end{bmatrix}.
    \end{align*}

    Then the coordinate-wise updates on the $k$-th step of \textbf{OGD} must be
    \begin{align}
        \begin{bmatrix}
            x_{k+1} \\ y_{k+1} \\ x_k \\ y_k
        \end{bmatrix} &= \underbrace{\begin{bmatrix}
            1 - 2 \alpha \mu_x & - 2 \alpha L_{xy} & \alpha \mu_x & \alpha L_{xy} \\ 
            2 \beta  L_{xy} & 1 - 2 \beta \mu_y & - \beta L_{xy} & \beta \mu_y \\
            1 & 0 & 0 & 0 \\
            0 & 1 & 0 & 0 \\
        \end{bmatrix}}_{\triangleq \mP}
        \begin{bmatrix}
            x_k \\ y_k \\ x_{k-1} \\ y_{k-1}
        \end{bmatrix}, \label{eq:x_k_y_k_ogd}\\
        s_{k+1} &= (1 - 2 \alpha \mu_x) s_k + \alpha \mu_x s_{k-1}, \label{eq:s_k_ogd} \\
        t_{k+1} &= (1  - 2 \alpha L_x) t_k + \alpha L_x t_{k-1}, \label{eq:t_k_ogd} \\
        u_{k+1} &= (1 - 2 \beta \mu_y) u_k + \beta \mu_y u_{k-1}, \label{eq:u_k_ogd} \\
        v_{k+1} &= (1 - 2 \beta L_y) v_k + \beta L_y v_{k-1}. \label{eq:v_k_ogd} 
    \end{align}

    First, observing that the quadratic $w^2 - (1 - 2c)w - c = 0$ has (real) roots given by
    \begin{align*}
        w &= \frac{(1 - 2c) \pm \sqrt{(1 - 2c)^2 + 4 c}}{2},
    \end{align*}
    a recurrence relation of the form $w_{k+1} = (1 - 2c) w_k + c w_{k-1}$ converges if and only if
    \begin{align*}
        r = \frac{|1 - 2c| + \sqrt{(1 - 2c)^2 + 4 c}}{2} < 1,
    \end{align*}
    which is again equivalent to $0 < c < \frac{2}{3}$.

    Moreover, if $0 < c \le \frac{1}{2}$, then we have
    \begin{align*}
        \frac{1}{1 - r} &= \frac{1}{1 - \frac{1 - 2c + \sqrt{(1 - 2c)^2 + 4 c}}{2}} \\
        &= \frac{2}{1 + 2c - \sqrt{1 + 4 c^2}} = \frac{1 + 2c + \sqrt{1 + 4 c^2}}{2c} \ge \frac{1}{2c} = \Omega \bigopen{\frac{1}{c}},
    \end{align*}
    while if $\frac{1}{2} < c < \frac{2}{3}$, then we have
    \begin{align*}
        \frac{1}{1 - r} &= \frac{1}{1 - \frac{2c - 1 + \sqrt{(1 - 2c)^2 + 4 c}}{2}} \\
        &= \frac{2}{3 - 2c - \sqrt{1 + 4 c^2}} \ge 2 + \sqrt{2} \ge \bigopen{1 + \frac{1}{\sqrt{2}}} \cdot \frac{1}{c} = \Omega \bigopen{\frac{1}{c}}
    \end{align*}
    which is because $\frac{2}{3 - 2c - \sqrt{1 + 4 c^2}}$ is an increasing function in $[\frac{1}{2}, \frac{2}{3})$.
    
    For the convergence of iterations \eqref{eq:t_k_ogd}~and~\eqref{eq:v_k_ogd}, the step sizes $\alpha$ and $\beta$ are required to satisfy
    \begin{align}
        \label{eq:stepsize_basic_3}
        0 < \alpha L_x < \frac{2}{3} \quad \text{and} \quad 0 < \beta L_y < \frac{2}{3},
    \end{align}
    by setting $c = \alpha L_x$ and/or $c = \beta L_y$.

    Also, to guarantee $\bignorm{\vx_K}^2 + \bignorm{\vy_K}^2 < \epsilon$, we need from \eqref{eq:s_k_ogd}~and~\eqref{eq:u_k_ogd} that $s_K^2 < \gO(\epsilon)$ and $u_K^2 < \gO(\epsilon)$, respectively.

    
    The two necessary conditions $s_K^2 < \gO(\epsilon)$ and $u_K^2 < \gO(\epsilon)$ require an iteration number of at least:
    \begin{align}
        K &= \Omega\bigopen{\bigopen{{\frac{1}{\alpha \mu_x}} + {\frac{1}{\beta \mu_y}}} \cdot \log\frac{1}{\epsilon}},
        \label{eq:iteration_complexity_base_3}
    \end{align}
    by setting $c = \alpha \mu_x$ and/or $c = \beta \mu_y$.
    
    Note that \eqref{eq:stepsize_basic_3} automatically yields
    \begin{align}
        \frac{1}{\alpha \mu_x} + \frac{1}{\beta \mu_y} = \Omega (\kappa_x + \kappa_y).
        \label{eq:hola}
    \end{align}

    Now, in order to ensure convergence of iteration \eqref{eq:x_k_y_k_ogd}, we need the matrix $\mP$ to have a spectral radius smaller than one.
    Hence it suffices to show that $\rho(\mP) < 1$ implies $\frac{1}{\alpha \mu_x} + \frac{1}{\beta \mu_y} = \Omega(\kappa_{xy})$.

    Suppose that $\lambda$ is an eigenvalue of $\mP$. Then we must have    
    \begin{align*}
        \det (\lambda \mI - \mP) &= 
        \begin{vmatrix}
            (1 - \lambda) - 2 \alpha \mu_x & - 2 \alpha L_{xy} & \alpha \mu_x & \alpha L_{xy} \\
            2 \beta L_{xy} & (1 - \lambda) - 2 \beta \mu_y & - \beta L_{xy} & \beta \mu_y \\
            1 & 0 & - \lambda & 0 \\
            0 & 1 & 0 & - \lambda
        \end{vmatrix} = 0.
    \end{align*}
    First, we observe that $\lambda \ne 0$, since if we plug in $\lambda = 0$ we have
    \begin{align*}
        \det (\lambda \mI - \mP) &= \det (\mP) = \alpha \beta (\mu_x \mu_y + L_{xy}^2 ) > 0.
    \end{align*}
    Therefore we can compute
    \begin{align*}
        & \begin{vmatrix}
            (1 - \lambda) - 2 \alpha \mu_x & - 2 \alpha L_{xy} & \alpha \mu_x & \alpha L_{xy} \\
            2 \beta L_{xy} & (1 - \lambda) - 2 \beta \mu_y & - \beta L_{xy} & \beta \mu_y \\
            1 & 0 & - \lambda & 0 \\
            0 & 1 & 0 & - \lambda
        \end{vmatrix} \\
        &= \frac{1}{\lambda^2} \begin{vmatrix}
            \lambda (1 - \lambda) - 2 \lambda \alpha \mu_x & - 2 \lambda \alpha L_{xy} & \alpha \mu_x & \alpha L_{xy} \\
            2 \lambda \beta L_{xy} & \lambda (1 - \lambda) - 2 \lambda \beta \mu_y & - \beta L_{xy} & \beta \mu_y \\
            \lambda & 0 & - \lambda & 0 \\
            0 & \lambda & 0 & - \lambda
        \end{vmatrix} \\
        &= \frac{1}{\lambda^2} \begin{vmatrix}
            \lambda (1 - \lambda) - (2 \lambda - 1) \alpha \mu_x & - (2 \lambda - 1) \alpha L_{xy} & \alpha \mu_x & \alpha L_{xy} \\
            (2 \lambda - 1) \beta L_{xy} & \lambda (1 - \lambda) - (2 \lambda - 1) \beta \mu_y & - \beta L_{xy} & \beta \mu_y \\
            0 & 0 & - \lambda & 0 \\
            0 & 0 & 0 & - \lambda
        \end{vmatrix} \\
        &= \begin{vmatrix}
            \lambda (1 - \lambda) - (2 \lambda - 1) \alpha \mu_x & - (2 \lambda - 1) \alpha L_{xy} \\
            (2 \lambda - 1) \beta L_{xy} & \lambda (1 - \lambda) - (2 \lambda - 1) \beta \mu_y
        \end{vmatrix} \\
        &= \bigopen{\lambda (1 - \lambda) - (2 \lambda - 1) \alpha \mu_x} \bigopen{\lambda (1 - \lambda) - (2 \lambda - 1) \beta \mu_y} + (2 \lambda - 1)^2 \alpha \beta L_{xy}^2.
    \end{align*}
    If we substitute $a = \alpha \mu_x$ and $b = \beta \mu_y$, then $\det (\lambda \mI - \mP) = 0$ is equivalent to
    \begin{align}
        \bigopen{- \lambda^2 + (1 - 2 a) \lambda + a} \bigopen{- \lambda^2 + (1 - 2 b) \lambda + b} + (2 \lambda - 1)^2 ab \kappa_{xy}^2 &= 0,
        \label{eq:jaeyoung}
    \end{align}
    where we note that $\alpha \beta L_{xy}^2 = ab \kappa_{xy}^2$.

    Hence we have a quartic equation of the form $\lambda^4 - p \lambda^3 + q \lambda^2 - r \lambda + \ell$ with coefficients given by
    \begin{align}
    \begin{aligned}
        p &= 2 - 2 (a+b), \\
        q &= 1 - 3a - 3b + 4ab (\kappa_{xy}^2 + 1), \\
        r &= - a - b + 4ab (\kappa_{xy}^2 + 1), \\
        \ell &= ab (\kappa_{xy}^2 + 1).
    \end{aligned} \label{eq:dongkuk}
    \end{align}
    Note that we obviously have $p, q, r, \ell > 0$.
    
    There exists a well-known characterization of quartic polynomials having roots with absolute values less than one.

    \begin{proposition}[\citet{grove2004periodicities}, Theorem 1.5]
        \label{prop:routhhurwitztwo}
        Consider a quartic polynomial $x^4 + a_3x^3 + a_2x^2 + a_1x + a_0$, where $a_0$, $a_1$, $a_2$, $a_3$ are real numbers. Then a necessary and sufficient condition that all roots of the polynomial are contained in the open disk $|x|<1$ is 
        \begin{align}
        \begin{aligned}
            |a_1 + a_3| < 1 + a_0 + a_2, \quad |a_1 - a_3| &< 2(1 - a_0), \quad a_2 - 3a_0 < 3, \\
            a_0 + a_2 + a_0^2 + a_1^2 + a_0^2 a_2 + a_0 a_3^2 < 1 & + 2 a_0 a_2 + a_1 a_3 + a_0 a_1 a_3 + a_0^3.
        \end{aligned} \label{eq:degfour}
        \end{align}
    \end{proposition}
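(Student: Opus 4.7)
The plan is to establish this as an instance of the classical Schur–Cohn stability criterion, which I will derive by applying the bilinear (Möbius) transformation that carries the open unit disk $\{|x|<1\}$ onto the open left half-plane $\{\Re(w)<0\}$, and then invoking the Routh–Hurwitz criterion in the new variable $w$.

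First, I would substitute $x = (1+w)/(1-w)$ (equivalently, $w = (x-1)/(x+1)$) into $P(x) = x^4 + a_3 x^3 + a_2 x^2 + a_1 x + a_0$ and multiply through by $(1-w)^4$. A direct expansion of each term $(1+w)^{4-k}(1-w)^k$ yields a quartic $Q(w) = c_4 w^4 + c_3 w^3 + c_2 w^2 + c_1 w + c_0$ whose coefficients are explicit affine functions of $a_0, a_1, a_2, a_3$:
\begin{align*}
c_4 &= P(-1) = 1 - a_3 + a_2 - a_1 + a_0, \\
c_3 &= 2(2 - a_3 + a_1 - 2a_0), \\
c_2 &= 2(3 - a_2 + 3a_0), \\
c_1 &= 2(2 + a_3 - a_1 - 2a_0), \\
c_0 &= P(1) = 1 + a_3 + a_2 + a_1 + a_0.
\end{align*}
A direct calculation verifies that $\Re(w) < 0 \iff |x| < 1$, so since the map is a bijection of the complex plane (minus the point $x=-1 \leftrightarrow w=\infty$) and since $(1-w)^4$ has no zeros off $w=1$, all roots of $P$ lie in the open unit disk if and only if all roots of $Q$ lie in the open left half-plane.

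Second, I would invoke the Routh–Hurwitz criterion: all roots of $Q$ have negative real part if and only if $c_4 > 0$, $c_3 > 0$, $c_0 > 0$, the second Hurwitz determinant $\Delta_2 := c_3 c_2 - c_4 c_1$ is positive, and the third Hurwitz determinant $\Delta_3 := c_1 \Delta_2 - c_3^2 c_0$ is positive, with $c_1 > 0$ and $c_2 > 0$ then following automatically. The pair $c_0, c_4 > 0$ is equivalent to $P(1), P(-1) > 0$, which (by adding and subtracting) is equivalent to $|a_1 + a_3| < 1 + a_0 + a_2$. Likewise, $c_1 > 0$ and $c_3 > 0$ combine via the same trick into $|a_1 - a_3| < 2(1 - a_0)$. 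Finally, $c_2 > 0$ reads $a_2 - 3a_0 < 3$.

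Third, and this is where the main algebraic burden sits, I would verify that once the first three inequalities hold (so that $c_0, c_1, c_3, c_4 > 0$), the single condition $\Delta_3 > 0$ is equivalent to the fourth displayed inequality. The key observation that lets us list only four inequalities instead of five is the identity $\Delta_3 = c_1 \Delta_2 - c_3^2 c_0$: since $c_1, c_0, c_3 > 0$ once the first three conditions hold, $\Delta_3 > 0$ automatically forces $\Delta_2 > 0$, so $\Delta_2 > 0$ need not be listed separately. The genuine obstacle is the mechanical but lengthy expansion of $\Delta_3 = c_3 c_2 c_1 - c_3^2 c_0 - c_4 c_1^2$ as a polynomial in $(a_0,a_1,a_2,a_3)$ and its collection into the compact form on the right-hand side of the fourth inequality; after substituting and factoring out the positive constant $8$, the resulting cubic expression in the $a_i$ should collapse exactly to $1 + 2a_0 a_2 + a_1 a_3 + a_0 a_1 a_3 + a_0^3 - a_0 - a_2 - a_0^2 - a_1^2 - a_0^2 a_2 - a_0 a_3^2 > 0$. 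I would double-check this simplification by verifying two degenerate limits: the palindromic case $a_0 = 1$, which forces all roots onto the unit circle and must violate strict inequality in the fourth condition; and the biquadratic case $a_1 = a_3 = 0$, in which the fourth inequality should reduce to the classical Schur-Cohn condition on the quadratic $y^2 + a_2 y + a_0$ with $y = x^2$ lying in $|y|<1$.
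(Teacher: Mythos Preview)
The paper does not prove this proposition at all: it is quoted verbatim as Theorem 1.5 of Grove and Ladas and used as a black box. So there is no ``paper's proof'' to compare against; your proposal supplies a proof where the paper supplies only a citation.

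That said, your approach is the standard textbook derivation of the Schur--Cohn conditions and is correct in outline. The bilinear substitution and your computed coefficients $c_0,\dots,c_4$ are right, and the identification of the first three displayed inequalities with $c_0,c_4>0$, $c_1,c_3>0$, and $c_2>0$ respectively is exact. Your reduction argument---that once all $c_i>0$, the single condition $\Delta_3>0$ forces $\Delta_2>0$ via $\Delta_3 = c_1\Delta_2 - c_3^2 c_0$---is also correct and is what lets four inequalities suffice rather than five.

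Two small corrections. First, the positive constant you factor out of $\Delta_3$ is $64$, not $8$: carrying the expansion through gives
\[
\frac{\Delta_3}{64} \;=\; 1 + 2a_0a_2 + a_1a_3 + a_0a_1a_3 + a_0^3 - a_0 - a_2 - a_0^2 - a_1^2 - a_0^2 a_2 - a_0 a_3^2,
\]
which is exactly the fourth inequality. Second, your palindromic sanity check should read ``$a_0=1$ forces the \emph{second} inequality $|a_1-a_3|<2(1-a_0)=0$ to fail,'' which it does; the fourth inequality need not be the one violated in that limit.
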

    
    Also, the following corollary suggests that the coefficients are all bounded (by constants) for such cases.
    
    \begin{corollary}
        \label{cor:routhhurwitztwobd}
        For coefficients $a_0, a_1, a_2, a_3$ satisfying \eqref{eq:degfour}, we have $|a_3| < 6$, $|a_2| < 6$, $|a_1| < 6$, $|a_0| < 1$.
    \end{corollary}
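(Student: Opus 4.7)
The plan is to bypass direct algebraic manipulation of the four inequalities in \eqref{eq:degfour} and instead invoke Proposition~\ref{prop:routhhurwitztwo} itself: the conditions \eqref{eq:degfour} are \emph{equivalent} to the statement that every root of the real monic quartic $p(x) = x^4 + a_3 x^3 + a_2 x^2 + a_1 x + a_0$ lies strictly inside the open unit disk $|x| < 1$. So one may assume there exist (possibly complex) $\lambda_1, \lambda_2, \lambda_3, \lambda_4$ with $|\lambda_i| < 1$ for all $i$ such that $p(x) = \prod_{i=1}^4 (x-\lambda_i)$.

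From here the argument reduces to Vieta's formulas and the triangle inequality. Expanding the product yields
\begin{align*}
    a_3 &= -\sum_i \lambda_i, \quad a_2 = \sum_{i<j} \lambda_i \lambda_j, \quad a_1 = -\sum_{i<j<k}\lambda_i \lambda_j \lambda_k, \quad a_0 = \lambda_1\lambda_2\lambda_3\lambda_4.
\end{align*}
Applying the triangle inequality term by term, together with $|\lambda_i|<1$, gives
\begin{align*}
    |a_3| &\le \sum_i |\lambda_i| < 4, \quad |a_2| \le \sum_{i<j}|\lambda_i||\lambda_j| < \binom{4}{2} = 6, \\
    |a_1| &\le \sum_{i<j<k}|\lambda_i||\lambda_j||\lambda_k| < \binom{4}{3} = 4, \quad |a_0| = \prod_i |\lambda_i| < 1.
\end{align*}
In particular $|a_3|, |a_1| < 4 < 6$, so all four stated bounds $|a_3|,|a_2|,|a_1| < 6$ and $|a_0| < 1$ hold.

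There is really no obstacle: the ``hard'' direction (equivalence of \eqref{eq:degfour} with root localization) is already delegated to Proposition~\ref{prop:routhhurwitztwo}, and once that is in hand, Vieta plus triangle inequality is routine. An alternative route, analogous to the proof of Corollary~\ref{cor:routhhurwitzbd} in the cubic case, would be to combine the four inequalities in \eqref{eq:degfour} directly: condition~2 immediately yields $a_0 < 1$ (and condition~4 evaluated at the boundary $a_0 = -1$ forces $a_0 > -1$), condition~3 gives $a_2 < 3 + 3a_0 < 6$, and then conditions~1 and~2 bound $|a_1|$ and $|a_3|$ by $2(1+\max\{|a_0|,|a_2|\}) < 6$. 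The Vieta route, however, is shorter and exactly parallels the logic already used when translating root bounds into coefficient bounds in the cubic case of \cref{sec:thmsimgdalb}, so I would adopt it.
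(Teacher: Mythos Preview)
Your Vieta-based argument is correct and in fact yields the sharper bounds $|a_1|,|a_3|<4$. It is, however, a genuinely different route from the paper's. The paper never invokes the root characterization at all: it works purely with the first three inequalities of \eqref{eq:degfour}, observing that $0<1+a_0+a_2$, $0<2(1-a_0)$, and $a_2-3a_0<3$ confine $(a_0,a_2)$ to a triangle with vertices $(-1,0)$, $(1,-2)$, $(1,6)$ (whence $|a_0|<1$, $|a_2|<6$), and then plugs these into conditions~1 and~2 to trap $(a_1,a_3)$ in a parallelogram (whence $|a_1|,|a_3|<6$). Your approach is shorter and delivers tighter constants by leveraging the full equivalence in Proposition~\ref{prop:routhhurwitztwo}; the paper's approach is more self-contained in that it treats the inequalities as raw algebraic constraints and never touches the fourth condition or the root interpretation. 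Either is perfectly adequate for the downstream use, which only needs $a_0=\gO(1)$ to conclude $\alpha\beta L_{xy}^2=\gO(1)$. (Your closing sketch of the ``alternative route'' is slightly off---the paper gets $a_0>-1$ from the triangle formed by conditions~1--3, not from condition~4---but this does not affect your main argument.)
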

    
    \begin{proof}
        From the first three conditions, we can observe that
        \begin{align*}
            0 < 1 + a_0 + a_2, \quad 0 &< 2(1 - a_0), \quad a_2 - 3a_0 < 3.
        \end{align*}
        Hence $(a_0, a_2)$ must be inside a triangle with endpoints $(-1, 0)$, $(1, -2)$, $(1, 6)$, which implies $|a_2| < 6$, $|a_0| < 1$.
        
        Using this, we can also observe from the first two conditions that
        \begin{align*}
            |a_1 + a_3| < 1 + a_0 + a_2 < 8, \quad |a_1 - a_3| &< 2(1 - a_0) < 4.
        \end{align*}
        Hence $(a_1, a_3)$ must be inside a rectangle with endpoints $(2, 6)$, \!$(6, 2)$, \!$(-2, -6)$, \!$(-6, -2)$, implying $|a_3| < 6$, $|a_1| < 6$.
    \end{proof}

    By \cref{cor:routhhurwitztwobd}, we can observe that a necessary condition for $\rho (\mP) < 1$ is that all coefficients in \eqref{eq:dongkuk} are of order $\gO(1)$.
    In particular, this implies $ab\kappa_{xy}^2 = \alpha \beta L_{xy}^2 = \gO (1)$ in order to assure convergence, which concludes that
    \begin{align}
        \frac{1}{\alpha \mu_x} + \frac{1}{\beta \mu_y} &\ge \frac{2}{\sqrt{\alpha \beta \mu_x \mu_y}} = \frac{2 \kappa_{xy}}{\sqrt{\alpha \beta L_{xy}^2}} = \Omega (\kappa_{xy}). 
        \label{eq:rice2}
    \end{align}
    Combining \eqref{eq:hola} and \eqref{eq:rice2}, we have
    \begin{align*}
        \frac{1}{\alpha \mu_x} + \frac{1}{\beta \mu_y} &= \Omega (\kappa_x + \kappa_y + \kappa_{xy})
    \end{align*}
    and therefore from \eqref{eq:iteration_complexity_base_3} we can show a lower bound of
    \begin{equation*}\Omega\bigopen{\bigopen{\kappa_x + \kappa_y + \kappa_{xy}} \cdot \log\frac{1}{\epsilon}}.
    \end{equation*}
\end{proof}
\newpage
\section{Details of Experiments} 
\label{sec:g}

\subsection{SCSC Quadratic Game (1): Small-scale}
\label{sec:experiment_scsc}

We run experiments on the following SCSC quadratic problem:
\begin{align*}
    f(\vx,\vy) = \frac{1}{2}\vx^\top \mU^\top \begin{bmatrix}
        \mu_x & 0 & 0 \\ 0 & L_x & 0 \\ 0 & 0 & L_x
    \end{bmatrix} \mU \vx + \vx^\top \mU^\top \begin{bmatrix}
        L_{xy} & 0 & 0 \\ 0 & L_{xy} & 0 \\ 0 & 0 & \mu_{xy}
    \end{bmatrix}  \mV \vy + \frac{1}{2}\vy^\top \mV^\top \begin{bmatrix}
        \mu_y & 0 & 0 \\ 0 & L_y & 0 \\ 0 & 0 & L_y
    \end{bmatrix} \mV \vy,
\end{align*}
where $\mU\in \R^{3\times 3}$ and $\mV\in \R^{3\times 3}$ are random orthogonal matrices (generated with QR-decompositions of random Gaussian matrices). For a clear demonstration of optimization trajectories in \cref{fig:scsc}, we set $\mU = \mV = \mI_{3\times 3}$. For the problem parameters, we use $L_x = L_y = L_{xy} = 1$ and $\mu_x = \mu_y = \mu_{xy} = 0.2$. We run each algorithm until it reaches  $\norm{\vz_k}^2 < \eps = 10^{-50}$.

\paragraph{Implementation of EG.} 
We use a general form of \textbf{EG} as follows:
\begin{gather*}
    \vx_{k+\frac{1}{2}} = \vx_{k} - \alpha_0 \nabla_{\vx} f(\vx_{k}, \vy_{k}),  \quad
    \vy_{k+\frac{1}{2}} = \vy_{k} + \beta_0 \nabla_{\vy} f(\vx_{k}, \vy_{k}), \\
    \vx_{k+1} = \vx_{k} - \alpha_1 \nabla_{\vx} f(\vx_{k+\frac{1}{2}}, \vy_{k+\frac{1}{2}}), \quad
    \vy_{k+1} = \vy_{k} + \beta_1 \nabla_{\vy} f(\vx_{k+\frac{1}{2}}, \vy_{k+\frac{1}{2}}),
\end{gather*}
where the step sizes at explorations step ($k \rightarrow k+1/2$) and at update step ($k+1/2 \rightarrow k+1$) can differ.

\paragraph{Implementation of OGD.} 
Also, we use a general form of \textbf{OGD} as follows:
\begin{align*}
    \vx_{k+1} &= \vx_{k} - \alpha_0 \nabla_{\vx} f(\vx_{k}, \vy_{k}) + \alpha_1 \nabla_{\vx} f(\vx_{k-1}, \vy_{k-1}), \\
    \vy_{k+1} &= \vy_{k} + \beta_0 \nabla_{\vy} f(\vx_{k}, \vy_{k}) - \beta_1 \nabla_{\vy} f(\vx_{k-1}, \vy_{k-1}),
\end{align*}

\paragraph{Parameter tuning.} We tuned step sizes and other parameters (like $\gamma$ and $\delta$ of \alexgda{}) by grid search. Since this is a quadratic problem (where the local convergence analysis directly applies), following the analysis by \citet{zhang22near}, we choose $\mu/L^2$-scale step size for \simgda{} and $1/L$-scale step size for the other algorithms ($L=\max\bigset{L_x, L_y, L_{xy}}$, $\mu=\min\bigset{\mu_x, \mu_y, \mu_{xy}}$).
To be more specific,
\begin{itemize}
    \item \simgda{}: (step size) $=\frac{\mu}{CL^2}$, where $C \in \{0.5, 0.51, 0.52, \cdots, 2.99, 3\}$. (If we apply $\frac{1}{L}$-scale step size, it diverges.)
    \item \altgda{}: (step size) $=\frac{1}{CL}$, where $C \in \{1, 1.01, 1.02, \cdots, 3.99, 4\}$.
    \item \textbf{EG}, \textbf{OGD}: $\alpha_0=\beta_0=\frac{1}{C_0 L}$ and $\alpha_1=\beta_1=\frac{1}{C_1 L}$, where $C_0, C_1 \in \{0.5, 0.51, 0.52, \cdots, 3.99, 4\}$
    \item \alexgda{}: (step size) $=\frac{1}{CL}$, where $C \in \{1, 1.1, 1.2, \cdots, 1.9, 2\}$, and $\gamma, \delta \in \{1.1, 1.2, 1.3, \cdots, 3.9, 4\}$
\end{itemize}

\subsection{SCSC Quadratic Game (2): Higher Dimension, Extensive Comparisons}
\label{sec:experiment_scsc_large}

We generate the SCSC quadratic problems $f: \R^{d_x} \times \R^{d_y} \rightarrow \R$ as
\begin{align*}
    f(\vx,\vy) = \frac{1}{2} \vx^\top \mU^\top \mA \mU \vx + \vx^\top \mU^\top \mB  \mV \vy + \frac{1}{2} \vy^\top \mV^\top \mC \mV \vy,
\end{align*}
where we randomly sample the matrices $\mA\in \R^{d_x \times d_x}$, $\mB\in \R^{d_x \times d_y}$, $\mC\in \R^{d_y \times d_y}$, $\mU\in \R^{d_x \times d_x}$, and $\mV\in \R^{d_y \times d_y}$:
\begin{align*}
    \mA = \diag(a_1, \dots, a_{d_x}), && a_1 &= \mu_x, && a_2 = L_x, && a_i \sim \Uniform(\mu_x, L_x), \quad (i = 3, \dots, d_x) \\
    \mB = \diag(b_1, \dots, b_{\min\{d_x, d_y\}}), && b_1 &= \mu_{xy}, && b_2 = L_{xy}, && b_i \sim \Uniform(\mu_{xy}, L_{xy}), \quad (i = 3, \dots, \min\{d_x, d_y\}) \\
    \mC = \diag(c_1, \dots, c_{d_y}), && c_1 &= \mu_y, && c_2 = L_y, && c_i \sim \Uniform(\mu_y, L_y), \quad (i = 3, \dots, d_y)
\end{align*}
while $\mU \in \R^{d_x \times d_x}$ and $\mV\in \R^{d_y \times d_y}$ are random orthogonal matrices.

For each combination of $\mu$ ($=\mu_x=\mu_y$), $\mu_{xy}$, $L$ ($=L_x=L_y$), and $L_{xy}$, we test 3 random initialization points $(\vx_0, \vy_0)$ and 10 random instances of $f(\vx, \vy)$.

\paragraph{Algorithms.}
We follow a standard implementation of heavy-ball momentum as PyTorch’s implementation. 
We adopt \citet{azizian20} for \textbf{EG} with Momentum, \citet{ramirez2023omega} for \textbf{OGD} with Momentum (so-called OmegaM), and \citet{zhang2020convergence} for the alternating counterparts of \textbf{EG} and \textbf{OGD} (Alt-EG and Alt-OG, respectively).

Our implementation of \alexgda{} with momentum (Alex+M) is as in \cref{alg:alexgda_momentum}.
\begin{algorithm}[ht]
    \caption{\alexgda{} with Momentum}
    \label{alg:alexgda_momentum}
    \begin{algorithmic}
        \STATE {\bfseries Input:} Number of epochs $K$, step sizes $\alpha, \beta > 0$, hyperparameters $\gamma, \delta \ge 0$, momentum parameters $m_x,m_y \in \R$
        \STATE {\bfseries Initialize:} $(\vx_0, \vy_0) \in \R^{d_x} \times \R^{d_y}$ and $\tilde{\vy}_0 = \vy_0 \in \R^{d_y}$
        \FOR{$k = 0, \dots, K-1$}
            \STATE $\vv^{x}_{k+1} = m_x \vv^{x}_{k} + \nabla_{x} f(\vx_{k}, \blue{\tilde{\vy}_{k}})$
            \STATE $\vx_{k+1} = \vx_{k} - \alpha \vv^{x}_{k+1}$
            \STATE $\tilde{\vx}_{k+1} = \vx_{k} - \gamma\alpha \vv^{x}_{k+1}$
            \STATE $\vv^{y}_{k+1} = m_y \vv^{y}_{k} + \nabla_{y} f(\blue{\tilde{\vx}_{k+1}}, \vy_{k})$
            \STATE $\vy_{k+1} = \vy_{k} + \beta \vv^{y}_{k+1}$
            \STATE $\tilde{\vy}_{k+1} = \vy_{k} + \delta\beta \vv^{y}_{k+1}$
        \ENDFOR
        \STATE {\bfseries Output:} $(\vx_K, \vy_K) \in \R^{d_x} \times \R^{d_y}$
    \end{algorithmic}
\end{algorithm}

\paragraph{Computing gradient complexity.}
For most algorithms, the number of gradient computations equals the number of iterations. However, \textbf{EG} and its alternating counterpart (Alt-EG) take multiple gradient computations per iteration. For \textbf{EG} (with simultaneous updates), it takes two gradient computations per iteration. For Alt-EG, according to the implementation by \citet{zhang2020convergence}, it takes three gradient computations per iteration. Hence, we computed the gradient complexity by multiplying the number of iterations and the amount of gradient computation per iteration.

\paragraph{Parameter Tuning.}
Likewise in \cref{sec:experiment_scsc}, we choose $\tfrac{\mu}{\max\{L^2, L_{xy}^2\}}$-scale step size for \simgda{} and $\tfrac{1}{\max\{L, L_{xy}\}}$-scale step size for the other algorithms. 
To be specific,
\begin{itemize}[leftmargin=15pt]
    \item \simgda{} : (step size)$=\frac{C\mu}{\max\{L^2, L_{xy}^2\}}$ where $C\in \{0.1, 0.2, \ldots, 1.5\}$,
    \item The other algorithms (including \simgda{} with momentum): (step size)$=\frac{C}{\max\{L, L_{xy}\}}$ where $C\in \{0.1, 0.2, \ldots, 1.5\}$.
\end{itemize}
We tune the momentum parameters $m_x, m_y \in \{-0.99, -0.95, -0.9, -0.8, -0.7, \ldots, 0.9, 0.95, 0.99\}$. 
Note that we allow the negative momentum as per the work by \citet{gidel2019negative}.
We tune $\gamma$ and $\delta$ for \alexgda{} as $\gamma, \delta \in \{0.5, 0.6, 0.7, \ldots, 3.0\}$.
For the momentum variant of \alexgda{} (\cref{alg:alexgda_momentum}), we slightly reduced the range of search as $\gamma, \delta \in \{1.0, 1.1, 1.2, \ldots, 3.0\}$.

\subsection{Generative Adversarial Networks: WGAN-GP}
\label{sec:experiment_wgan}

We name the combination of Adam \citep{kingma2015adam} and (the stochastic version of) \textbf{\green{Sim-}}/\textbf{\red{Alt-}}/\alexgda{} as Sim-/Alt-/Alex-Adam, respectively. 
In \cref{lst:alex_adam}, we provide a brief Python code based on PyTorch \citep{paszke2019pytorch} for GAN training with Alex-Adam. 
The full code base can be found at \href{https://github.com/HanseulJo/Alex-GDA/tree/main/gan}{\texttt{github.com/HanseulJo/Alex-GDA/tree/main/gan}}. 
In the code, we use the main models \verb|netD| and \verb|netG| (for which the weights correspond to $\vx$ and $\vy$, respectively) and the auxiliary models \verb|netD_| and \verb|netG_|. 
The auxiliary models are for describing the `tilde' variables $\tilde{\vx}$ and $\tilde{\vy
}$, i.e., the results of the inter-/extrapolation steps.

\paragraph{Learning Rates.}
For MNIST \citep{deng2012mnist}, we tuned the step sizes for Alex-Adam ($\{10^{-4}, 3\times 10^{-4}\}$ for both generator and discriminator) and applied the best step size ($3\times 10^{-4}$ for generator, $10^{-4}$ for discriminator) for the other algorithms.

For CIFAR-10 \citep{krizhevsky2009learning}, we tuned the step sizes for algorithms ($\{10^{-4}, 3\times 10^{-4}\}$ for both generator and discriminator). The best step sizes were ($10^{-4}$ for both generator and discriminator) for Sim-Adam and ($3\times 10^{-4}$ for both generator and discriminator) for Alt-Adam and Alex-Adam.

For LSUN-Bedroom $64\times 64$ dataset \citep{yu15lsun}, we fixed the step size as ($10^{-4}$ for generator, $3\times 10^{-4}$ for discriminator) following \citet{heusel2017gans}.

\newpage
\lstinputlisting[
    label={lst:alex_adam},
    language=Python,
    caption={PyTorch-based Python code for GAN Training with \alexgda{} + Adam optimizer (i.e., Alex-Adam)},
    captionpos=t  
]{short_GAN_code_for_paper.py}
\newpage
\section{Guessing the Complexity Bound of Alt-GDA} 
\label{sec:h}

We have found numerical evidence based on the performance estimation program (PEP) \citep{drori14performance} that the upper complexity bound can be strictly smaller than $\gO(\kappa^{1.5})$ for \altgda{}, which we formally state in \cref{conj:alt}.

Reproducing the work by \citet{gupta24}, we devised a PEP-based tool that automatically optimizes the convergence rate of \textbf{\green{Sim-}}/\altgda{} under SCSC and Lipschitz gradient assumptions. 
While the original PEP is a tool for finding the worst-case convergence rate of a \emph{given} algorithm (with fixed and known parameters like step sizes) by solving a semidefinite programming problem, our tool tries to minimize this worst-case rate by finding optimal step sizes and optimal coefficients of the performance measure. 
Here, the performance measure is a linear combination of (1) the squared distance from the current iterate to the optimum, (2) the gradient norm at the current iterate, and (3) their interaction term (inner product between an iterate-optimum gap and a gradient norm), where the coefficients of the linear combination are part of optimization variables.

Using this tool, we can obtain an optimized convergence rate $r$ of \textbf{\green{Sim-}}/\altgda{} for each set of problem parameters $(\mu_x, \mu_y, L_x, L_y, L_{xy})$. 
(For convenience of exhibition, we set $\mu=\mu_x=\mu_y$ and $L = L_x = L_y = L_{xy}$ and define $\kappa = L / \mu$.)
Recall from \cref{eq:validilav} that the complexity can be expressed as $\frac{1}{1-r}$ except for the logarithmic factor. 
Hence, if we find how $\frac{1}{1-r}$ can be expressed as a function of $\kappa$, we will be able to guess the actual complexity in terms of $\kappa$.
We draw log-log plots between $\frac{1}{1-r}$ and $\kappa$ and observe its slope, which would be the exponent of $\kappa$ in the complexity.
Here we tune $\kappa\in \{10^{1}, 10^{1.2}, 10^{1.4}, \dots, 10^{3}\}$, and we compute the median slope of line segments, each of them connecting a pair of adjacent points.

As shown in \cref{fig:bnbpep}, the graphs for both algorithms appear close to a straight line.
For \simgda{}, we observe the optimal complexity is $\approx \kappa^{1.999}$: it is tight up to numerical error. On the other hand, for \altgda{}, the observed lowest possible complexity is $\approx \kappa^{1.385}$ (if we utilize a pair of consecutive iterates $\boldsymbol{z}_{k} \rightarrow \boldsymbol{z}_{k+1}$): See \cref{fig:bnbpep}.

\begin{figure}[ht]
    \centering
    \includegraphics[width=0.45\linewidth]{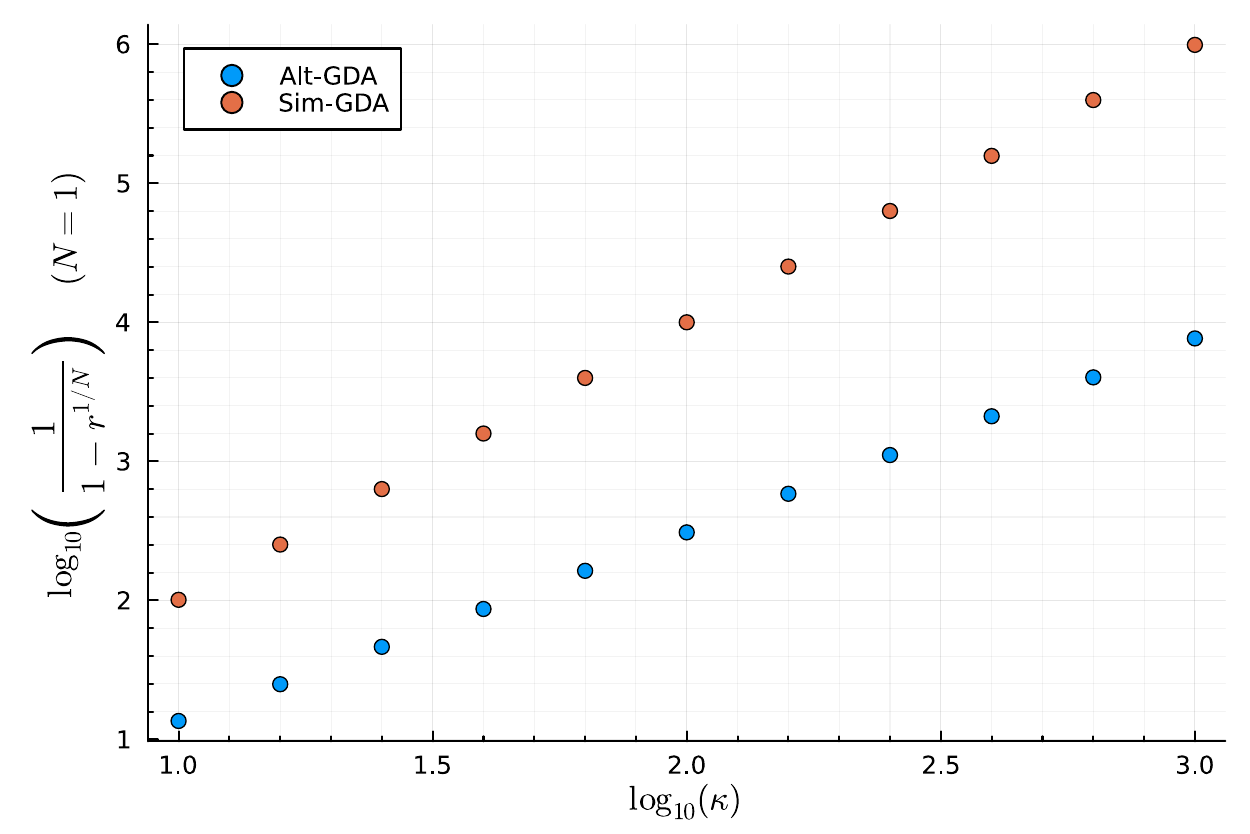}
    ~
    \includegraphics[width=0.45\linewidth]{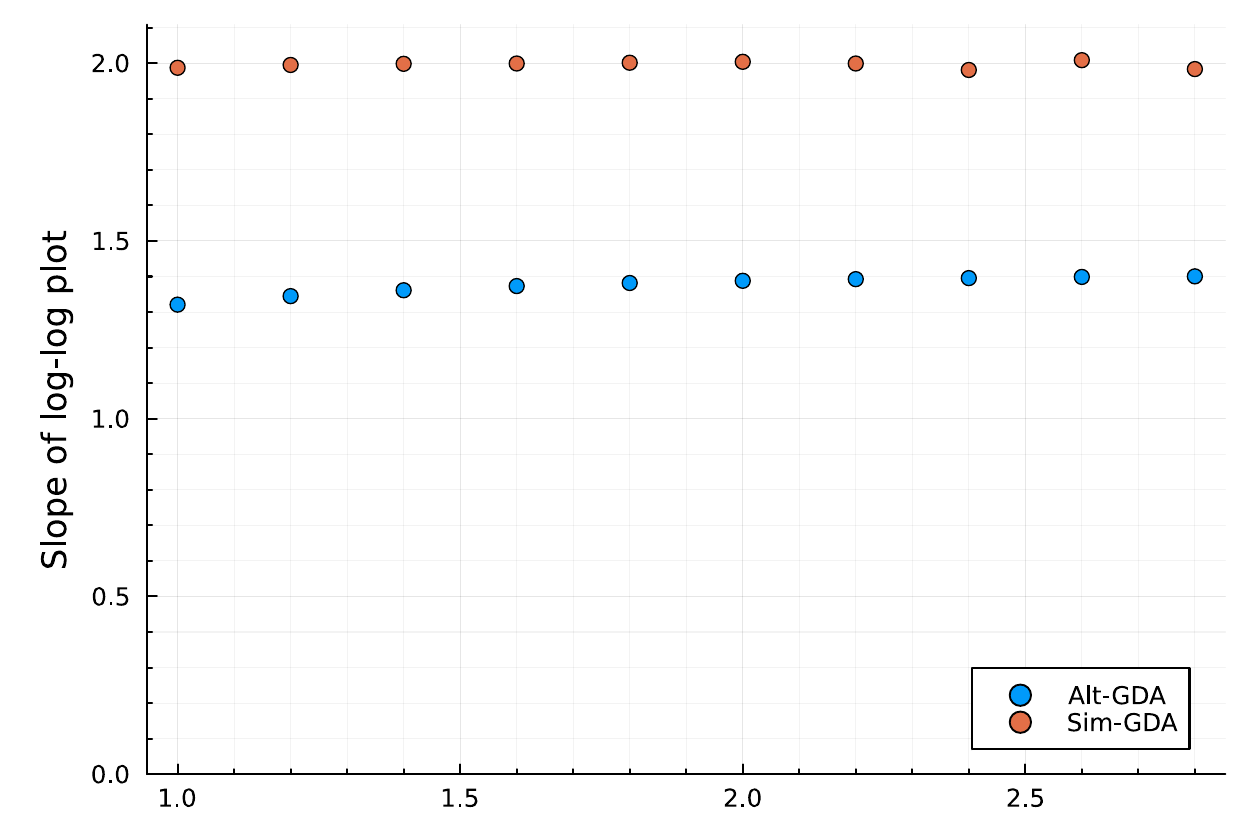}
    \caption{\textbf{Guessing the complexity bound of \textbf{\green{Sim-}}/\altgda{}.} \textbf{Left:} log-log plot between $\kappa = L/\mu$ and the near-optimal worst-case complexity. \textbf{Right:} Slope of the log-log plot. Each point corresponds to the slope of a line segment connecting a pair of adjacent points in the left plot.}
    \label{fig:bnbpep}
\end{figure}

Nevertheless, we cannot assure that this \textit{proves} that the tight complexity of \altgda{} of rate $\gO(\kappa^{1.385})$ is \emph{tight}. 
This is mainly because, in fact, our tool is not perfect in terms of the function class.
Although our tool implements every condition of SCSC and Lipschitz gradients as constraints of an optimization problem, it is not well understood (especially for minimax problems) whether such an implementation can properly simulate the class of SCSC functions with Lipschitz gradients; rather, it can only simulate a slightly \emph{larger} function class including SCSC and Lipschitz-gradient functions (this is similar to the case of monotone and Lipschitz operators \citep{ryu20}).
Thus, the numerical value $1.385$ is not tight and the true exponent can be smaller for the actual SCSC Lipschitz-gradient functions.
In other words, the complexity can be smaller than $\gO(\kappa^{1.385})$.
Nonetheless, our results altogether corroborate that the upper complexity bound of \altgda{} must be strictly smaller than $\gO(\kappa^{1.5})$.

\end{document}